\numberwithin{equation}{subsection}
\theoremstyle{plain}
\newtheorem{thm}[subsection]{Theorem}
\newtheorem*{thm*}{Theorem}
\newtheorem{nothm}{Theorem}
\newtheorem{thmsub}[equation]{Theorem}
\newtheorem*{exthm*}{Expected Theorem}
\newtheorem{lem}[subsection]{Lemma}
\newtheorem{lemsub}[equation]{Lemma}
\newtheorem*{lem*}{Lemma}
\newtheorem{prop}[subsection]{Proposition}
\newtheorem{propsub}[equation]{Proposition}
\newtheorem*{prop*}{Proposition}
\newtheorem{noprop}[nothm]{Proposition}
\newtheorem{cor}[subsection]{Corollary}
\newtheorem{corsub}[equation]{Corollary}
\newtheorem*{cor*}{Corollary}
\newtheorem*{claim*}{Claim}
\newtheorem*{conj*}{Conjecture}
\theoremstyle{definition}
\newtheorem*{defn*}{Definition}
\newtheorem{defnsub}[equation]{Definition}
\newtheorem{deflemsub}[equation]{Definition/Lemma}
\newtheorem{deflem}[subsection]{Definition/Lemma}
\newtheorem{axiom}{Axiom}
\newtheorem{exasub}[equation]{Example}
\newtheorem*{exa*}{Example}
\theoremstyle{remark}
\newtheorem{rmk}[subsection]{Remark}
\newtheorem{rmksub}[equation]{Remark}
\newtheorem*{rmk*}{Remark}
\numberwithin{figure}{subsection}
\numberwithin{table}{subsection}
\newcounter{listnum}
\newenvironment{numlist}
   {
      \begin{list}
            {{(\thelistnum)}}{
            \usecounter{listnum}
            \setlength{\itemsep}{.5ex}
            \setlength{\parsep}{0ex}
            \setlength{\parskip}{0ex}
            \setlength{\topsep}{.5ex}
         }
   }
   {
      \end{list}
   }
\newcounter{asslistcounter}
\newenvironment{assertionlist}{
 \begin{list}
  {\upshape (\alph{asslistcounter})}
  {\setlength{\leftmargin}{18pt}
   \setlength{\rightmargin}{0pt}
   \setlength{\itemindent}{0pt}
   \setlength{\labelsep}{5pt}
   \setlength{\labelwidth}{13pt}
   \setlength{\listparindent}{\parindent}
   \setlength{\parsep}{0pt}
   \setlength{\itemsep}{0pt}
   \setlength{\topsep}{-.5\parskip}
   \usecounter{asslistcounter}}}
  {\end{list}}
\newcounter{subenvcounter}
\newenvironment{subenv}{%
 \begin{list}
  {\em (\arabic{subenvcounter})}
  {\setlength{\leftmargin}{20pt}
   \setlength{\rightmargin}{0pt}
   \setlength{\itemindent}{0pt}
   \setlength{\labelsep}{5pt}
   \setlength{\labelwidth}{13pt}
   \setlength{\listparindent}{\parindent}
   \setlength{\parsep}{0pt}
   \setlength{\itemsep}{0pt}
   \setlength{\topsep}{-\parskip}
   \usecounter{subenvcounter}}}
  {\end{list}}
\DeclareMathOperator{\Adm}{Adm}
\DeclareMathOperator{\cInd}{c-Ind}
\DeclareMathOperator{\End}{End}
\DeclareMathOperator{\Fil}{Fil}
\DeclareMathOperator{\Frac}{Frac}
\DeclareMathOperator{\id}{id}
\DeclareMathOperator{\image}{im}
\DeclareMathOperator{\Gal}{Gal}
\DeclareMathOperator{\GL}{GL}
\DeclareMathOperator{\GSp}{GSp}
\DeclareMathOperator{\Hom}{Hom}
\DeclareMathOperator{\Ind}{Ind}
\DeclareMathOperator{\Isom}{Isom}
\DeclareMathOperator{\Lie}{Lie}
\DeclareMathOperator{\Ig}{Ig}
\DeclareMathOperator{\Map}{Map}
\DeclareMathOperator{\M}{M}
\DeclareMathOperator{\pr}{pr}
\DeclareMathOperator{\QIsog}{QIsog}
\DeclareMathOperator{\Rep}{Rep}
\newcommand{\Sh}{\mathsf{Sh}}
\DeclareMathOperator{\Spec}{Spec}
\DeclareMathOperator{\Spf}{Spf}
\DeclareMathOperator{\Tor}{Tor}
\DeclareMathOperator{\trace}{Tr}
\DeclareMathOperator{\Res}{Res}
\def \AA {\mathbb{A}}
\def \BB {\mathbb{B}}
\def \CC {\mathbb{C}}
\def \DD {\mathbb{D}}
\def \FF {\mathbb{F}}
\def \GG {\mathbb{G}}
\def \II {\mathbb{I}}
\def \NN {\mathbb{N}}
\def \PP {\mathbb{P}}
\def \QQ {\mathbb{Q}}
\def \WW {\mathbb{W}}
\def \XX {\mathbb{X}}
\def \ZZ {\mathbb{Z}}
\def \Acal {\mathcal{A}}
\def \Bcal {\mathcal{B}}
\def \Ccal {\mathcal{C}}
\def \Dcal {\mathcal{D}}
\def \Ecal {\mathcal{E}}
\def \Fcal {\mathcal{F}}
\def \Gcal {\mathcal{G}}
\def \Hcal {\mathcal{H}}
\def \Lcal {\mathcal{L}}
\def \Mcal {\mathcal{M}}
\def \Ocal {\mathcal{O}}
\def \Pcal {\mathcal{P}}
\def \Scal {\mathcal{S}}
\def \Ucal {\mathcal{U}}
\def \Xcal {\mathcal{X}}
\def \Ycal {\mathcal{Y}}
\def \Zcal {\mathcal{Z}}
\def \Gfr {\mathfrak{G}}
\def \Mfr {\mathfrak{M}}
\def \Sfr {\mathfrak{S}}
\def \Ufr {\mathfrak{U}}
\def \Vfr {\mathfrak{V}}
\def \Xfr {\mathfrak{X}}
\def \Yfr {\mathfrak{Y}}
\def \mfr {\mathfrak{m}}
\def \pfr {\mathfrak{p}}
\def \Ascr {\mathscr{A}}
\def \Escr {\mathscr{E}}
\def \Fscr {\mathscr{F}}
\def \Gscr {\mathscr{G}}
\def \Lscr {\mathscr{L}}
\def \Mscr {\mathscr{M}}
\def \Oscr {\mathscr{O}}
\def \Sscr {\mathscr{S}}
\def \Xscr {\mathscr{X}}
\def \Cbar {\bar{C}}
\def \hbar {\bar{h}}
\def \xbar {\bar{x}}
\def \ybar {\bar{y}}
\def \zbar {\bar{z}}
\def \Wtilde {\tilde{W}}
\def \Xtilde {\tilde{X}}
\def \wtilde {\tilde{w}}
\def \ztilde {\tilde{z}}
\def \Dbf {\mathbf{D}}
\def \Gsf {\mathbf{G}}
\def \Asf  {\mathsf{A}}
\def \Esf  {\mathsf{E}}
\def \Gsf  {\mathsf{G}}
\def \Ksf  {\mathsf{K}}
\def \Ssf  {\mathsf{S}}
\def \Xsf  {\mathsf{X}}
\def \FFbar {\overline{\mathbb{F}}}
\def \QQbar {\overline{\mathbb{Q}}}
\def \BT    {{Barsotti-Tate group}}
\def \lto   {\longrightarrow}
\def \mono  {\hookrightarrow}
\def \epi   {\twoheadrightarrow}
\def \isom  {\stackrel{\sim}{\rightarrow}}
\newcommand{\pot}[1]{ [\hspace{-0,17em}[ {#1} ]\hspace{-0,17em}] }
\newcommand{\restr}[2]{{#1}\raise-.5ex\hbox{\ensuremath|}_{#2}}
\newcommand{\bigslant}[2]{{\raisebox{.2em}{$#1$}\hspace{-.3em}\left/ \hspace{-.2em}\raisebox{-.2em}{$#2$}\right.}}
\def \univ {{\rm univ}}
\def \Flag {{\mathscr{F}\ell}}
\def \Grass {{\rm Gr}}
\def \perf {{\rm perf}}
\def \Def {{\mathfrak{Def}}}
\DeclareMathOperator{\RHom}{RHom}
\def \ad {\mathrm{ad}}
\newcommand{\loc}{\mathrm{loc}}
\newcommand{\red}{\mathrm{red}}
\newcommand{\coh}[1]{\mathrm{H}^{#1}}
\newcommand{\Lrm}{\mathrm{L}}
\newcommand{\Rrm}{\mathrm{R}}
\newcommand{\ol}[1]{\overline{#1}}
\newcommand{\wt}[1]{\widetilde{#1}}
\newcommand{\wh}[1]{\widehat{#1}}
\newcommand{\iv}{^{-1}}
\newcommand{\ra}{\rightarrow}
\newcommand{\hra}{\hookrightarrow}
\newcommand{\thra}{\twoheadrightarrow}
\newcommand{\riso}{\xrightarrow{\sim}}
\newcommand{\et}{\text{\rm\'et}}
\newcommand{\qcet}{\text{\rm qc\'et}}
\newcommand{\dR}{\text{\rm dR}}
\newcommand{\Dr}{\mathrm{Dr}}
\DeclareMathOperator{\Ext}{Ext}
\DeclareMathOperator{\Mant}{Mant}
\DeclareMathOperator{\Mod}{Mod}
\DeclareMathOperator{\Spa}{Spa}
\newcommand{\spbf}{\mathbf{sp}}
\newcommand{\Fpbar}{\overline{\FF}_p}
\newcommand{\Zp}{\ZZ_p}
\newcommand{\Zpbr}{\breve\ZZ_p}
\newcommand{\Qp}{\QQ_p}
\newcommand{\Qpbar}{\overline{\QQ}_p}
\newcommand{\Acris}{A_{\rm cris}}
\newcommand{\Bcris}{B_{\rm cris}}
\newcommand{\cris}{\mathrm{cris}}
\author[P.~Hamacher, W.~Kim]{Paul Hamacher and Wansu Kim}
\def \k {{\bar{\mathbb{F}}}_p}
\def \F {{\mathbb{Q}_p}}
\def \O {{\mathbb{Z}_p}}
\def \W {{\breve{\ZZ}_p}}
\renewcommand{\L}{\breve{\QQ}_p}
\def \Igfr {{\mathfrak{Ig}}}
\def \Ssb {{\bar{\mathscr{S}}}}
\def \Ebreve {{\breve{E}}}
\def \hyph {\text{-}}
\def \disc {\mathrm{disc}}
\def \ft {\mathrm{ft}}
\address{Paul Hamacher\\
Technische Universit\"at M\"unchen\\
Zentrum Mathematik - M 11\
Boltzmannstra{\ss}e 3\\
85748 Garching\\
Deutschland\\ \newline
\indent Wansu Kim\\%
Department of Mathematical Sciences\\
Korea Advanced Institute of Science and Technology (KAIST)\\
291 Daehak-ro\\
Yuseong-gu\\ 
Daejeon 34141\\
South Korea}
\email{hamacher@ma.tum.de, wansu.math@gmail.com}
\title[]{$l$-adic \'etale cohomology of Shimura varieties of Hodge type with non-trivial coefficients}
\begin{document}

\begin{abstract}
  Let $(\Gsf,\Xsf)$ be a Shimura datum of Hodge type. Let $p$ be an odd prime such that $\Gsf_{\QQ_p}$ splits after a tamely ramified extension and $p\nmid |\pi_1(\Gsf^{\rm der})|$. Under some mild additional assumptions that are satisfied if the associated Shimura variety is proper and $\Gsf_{\QQ_p}$ is either unramified or residually split, we prove the generalisation of Mantovan's formula for the $l$-adic cohomology of the associated Shimura variety. On the way we derive some new results about the geometry of the Newton stratification of the reduction modulo $p$ of the Kisin-Pappas integral model.
\end{abstract}

\maketitle

\tableofcontents

\section*{Introduction}

The aim of this paper is to obtain the generalisation of Mantovan's formula for the cohomology of Hodge-type Shimura varieties, under some mild assumptions. %
To explain the motivation, it is a folklore belief that the cohomology of Shimura varieties should decompose in terms of certain automorphic representations and their Langlands parameters. Similarly, we expect that there should exist a local analogue of Shimura varieties whose cohomology decomposes according to the local Langlands and Jacquet-Langlands correspondences; \emph{cf.} \cite{RapoportViehmann:LocShVar}. It is natural to expect that there should be a relationship between the cohomology of Shimura varieties and ``local Shimura varieties'' that encodes the local-global compatibility of the Langlands correspondence.
For compact Shimura varieties associated to PEL Shimura datum unramified at a chosen prime $p$, such a cohomological formula was obtained by Mantovan \cite{Mantovan:UnitaryShVar,Mantovan:Foliation,Mantovan:PELnon-trivCoeff} built upon the work of Harris and Taylor on the special case of Shimura varieties of Harris-Taylor type \cite[IV, Theorem~2.7]{HarrisTaylor:TheBook}.\footnote{Variants  of such a formula were also studied in \cite{Rapoport:GuideRednShVar},\cite{Fargues:AsterisqueLLC}. Readers are referred to the introduction of \cite{HarrisTaylor:TheBook} for the works prior to \emph{loc.~cit.}} %

Let us now briefly explain the aforementioned result of Harris-Taylor and Man\-to\-van.
Let $(\Gsf,\Xsf)$ be a Shimura datum associated to a PEL datum, and assume that the associated Shimura variety is compact. We choose a prime $\pfr$ of the Shimura field $\Esf$ over a prime $p$ where $\Gsf$ is unramified. Then Mantovan's formula (for the constant coefficients) expresses the alternating sum the ``infinite-level'' Shimura variety  $\sum_i (-1)^i\coh i(\Sh(\Gsf,\Xsf),\overline\QQ_l) $, as a virtual representation of $\Gsf(\AA_f)\times W_{\Esf_\pfr}$, in terms of the compactly supported cohomology of the following spaces
\begin{enumerate}
\item  rigid analytic towers associated to PEL Rapoport-Zink spaces, playing the role of ``local Shimura varieties'';
\item  Igusa towers, which is a tower of smooth varieties in characteristic~$p$.
\end{enumerate}

The cohomology of (the rigid analytic towers of) PEL Rapoport-Zink spaces carries purely local information, while the cohomology of Igusa towers carries global information. Mantovan's formula, which will be precisely stated later on, expresses the cohomology of Shimura varieties in a way that nicely separates the purely local contribution from the remaining global information.

One of the key ingredients of the proof is the study of the geometry of the Newton stratification of the mod~$p$ reduction of Shimura varieties with hyperspecial\footnote{This is where we use the assumption that $\Gsf$ is unramified at $p$.} level at $p$; namely, the product of Igusa tower and the underlying reduced subscheme of PEL Rapoport-Zink spaces forms a nice ``pro-\'etale cover up to perfection''. This property is often called the \emph{almost product structure} of Newton strata.

Note that in the PEL case Mantovan's formula can be generalised for non-constant automorphic \'etale sheaves, instead of constant sheaf $\overline{\QQ}_l$; \emph{cf.} \cite{Mantovan:PELnon-trivCoeff}. Also we may drop the compactness assumption and allow non-compact PEL Shimura varieties; \emph{cf.} \cite[\S6]{LanStroh:NearbyCycles}. Indeed,  compactness was assumed so that  the nearby cycles spectral sequence should hold for the integral models of Shimura varieties, but it was proved in \emph{loc.~cit.} that the nearby cycles spectral sequence holds if the integral models have ``good compactifications''.

Recently, basic definitions and results on mixed characteristic PEL Shimura varieties have been generalised for Hodge-type Shimura varieties; for example, integral canonical models at hyperspecial level at $p$ were constructed \cite{Kisin:IntModelAbType,KimMadapusiPera:2adicICM}, and they were shown to have good compactifications \cite{MadapusiPera:Compactification}. Furthermore, Kottwitz' description of mod~$p$ points of PEL Shimura varieties with hyperspecial level at $p$ can be generalised to Hodge-type Shimura varieties \cite{Kisin:LanglandsRapoport}. Also the local analogue of Hodge-type Shimura varieties (or Hodge-type generalisation of PEL Rapoport-Zink spaces) was obtained in \cite{Kim:RZ} in the unramified case (or alternatively, \cite{HowardPappas:GSpin}), and we also have the almost product structure of the Newton strata in Hodge-type Shimura varieties with hyperspecial level at $p$ \cite{Hamacher:ShVarProdStr}.

Kisin and Pappas have constructed integral models of Hodge-type Shimura varieties with parahoric\footnote{Instead of parahoric level structure, we will work with the ``Bruhat-Tits level structure'' for technical reasons; in other words, the level at $p$ is given by the \emph{full} stabiliser in $\Gsf(\QQ_p)$ of some facet in the Bruhat-Tits building, not necessarily by the connected stabilsier (which is a parahoric group). Note that integral models with the ``Bruhat-Tits level structure'' is obtained as the normalisation of the closure of the generic fibre in some Siegel modular variety.} level structure  at $p>2$, if $\Gsf_{\QQ_p}$ splits after some tamely ramified extension of $\QQ_p$. It is natural, therefore, to ask whether one can prove Mantovan's formula whenever the Shimura variety admits a Kisin-Pappas integral model.

In this paper, we use the following setup. We fix a Shimura datum of Hodge type $(\Gsf,\Xsf)$. We denote by $\{\mu\}$ the conjugacy class of cocharacters defined by $\Xsf$, and let $\Esf$ be its field of definition. Let $p$ be a rational prime and $\pfr | p$ a prime of $\Esf$. Assume that $\Gsf_{\QQ_p}$ splits over a tamely ramified extension and denote by $\Sscr$ a Kisin-Pappas integral model of $\Sh_\Ksf(\Gsf,\Xsf)$ over the ring of integers $O_E$ of $p$-adic completion $E$ of $\Esf$. In particular we assume $\Ksf=\Ksf^p\Ksf_p$ and $\Ksf_p\subset\Gsf(\QQ_p)$  is assumed to be a stabiliser of some facet in the Bruhat-Tits building. We denote by $\Gscr_x$ the associated Bruhat-Tits integral model of $\Gsf$. By construction, $\Sscr$ comes with a finite morphism $\rho\colon \Sscr \to \Sscr'$ into some Siegel moduli space and thus with a principally polarised abelian variety $\Ascr_\Gsf \to \Sscr$. In order to prove Mantovan's formula for $\Sh(\Gsf,\Xsf)$, we assume that the following assertions hold (see \S~\ref{sect ADLV} or \S~\ref{sect kuenneth} for the precise formulations.)

\begin{axiom} \label{axiom RZ intro}
 Let $b \in B(\Gsf_{\QQ_p},\mu)$ such that the Newton stratum in the special fibre $\Ssb$ of $\Sscr$ is nonempty. The Rapoport-Zink uniformisation map $\Theta'\colon \Mfr^{\rho(b)} \to \Sscr'$ has a unique lift on $\FFbar_p$-points
 \[
  f\colon X_\mu(b)_{\Ksf_p}(\FFbar_p) \to \Sscr(\FFbar_p)
 \]
 such that for any $P \in X_\mu(b)_{\Ksf_p}(\FFbar_p)$ the isomorphism of $\Ascr_{\Gsf,f(P)}[p^\infty]$ with the principally polarised {\BT} associated to $P$ respects the canonical crystalline Tate tensors on both sides.
\end{axiom}

\begin{axiom} \label{axiom:LanStroh intro}
Let $\Ksf=\Ksf^p\Ksf_p$ be as above, and let $\Sh_m$ denote the Shimura variety with the same prime-to-$p$ level $\Ksf^p$ and ``full level $p^m$ structure'' (in the sense of  subsection~\ref{ssect Drinfeld}).
 Denote by $\Sscr_m$ the normalisation of $\Sscr$ in $\Sh_m$, and by $\Ssb_m$ its special fibre. Then for any automorphic \'etale sheaf $\Lscr_\xi$, the canonical morphism
 \[
  \coh i_c(\Ssb_{m},\Rrm\Psi\Lscr_\xi) \to \coh i_c(\Sh_{m},\Lscr_\xi)
 \]
 is an isomorphism.
\end{axiom}

While not yet proven in full generality, it is reasonable to expect that these assertions will be proven in the near future. 

Axiom A was proven in the case of hyperspecial level structure by Kisin in \cite[Thm.~1.4.4]{Kisin:LanglandsRapoport}, and in the case when $\Gsf_{\QQ_p}$ is residually split (for any parahoric level structure at $p$) by R.~Zhou \cite[Prop.~6.7]{Zhou:ModpPts}. 
Axiom B was proven by Lan and Stroh under the assumption that a ``good compactification'' for $\Sscr_m$ exist in \cite[Cor.~5.20]{LanStroh:NearbyCycles}. Following the reasoning of the proof of Mantovan's formula for PEL Shimura varieties in their paper, we see that it suffices to prove the isomorphy (respectively, the existence of good compatifications) for the more standard choice of the integral model given by the relative normalisation of the integral model of the Shimura variety with parahoric level structure (instead of the integral models constructed via Drinfeld level structure as in Mantovan's papers \cite{Mantovan:UnitaryShVar,Mantovan:Foliation}). This was done in the PEL case in \cite{Lan:CompactificationRamifiedPEL} (see also \cite[Prop.~2.2]{LanStroh:NearbyCycles}). %

To obtain Mantovan's formula in the tamely ramified Hodge-type case, we need to define Rapoport-Zink spaces and Igusa varieties in this generality, and show that the Newton strata, defined by Kisin and Pappas on $\FFbar_p$-points, are locally closed. For any of these constructions some sort of global ``crystalline'' tensors on $\Ascr_{\Gsf,\k}$ are needed which coincide with the tensors constructed by Kisin and Pappas when restricted to formal neighbourhoods of closed points. While we do not constructed tensors on the crystal of $\Ascr_\Gsf$, we prove the following result

 \begin{noprop}[{\emph{cf.}\ Prop.~\ref{prop global tensors}}]\label{prop global tensors intro}

Assume that $p>2$.
Let $\wh\Sscr$ denote the $p$-adic completion of $\Sscr\otimes_{O_E}O_{\breve{E}}$. Then  there exists a family of $\Psi$-invariant tensors $(t_{\alpha})$ in $P(\Ascr_{\Gsf}[p^\infty]_{\wh\Sscr})^\otimes$, where $P(\cdot)$ denotes the ``display''\footnote{``Displays'' over (not necessarily affine) $p$-adic formal schemes are described right above the statement of Prop.~\ref{prop global tensors}.} associated to a {\BT}, such that for every $z \in \Sscr(\k)$ the restriction of $t_{\alpha}$ to the formal neighbourhood of $z$ coincides with the tensors constructed by Kisin and Pappas.
 \end{noprop}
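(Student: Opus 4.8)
The plan is to construct the tensors $(t_\alpha)$ by descent from a pro-étale (or rather $v$-) cover on which they become manifestly available, and then to check the compatibility with Kisin--Pappas by working in the formal neighbourhood of each $\k$-point. First I would recall how Kisin and Pappas build their tensors: on the formal neighbourhood $\wh\Sscr_{/z}$ of a point $z\in\Sscr(\k)$, the display (or the Dieudonné crystal) of $\Ascr_{\Gsf}[p^\infty]$ carries Frobenius-invariant tensors cut out from the Tate tensors defining the Hodge embedding, via the deformation-theoretic description of $\Sscr$ in terms of the group-theoretic local model. The point of the proposition is that these \emph{local} tensors glue to a \emph{global} object living in $P(\Ascr_\Gsf[p^\infty]_{\wh\Sscr})^\otimes$. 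Since displays over a $p$-adic formal scheme $\Spf R$ with $R$ a normal domain flat over $O_{\breve E}$ can be understood via their base change to the perfection/tilt, or via Zink's theory glued over an affine cover, the natural strategy is: (i) produce the tensors generically, i.e.\ in characteristic $0$, using the absolute Hodge / de Rham realization of $\Ascr_\Gsf$ together with the canonical comparison with the $p$-adic étale cohomology, where the Hodge cycles are literally there by Deligne; (ii) spread them out over $\wh\Sscr$ using that $\Sscr$ is normal and flat over $O_E$, so that a tensor defined on the generic fibre and integral at every closed point of the special fibre is integral everywhere; (iii) identify the resulting integral tensors with the Kisin--Pappas tensors in each formal neighbourhood.

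For step (i), I would use that $\Ascr_\Gsf\to\Sscr$ is pulled back from the universal abelian scheme on the Siegel space via the finite map $\rho$, and that on the Shimura variety $\Sh_\Ksf(\Gsf,\Xsf)$ over $E$ the Hodge tensors $s_\alpha\in H^1_{\dR}(\Ascr_\Gsf/\Sh_\Ksf)^\otimes$ defining the reductive group $\Gsf$ are available — this is exactly how Hodge-type Shimura varieties are set up (Kisin, Deligne). Passing to the associated filtered $F$-isocrystal / crystalline realization over $\Sscr\otimes O_{\breve E}[1/p]$ and then to the isogeny class of the $p$-divisible group, one gets $\Psi$-invariant tensors in $P(\Ascr_\Gsf[p^\infty]_{\wh\Sscr})^\otimes\otimes\QQ_p$; the content of step (ii) is integrality, i.e.\ that these tensors actually lie in the integral display module and not just its isogeny version. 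Here I would invoke the hypothesis $p>2$ and the local structure of the Kisin--Pappas model: by \cite{Kisin:LanglandsRapoport}-type arguments (or the display-theoretic results underlying Prop.~\ref{prop global tensors}), the Kisin--Pappas tensors $t_{\alpha,z}$ on $\wh\Sscr_{/z}$ are integral and $\Psi$-invariant, and they agree with the generic (rational) tensors after inverting $p$ because both are obtained from the same Hodge tensors via the crystalline comparison. Since $\wh\Sscr\otimes O_{\breve E}$ is normal and the points $z\in\Sscr(\k)$ are (schematically) dense in the special fibre, a section of a coherent sheaf on $\wh\Sscr$ that is integral generically and integral in every formal neighbourhood $\wh\Sscr_{/z}$ is integral; this is the gluing that produces the global $t_\alpha$. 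One has to be a little careful that ``integral in every formal neighbourhood'' plus ``integral generically'' suffices — this uses that $P(\Ascr_\Gsf[p^\infty]_{\wh\Sscr})$ is a finite locally free module over $\Ocal_{\wh\Sscr}$ and normality of the base, a standard ``$S_2$''-type argument.

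The main obstacle, as I see it, is step (ii)/(iii): matching the globally-constructed tensors with the Kisin--Pappas tensors in the formal neighbourhoods, and more precisely controlling the display functor $P(\cdot)$ over a non-affine, non-smooth $p$-adic formal scheme. Kisin and Pappas work with Zink's displays/windows over specific (framed) local rings where the theory is clean; globalizing $P(\cdot)$ requires either checking that the local displays glue (which they do, functorially, because the Dieudonné/display crystal is intrinsic to the $p$-divisible group), or reformulating everything via the Dieudonné crystal on the crystalline site of $\Ssb$, which is canonical but then one must re-derive the Kisin--Pappas tensors in that language. I expect the argument will run: define $P(\cdot)$ as the evaluation of the (covariant, say) Dieudonné crystal on a suitable PD-thickening / via Lau's or Zink's equivalence, note this is compatible with the local descriptions, produce $(t_\alpha)$ rationally from Hodge theory, and then cite the local comparison of Kisin--Pappas tensors with de Rham/crystalline Hodge tensors to get both integrality and the asserted compatibility at each $z$. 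The $\Psi$-invariance is automatic since the generic tensors are Frobenius-invariant in the isocrystal and $\Psi$ is the Frobenius on the display. Uniqueness of $(t_\alpha)$ — should one want it — follows from density of $\{z\in\Sscr(\k)\}$ in $\Ssb$ together with the fact that two sections of a locally free sheaf on a normal formal scheme agreeing on a dense set of formal neighbourhoods agree.
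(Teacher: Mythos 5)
Your overall architecture (rational tensors from Hodge cycles, integrality, local comparison with Kisin--Pappas) matches the paper's, but the two steps you treat as routine are the technical heart of the argument, and one of them as stated would fail.

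First, integrality. You propose an ``$S_2$-type'' spreading-out for a ``finite locally free module over $\Ocal_{\wh\Sscr}$'', but the display $P(\Ascr_\Gsf[p^\infty]_{\wh\Sscr})$ is locally free over the Witt vector sheaf $W(\Ocal_{\wh\Sscr})$, not over $\Ocal_{\wh\Sscr}$, and $W(S)$ for non-perfect $S$ is not a ring to which normality or Serre-type arguments apply. Worse, integrality of the rational tensor is not clear even at codimension-one points of the special fibre: tensors live in $M^\otimes$, which mixes $M$ and $M^\vee$, so integrality of $t_{\alpha,\et}$ does not pass through the crystalline comparison map, which is only an isomorphism after inverting $t$ (the paper flags exactly this pitfall before Prop.~\ref{prop integrality Tate tensors}). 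The paper's route is: (a) prove integrality over the completions $\widehat S_\pfr$ at the minimal open primes of an affine chart --- $p$-adic discrete valuation rings with \emph{imperfect} residue field --- using the tensor-functoriality of Kisin modules (Prop.~\ref{prop integrality Tate tensors}) and Hyodo's theorem to descend from $O_C$ to $O_K$ (Cor.~\ref{cor integrality Tate tensors}); (b) glue these with the Kisin--Pappas tensors at closed points by a Witt-coordinatewise gluing lemma (Lemma~\ref{lem glueing}) that uses reducedness and normality of the irreducible components of $\Ssb$ and the Jacobson property, not a coherent-sheaf argument on $\wh\Sscr$.

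Second, the matching. You assert that the global rational tensors and the Kisin--Pappas tensors ``agree after inverting $p$ because both are obtained from the same Hodge tensors via the crystalline comparison.'' That is precisely what has to be proved, and it is not formal: one must show that the \'etale tensors $(t_{\alpha,\et})$ and the integral de~Rham tensors $(t_{\alpha,\dR})$ are associated under Scholze's \emph{relative} de~Rham comparison over the adic generic fibre (Prop.~\ref{prop:CdR}, which passes to the perfectoid infinite-level Shimura variety to reduce to fibres at classical points and then invokes Blasius--Wintenberger), then specialise to the rank-one point of the adic space corresponding to $\widehat S_\pfr$ (Cor.~\ref{cor:CdR}), and finally use that a Frobenius-invariant display tensor over a $p$-adic discrete valuation ring is determined by its \'etale realisation to conclude $t_{\alpha,\et}\otimes 1=t^{\rm def}_{\alpha,z}\otimes 1$ over $W((\widehat S_z)^\wedge_\pfr)$ (Prop.~\ref{prop local tensors}). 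Without these steps your construction produces some rational tensors but yields neither their integrality nor the asserted compatibility at each $z\in\Sscr(\k)$.
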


 Using this proposition, we deduce that for any $b \in B(G,\{\mu\})$ the Newton stratum $\Ssb^b \subset \Ssb$ is locally closed and thus can be considered as subvariety. Moreover, using Proposition~\ref{prop global tensors intro} and Axiom~\ref{axiom RZ intro}, we can construct (perfected) Igusa varieties and Rapoport-Zink spaces of Hodge type as closed subspaces of the corresponding spaces in the Siegel case. To explain, let $b \in B(G,\{\mu\})$ and $b' \coloneqq \rho(b) \in B(\GSp_n,\rho(\mu))$. Choosing a (nice enough) base point in $\Ssb^b$, we obtain a principally polarised {\BT} $\XX$ equipped with crystalline Tate-tensors. Then we have formal schemes $\Igfr^{b'}$ and $\Mfr^{b'}$ over the Siegel modular variety $\Sscr'$, where $\Igfr^{b'}$ parametrises polarised isomorphisms of $\Ascr_{\Gsf,\k}[p^\infty]$  with $\XX$, and $\Mfr^{b'}$ parametrises polarised quasi-isogenies of $\Ascr_{\Gsf,\k}[p^\infty]$  with $\XX$.
 
As key geometric input for our proof of Mantovan's formula, we prove the following version of the almost product structure of Newton strata $\Ssb^b\subset \Ssb$ of Hodge-type Shimura varieties.

 \begin{nothm}[{\emph{cf.} \S\S~\ref{ssect construction of RZ}, \S\S~\ref{ssect Igusa Var}, Prop.~\ref{thm product structure}}]
  We continue to assume that $p > 2$ and that Axiom~\ref{axiom RZ intro} holds. Then for any $b\in B(G,\{\mu\})$ and a suitable base point in $\Ssb^b$,  there exist closed formal subschemes $\Igfr^b\subset \Igfr^{b'}$ and $\Mfr^b\subset\Mfr^{b'}$, defined by some natural conditions involving the tensors obtained in Proposition~\ref{prop global tensors intro}. %
  
Furthermore, the almost product structure $\Mfr^{b'} \times \Igfr^{b'} \to \Sscr'$ lifts uniquely to a morphism $\Mfr^b \times \Igfr^b \to \Sscr$, which defines a pro\'etale $J_b(\QQ_p)$-torsor ``up to perfection'' over the Newton stratum $\Ssb^b$. Due to the uniqueness of the lifting, it is moreover invariant under the Hecke correspondences and Weil group action.
 \end{nothm}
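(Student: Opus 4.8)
The plan is to cut $\Igfr^b$ and $\Mfr^b$ out of the Siegel-type spaces $\Igfr^{b'}$ and $\Mfr^{b'}$ by the condition that the tautological framing respect the tensors of Proposition~\ref{prop global tensors}, and then to transport the Siegel almost product structure through these closed embeddings. Fix a suitable base point $\bar z\in\Ssb^b(\k)$ as in the statement and let $(\XX,(s_\alpha))$ be the principally polarised $p$-divisible group $\Ascr_{\Gsf,\bar z}[p^\infty]$ equipped with the specialisations of the global tensors $(t_\alpha)$; by Proposition~\ref{prop global tensors} the $(s_\alpha)$ coincide with the Kisin-Pappas tensors, so this is the expected local object and its group of polarised self-quasi-isogenies preserving $(s_\alpha)$ is $J_b(\QQ_p)\subset J_{b'}(\QQ_p)$. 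Define $\Igfr^b\subset\Igfr^{b'}$ (resp.\ $\Mfr^b\subset\Mfr^{b'}$) as the locus on which the universal polarised isomorphism (resp.\ quasi-isogeny) between $\Ascr_{\Gsf}[p^\infty]$ and $\XX$ carries $(t_\alpha)$ to $(s_\alpha)$. As in the construction of Hodge-type Rapoport-Zink spaces in \cite{Kim:RZ} and of central leaves in \cite{Hamacher:ShVarProdStr}, these are closed formal subschemes because ``preserving a fixed tensor'' is a closed condition on the scheme of morphisms (resp.\ quasi-isogenies) of the associated displays; Proposition~\ref{prop global tensors} is exactly what makes this applicable, as it places $(t_\alpha)$ in the display of $\Ascr_{\Gsf}[p^\infty]$ over all of $\wh\Sscr$.

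Next I would construct the lift $\Mfr^b\times\Igfr^b\to\Sscr$. On $\k$-points a point of $\Mfr^b\times\Igfr^b$ amounts to a point of the affine Deligne-Lusztig set $X_\mu(b)_{\Ksf_p}(\k)$ together with a tensor-preserving Igusa datum; its image under the Siegel almost product structure lies in the Newton stratum of the special fibre of $\Sscr'$, and Axiom~\ref{axiom RZ intro} supplies the unique compatible lift to $\Ssb^b(\k)$. Bundling this lift with the Igusa datum --- which already lies over $\Ssb^b$, since $\Ascr_{\Gsf}$ is defined over $\Sscr$ --- gives a set-theoretic lift, which one upgrades to a morphism of perfect formal schemes using that $\Ssb^b$ is finite over the special fibre of $\Sscr'$, that $\Mfr^b\times\Igfr^b$ is reduced after perfection, and that the pulled-back data (abelian scheme, polarisation, prime-to-$p$ level, and the $(t_\alpha)$ transported through the tautological framing) satisfies precisely the conditions defining points of the Kisin-Pappas model. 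Uniqueness follows from separatedness of $\rho$, the uniqueness on $\k$-points in Axiom~\ref{axiom RZ intro}, and density of closed points in the perfection; equivariance under the Hecke correspondences and the Weil group is then automatic, as these act on $\Mfr^b$, $\Igfr^b$ and $\Ssb^b$ compatibly through the underlying abelian varieties and commute with the projections, so uniqueness forces the lift to intertwine them.

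For the torsor property I would intersect the Siegel almost product structure, available in the required generality from \cite{Mantovan:Foliation} and \cite{Hamacher:ShVarProdStr}, with the Hodge-type loci. Since $J_b(\QQ_p)$ is precisely the stabiliser of $(s_\alpha)$ in $J_{b'}(\QQ_p)$, the $J_{b'}(\QQ_p)$-action restricts to a $J_b(\QQ_p)$-action on $\Mfr^b\times\Igfr^b$ under which the projection to $\Ssb^b$ is invariant. Surjectivity of the projection and the identification of its fibres with $J_b(\QQ_p)$-orbits follow from the Siegel statements once one checks --- using Proposition~\ref{prop global tensors} and Axiom~\ref{axiom RZ intro} --- that every point of $\Ssb^b$ admits a tensor-preserving framing and that any two such framings of a given point differ by an element of $J_b(\QQ_p)$. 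Finally, the pro\'etale-up-to-perfection property is inherited: $\Mfr^b\times\Igfr^b\to\Ssb^b$ is obtained from a pro\'etale torsor up to perfection by passing to closed subschemes and perfections, and the defining tensor conditions are compatible with the transition maps of the Igusa and Rapoport-Zink towers.

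The step I expect to be the main obstacle is upgrading the set-theoretic lift to an honest morphism of perfect formal schemes while keeping the torsor structure intact: this is where the normalisation description of the Kisin-Pappas model --- which is not a fine moduli space --- must be reconciled with the moduli-theoretic input carried by the global tensors $(t_\alpha)$, and where essentially all of the geometric content of Proposition~\ref{prop global tensors}, of Axiom~\ref{axiom RZ intro}, and of the Siegel almost product structure is needed.
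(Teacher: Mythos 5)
Your overall skeleton matches the paper's: cut $\Mfr^b$ and $\Igfr^b$ out of the Siegel objects by requiring agreement of the tensors transported through the universal framing with the global tensors of Proposition~\ref{prop global tensors} (the paper does this by first forming the fibre product with $\Sscr$ and then invoking Proposition~\ref{prop homomorphisms of F-crystals} to see that the agreement locus is a union of connected components, cf.\ Lemma~\ref{lemma clopen}), use Axiom~\ref{axiom RZ} to produce the lift on $\k$-points, and restrict the Caraiani--Scholze torsor to the Hodge-type locus.

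There is, however, a genuine gap in the central step --- the one you yourself flag as the main obstacle. The theorem requires a lift $\pi_\infty\colon \Xfr^b=\Igfr^b\times\Mfr^b\to\Sscr$ of \emph{formal schemes}, not merely of perfections: $\Mfr^b$ has nontrivial infinitesimal structure, and only the torsor property is asserted ``up to perfection''. Your plan --- upgrade the set-theoretic lift ``to a morphism of perfect formal schemes'' using reducedness after perfection and ``the conditions defining points of the Kisin--Pappas model'' --- cannot work as stated: the Kisin--Pappas model is a normalisation, not a fine moduli space, so there are no such defining conditions to verify, and passing to perfections discards exactly the infinitesimal data one needs. The missing idea is the deformation-theoretic almost product structure of \cite{Kim:ProductStructure}: one identifies the formal neighbourhood of $\Igfr^{b}$ at a point with $\QIsog_G(\XX)$ and uses that the $\QIsog_G(\XX)^\circ$-action on the Siegel deformation space preserves the Kisin--Pappas deformation space $\Def_{\Gscr_x}$; this produces the lift, and its uniqueness, on each formal neighbourhood (Lemma~\ref{lem lift on formal fibres}). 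Globalising these local lifts is itself nontrivial --- the paper needs the coherence and faithful-flatness-of-completion statements of Lemma~\ref{lem formal nbhd of X^b} to turn the formal-neighbourhood isomorphisms into an open immersion and hence a section of $\Xfr^b\times_{\Sscr'}\Sscr\to\Xfr^b$ --- and the same local identification is what makes the quasi-torsor map $\QIsog_G(\XX)\times\Xfr^b\to\Xfr^b\times_\Sscr\Xfr^b$ an isomorphism on formal neighbourhoods, which intersecting the Siegel statement with a closed locus does not by itself give you.
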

Unsurprisingly, the construction of $\Igfr^b$ and $\Mfr^b$, as well as the proof of  almost product structure of Newton strata, is analogous to the unramified Hodge-type case using Axiom~\ref{axiom RZ intro} as an input (\emph{cf.} \cite{Hamacher:ShVarProdStr}, \cite{HowardPappas:GSpin}). Another input for almost product structure in the tamely ramified case is the deformation-theoretic analogue of almost product structure obtained in \cite{Kim:ProductStructure}.

 In order to formulate the main result, we need one further ingredient. Let $\Mcal^b \coloneqq (\Mfr^b)^{\rm ad}_{\breve{E}}$ the adic generic fibre. As in the case of Rapoport-Zink spaces with hyperspecial level strucure we get a Galois tower $(\Mcal^b_K)_{K \subset \Ksf_p}$ over $\Mcal^b$. For a certain final system $(\Mcal^b_m)$ we construct auxilliary integral models $\Mfr^b_m$ and define Mantovan's functor
\[
 \Mant_{b,\mu}(-) \coloneqq \sum_{i=0}^{2d}(-1)^i\varinjlim_m \Ext_{J_b(\QQ_p)}^{-2d+i}(\Rrm\Gamma_c(\Mcal_m^b,\QQbar_l),-)(-d),
\]
from the Grothendieck group of smooth $J_b(\QQ_p)$-representations over $\QQbar_l$ to the Grothendieck group of smooth representations of $\Gsf(\AA_f)\times W_E$ over $\QQbar_l$.

 \begin{nothm}[\emph{cf.}\ Cor.~\ref{cor:MantShVar}]\label{thm Mantovan}
  Assume there exists at least one Kisin-Pappas integral model for $(\Gsf,\Xsf)$ such that Axiom~\ref{axiom RZ} and Axiom~\ref{axiom:LanStroh intro} hold. Then for any $l$-adic automorphic sheaf $\Lscr_\xi$ we have the following equality of virtual smooth representations of $\Gsf(\AA_f)\times W_E$ over $\QQbar_l$:
 \begin{multline*}
 \sum_i (-1)^i \varinjlim_{\Ksf\subset\Gsf(\AA_f)}\coh{i}_c(\Sh_\Ksf(\Gsf,\Xsf)_{\overline{\breve{E}}},\Lscr_\xi) \\
 = \sum_{b\in B(G,\{\mu\})}\sum_{j}(-1)^j\Mant_{b,\mu}(\varinjlim_{\Ksf^p\subset\Gsf(\AA_f^p)}\coh j_c(\bar\Ig^b_{\Ksf^p},\Lscr_\xi)),
\end{multline*}
where $\bar\Ig^b_{\Ksf^p}$ denotes the underlying reduced subscheme of $\Igfr^b$.
 \end{nothm}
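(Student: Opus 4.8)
The plan is to run the classical Mantovan-style argument (as in \cite{Mantovan:Foliation}, \cite{LanStroh:NearbyCycles}) but with the Hodge-type inputs already assembled in the earlier sections. First I would reduce the statement about the generic fibre to a statement about the special fibre of the Kisin--Pappas model: by Axiom~\ref{axiom:LanStroh intro} the canonical map $\coh i_c(\Ssb_m,\Rrm\Psi\Lscr_\xi)\to\coh i_c(\Sh_m,\Lscr_\xi)$ is an isomorphism, compatibly with the $\Gsf(\AA_f^p)$- and $W_E$-actions and with transition maps in $m$; passing to the colimit over $m$ and tensoring with the smooth representation of $\Gsf(\QQ_p)$ coming from the prime-to-$p$ tower, we identify $\varinjlim_\Ksf\coh i_c(\Sh_\Ksf(\Gsf,\Xsf)_{\overline{\breve E}},\Lscr_\xi)$ with $\varinjlim_m\coh i_c(\Ssb_m,\Rrm\Psi\Lscr_\xi)$. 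Since $\Rrm\Psi$ commutes with the restriction to a locally closed subscheme, the excision/stratification spectral sequence attached to the Newton stratification $\Ssb=\coprod_{b\in B(G,\{\mu\})}\Ssb^b$ (which is a stratification into locally closed subschemes by the consequence of Prop.~\ref{prop global tensors intro} quoted in the excerpt) degenerates in the Grothendieck group, giving
\[
 \sum_i(-1)^i\varinjlim_m\coh i_c(\Ssb_m,\Rrm\Psi\Lscr_\xi)=\sum_{b}\sum_i(-1)^i\varinjlim_m\coh i_c(\Ssb_m^b,\Rrm\Psi\Lscr_\xi),
\]
so it remains to compute each Newton-stratum contribution.

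The second step is to feed in the almost product structure (the Theorem quoted just before the statement): over the completed, perfected Newton stratum one has a presentation as a pro\'etale $J_b(\QQ_p)$-torsor ``up to perfection'' via $\Mfr^b\times\Igfr^b\to\Sscr$, compatibly with Hecke and Weil-group actions by the uniqueness of the lift. Taking nearby cycles and compactly supported cohomology and using that \'etale cohomology is insensitive to perfection, one rewrites $\varinjlim_m\coh i_c(\Ssb_m^b,\Rrm\Psi\Lscr_\xi)$ as the cohomology of the product $\Mcal^b_m\times\bar\Ig^b_{\Ksf^p}$ of the Rapoport--Zink tower and the Igusa tower, with the sheaf $\Lscr_\xi$ pulled back appropriately; here one must check that $\Rrm\Psi$ on the stratum is computed on the Rapoport--Zink factor and that the Igusa factor contributes only its constant-sheaf cohomology twisted by $\Lscr_\xi$ (this uses that $\Lscr_\xi$ restricted to the Igusa tower is locally constant and that the Berkovich/adic nearby-cycle formalism for the Rapoport--Zink formal scheme applies, exactly as in the PEL case). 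A K\"unneth argument together with the Hochschild--Serre spectral sequence for the $J_b(\QQ_p)$-torsor then converts the product decomposition into the $\Ext_{J_b(\QQ_p)}$-pairing: the cohomology of $\Mcal^b_m$ is a smooth $J_b(\QQ_p)$-representation, and contracting it against $\varinjlim_{\Ksf^p}\coh j_c(\bar\Ig^b_{\Ksf^p},\Lscr_\xi)$ via $\Ext^\bullet_{J_b(\QQ_p)}(-,-)$, with the Tate twist $(-d)$ and the shift by $2d=\dim$ of the relevant spaces accounted for by Poincar\'e duality on the Rapoport--Zink tower, yields precisely $\Mant_{b,\mu}(\varinjlim_{\Ksf^p}\coh j_c(\bar\Ig^b_{\Ksf^p},\Lscr_\xi))$. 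Summing the alternating signs over $i$, $j$ and over $b\in B(G,\{\mu\})$ gives the asserted identity in the Grothendieck group of smooth $\Gsf(\AA_f)\times W_E$-representations.

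Throughout, the equivariance has to be tracked carefully: the $\Gsf(\AA_f^p)$-action comes from the prime-to-$p$ Hecke correspondences (acting on the Igusa side), the $\Gsf(\QQ_p)$-action from the Rapoport--Zink tower level structure, and the $W_E$-action from the Galois action on the special fibre over $\k$ together with the Weil descent datum on $\Mfr^b$; the uniqueness clause in Axiom~\ref{axiom RZ intro} and in the almost product structure theorem is exactly what makes all of these commute with the torsor presentation, so that the identification descends to the Grothendieck group with all three actions. I expect the main obstacle to be the compatibility of the nearby-cycles functor with the almost product structure at the level of the towers $\Ssb_m^b$ — i.e.\ justifying that $\Rrm\Psi\Lscr_\xi$ restricted to the Newton stratum is computed, after pulling back along the torsor, as the exterior product of the Rapoport--Zink nearby cycles with the locally constant Igusa sheaf, uniformly in $m$ and compatibly with all the group actions. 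In the PEL setting this is handled in \cite{Mantovan:Foliation} and \cite{LanStroh:NearbyCycles} using explicit moduli descriptions; in our setting one replaces those by the closed-immersion descriptions of $\Igfr^b\subset\Igfr^{b'}$, $\Mfr^b\subset\Mfr^{b'}$ and the global tensors of Prop.~\ref{prop global tensors intro}, but verifying that the comparison isomorphism is the right one (and independent of the auxiliary Siegel embedding) is the delicate point. The rest — excision, K\"unneth, Hochschild--Serre, Poincar\'e duality, passage to colimits — is formal once that input is in place.
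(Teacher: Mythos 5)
Your plan follows essentially the same route as the paper's proof: reduction to the special fibre via Axiom~B, excision over the Newton stratification, the product decomposition of the nearby cycles along the almost-product torsor (deduced in the paper from Mantovan's PEL computation by restriction along the closed immersions into the Siegel case), and then the K\"unneth formula for pro\'etale torsors combined with Mieda's duality theorem to produce the $\Ext_{J_b(\QQ_p)}$-pairing with the twist $(-d)$ and shift $[-2d]$. The only ingredient you underestimate is how the $G(\QQ_p)$-action is realised at finite level $m$ on nearby-cycle cohomology --- the paper must construct auxiliary normal integral models $\Sscr_{m,g}$ and $\Mfr^b_{m,g}$ indexed by $g$ in the monoid $G(\QQ_p)^+$, check independence of the resulting support conditions, and extend the $G(\QQ_p)^+$-action to $G(\QQ_p)$ only a posteriori --- but you correctly isolate the surrounding compatibilities as the delicate point, so the plan is sound.
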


Let us outline the structure of the paper. In the first two chapters we give background on Mieda's duality theorem and the Kisin-Pappas integral model, respectively. Then we discuss the display tensors over the $p$-adic completion of $\Ascr_G$; as a result we obtain the Newton stratification of the special fibre in the third chapter. In the fourth and fifth chapter we lay the geometric foundation of Mantovan's formula. First, we construct Rapoport-Zink spaces of Hodge type with Bruhat-Tits level structure by generalising the construction of Howard and Pappas. Then we define Igusa varieties and prove the almost product structure of Newton strata in the special fibre of Shimura varieties. Finally, in the sixth section we prove Mantovan's formula in the Hodge-type case, which is our main result.

\subsection*{Acknowledgement}
 We thank Rong Zhou and Stephan Neupert for helpful conversations and in particular for sharing a preliminary version of their respective preprints with us. We are grateful to Kai-Wen Lan for his explanations about compactifications of Shimura varieties. %
We would also like to thank the anonymous referee, whose comments have been extremely helpful.
 The first named author was partially supported by the ERC starting grant 277889 ``Moduli of local $G$-shtukas''.
The second named author was supported by the EPSRC (Engineering and Physical Sciences Research Council) in the form of EP/L025302/1.

  \section{Review of compactly supported cohomology and Mieda's duality theorem} \label{sect mieda}
 In this section, we review some basic definitions and recall Mieda's duality theorem (Theorem~\ref{thm:Poincare}), following \cite{Mieda:Zelevinsky}. 
 
 Throughout this paper, we always work with torsion \'etale sheaves with torsion order invertible on the base scheme, and $\ZZ_l$- and $\QQ_l$- sheaves with $l$ invertible on the base scheme. For \'etale sheaves on adic spaces, we additionally assume that the torsion order and $l$ are not divisible by the residue characteristic.

\subsection{Review of the compactly supported cohomology for schemes}\label{subsec:CompSuppCohScheme}
The theory of compactly supported cohomology for partially proper \emph{analytic} adic spaces was developed in \cite[\S5.3]{Huber:EtCohBook}.
Mieda \cite[\S3.1]{Mieda:Zelevinsky} observed that the definition of partially proper rigid varieties (via Huber's theory of adic spaces; \emph{cf.} \cite[Definition~1.3.3]{Huber:EtCohBook}) can be adapted to schemes, and one can analogously develop the theory of compactly supported cohomology. We recall the definition and basic properties.

\begin{defnsub}[{\cite[Definition~3.1]{Mieda:Zelevinsky}}]
A morphism $f\colon X \to Y$ of schemes is called \emph{partially proper} if it is locally of finite type, separated, and universally specialising; or equivalently, $f$ is locally of finite type and satisfies the ``valuative criterion for properness'' (but not necessarily quasi-compact).
\end{defnsub}

In the case when $Y$ is the spectrum of a field $\kappa$, $X$ is partially proper over $\kappa$ if and only if $X$ is a locally finite union of its irreducible components, which are proper schemes over $\kappa$; \emph{cf.} \cite[Corollary~3.3]{Mieda:Zelevinsky}. A natural example of partially proper and non-proper $\kappa$-schemes is the underlying reduced scheme of a Rapoport-Zink space.

For a scheme $X$, we let $X_\et$ denote the \'etale site, and $\widetilde{X}_\et$ the \'etale topos.  For a torsion coefficient ring $\Oscr$, we let $\Oscr\hyph\widetilde{X}_\et$ denote the category of \'etale sheaves of $\Oscr$-modules on $X$. We will always choose $\Oscr$ to be a $\ZZ/N\ZZ$-algebra, where  $N$ is invertible in $\Ocal_X$.


\begin{defnsub}[{\emph{cf.} \cite[Definition~3.4]{Mieda:Zelevinsky}, \cite[\S~1]{Huber:ComparisonEllAdic}}]\label{def:CompSuppCoh}
Assume that $X$ is a partially proper scheme over a field $\kappa$. 
For $\Fscr\in\Oscr\hyph\widetilde{X}_\et$, let $\Gamma_c(X,\Fscr)$ denote the $\Oscr$-module of global sections with proper support.

For an \'etale $\ZZ_l$-sheaf $\Fscr$, we let $\Gamma_c(X,\Fscr)$ denote the $\ZZ_l$-module of global sections with proper support.\footnote{Assume that the \'etale $\ZZ_l$-sheaf $\Fscr$ is given by an inverse system $\{\Fscr_n\}$ for $\Fscr_n\in(\ZZ/l^n)\hyph\widetilde X_\et$.  Then a section in $\Gamma(X,\Fscr)$ has \emph{proper support} if its image in $\Gamma(X,\Fscr_n)$ is supported in a common quasi-compact closed subset independent of $n$. Therefore, $\Gamma_c(X,\Fscr)$ may \emph{not} coincide with the inverse limit of $\Gamma_c(X,\Fscr_n)$ in general, as it was pointed out in \cite[Introduction]{Huber:ComparisonEllAdic}.}	We similarly define $\Rrm\Gamma_c(X,\Fscr)$ when $\Fscr$ is an \'etale $\QQ_l$-sheaf or $\overline\QQ_l$-sheaf.
\end{defnsub}

For an $\Oscr$-module $\Fscr$, let $\Rrm\Gamma_c(X,\Fscr)\in D^+(\Mod(\Oscr))$ and $\coh i_c(X,\Fscr)$ denote the right derived functors.  If $\Fscr$ is an $l$-adic sheaf, we define $\Rrm\Gamma_c(X,\Fscr)\in D^+(\Mod(\ZZ_l))$ and $\coh i_c(X,\Fscr)$ analogous to \cite[\S~1, pp.~219--220]{Huber:ComparisonEllAdic}, where $\Rrm\Gamma_c(X,\Fscr)$ was defined when $X$ is a partially proper analytic adic space. We sketch the construction for the reader's convenience. First, we embed the category of $\ZZ_l$-sheaves, considered as projective systems $(\Fscr_n)_{n \in \NN}$ of $\ZZ/l^n$-modules, into the category of projective systems of sheaves on the \'etale site of $X$ with values in $\ZZ_l$-modules indexed by $\NN$. Since the latter category has enough injectives, we can define derived functors on this categories. We extend the $\Gamma_c$ to the above category of projective systems by defining $\Gamma_c((\Fscr_n)) \subseteq \varprojlim \Gamma(\Fscr_n)$ as the submodule of sections $(s_n)_{n\in \NN}$ whose collective support is contained in a quasi-compact closed subset of $X$ and define  $\Rrm\Gamma_c$ and $\coh i_c(X,-)$ to be its right derived functor.


\begin{rmksub}
\begin{subenv}
\item
In the case that $X$ is  proper over $\kappa$, then we have $\Rrm\Gamma_c(X,\Fscr) = \Rrm\Gamma(X,\Fscr)$. But if $X$ is not quasi-compact, then $\Rrm\Gamma_c(X,\Fscr)$ and $\Rrm\Gamma(X,\Fscr)$ could be different, and $\Rrm\Gamma_c(X,\Fscr)$ behaves more nicely.
\item
Note that the ``more standard'' definition of $\Rrm\Gamma_c(X,\Fscr)$ (via compactification) was made for a separated finite-type scheme $X$ over $\kappa$ (in particular, $X$ has to be quasi-compact), so it does not cover partially proper $\kappa$-schemes that are not quasi-compact. Indeed, the two definitions are compatible in the intersecting case (namely, when $X$ is proper over $\kappa$).
\end{subenv}
\end{rmksub}

\begin{propsub}\label{prop:CompSupp} 
Let $X$ be partially proper over a field $ \kappa$, and $\Fscr\in\Oscr\hyph\widetilde{X}_\et$.
\begin{enumerate}
	\item\label{prop:CompSupp:qcClosed}   We have $\coh i_c(X,\Fscr) \cong \varinjlim_Z\coh i_Z(X,\Fscr)$, where $Z$ runs through quasi-compact closed subsets of $X$.
	\item\label{prop:CompSupp:qcOpen}  
	We have 
	\[\coh i_c(X,\Fscr) \cong \varinjlim_U \coh i_c(U,\Fscr_U)\cong\varinjlim_U \coh i(\overline U,\Fscr_{\overline U}),\] 
	where $U$ runs through quasi-compact open subschemes of $X$ and $\overline U$ is the Zariski closure of $U$ in $X$.
	\item For any filtered direct system $\{\Fscr_\lambda\}$ with $\Fscr\coloneqq \varinjlim_\lambda\Fscr_\lambda$, we have $\coh i_c(X,\Fscr) \cong \varinjlim_\lambda\coh i_c(X,\Fscr_\lambda)$.
\end{enumerate}
The properties (\ref{prop:CompSupp:qcClosed}) and (\ref{prop:CompSupp:qcOpen})  also hold if $\Fscr$ is an \'etale $\ZZ_l$- sheaf (respectively, $\QQ_l$-sheaf; respectively, $\overline\QQ_l$-sheaf).
\end{propsub}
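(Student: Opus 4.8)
The plan is to derive all three assertions from one principle — exactness of filtered colimits of $\Oscr$-modules — together with two geometric inputs available from the discussion above: excision for \'etale cohomology with supports, and \cite[Cor.~3.3]{Mieda:Zelevinsky}, which identifies the quasi-compact closed subsets of a partially proper $\kappa$-scheme with its closed subschemes proper over $\kappa$; in particular the Zariski closure $\overline U$ of a quasi-compact open $U\subseteq X$ is proper over $\kappa$.

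For (1): by definition of proper support, $\Gamma_c(X,-)=\varinjlim_Z\Gamma_Z(X,-)$ as functors on $\Oscr\hyph\widetilde{X}_\et$, where $Z$ runs over the directed set of quasi-compact closed subsets of $X$. Choose an injective resolution $\Fscr\to I^\bullet$; then each $I^j$ and each restriction $I^j|_{X\setminus Z}$ is injective, hence acyclic for global sections, so the long exact local-cohomology sequence relating $\Gamma_Z(X,-)$, $\Gamma(X,-)$ and $\Gamma(X\setminus Z,-)$ yields $\coh i_Z(X,I^j)=0$ for all $i>0$ and all $Z$. Thus $\Gamma_Z(X,I^\bullet)$ computes $\Rrm\Gamma_Z(X,\Fscr)$ and $\Gamma_c(X,I^\bullet)$ computes $\Rrm\Gamma_c(X,\Fscr)$; since filtered colimits are exact,
\[
 \coh i_c(X,\Fscr)=\coh i\!\bigl(\varinjlim_Z\Gamma_Z(X,I^\bullet)\bigr)=\varinjlim_Z\coh i\!\bigl(\Gamma_Z(X,I^\bullet)\bigr)=\varinjlim_Z\coh i_Z(X,\Fscr).
\]

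For (2): first, two cofinality observations. Since $X$ is locally of finite type over $\kappa$ it is locally Noetherian, hence locally quasi-compact, so every quasi-compact closed $Z$ lies in a quasi-compact open $U$ (a finite union of affine opens covering $Z$); conversely $\overline U$ is proper over $\kappa$ by \cite[Cor.~3.3]{Mieda:Zelevinsky}, in particular quasi-compact and closed. So in~(1) one may let $Z$ range over the sets $\overline U$. Next, for a quasi-compact open $U$ the open dense immersion $j\colon U\hookrightarrow\overline U$ into the proper $\kappa$-scheme $\overline U$ is a compactification, so classically $\coh i_c(U,\Fscr_U)=\coh i(\overline U,j_!\Fscr_U)$, independently of the compactification; write $\Fscr_{\overline U}\coloneqq j_!\Fscr_U$. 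Finally one identifies $\varinjlim_U\coh i_c(U,\Fscr_U)$ with $\coh i_c(X,\Fscr)$: for partially proper $X$ a section of $\Fscr$ on $X$ has quasi-compact support iff its support is proper over $\kappa$ (hence closed in $X$), so it already lives on a quasi-compact open $U$ and is there $j_!$-extended from $U$; one checks $\Gamma_c(X,-)=\varinjlim_U\Gamma_c(U,(-)|_U)$ as functors, with transition maps the extension-by-zero maps for $U\subseteq U'$, and since $I^\bullet|_U$ is again an injective resolution, taking cohomology and using exactness of $\varinjlim_U$ gives $\coh i_c(X,\Fscr)=\varinjlim_U\coh i_c(U,\Fscr_U)=\varinjlim_U\coh i(\overline U,\Fscr_{\overline U})$.

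For (3) and the $\ZZ_l$-case: given $\Fscr=\varinjlim_\lambda\Fscr_\lambda$, interchange two filtered colimits — by~(1), $\coh i_c(X,\Fscr)=\varinjlim_Z\coh i_Z(X,\Fscr)$, and for fixed $Z$ inside a quasi-compact (hence Noetherian) open $U$, excision plus the fact that $\coh i(U,-)$ and $\coh i(U\setminus Z,-)$ commute with filtered colimits of sheaves (via the local-cohomology triangle) gives $\coh i_Z(X,\Fscr)=\varinjlim_\lambda\coh i_Z(X,\Fscr_\lambda)$, so swapping colimits proves the claim. For $\ZZ_l$-, $\QQ_l$- and $\overline\QQ_l$-coefficients one repeats the proofs of~(1) and~(2) with the projective-system variant of $\Gamma_c$ from Definition~\ref{def:CompSuppCoh}: for fixed quasi-compact $Z$ the system $(\Fscr_n)$ is treated as in the finite-type case, and one again applies the exact functor $\varinjlim_Z$ and uses that $\overline U$ is proper over $\kappa$. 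The main obstacle, as I see it, is precisely this last bookkeeping: the interaction of $\varinjlim_Z$ with the $\varprojlim_n$ built into the $\ZZ_l$-definition is exactly why $\Gamma_c$ is defined via \emph{common} quasi-compact support rather than as $\varprojlim_n\Gamma_c(X,\Fscr_n)$, and why the colimit-over-$U$ formulation of~(2) — rather than anything involving $\varprojlim$ of the $\coh i_Z$ — is the robust one (this is also why~(3) is, correctly, not asserted for $\ZZ_l$-sheaves). The remaining ingredients — excision, acyclicity of injectives, the cofinality bookkeeping — are routine.
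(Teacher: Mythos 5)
Your treatments of (1), (3), the first isomorphism in (2), and the $l$-adic extension are essentially the arguments of \cite[Proposition~3.5, Corollary~3.6]{Mieda:Zelevinsky}, which is exactly what the paper does (it cites them and only proves the remaining piece by hand). That remaining piece --- the second isomorphism in (2) --- is where your argument has a real gap.

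You set $\Fscr_{\overline U}\coloneqq j_!\Fscr_U$ for $j\colon U\hookrightarrow\overline U$, which turns $\varinjlim_U\coh i_c(U,\Fscr_U)\cong\varinjlim_U\coh i(\overline U,\Fscr_{\overline U})$ into a tautology. But in the statement $\Fscr_{\overline U}$ is the restriction $\Fscr|_{\overline U}$, consistent with the meaning of $\Fscr_U$ and $\Fscr_{U'}$ throughout, and with the only way the proposition is used later (in Proposition~\ref{prop:VanCycSpecSeq} one takes $\Rrm\Gamma(Z,(\Rrm\Psi_\Xfr\Fscr)|_Z)$ for $Z$ a finite union of irreducible components, i.e.\ genuinely the restricted sheaf). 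The sheaves $j_!\Fscr_U$ and $\Fscr|_{\overline U}$ differ along the boundary $\overline U\setminus U$, and the two sides really do differ term by term: for $X$ a connected, locally finite but infinite chain of $\PP^1$'s and $\Fscr=\ZZ/l$, every cofinal quasi-compact open $U$ is connected and non-proper, so $\coh 0_c(U,\ZZ/l)=0$, while $\coh 0(\overline U,\ZZ/l)=\ZZ/l$. The isomorphism holds only in the colimit, with transition maps on the $\{\coh i(\overline U,\Fscr_{\overline U})\}$-system that are not the naive restriction maps, and this requires an interleaving argument. The paper supplies it: since $\overline U$ is quasi-compact (\cite[Proposition~3.2,~iii)]{Mieda:Zelevinsky}) it sits inside a further quasi-compact open $U'$, and one builds a chain
\[
\coh i_c(U,\Fscr_U)\to\coh i(\overline U,\Fscr_{\overline U})\to\coh i_{\overline U}(X,\Fscr)\to\coh i_c(U',\Fscr_{U'})
\]
whose composites are the transition maps of the respective directed systems, so that the colimits over $U$, over $\overline U$, and over quasi-compact closed $Z$ all coincide. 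Your proof needs this step (or a substitute, e.g.\ showing the boundary contributions $\coh{i}(\overline U\setminus U,\Fscr)$ die in the colimit); as written it establishes a different and strictly weaker statement. The same gap recurs in your $\ZZ_l$-case, which inherits part (2) by the same mechanism.
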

\begin{proof}
When $\Fscr$ is a torsion \'etale sheaf, the proposition was verified in \cite[Proposition~3.5, Corollary~3.6]{Mieda:Zelevinsky} except   $\varinjlim_U \coh i_c(U,\Fscr_U)\cong\varinjlim_U \coh i(\overline U,\Fscr_{\overline U})$. Note first that since $U$ is quasi-compact, its Zariski closure $\overline U$ is quasi-compact by  \cite[Proposition~3.2,~iii)]{Mieda:Zelevinsky}, so we can find another quasi-compact open $U'$ containing $\overline U$. Therefore, we have pushforward maps
\[
\coh i_c(U,\Fscr_U) \to \coh i(\overline U,\Fscr_{\overline U}) \to \coh i_{\overline U}(X,\Fscr)\to \coh i_c(U',\Fscr_{U'}),
\]
which proves the isomorphism $\varinjlim_U \coh i_c(U,\Fscr_U)\cong\varinjlim_U \coh i(\overline U,\Fscr_{\overline U})$.
The properties (\ref{prop:CompSupp:qcClosed}) and (\ref{prop:CompSupp:qcOpen})  for $l$-adic \'etale sheaves can be verified by repeating the proof of \cite[Proposition~3.5]{Mieda:Zelevinsky}, with a small technical caveat: Since Mieda's proof uses that the category $\Oscr$-$\Xtilde_\et$ has enough injectives, we have to work in the category of projective system of sheaves on the \'etale site of $X$ with values in $\ZZ_l$-modules.
\end{proof}



\subsection{Cohomology of partially proper adic spaces with support condition}
Recall that a morphism of adic spaces $f:\Xcal\ra\Ycal$  is \emph{partially proper} if it is locally of $^+$weakly finite type, separated, and universally specialising; \emph{cf.} \cite[Definition~1.3.3]{Huber:EtCohBook}. Furthermore, there is a valuative criterion for partial properness; \emph{cf.} \cite[Proposition~1.3.7]{Huber:EtCohBook}.

Now, let $\Xcal$ be a partially proper adic space over $(C,O_C)$, where $C$ is an algebraically closed non-archimedean field and $O_C$ its valuation ring. We define a \emph{$\ZZ_l$-sheaf} on $\Xcal$ to be a projective system of $\ZZ/l^n\ZZ$-sheaves as in \cite[\S1]{Huber:ComparisonEllAdic}, and define a $\QQ_l$-sheaf in the same was as the scheme case.

\begin{defnsub}[{\cite[Definition~3.13]{Mieda:Zelevinsky}}]
A \emph{support set} $\Ccal$ of $\Xcal$  is a set consisting of closed subsets with the following properties:
\begin{enumerate}
\item For $\Zcal,\Zcal'\in\Ccal$, we have $\Zcal\cup \Zcal'\in\Ccal$.
\item For $\Zcal\in\Ccal$, any closed subset $\Zcal'\subset \Zcal$ also belongs to $\Ccal$.
\end{enumerate}

Let $\Ccal$ be a support set of $\Xcal$, and let $\Fscr$ be a torsion or $l$-adic \'etale sheaf on $\Xcal$. Then we define $\Gamma_\Ccal(\Xcal,\Fscr)\coloneqq \varinjlim_{\Zcal\in\Xcal}\Gamma_\Zcal(\Xcal,\Fscr)$. We let $\Rrm\Gamma_\Ccal(\Xcal,\Fscr)$ denote the derived functor of $\Gamma_\Ccal(\Xcal,\Fscr)$ as an object in a suitable derived category, and write $\coh i_\Ccal(\Xcal,\Fscr)\coloneqq \Rrm^i\Gamma_\Ccal(\Xcal,\Fscr)$. Just as the case of partially proper schemes, we have
\[
\coh i_\Ccal(\Xcal,\Fscr) \cong \varinjlim_{\Zcal\in\Ccal}\coh i_\Zcal(\Xcal,\Fscr).
\]
\end{defnsub}

\begin{defnsub}[{\cite[Definition~3.14]{Mieda:Zelevinsky}}]
\begin{enumerate}
\item  For a set $\Ccal_0$ of closed subsets of $\Xcal$, the \emph{support set generated by $\Ccal_0$} is the support set $\Ccal$ consisting of closed subsets of finite unions of elements in $\Ccal_0$.
\item For a morphism $f:\Xcal'\ra\Xcal$ of  partially proper adic spaces over $(C,O_C)$ and for a support set $\Ccal$ of $\Xcal$, we define $f\iv(\Ccal)$ to be the support set for $\Xcal'$  generated by $\{f\iv \Zcal;\ \Zcal\in\Ccal\}$.
\item  Let $(\Xcal,\Ccal)$ and $(\Xcal',\Ccal')$ be partially proper adic spaces over $(C,O_C)$ equipped with support sets. Then a morphism of pairs $f:(\Xcal',\Ccal')\ra(\Xcal,\Ccal)$ is a morphism of adic spaces $f:\Xcal'\ra\Xcal$ such that $f\iv(\Ccal)\subseteq\Ccal'$. If  $\Fscr$ is torsion \'etale sheaf on $\Xcal$, we denote by $f^\ast$ the natural morphism
\[
f^\ast :\Rrm\Gamma_{\Ccal}(\Xcal,\Fscr) \ra \Rrm\Gamma_{\Ccal'} (\Xcal',f^\ast \Fscr).
\]
\end{enumerate}
\end{defnsub}

\begin{exasub}
Let $c$ denote the set of all quasi-compact closed subsets of $\Xcal$ (which is a support set). Then we define $\Gamma_c$, $\Rrm\Gamma_c$ and $\coh i_c$ using the support set $c$ in place of  $\Ccal$. As $\Xcal$ is assumed to be partially proper over $(C,O_C)$, $\Rrm\Gamma_c$  defines the correct notion of compactly supported cohomology; \emph{cf.} \cite[\S5.3]{Huber:EtCohBook}. Furthermore,  any proper morphism of partially proper adic spaces $f:\Xcal'\ra\Xcal$ induces a morphism of pairs $f:(\Xcal',c)\ra(\Xcal,c)$, so one can define $f^\ast :\Rrm\Gamma_c(\Xcal,\Fscr) \ra \Rrm\Gamma_c(\Xcal',f^\ast \Fscr)$ for any $\Fscr$.
\end{exasub}

Let $ O $ be a complete discrete valuation ring with separably closed residue field $\kappa$. Let $E$ be the fraction field of $O$.
For any formal scheme $\Xfr$ locally formally of finite type over $\Spf O$, we let $\Xfr^\ad$ denote the associated adic space over $\Spa(O,O)$.\footnote{In \cite{Huber:EtCohBook}, $\Xfr^\ad$ is denoted by $t(\Xfr)$.}  For any non-archimedean extension $K$ of $E$, we write 
\begin{equation}\label{eq rig gen fib}
\Xfr^\ad_K\coloneqq \Xfr^\ad\times_{\Spa(O,O)}\Spa(K,O_K).
\end{equation}
Note that $\Xfr^\ad_K$ coincides with the adic space associated to the rigid generic fibre of $\Xfr_{ O _K}\coloneqq \Xfr\times_{\Spf O }\Spf O _K$ (constructed by Berthelot).

The following proposition shows the relation between partially proper $\kappa$-varieties and partially proper adic spaces over $E$ via formal models.

\begin{propsub}[{\cite[Proposition~3.9]{Mieda:Zelevinsky}, \cite[Proposition~4.23]{Mieda:FormalNearbyCycles}}]
Let $\Xfr$ be a formal scheme which is locally formally of finite type over $\Spf O$. If $\Xfr^\red$ is partially proper over $\kappa$, then the rigid generic fibre  $\Xfr^\ad_{E}$ of $\Xfr$ is partially proper over $\Spa(E, O)$
\end{propsub}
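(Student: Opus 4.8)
The plan is to check directly the three conditions in the definition of a partially proper morphism (\cite[Def.~1.3.3]{Huber:EtCohBook}): that $\Xfr^\ad_E\to\Spa(E,O)$ is locally of $^+$weakly finite type, separated, and universally specialising, the last by means of the valuative criterion \cite[Prop.~1.3.7]{Huber:EtCohBook}. The first is immediate: locally $\Xfr=\Spf A$ with $A$ a quotient of some $O\pot{X_1,\dots,X_r}\langle Y_1,\dots,Y_s\rangle$, and by Berthelot's construction the rigid generic fibre of such an $\Spf A$ is a union of affinoids $\Spa(B,B^+)$ with $B$ topologically of finite type over $E$. For separatedness, I would first note that $\Xfr^\red$ is separated over $\kappa$ (being partially proper), then use that separatedness of $\Xfr$ over $\Spf O$ is equivalent to separatedness of $\Xfr^\red$ over $\kappa$ — the diagonal being a closed immersion is a condition on the underlying reduced schemes only — and finally invoke that the rigid-generic-fibre functor commutes with fibre products and preserves closed immersions.

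The real content is universal specialisation. By the valuative criterion it suffices to show that, for every complete affinoid field $(K,K^+)$ over $(E,O)$ and every morphism $g\colon\Spa(K,O_K)\to\Xfr^\ad_E$, there is an extension $\Spa(K,K^+)\to\Xfr^\ad_E$ (unique by separatedness). Write $\mfr_K$ for the maximal ideal of $O_K$, so $\mfr_K\subseteq K^+\subseteq O_K$; put $\kappa_K=O_K/\mfr_K$ and $R=K^+/\mfr_K\subseteq\kappa_K$, a valuation ring with $\Frac R=\kappa_K$. Since $\Spa(K,O_K)$ is a single point, $g$ factors through the generic fibre of an affine formal open $\Spf A\subseteq\Xfr$, hence corresponds to a continuous $O$-algebra homomorphism $A\to O_K$, i.e.\ to $\bar g\colon\Spf O_K\to\Xfr$; reducing modulo $\mfr_K$ (and using that the image of an ideal of definition of $A$ is topologically nilpotent, so lies in $\mfr_K$) gives a $\kappa_K$-point $h\colon\Spec\kappa_K\to\Xfr^\red$, say with image $z$.

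Here the hypothesis is used: by \cite[Cor.~3.3]{Mieda:Zelevinsky} the irreducible component of $\Xfr^\red$ containing $z$ is a proper $\kappa$-scheme, hence so is $\overline{\{z\}}$, and its valuative criterion extends $h$ to $\tilde h\colon\Spec R\to\Xfr^\red$. The image of $\tilde h$ is $z$ together with a specialisation $z'$ of $z$; I would then choose an affine formal open $\Spf A\subseteq\Xfr$ with $z'\in\Spf A$ — it automatically contains $z$, and also contains the image of $g$, since the specialisation map $\spbf$ carries that image to $z$ — and work there. As the reduction of $A\to O_K$ factors through $R\subseteq\kappa_K$, every $a\in A$ satisfies $g(a)\in K^++\mfr_K=K^+$; thus $A\to O_K$ lands in $K^+$, giving a continuous map $A\to K^+$, hence $\Spf K^+\to\Spf A\hra\Xfr$, whose rigid generic fibre $\Spa(K,K^+)\to\Xfr^\ad_E$ restricts to $g$ on the open point. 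This is the desired extension.

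The hard part is this last step: translating the valuative criterion on the adic generic fibre into the one on the special fibre through the reduction and specialisation maps, while keeping track of the topologies on the (generally non-noetherian) valuation rings $O_K$, $K^+$ and $R$, and justifying that a $K$-point of $\Xfr^\ad_E$ spreads out to an $O_K$-point of $\Xfr$. Once this is arranged, the only geometric input is that every point of $\Xfr^\red$ has proper closure — which is exactly \cite[Cor.~3.3]{Mieda:Zelevinsky}, and is the single place where the hypothesis that $\Xfr^\red$ is partially proper over $\kappa$ enters.
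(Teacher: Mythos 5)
Your argument is correct, and it is essentially the argument of the result the paper merely cites here (the paper gives no proof of its own, deferring to Mieda): reduce to Huber's valuative criterion, spread a $(K,O_K)$-point of the generic fibre out to a continuous map $A\to O_K$ on an affine formal chart, pass to the special fibre, and use that the closure of the resulting point of $\Xfr^\red$ is proper (Mieda's Corollary~3.3) to extend over $K^+/\mfr_K$ and then lift back to $\Spa(K,K^+)\to\Xfr^\ad_E$. All the delicate points you flag (factoring through the chart containing the specialisation $z'$, the identification $K^+=$ preimage of $R$ in $O_K$, continuity of $A\to K^+$) are handled correctly.
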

In particular, the rigid generic fibre of a Rapoport-Zink formal moduli scheme is partially proper, as well as its natural \'etale covering defined via level structures.

Let $C$ denote an algebraically closed complete extension of $E=\Frac( O )$, with ring of integers $ O _C$;  for example, we may take $C = \widehat{\overline E}$. 
Recall that we have a natural morphism of locally topologically ringed spaces
\begin{equation}\label{eqn:specialisation}
	\spbf(=\spbf_\Xfr): (\Xcal,\Ocal_{\Xcal}^+) \ra (\Xfr^\ad,\Ocal_{\Xfr^\ad}^+) \ra (\Xfr,\Ocal_\Xfr),
\end{equation}
where the last arrow is the morphism final among morphisms from adic spaces over $\Spa(O ,O)$ to $\Xfr$; \cite[Proposition~4.1]{Huber:GenFormalRigid}. Therefore, we obtain the following morphism of \'etale sites
\[
\spbf (=\spbf_\Xfr):\Xcal_\et \ra \Xfr_\et \cong \Xfr^\red_\et.
\]
More explicitly, $\spbf^\ast $ sends $Y\in\Xfr^\red_\et$ to $\Yfr^\ad_C\in\Xcal_\et$, where $\Yfr\in\Xfr_\et$ is the unique lift of $Y$.

\begin{defnsub}\label{def:FormalModelSupportSet}
In the above setting, we further assume that $\Xfr^\red$ is partially proper over $\kappa$.
Following \cite[Definition~3.15]{Mieda:Zelevinsky} we let $\Ccal_\Xfr$ denote the support set of $\Xcal$ generated by $\{\spbf\iv(Z)\}$ where $Z\subset\Xfr^\red$ are quasi-compact closed subsets. 
\end{defnsub}

\begin{rmksub}
For a closed subset $Z\subset \Xfr^\red$, $\spbf^{-1}(Z)$ is in general larger than its interior $\spbf\iv(Z)^\circ$. For example, if $\Xfr\coloneqq \PP^1_O$ and $Z$ is the closed point $0$ in the special fibre, then $\spbf^{-1}(0) \setminus \spbf\iv(0)^\circ$ consists of a single rank-$2$ specialisation of the Gauss point of the closed unit disk around $0$.
\end{rmksub}

\begin{exasub}\label{exa:SuppSet}
If $\Xfr$ is locally of finite type over $O$ (i.e., a uniformiser of $O$ generates an ideal of definition of $\Xfr$), then we  have $c=\Ccal_\Xfr$. 

In general, we only have $c\subset \Ccal_\Xfr$, as the following example shows. Let $\Xfr\coloneqq \Spf O[[t]]$. Then $\Xcal\coloneqq \Xfr^\ad_C$ is an open unit disk and $\Xfr^\red = \Spec \kappa$  is proper over $\kappa$. Clearly, the support set $\Ccal_\Xfr$ contains $\Xcal = \spbf\iv(\Xfr^\red)$,  so $\Ccal_\Xfr$ consists of all closed subsets of $\Xcal$ and we have $\Rrm\Gamma_{\Ccal_\Xfr}(\Xcal,\bullet) = \Rrm\Gamma(\Xcal,\bullet)$. Note that $\Xcal$ is not quasi-compact so $c\ne\Ccal_\Xfr$. The same applies to $\Xfr\coloneqq \Spf R$ where $R$ is a complete local noetherian flat $O$-algebra.
\end{exasub}

Unless $c=\Ccal_\Xfr$, the support set $\Ccal_\Xfr$ depends on the choice of the formal model $\Xfr$. In particular, $\Rrm\Gamma_{\Ccal_\Xfr}(\Xcal,-)$ may not necessarily have pull back functoriality for any morphism $f:\Xcal'\ra \Xcal$ unless $f$ satisfies some good property with respect to the formal model $\Xfr$. Indeed, we have the following result:

\begin{lemsub}[{\cite[Lemma~3.17]{Mieda:Zelevinsky}}]\label{lem:Funct}
In the setting of Definition~\ref{def:FormalModelSupportSet},
let $\Xfr$ and $\Xfr'$ are formal schemes locally formally of finite type over $O $ such that $\Xfr^\red$ and $\Xfr^{\prime\red}$ are partially proper over $\kappa$. We write $\Xcal\coloneqq \Xfr^\ad_C$ and $\Xcal'\coloneqq \Xfr^{\prime\ad}_C$.
Let $f:\Xfr'\ra\Xfr$ be a morphism of finite type over $O$, and we let $f_C:\Xcal'\ra\Xcal$  denote the induced map on the generic fibre. 

Then we have $f_C\iv(\Ccal_{\Xfr}) = \Ccal_{\Xfr'}$. In particular, if $f_C$ is an isomorphism (e.g., if $f$ is a finite iteration of admissible blow-ups), then we have an isomorphism of pairs $f_C:(\Xcal',\Ccal_{\Xfr'})\riso(\Xcal,\Ccal_{\Xfr})$.
\end{lemsub}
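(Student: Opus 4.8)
The plan is to compare the generating families of $\Ccal_\Xfr$ and $\Ccal_{\Xfr'}$ through the specialisation maps, reducing the statement to two elementary compactness facts. First I would record that functoriality of the rigid generic fibre and of the specialisation morphism \eqref{eqn:specialisation} produces a commutative square
\[
\begin{CD}
\Xcal' @>{f_C}>> \Xcal \\
@V{\spbf_{\Xfr'}}VV @VV{\spbf_{\Xfr}}V \\
\Xfr^{\prime\red} @>{f^\red}>> \Xfr^\red,
\end{CD}
\]
where $f^\red$ is the morphism induced on reduced special fibres. By Definition~\ref{def:FormalModelSupportSet}, $\Ccal_\Xfr$ is generated by the closed sets $\spbf_{\Xfr}\iv(Z)$ with $Z\subseteq\Xfr^\red$ quasi-compact closed; since taking preimage under $f_C$ commutes with finite unions and with passing to closed subsets, the pulled-back support set $f_C\iv(\Ccal_\Xfr)$ (in the sense of \cite[Definition~3.14]{Mieda:Zelevinsky}) is generated by the sets $f_C\iv(\spbf_{\Xfr}\iv(Z)) = \spbf_{\Xfr'}\iv\big((f^\red)\iv(Z)\big)$, the equality being commutativity of the square. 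It then suffices to show that this family generates the same support set as $\{\spbf_{\Xfr'}\iv(Z')\}$ with $Z'$ running over quasi-compact closed subsets of $\Xfr^{\prime\red}$.

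For the inclusion $f_C\iv(\Ccal_\Xfr)\subseteq\Ccal_{\Xfr'}$: since $f$ is of finite type, so is $f^\red$, hence $(f^\red)\iv(Z)$ is a closed subscheme that is of finite type over the quasi-compact scheme $Z$ and therefore itself quasi-compact; thus each generator $\spbf_{\Xfr'}\iv((f^\red)\iv(Z))$ of $f_C\iv(\Ccal_\Xfr)$ is already one of the generators of $\Ccal_{\Xfr'}$. For the reverse inclusion, given $Z'$ I would set $Z\coloneqq\overline{f^\red(Z')}$: the image $f^\red(Z')$ is quasi-compact as the continuous image of a quasi-compact space, and here one uses that $\Xfr^\red$ is partially proper over $\kappa$, so by \cite[Proposition~3.2,~iii)]{Mieda:Zelevinsky} the Zariski closure $Z$ is a quasi-compact closed subset of $\Xfr^\red$. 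As $Z'\subseteq(f^\red)\iv(Z)$, applying $\spbf_{\Xfr'}\iv$ yields $\spbf_{\Xfr'}\iv(Z')\subseteq f_C\iv(\spbf_{\Xfr}\iv(Z))$, whence $\spbf_{\Xfr'}\iv(Z')\in f_C\iv(\Ccal_\Xfr)$. Combining the two inclusions gives $f_C\iv(\Ccal_\Xfr)=\Ccal_{\Xfr'}$.

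The final assertion is then immediate: if $f_C$ is an isomorphism of adic spaces, the equality $f_C\iv(\Ccal_\Xfr)=\Ccal_{\Xfr'}$ says that both $f_C$ and $f_C\iv$ are morphisms of pairs, i.e.\ $f_C\colon(\Xcal',\Ccal_{\Xfr'})\riso(\Xcal,\Ccal_\Xfr)$ is an isomorphism of pairs, and a finite iteration of admissible blow-ups induces an isomorphism on rigid generic fibres, which is the parenthetical case. I do not expect a serious obstacle: the only non-formal inputs are the two compactness statements above — quasi-compactness of $(f^\red)\iv(Z)$ from the finite type hypothesis on $f$, and of $\overline{f^\red(Z')}$ from partial properness of $\Xfr^\red$ (the one place that hypothesis is genuinely used) — and the only point requiring mild care is that $f_C\iv(\Ccal_\Xfr)$ is generated by $\{\spbf_{\Xfr'}\iv((f^\red)\iv(Z))\}_Z$ rather than by the a priori larger family $\{f_C\iv(\Zcal)\}_{\Zcal\in\Ccal_\Xfr}$, which holds because each $\Zcal\in\Ccal_\Xfr$ is contained in a finite union of the sets $\spbf_{\Xfr}\iv(Z)$.
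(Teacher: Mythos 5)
Your proof is correct, and since the paper does not reproduce an argument for this lemma (it simply imports Mieda's Lemma~3.17), there is nothing to diverge from: the commutative specialisation square, the reduction to generators, and the two compactness inputs (quasi-compactness of $(f^{\red})^{-1}(Z)$ from the finite-type hypothesis, and of $\overline{f^{\red}(Z')}$ from partial properness) constitute exactly the expected argument. The only cosmetic caveat is that the cited \cite[Proposition~3.2,~iii)]{Mieda:Zelevinsky} is invoked in the paper for closures of quasi-compact \emph{opens}, whereas you apply it to the quasi-compact image $f^{\red}(Z')$; this is harmless because a quasi-compact subset of a partially proper $\kappa$-scheme meets only finitely many of its (proper) irreducible components, so its closure is quasi-compact.
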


It was observed by Mieda that the cohomology $\Rrm\Gamma_{\Ccal_\Xfr}(\Xcal,\Fscr)$ is a suitable ``nearby cycle cohomology'' (\emph{cf.} \cite[Prop.~3.16]{Mieda:Zelevinsky}), which we review now.

\begin{defnsub}\label{def:FormalNearbyCycles}
Let $\Xcal=\Xfr^\ad_C$, and let $\Oscr$ be a torsion ring.
For  $\Fscr\in\Oscr\hyph\widetilde\Xcal_\et$, we define 
\[
\Rrm\Psi_\Xfr\Fscr\coloneqq  \Rrm\spbf_*\Fscr \in D^+(\Xfr^\red,\Oscr),
\]
and call it a \emph{formal nearby cycle sheaf}. We naturally extend the definition of $\Rrm\Psi_\Xfr\Fscr$ when $\Fscr$ is an $l$-adic \'etale sheaf (i.e., $\ZZ_l$-, $\QQ_l$-, or $\overline\QQ_l$- sheaf).

As observed in \cite[\S5.4.2]{Fargues:AsterisqueLLC}, $\Rrm\Psi_\Xfr\Fscr$ is compatible with the formal nearby cycle sheaves defined via Berkovich spaces \cite{Berkovich:VanishingCyclesFormal2}.
\end{defnsub}

To interpret the compactly supported cohomology of formal nearby cycle sheaves, we introduce  the  notion of \emph{locally algebraisable formal schemes}.
\begin{defnsub}[{\cite[Definition~3.19]{Mieda:FormalNearbyCycles}}]
Let $\Xfr$ be a formal scheme locally formally of finite type over $\Spf O$. 
We say that  $\Xfr$ is \emph{locally algebraisable} if there exists a Zariski open covering $\{\Ufr\}$ of $\Xfr$ such that each $\Ufr$ can be obtained as a completion of some  finite-type $O$-scheme.
\end{defnsub}

\begin{propsub}\label{prop:VanCycSpecSeq}
Let $\Xfr$ be a locally algebraisable formal scheme over $O$ such that $\Xfr^\red$ is partially proper over $\kappa$, and let $\Xcal$ be the generic fibre of $\Xfr$.
Let $\Fscr$ be either a \emph{constructable} torsion sheaf with torsion order invertible in $O$, or a \emph{constructable} $l$-adic sheaf on $\Xcal$ with $l$ invertible in $O$. 

Then we have a natural isomorphism 
\[\Rrm\Gamma_c(\Xfr^\red,\Rrm\Psi_\Xfr\Fscr)\cong \Rrm\Gamma_{\Ccal_\Xfr}(\Xcal,\Fscr).\]
\end{propsub}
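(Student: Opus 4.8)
The plan is to reduce to a local statement over each quasi-compact closed $Z\subseteq\Xfr^\red$ and then pass to a colimit. First I would rewrite both sides as filtered colimits indexed by such $Z$. On the left, since $\Xfr^\red$ is partially proper over $\kappa$, Proposition~\ref{prop:CompSupp}(\ref{prop:CompSupp:qcClosed}) applied to the bounded-below complex $\Rrm\Psi_\Xfr\Fscr$ (the statement there is for sheaves, but extends to complexes since filtered colimits are exact) gives $\Rrm\Gamma_c(\Xfr^\red,\Rrm\Psi_\Xfr\Fscr)\cong\varinjlim_Z\Rrm\Gamma_Z(\Xfr^\red,\Rrm\Psi_\Xfr\Fscr)$. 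On the right, the sets $\spbf\iv(Z)$, for $Z\subseteq\Xfr^\red$ quasi-compact closed, are cofinal in $\Ccal_\Xfr$: every member of $\Ccal_\Xfr$ is a closed subset of a finite union $\bigcup_i\spbf\iv(Z_i)=\spbf\iv(\bigcup_i Z_i)$, and a finite union of quasi-compact closed subsets of $\Xfr^\red$ is again quasi-compact closed; hence $\Rrm\Gamma_{\Ccal_\Xfr}(\Xcal,\Fscr)\cong\varinjlim_Z\Rrm\Gamma_{\spbf\iv(Z)}(\Xcal,\Fscr)$. So it suffices to construct, naturally in $Z$, an isomorphism $\Rrm\Gamma_Z(\Xfr^\red,\Rrm\Psi_\Xfr\Fscr)\cong\Rrm\Gamma_{\spbf\iv(Z)}(\Xcal,\Fscr)$.

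For fixed $Z$, put $V\coloneqq\Xfr^\red\setminus Z$ and let $\Xfr_V\subseteq\Xfr$ be the open formal subscheme with $\Xfr_V^\red=V$, so that $\Xcal_V\coloneqq(\Xfr_V)^\ad_C=\spbf\iv(V)$ is open in $\Xcal$ with closed complement $\spbf\iv(Z)$. I would then compare the two localisation triangles
\[
 \Rrm\Gamma_Z(\Xfr^\red,\Rrm\Psi_\Xfr\Fscr)\to\Rrm\Gamma(\Xfr^\red,\Rrm\Psi_\Xfr\Fscr)\to\Rrm\Gamma(V,(\Rrm\Psi_\Xfr\Fscr)|_V)\to
\]
and
\[
 \Rrm\Gamma_{\spbf\iv(Z)}(\Xcal,\Fscr)\to\Rrm\Gamma(\Xcal,\Fscr)\to\Rrm\Gamma(\Xcal_V,\Fscr|_{\Xcal_V})\to.
\]
Since $\spbf^\ast$ is exact, $\spbf_*$ preserves injectives, so the Leray spectral sequence for $\spbf$ gives $\Rrm\Gamma(\Xfr^\red,\Rrm\Psi_\Xfr\Fscr)=\Rrm\Gamma(\Xfr^\red,\Rrm\spbf_*\Fscr)\cong\Rrm\Gamma(\Xcal,\Fscr)$; and restriction along the open immersion $V\hookrightarrow\Xfr^\red$ commutes with $\Rrm\spbf_*$ (the pullback $j^\ast$ has the exact left adjoint $j_!$), so $\Rrm\Gamma(V,(\Rrm\Psi_\Xfr\Fscr)|_V)\cong\Rrm\Gamma(\Xcal_V,\Fscr|_{\Xcal_V})$ (again by Leray). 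These identifications are compatible with the restriction maps in the triangles, so the triangulated five lemma yields the desired isomorphism, which is manifestly natural in $Z$. This settles the case of a torsion sheaf $\Fscr$; note that when $\Xfr$ is locally of finite type over $O$ one has $c=\Ccal_\Xfr$ by Example~\ref{exa:SuppSet}, so the right-hand side is honest compactly supported cohomology.

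For an $l$-adic sheaf $\Fscr=(\Fscr_n)_n$ I would pass to the limit over $n$ in the projective-system formalism of Definition~\ref{def:CompSuppCoh}. The torsion isomorphisms constructed above are natural in the coefficient sheaf, hence compatible with the transition maps $\Fscr_{n+1}\to\Fscr_n$, so they assemble: both $\Rrm\Gamma_c(\Xfr^\red,-)$ and $\Rrm\Gamma_{\Ccal_\Xfr}(\Xcal,-)$ for $l$-adic coefficients are computed in the category of projective systems, and the collective-support conditions on the two sides correspond to one another because $\{\spbf\iv(Z)\}$ is cofinal in $\Ccal_\Xfr$. This is where the hypotheses enter: local algebraisability of $\Xfr$ lets one compute $\Rrm\Psi_\Xfr\Fscr_n$ on algebraisable Zariski charts, with the associated support sets patching by Lemma~\ref{lem:Funct}, and there the finiteness theorem for nearby cycles of Berkovich and Huber shows that $\Rrm\Psi_\Xfr\Fscr_n$ is constructible and supported on a quasi-compact closed subset that can be chosen independently of $n$; this makes $\Rrm\Psi_\Xfr\Fscr$ a well-defined constructible $l$-adic complex and makes the colimit over $Z$ interchange correctly with the projective limit over $n$. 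One then concludes as in \cite[\S1]{Huber:ComparisonEllAdic}.

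The step I expect to be the main obstacle is exactly this $l$-adic passage to the limit: as the footnote to Definition~\ref{def:CompSuppCoh} warns, the collective-support condition defining $\Gamma_c$ need not survive the projective limit over $n$, and ensuring that it does is precisely what forces the hypotheses of constructibility of $\Fscr$ and local algebraisability of $\Xfr$, through the finiteness of $\Rrm\Psi$. By contrast, once Proposition~\ref{prop:CompSupp} and the description of $\Ccal_\Xfr$ are in hand, the torsion case is essentially formal.
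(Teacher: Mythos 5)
Your argument is correct, but the key comparison is carried out by a genuinely different route from the paper's. At fixed $Z$ you establish $\Rrm\Gamma_Z(\Xfr^\red,\Rrm\Psi_\Xfr\Fscr)\cong\Rrm\Gamma_{\spbf\iv(Z)}(\Xcal,\Fscr)$; since $\spbf\iv(\Xfr^\red\setminus Z)=\Xcal\setminus\spbf\iv(Z)$ and $\spbf_*$ preserves injectives, this is in effect the tautology $\Rrm\Gamma_Z\circ\Rrm\spbf_*=\Rrm\Gamma_{\spbf\iv(Z)}$ (your two localisation triangles plus the five lemma are a slightly roundabout, but valid, way of seeing it, and the map in question exists canonically so the non-uniqueness of cones is not an issue). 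You then feed this into Proposition~\ref{prop:CompSupp}(1). The paper instead works with $\Rrm\Gamma(Z,(\Rrm\Psi_\Xfr\Fscr_n)|_Z)$ for $Z$ a finite union of irreducible components --- cohomology of the \emph{restriction} to $Z$, not of sections supported on $Z$, which are different functors --- and feeds that into Proposition~\ref{prop:CompSupp}(2); the identification of $\Rrm\Gamma(Z,(\Rrm\Psi_\Xfr\Fscr_n)|_Z)$ with $\Rrm\Gamma_{\spbf\iv(Z)}(\Xcal,\Fscr_n)$ is then a nontrivial theorem quoted from Huber and Berkovich, which at the same time delivers the constructibility of $\Rrm\Psi_\Xfr\Fscr_n$. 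Your route is purely formal at finite level and in particular shows that the torsion case needs neither constructibility nor local algebraisability; the paper's route buys the finiteness statements that make $\Rrm\Psi_\Xfr\Fscr$ a bona fide constructible $l$-adic complex, which is what the hypotheses of the proposition are really for.

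Two small corrections to your $l$-adic discussion. First, the worry that the collective-support condition might not survive $\varprojlim_n$ is already defused by your own setup: as in the paper, one takes $\Rrm\varprojlim_n$ at \emph{fixed} $Z$ (Grothendieck spectral sequence for $\Gamma_Z$ on projective systems) and only afterwards takes $\varinjlim_Z$, so the support is imposed uniformly in $n$ by construction. Second, the claim that $\Rrm\Psi_\Xfr\Fscr_n$ is ``supported on a quasi-compact closed subset that can be chosen independently of $n$'' is false in general (take $\Fscr$ constant) and is not what is needed; the relevant finiteness is constructibility of each $\Rrm\Psi_\Xfr\Fscr_n$, which is where local algebraisability enters via Berkovich's comparison with algebraic nearby cycles.
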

\begin{proof}
It suffices to handle the case when $\Fscr\coloneqq \{\Fscr_n\}$ is a constructible \'etale $\ZZ_l$-sheaf with $l$ invertible in $O$ (so $\Fscr_n$ is a constructible \'etale $\ZZ/l^n$-sheaf).
From \cite[Theorem~8.3.5]{Huber:EtCohBook} and \cite[Corollary~3.6]{Berkovich:VanishingCyclesFormal2}, it follows that $R\Psi_\Xfr \Fscr_n$ is constructible and we have the following isomorphism for any finite union $Z$ of irreducible  components of $\Xfr^{\red}$:
\[
\Rrm\Gamma(Z, (R\Psi_\Xfr \Fscr_n)|_{Z}) \cong \Rrm\Gamma_{\spbf^{-1}(Z)}(\Xcal, \Fscr_n)\quad\forall n\geqslant 1.
\]
By Grothendieck's spectral sequence, we have
\begin{align*}
 \Rrm\Gamma(Z, (R\Psi_\Xfr \Fscr)|_{Z}) &\cong \Rrm\hspace{-.2em}\varprojlim \Rrm\Gamma(Z, (R\Psi_\Xfr \Fscr_n)|_{Z}) \\
 \Rrm\Gamma_{\spbf^{-1}(Z)}(\Xcal, \Fscr) &\cong \Rrm\hspace{-.2em}\varprojlim\Rrm\Gamma_{\spbf^{-1}(Z)}(\Xcal, \Fscr_n)
\end{align*}
and thus taking the derived limit of both sides of the above isomorphism yields
\[
\Rrm\Gamma(Z, (R\Psi_\Xfr \Fscr)|_{Z}) \cong \Rrm\Gamma_{\spbf^{-1}(Z)}(\Xcal, \Fscr).
\]
Now, by taking $\varinjlim_Z$ on both sides we obtain the following
\begin{align*}
 \Rrm\Gamma_c(\Xfr^\red,\Rrm\Psi_\Xfr\Fscr) &\cong 
\varinjlim_Z\Rrm\Gamma(Z, (R\Psi_\Xfr \Fscr)|_{Z})\\
& \cong 
\varinjlim_Z\Rrm\Gamma_{\spbf^{-1}(Z)}(\Xcal, \Fscr)\cong
\Rrm\Gamma_{\Ccal_\Xfr}(\Xcal,\Fscr),
\end{align*}
which concludes the proof.
\end{proof}
\subsection{Smooth equivariant sheaves on analytic \'etale sites}

Let $J$ be a locally pro-$p$ topological group; in other words, a locally profinite topological group whose topology is generated by pro-$p$ open subgroups. In practice, $J$ will be the group of $\Qp$-points of some reductive group over $\Qp$, or quotients thereof. In the intended application where $\Xfr\coloneqq \Mfr^b$ is a Rapoport-Zink formal scheme associated to $(G,b)$, we will work with the natural action of $J\coloneqq J_b(\Qp)$ on $\Xfr$.

Let us recall basic definitions and key properties of \emph{smooth} $J$-equivariant torsion \'etale sheaves, following \cite[\S{IV}.8\emph{ff.}]{Fargues:FaltingsIsom}. Here, "smoothness" is about the $J$-action on an \'etale sheaf, and does \emph{not} refer to the `` lissit\'e'' of  the underlying \'etale sheaf.

Let $\Oscr$ be an artinian quotient of some finite extension of $\ZZ_l$ with $l\ne p$. %
Note that such $\Oscr$ is Gorenstein, which will be relevant in Theorem~\ref{thm:Poincare}.

Let $\Xcal$ denote an adic space locally of finite type over $(C, O _C)$ with a continuous $J$-action on it; that is for any affinoid open $\Ucal\coloneqq \Spa(A,A^+)$ of $\Xcal$, the stabiliser $J_\Ucal$ of $\Ucal$ is an open subgroup of $J$ and the $J_{\Ucal}$-action on $A$ is continuous with respect to the natural topology on $A$.

In the intended application, $\Xcal$ admits a formal model $\Xfr$ over $\Spf O$ with $\Xfr^\red$ partially proper (as in the setting of Definition~\ref{def:FormalModelSupportSet}), and the $J$-action on $\Xcal$ extends to a continuous $J$-action on $\Xfr$.  (Here, continuity of the $J$-action on a formal scheme is defined similarly; \emph{cf.} \cite[D\'efinition~2.3.10]{Fargues:AsterisqueLLC}.)

For an adic space $\Xcal$ locally of finite type over $(C, O _C)$, we let $\Xcal_\et$ denote the \'etale site and $\Xcal_{\qcet}$ the site where the underlying category is the category of \emph{quasi-compact} adic spaces \'etale over $\Xcal$. 
Since any $\Ycal\in \Xcal_{\et}$ admits an \'etale cover $\{\Ucal_\alpha\}$ where  each $\Ucal_\alpha$ is quasi-compact, it follows that these the natural map defines an isomorphism of topoi $\tilde\Xcal_{\qcet}\riso\tilde\Xcal_{\et}$.
Let us recall the following  result that is fundamental in working with $J$-equivariant \'etale sites and $J$-equivariant \'etale sheaves:
\begin{lemsub}[{\cite[Key~Lemma~7.2]{Berkovich:VanishingCyclesFormal}}]\label{lem:EquivKeyLem}
For any $\Ucal\in\Xcal_\qcet$, there exists a compact open subgroup $J_\Ucal\subset J$ such that the $J_\Ucal$-action on $\Xcal$ lifts canonically and continuously to $\Ucal$.
\end{lemsub}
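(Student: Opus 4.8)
The plan is to reduce to a quasi-compact base and then exploit the rigidity of quasi-compact \'etale morphisms. Since an \'etale morphism of adic spaces is open and $\Ucal$ is quasi-compact, the image $\Vcal$ of $\Ucal$ in $\Xcal$ is a quasi-compact open subspace; fix a finite cover $\Vcal=\bigcup_{i=1}^{n}\Vcal_i$ by affinoid opens $\Vcal_i=\Spa(A_i,A_i^+)$ (with pairwise overlaps that we may take quasi-compact). Replacing $J$ by the open subgroup $J_1\coloneqq\bigcap_i J_{\Vcal_i}$, we may assume $J$ stabilises each $\Vcal_i$, hence $\Vcal$, and acts continuously on each $A_i$. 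As $\Ucal\to\Xcal$ factors through the $J_1$-stable open $\Vcal$, it is enough to produce a compact open subgroup $J_\Ucal\subset J_1$ and a continuous $J_\Ucal$-action on $\Ucal$ lifting the action on $\Vcal$; the original claim then follows from equivariance of $\Ucal\to\Vcal\hra\Xcal$.

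Next I would encode $\Ucal$ by finite data over the $A_i$. Being quasi-compact and \'etale over the quasi-compact $\Vcal$, the space $\Ucal$ can be covered by finitely many affinoid opens $\Ucal_k=\Spa(B_k,B_k^+)$ so that each $\Ucal_k\to\Vcal$ factors as a rational localisation $\Vcal_{i(k)}\supset\Vcal_{i(k)}'$ followed by a finite \'etale morphism $\Ucal_k\to\Vcal_{i(k)}'$, together with the gluing isomorphisms on the overlaps; all of this is determined by finitely many elements of the rings $A_i$. For $j\in J_1$ the induced topological automorphisms $j^\sharp_i$ of the $A_i$ yield the pullback $j^*\Ucal$, described by the same combinatorial data with the rational domains, the algebras $B_k$ and the gluing maps all twisted by the $j^\sharp_i$. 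By continuity of the $J_1$-action, applied to those finitely many elements, there is a compact open subgroup $J_\Ucal\subset J_1$ such that for every $j\in J_\Ucal$ each $j^\sharp_i$ is congruent to the identity modulo an ideal small enough that the rational subdomains $\Vcal_{i(k)}'$ are preserved and Hensel's lemma (lifting of idempotents) identifies each twisted $B_k$ with $B_k$; this produces a \emph{canonical} isomorphism $c_j\colon j^*\Ucal\riso\Ucal$ over $\Vcal$ compatible with the gluing. (Intuitively: the action and projection morphisms agree at $j=1$, so the two pullbacks of $\Ucal$ become isomorphic there, and this isomorphism spreads out over a neighbourhood of the identity — the affinoid description above is what makes this precise and produces $J_\Ucal$.)

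Finally I would assemble the $c_j$ into an action. For $j,j'\in J_\Ucal$ the two isomorphisms $(jj')^*\Ucal\riso\Ucal$ given by $c_{jj'}$ and by $c_j\circ j^*(c_{j'})$ agree, because the same rigidity shows the canonical lift of the identity is the \emph{only} such isomorphism (there are no nontrivial automorphisms of $\Ucal$ over $\Vcal$ congruent to the identity modulo the relevant ideal); hence the cocycle condition holds and the $c_j$ define a $J_\Ucal$-action on $\Ucal$ covering the given action on $\Vcal\subset\Xcal$. This action is continuous, since it is built from the continuous $J_1$-action on the $A_i$ together with finitely much rigid data, and it is canonical (unique) by the same uniqueness statement. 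The main obstacle is the second step: making precise the ``finitely much rigid data'', and in particular controlling the rational subdomains appearing in the description of $\Ucal$ so that a small perturbation of the $A_i$ genuinely preserves them — which forces a further shrinking of $J_\Ucal$ — and, at the foundational level, giving a precise meaning to ``$j^*\Ucal$ varying continuously with $j$''. This is exactly the point where the finite presentation of quasi-compact \'etale morphisms, in the form established by Berkovich, is indispensable.
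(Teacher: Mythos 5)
The paper does not prove this lemma at all: it is quoted verbatim as Berkovich's Key Lemma 7.2 from \cite{Berkovich:VanishingCyclesFormal} and used as a black box, so there is no in-paper argument to compare against. Your sketch is, in substance, a reconstruction of Berkovich's own proof: reduce to a quasi-compact affinoid-covered base, describe the quasi-compact \'etale space by finitely many pieces of topologically finitely presented data, and use continuity of the action on the affinoid algebras together with Hensel/Krasner rigidity (uniqueness of isomorphisms congruent to the identity) to produce the canonical lift and the cocycle identity. One inaccuracy worth flagging: the normal form you assert, namely that each $\Ucal_k\to\Vcal$ is a rational localisation of the base followed by a \emph{finite} \'etale morphism, is too strong (a strict rational subdomain of a degree-two finite \'etale cover surjecting onto the base is a counterexample); the correct local structure is an open immersion of $\Ucal_k$ into a finite \'etale cover $\Wcal_k$ of an open of the base. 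This matters because the rational subdomains you must preserve then live in $\Wcal_k$ rather than in $\Xcal$, so their stabilisers are not covered by the continuity hypothesis and must be controlled by hand --- which is exactly the obstacle you yourself identify at the end, and the point where Berkovich's finite-presentation argument does the real work.
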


\begin{defnsub}[{\cite[D\'efinition~IV.8.1]{Fargues:FaltingsIsom}}]
A $J$-equivariant \'etale sheaf $\Fscr$ on $\Xcal$ is defined to be \emph{smooth} if for any $\Ucal\in\Xcal_\qcet$, the $J_\Ucal$-action on $\Gamma(\Ucal,\Fscr)$ is smooth, where $J_\Ucal$ is an open compact subgroup of $J$ stabilising $\Ucal$. (Note that such $J_\Ucal$ exists by Lemma~\ref{lem:EquivKeyLem}, and the notion of smoothness is independent of the choice of $J_\Ucal$.) We write  $\Oscr\hyph\widetilde\Xcal_\et/J$ for the category of smooth $J$-equivariant \'etale sheaves of $\Oscr$-modules on $\Xcal$.
\end{defnsub}
For any $J$-equivariant \'etale sheaf $\Fscr$ on $\Xcal$, one can naturally associate a smooth $J$-equivariant sheaf $\Fscr^{(\infty)}$, called the \emph{smoothening} of $\Fscr$ (in French, \emph{lissification}), defined as follows. For any $\Ucal\in\Xcal_\qcet$,
we define $\Gamma(\Ucal,\Fscr^{(\infty)})$ to be the set of smooth sections in $\Gamma(\Ucal,\Fscr)$ with respect to the $J_\Ucal$-action (where $J_{\Ucal}$ is as defined in Lemma~\ref{lem:EquivKeyLem}). This presheaf turns out to be a sheaf $\Fscr^{(\infty)}$ on $\Xcal_\qcet$, and we view it as a $J$-equivariant sheaf on $\Xcal_\et$ using the equivalence of topoi $\widetilde\Xcal_\qcet\cong\widetilde\Xcal_\et$. Note that the functor $\Fscr\rightsquigarrow\Fscr^{(\infty)}$ is the right adjoint of the natural inclusion of the category of smooth $J$-equivariant sheaves into the category of $J$-equivariant sheaves; \emph{cf.} \cite[\S{IV}.8.3.3]{Fargues:FaltingsIsom}.

Let $f:\Ucal\ra\Xcal$ be a $J$-equivariant \'etale morphism. Then we have the following operations (\emph{cf.}  \cite[Definition~3.25]{Mieda:Zelevinsky}):
\begin{enumerate}
\item pull back $\Oscr\hyph\widetilde\Xcal_\et/J\ra\Oscr\hyph\widetilde\Ucal_\et/J$ defined by $\Fscr\rightsquigarrow f^\ast \Fscr$.
\item extension by zero $\Oscr\hyph\widetilde\Ucal_\et/J\ra\Oscr\hyph\widetilde\Xcal_\et/J$ defined by $\Gcal\rightsquigarrow f_!\Gcal$; \emph{cf.} \cite[Lemma~3.24]{Mieda:Zelevinsky}. 
\item push forward $\Oscr\hyph\widetilde\Ucal_\et/J\ra\Oscr\hyph\widetilde\Xcal_\et/J$ defined by $\Gcal\rightsquigarrow (f_*\Gcal)^{(\infty)}$.
\end{enumerate}

For an open subgroup $K\subset J$, it is possible to ``sheafify'' the restriction, the compact induction, and the induction to obtain the following functors (\emph{cf.}  \cite[Definition~3.22]{Mieda:Zelevinsky}):
\begin{align*}
\Res^J_K&:\Oscr\hyph\widetilde\Xcal_\et/J\ra\Oscr\hyph\widetilde\Xcal_\et/K\\
\cInd^J_K,\ \Ind^J_K&:\Oscr\hyph\widetilde\Xcal_\et/K\ra\Oscr\hyph\widetilde\Xcal_\et/J.
\end{align*}

Finally, if $f:\Ucal\ra\Xcal$ is a $K$-equivariant \'etale morphism for an open subgroup $K\subset J$, then we have the functors 
\begin{align*}
f^\ast \circ\Res^J_K&:\Oscr\hyph\widetilde\Xcal_\et/J\ra\Oscr\hyph\widetilde\Ucal_\et/K\\
\cInd^J_K\circ f_!,\ \Ind^J_K\circ(f_*)^{(\infty)}&:\Oscr\hyph\widetilde\Ucal_\et/K\ra\Oscr\hyph\widetilde\Xcal_\et/J,
\end{align*}
 satisfying the adjunction relations generalising the Frobenius reciprocity; \emph{cf.}  \cite[Proposition~3.26]{Mieda:Zelevinsky}. 

Note that smoothness of $J$-action on $\Fscr$ does not necessarily imply that the natural $J$-action on $\Gamma(\Ucal,\Fscr)$ is smooth, unless $\Xcal$ is already quasi-compact. (If we assume that $J$ acts transitively on the set of connected components of $\Xcal$, then stabiliser of any locally constant function taking different values at each connected component is the intersection of the stabilisers of all connected components, which may not be open if there are infinitely many connected components.
Recall that by Berkovich's ``Key Lemma'' (Lemma~\ref{lem:EquivKeyLem}) the stabiliser of a single connected component is open.) 
Instead, $\Gamma(\Xcal,\Fscr)$ admits a natural action of the convolution algebra of compactly supported $\Oscr$-valued distributions\footnote{We will recall the definition of compactly supported distributions in \S\ref{subsec:Zelevinsky}.}  on $J$; \emph{cf.} \cite[Lemme~2.3]{Fargues:Zelevinsky}, \cite[Lemma~3.34]{Mieda:Zelevinsky}.%

On the other hand, compactly supported sections are always smooth in the following sense:
\begin{lemsub}[{\cite[Lemma~3.40]{Mieda:Zelevinsky}}]
Assume that $\Xcal$ is partially proper over $(C, O _C)$, equipped with a continuous $J$-action. Then  the $J$-action on $\Gamma_c(\Xcal,\Fscr)$ is smooth for   any $\Fscr\in\Oscr\hyph\widetilde\Xcal_\et/J$. 
\end{lemsub}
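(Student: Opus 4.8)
Recall that a $J$-module is smooth precisely when it equals the union of its invariants under the compact open subgroups of $J$; hence it suffices to show that every $s\in\Gamma_c(\Xcal,\Fscr)$ is fixed by some compact open $K\subseteq J$. The plan is as follows. By the definition of $\Gamma_c$ via the support set $c$ of quasi-compact closed subsets, there is a quasi-compact closed $\Zcal_0\subseteq\Xcal$ with $s|_{\Xcal\setminus\Zcal_0}=0$; consequently $\Zcal\coloneqq\supp(s)$ is a closed subset of $\Zcal_0$ and so is itself quasi-compact and closed. Covering $\Xcal$ by quasi-compact open subspaces and extracting a finite subcover of $\Zcal$, I would choose a quasi-compact open $\Ucal\subseteq\Xcal$ with $\Zcal\subseteq\Ucal$, so that $\{\Ucal,\Xcal\setminus\Zcal\}$ is an open cover of $\Xcal$.

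I would then apply Berkovich's ``Key Lemma'' (Lemma~\ref{lem:EquivKeyLem}) to $\Ucal\in\Xcal_\qcet$: it yields a compact open subgroup $J_\Ucal\subseteq J$ that stabilises $\Ucal$ and to which the $J$-action lifts canonically and continuously. Since $\Fscr$ is a smooth $J$-equivariant sheaf, the $J_\Ucal$-action on $\Gamma(\Ucal,\Fscr)$ is smooth, so there is a compact open $K\subseteq J_\Ucal$ with $g\cdot(s|_\Ucal)=s|_\Ucal$ for all $g\in K$. It then remains to see that $K$ fixes $s$. For $g\in K$, one has $g(\Zcal)\subseteq g(\Ucal)=\Ucal$, and because $g$ respects the equivariant structure, $g(\Zcal)=\supp(g\cdot(s|_\Ucal))=\supp(s|_\Ucal)=\Zcal$; hence $g$ stabilises $\Xcal\setminus\Zcal$ as well. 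Thus $(g\cdot s)|_\Ucal=g\cdot(s|_\Ucal)=s|_\Ucal$ while $(g\cdot s)|_{\Xcal\setminus\Zcal}=g\cdot(s|_{\Xcal\setminus\Zcal})=0=s|_{\Xcal\setminus\Zcal}$, so the sheaf axiom for the cover $\{\Ucal,\Xcal\setminus\Zcal\}$ gives $g\cdot s=s$. Therefore $K\subseteq\Stab_J(s)$, which is then open, as needed.

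The one point deserving care is the bookkeeping of equivariant structures: one must make sure that ``$g$ fixes $s|_\Ucal$'' for the lifted $J_\Ucal$-action on $\Ucal$ supplied by Lemma~\ref{lem:EquivKeyLem} is precisely the restriction to $\Ucal$ of the identity $g\cdot s=s$ for the ambient action on $\Xcal$ --- this is what the \emph{canonicity} of the lift in that lemma provides. Granting this compatibility, the glueing above is legitimate and the rest is formal; this is the argument of \cite[Lemma~3.40]{Mieda:Zelevinsky}.
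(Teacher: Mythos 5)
Your argument is correct and is essentially the standard proof of this fact (the paper itself gives no proof, citing \cite[Lemma~3.40]{Mieda:Zelevinsky}, whose argument runs along the same lines): reduce to the quasi-compact support $\Zcal$ of a section, enclose it in a quasi-compact open $\Ucal$, use Berkovich's Key Lemma together with smoothness of $\Fscr$ on $\Gamma(\Ucal,\Fscr)$, and glue over the cover $\{\Ucal,\Xcal\setminus\Zcal\}$. The one point you flag --- compatibility of the lifted $J_\Ucal$-action with restriction of global sections --- is indeed immediate here since $\Ucal$ is an open subspace stabilised by $J_\Ucal$, so the lift is just the restricted action; no gap remains.
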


\begin{defnsub}\label{def:EquivCoh}
Let $\Rep_\Oscr(J)$ be the category of $\Oscr$-modules with smooth $J$-action. Then we define
\[
\Rrm\Gamma_c(\Xcal,\bullet):\Oscr\hyph\widetilde\Xcal_\et/J \ra D^+(\Rep_\Oscr(J))
\]
to be the derived functor of $\Gamma_c(\Xcal,\bullet):\Oscr\hyph\widetilde\Xcal_\et/J \ra \Rep_\Oscr(J)$. We write $\coh i_c(\Xcal,\Fscr)\coloneqq \Rrm^i\Gamma_c(\Xcal,\Fscr)$ for any $\Fscr\in\Oscr\hyph\widetilde\Xcal_\et/J$, and call it the $i$th \emph{compactly supported $J$-equivariant cohomology}.
\end{defnsub}

Note that a smooth $J$-equivariant sheaf $\Fscr$ can also be viewed as just an \'etale sheaf by forgetting the $J$-action (i.e., by applying the forgetful functor $\Oscr\hyph\widetilde\Xcal_\et/J\ra\Oscr\hyph\widetilde\Xcal_\et$). We intentionally use the same notation $\Rrm\Gamma_c(\Xcal,\Fscr)$ without specifying whether we view $\Fscr$ as a smooth $J$-equivariant sheaf or just as an \'etale sheaf\footnote{In \cite[\S3.3]{Mieda:Zelevinsky}, the compactly supported $J$-equivariant cohomology is denoted by $\Rrm\Gamma_c(\Xcal/J,\bullet)$, but we find this notation could lead to confusion with the compactly supported cohomology of the ``quotient'' $\Xcal/J$ in some suitable sense.}, thanks to the following proposition.

\begin{propsub}[{\cite[Corollary~3.42]{Mieda:Zelevinsky}}]\label{prop:EquivCoh}
Assume that $\Xcal$ is partially proper over $(C, O _C)$. Then the following diagram is 2-commutative
\[\xymatrix{
D^+(\Oscr\hyph\widetilde\Xcal_\et/J) \ar[d] \ar[rr]^-{\Rrm\Gamma_c(\Xcal,\bullet)} & &
D^+(\Rep_\Oscr(J)) \ar[d] \\
D^+(\Oscr\hyph\widetilde\Xcal_\et) \ar[rr]^-{\Rrm\Gamma_c(\Xcal,\bullet)} & &
D^+(\Mod(\Oscr)) 
},\]
where the vertical arrows are forgetful functors. 
\end{propsub}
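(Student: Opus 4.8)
The plan is to deduce the 2-commutativity from the following two facts, which together pin down $\Rrm\Gamma_c(\Xcal,\bullet)$ on both sides: first, on the level of underived functors, the square of abelian categories commutes strictly, since $\Gamma_c(\Xcal,\Fscr)$ is computed as the same $\Oscr$-module of compactly supported sections whether or not we remember the $J$-action (the $J$-action is simply extra structure on this module, which by the preceding lemma is automatically smooth); second, the forgetful functor $\Rep_\Oscr(J)\to\Mod(\Oscr)$ is exact and sends injectives to $\Gamma_c(\Xcal,\bullet)$-acyclic objects. Granting these, the universal property of derived functors gives the desired 2-commutativity: the composite $D^+(\Oscr\hyph\widetilde\Xcal_\et/J)\to D^+(\Rep_\Oscr(J))\to D^+(\Mod(\Oscr))$ is the derived functor of the composite underived functor, which is $\Gamma_c(\Xcal,\bullet)$ followed by forgetting, and this agrees with first forgetting and then taking $\Rrm\Gamma_c$.

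Concretely, I would argue as follows. Fix $\Fscr\in\Oscr\hyph\widetilde\Xcal_\et/J$ and choose an injective resolution $\Fscr\to\Iscr^\bullet$ in $\Oscr\hyph\widetilde\Xcal_\et/J$. Applying $\Gamma_c(\Xcal,\bullet)$ computes $\Rrm\Gamma_c(\Xcal,\Fscr)\in D^+(\Rep_\Oscr(J))$ by definition. To identify its image under the forgetful functor with $\Rrm\Gamma_c(\Xcal,\Fscr)\in D^+(\Mod(\Oscr))$ computed via an injective resolution in the larger category $\Oscr\hyph\widetilde\Xcal_\et$, it suffices to check that each $\Iscr^n$, viewed as an ordinary (non-equivariant) \'etale sheaf via the forgetful functor $\Oscr\hyph\widetilde\Xcal_\et/J\to\Oscr\hyph\widetilde\Xcal_\et$, is $\Gamma_c(\Xcal,\bullet)$-acyclic. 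This is the point where the partial properness of $\Xcal$ enters: by Proposition~\ref{prop:CompSupp}(analogue for adic spaces) and Mieda's computation, $\coh i_c(\Xcal,\bullet)\cong\varinjlim_{\Zcal\in c}\coh i_\Zcal(\Xcal,\bullet)$, so acyclicity can be checked locally on quasi-compact closed subsets and reduces to a statement about quasi-compact pieces, where $J$-equivariant injectives restrict to injectives (or at least flasque sheaves) on the ordinary \'etale site. Alternatively, and perhaps more cleanly, one invokes that the forgetful functor $\Oscr\hyph\widetilde\Xcal_\et/J\to\Oscr\hyph\widetilde\Xcal_\et$ has an exact left adjoint — or one uses the smoothening functor $\Fscr\rightsquigarrow\Fscr^{(\infty)}$ and its adjunction — to conclude that it preserves injectives, or at least sends injectives to objects computing the same cohomology.

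The main obstacle I anticipate is precisely this acyclicity / injectivity-transfer step: a $J$-equivariant injective sheaf need not be injective after forgetting the equivariance, because the forgetful functor $\Oscr\hyph\widetilde\Xcal_\et/J\to\Oscr\hyph\widetilde\Xcal_\et$ is not obviously exact-with-exact-adjoint in the naive way (the category $\Oscr\hyph\widetilde\Xcal_\et/J$ of \emph{smooth} equivariant sheaves sits inside all equivariant sheaves via a right adjoint, the smoothening). The way to handle this, following \cite[\S{IV}.8]{Fargues:FaltingsIsom} and \cite[\S3.3]{Mieda:Zelevinsky}, is to factor the comparison through the category of all (not necessarily smooth) $J$-equivariant sheaves, or to use that an injective object of $\Oscr\hyph\widetilde\Xcal_\et/J$ can be taken of a specific form (e.g. a product of pushforwards $\cInd^J_{J_\Ucal}$ of injectives along quasi-compact \'etale maps) whose underlying \'etale sheaf is manifestly acyclic for $\Gamma_c$. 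Since this is exactly the content of \cite[Corollary~3.42]{Mieda:Zelevinsky}, I would in practice simply cite that corollary, with the above as the explanation of why it holds; the only genuinely new input needed here over the torsion-sheaf case treated by Mieda is none — the statement and proof are identical once the formalism of Mieda's \S3.3 is in place.
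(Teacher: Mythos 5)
Your proposal is correct and matches the paper's treatment: the paper gives no independent proof but simply cites Mieda's Corollary~3.42 and remarks immediately afterwards that ``the main content of this proposition is that an injective sheaf in $\Oscr\hyph\widetilde\Xcal_\et/J$ is acyclic for $\Gamma_c(\Xcal,\bullet)$ on $\Oscr\hyph\widetilde\Xcal_\et$'' (Mieda's Proposition~3.39), which is exactly the acyclicity-transfer step you single out as the crux and resolve in the same way (via injectives of a special form rather than preservation of injectivity by the forgetful functor).
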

The main content of this proposition is that an injective sheaf in $\Oscr\hyph\widetilde\Xcal_\et/J$ is acyclic for $\Gamma_c(\Xcal,\bullet)$ on $\Oscr\hyph\widetilde\Xcal_\et$; \emph{cf.} \cite[Proposition~3.39]{Mieda:Zelevinsky}.

This proposition implies that for $\Fscr\in \Oscr\hyph\widetilde\Xcal_\et/J$, the $J$-equivariant compactly supported cohomology  $\coh i_c(\Xcal,\Fscr)\in\Rep_\Oscr(J)$ coincides with the ``usual non-equivariant'' compactly supported cohomology $\coh i_c(\Xcal,\Fscr)\in\Mod(\Oscr)$ equipped with the $J$-action induced by functoriality. (In particular, we obtain smoothness of this $J$-action on the non-equivariant compactly supported cohomology.)

\subsection{Poincar\'e duality and derived duality \emph{a la} Zelevinsky, Dat, \dots}\label{subsec:Zelevinsky}
Let $J$ be a locally pro-$p$ topological group.
Let $\Oscr$ be a $0$-dimensional Gorenstein ring where $p$ is invertible (but not necessarily of characteristic~$0$); in practice, $\Oscr$ is either a field or an artinian quotient of a discrete valuation ring where $p$ is invertible. Note that flatness, projectivity and injectivity are equivalent for modules over Gorenstein artinian ring, which makes the functor $\Hom_\Oscr(\bullet,\Oscr)$ \emph{exact}.

Before introducing the convolution algebra of compactly supported distributions on $J$, let us start with some motivation. Let $\Rep_\Oscr(J)$ be the category of $\Oscr$-modules with \emph{smooth} $J$-action, and $\Rep_\Oscr(J^\disc)$ denote the category of $\Oscr[J^\disc]$-module. (We let $J^\disc$ denote the underlying group of $J$ with the discrete topology.) Then the natural functor $i_J:\Rep_\Oscr(J) \ra \Rep_\Oscr(J^\disc)$ admits a right adjoint
\[
\infty_J:\Rep_\Oscr(J^\disc)\ra \Rep_\Oscr(J); \quad V\rightsquigarrow \varinjlim_K V^K;
\] 
where $K\subset J$ runs over compact open subgroups. For any ``operation'' on $\Rep_\Oscr(J^\disc)$ that may not necessarily preserve smoothness, one may consider ``smoothening'' it by applying $\infty_J$. However, $\infty_J$ is \emph{not} right exact in general (as the $K$-invariance functor may not be right exact), so it is not  straightforward to apply $\infty_J$ to constructions involving homological algebra. Such homological algebra issues can be circumvented if we can work with modules over ``compactly supported distributions'' instead of $\Rep_\Oscr(J^\disc)$. Let us recall the basic definitions.

Let $\Hcal_\Oscr(J)\coloneqq \Ccal^\infty_c(J;\Oscr)$ be the $\Oscr$-module  of compactly supported locally constant functions $J\ra\Oscr$. We recall the following definition from \cite[Appendix~B.2]{Dat:LanglandsDrinfeld}, which also works when $\Oscr$ is a torsion ring as far as $p$ is invertible in $\Oscr$.
\begin{defnsub}[\emph{cf.} {\cite[\S1.8]{Dat:Finitude}}]
An \emph{$\Oscr$-valued distribution} (or a \emph{distribution}) on $J$ means a linear functional $\mu\in \Hom_\Oscr(\Hcal_\Oscr(J),\Oscr)$. Traditionally, we write $\int_J f d\mu\coloneqq  \mu(f)$ for any $f\in\Hcal_\Oscr(J)$. 

A distribution $\mu$ is \emph{compactly supported} if there exists a compact subset $K\subset J$ such that $\int_J f d\mu=0$ for any locally constant function $f$ supported away from $K$. Let $\Dcal_\Oscr(J)$ denote the $\Oscr$-module of compactly supported distributions.

We define the convolution action
\[
\Dcal_\Oscr(J)\times\Hcal_\Oscr(J)\ra\Hcal_\Oscr(J);\ (\mu, f)\mapsto \mu*f,
\]
where $(\mu*f)(\gamma)\coloneqq  \int_J (\gamma\cdot f)d\mu$ for any $\gamma\in J$. Here, $\gamma\cdot f$ is the right regular action of $J$ on $\Hcal_\Oscr(J)$.

Note that the map $\Dcal_\Oscr(J)\ra\End_\Oscr(\Hcal_\Oscr(J))$, sending $\mu$ to $(f\mapsto\mu*f)$, is injective. For $\mu,\mu'\in\Dcal_\Oscr(J)$, we define the convolution product $\mu*\mu'\in\Dcal_\Oscr(J)$ to be the distribution corresponding to the endomorphism $f\mapsto \mu*(\mu'*f)$ of $\Hcal_\Oscr(J)$, which uniquely exists. Note that the convolution product makes $\Dcal_\Oscr(J)$ a $\Oscr$-algebra with unity, where the multiplicative unity is the ``delta function at identity'' $\delta_e:f\mapsto f(e)$.
\end{defnsub}

Note that $\Dcal_\Oscr(J)$ contains the following $\Oscr$-subalgebras:
\begin{enumerate}
\item We have $\Oscr[J^\disc] \hra \Dcal_\Oscr(J)$ by sending $\gamma\in J$ to the ``delta function'' $\delta_\gamma:f\mapsto f(\gamma)$.
\item If there is a $\Oscr$-valued Haar measure $dh$ on $J$ in the sense of Vign\'eras \cite[\S{I}.3]{Vigneras:ModellRep}, then we can embed $\Hcal_\Oscr(J)\hra\Dcal_\Oscr(J)$ by $f\mapsto f dh$. Although the embedding depends on the choice of Haar measure, the image  precisely consists of compactly supported distributions that are invariant under some compact open subgroup of $J$. 
\end{enumerate}

\begin{defnsub}[\emph{cf.} {\cite[\S1.8]{Dat:Finitude}}]
For an $\Oscr$-module $V$, let $\Ccal^\infty(J;V)$ denote the $\Oscr$-module of locally constant morphisms from $J$ to $V$.  We define a natural $\Oscr$-linear morphism
\begin{align*}
\Dcal_\Oscr(J) &\to  \Hom_\Oscr(\Ccal^\infty(J;V),V);\\
\mu &\rightsquigarrow  (\rho\mapsto \int_J \rho d\mu),
\end{align*}
where for any given $\rho \in \Ccal^\infty(J;V)$, we define $\int_J \rho d\mu\in V$ as follows. 
We choose an open compact subset $K\subset J$ where  $\mu\in\Dcal_\Oscr(J)$ is supported; that is, $\int_J f d\mu = 0$ if $f$ is supported in $J\setminus K$. Then $\int_J \rho d\mu\in V$ is the image of $\rho$ under the following map
\[
\xymatrix@1{
\Ccal^\infty(J;V)\ar[r]^-{(\bullet)|_K}&
\Ccal^\infty(K;V) \cong \Ccal^\infty (K;\Oscr)\otimes_\Oscr V \ar[r]^-{\mu\otimes V}&
\Oscr\otimes_\Oscr V = V
}.
\]
 (The isomorphism in the middle can be deduced from the case when $K$ is a finite set, since  $\Ccal^\infty(K;V) $ is the directed union of maps from some finite discrete quotients of $K$.)
Note that  $\int_J \rho d\mu$ does not depend on the choice of support $K$.
\end{defnsub}

Let $\Mod(\Dcal_\Oscr(J))$ denote the category of $\Dcal_\Oscr(J)$-modules.
Then we have a natural functor
\begin{equation}
i_{\Dcal_\Oscr(J)}:\Rep_\Oscr(J)\ra\Mod(\Dcal_\Oscr(J)),
\end{equation}
defined as follows: the underlying $\Oscr$-module of $i_{\Dcal_\Oscr(J)}(V)$ is $V$, and $\mu\in\Dcal_\Oscr(J)$ acts on $v\in V$ as $\mu*v\coloneqq \int_J\rho_v d\mu$ where $\rho_v\in\Ccal^\infty(J;V)$  sends $\gamma\in J$ to $ \gamma\cdot v$. Since $\Dcal_\Oscr(J)$ contains $\Oscr[J^\disc]$,  it follows that $i_{\Dcal_\Oscr(J)}$ is fully faithful. We will often regard $\Rep_\Oscr(J)$ as the full subcategory of $\Mod(\Dcal_\Oscr(J))$ via $i_{\Dcal_\Oscr(J)}$, unless there is risk of confusion.

The functor $i_{\Dcal_\Oscr(J)}$ admits a right adjoint
\[
\infty_{\Dcal_\Oscr(J)}:\Mod(\Dcal_\Oscr(J))\ra \Rep_\Oscr(J),
\]
defined as follows. For any pro-$p$ open subgroup $K\subset J$, we have a natural idempotent $e_K\in\Dcal_\Oscr(J)$ supported in $K$, such that $e_K|_K$ is the Haar measure on $K$ with total volume~$1$. Then we define
 \[
\infty_{\Dcal_\Oscr(J)}(M)\coloneqq \varinjlim_K e_K*M.
\] 
Note that $\infty_{\Dcal_\Oscr(J)}$ is \emph{exact} (contrary to $\infty_J$), which is the motivation for working with modules over $\Dcal_\Oscr(J)$.

Since $\Oscr[J^\disc]\hra\Dcal_\Oscr(J)$, we have a functor $\kappa_J:\Mod(\Dcal_\Oscr(J))\ra \Rep_\Oscr(J^\disc)$. But note that $\infty_{\Dcal_\Oscr(J)}$ may not be equal to $\infty_J\circ\kappa_J$.

\begin{defnsub}\label{def:DerivedZelevinsky}
Note that $\Hcal_\Oscr(J)$ is a bi-$\Dcal_\Oscr(J)$-module, so we can define a contravariant functor $\Dbf^m:\Rep_\Oscr(J)\ra\Mod(\Dcal_\Oscr(J))$ by sending  $V$ to $\Hom_{J}(V,\Hcal_\Oscr(J))$, where the $J$-equivariance is with respect to the left regular $J$-action on $\Hcal_\Oscr(J)$, and the right regular $J$-action gives rise to the $\Dcal_\Oscr(J)$-action on $\Hom_J(V,\Hcal_\Oscr(J))$. More explicitly, for $\mu\in\Dcal_\Oscr(J)$ and $\rho:V\ra \Hcal_\Oscr(J)$, we define the left regular action to be $(\mu*\rho)(v)\coloneqq \rho(v)*\check\mu$, where $\mu\mapsto\check\mu$ is the involution on $\Dcal_\Oscr(J)$ induced by $g\mapsto g\iv$.

We define the following contravariant functor 
\[\Dbf\coloneqq \infty_{\Dcal_\Oscr(J)}\circ\Dbf^m:\Rep_\Oscr(J)\ra\Rep_\Oscr(J),\] 
and we consider the derived functor $\Rrm\Dbf = \infty_{\Dcal_\Oscr(J)} \circ\Rrm\Dbf^m:D^-(\Rep_\Oscr(J))\ra D^+(\Rep_\Oscr(J))$. (Recall that $\infty_{\Dcal_\Oscr(J)}$ is exact.)
\end{defnsub}

Let $K\subset J$ be a compact open subgroup. Then $\Dcal_\Oscr(K)$ is the completed group algebra over $\Oscr$, and can naturally viewed as a $\Oscr$-subalgebra of $\Dcal_\Oscr(J)$.
For any $\Dcal_\Oscr(K)$-module $M$ we set
\begin{equation}
	\cInd^{\Dcal_\Oscr(J)}_{\Dcal_\Oscr(K)} M\coloneqq  \Dcal_\Oscr(J)\otimes_{\Dcal_\Oscr(K)}M.
\end{equation}
In fact, this construction is compatible with compact induction of smooth representations of $K$ via $i_{\Dcal_\Oscr(J)}$ and $\infty_{\Dcal_\Oscr(J)}$; to be precise, we have a natural isomorphism $i_{\Dcal_\Oscr(J)}\circ\cInd^J_K\cong \cInd^{\Dcal_\Oscr(J)}_{\Dcal_\Oscr(K)}\circ i_{\Dcal_\Oscr(K)}$ of functors $\Rep_\Oscr(K)\ra\Mod(\Dcal_\Oscr(J))$, and a natural isomorphism $\infty_{\Dcal_\Oscr(J)}\circ\cInd^{\Dcal_\Oscr(J)}_{\Dcal_\Oscr(K)}\cong \cInd^J_K\circ \infty_{\Dcal_\Oscr(K)}$ of functors $\Mod(\Dcal_\Oscr(K))\ra\Rep_\Oscr(J)$. See \cite[\S1]{Fargues:Zelevinsky} and \cite[\S2]{Mieda:Zelevinsky} for the proof.

\begin{rmksub}\label{rmk:Finiteness}
Under some finiteness hypothesis on $\Rep_\Oscr(J)$, we can show that $\Rrm\Dbf$ satisfies some finiteness. We recall the precise statement  although this will not be crucially used in this paper.

Let $D^+_\ft(\Rep_\Oscr(J))$ (respectively, $D^-_\ft(\Rep_\Oscr(J))$; respectively, $D^b_\ft(\Rep_\Oscr(J))$) denote the triangulated subcategories of the derived category generated by bounded-below complexes  (respectively, 
bounded-above complexes; respectively, bounded complexes) whose cohomology is finitely generated smooth $J$-representations.

Let us make the following finiteness assumptions on $\Rep_\Oscr (J)$:
\begin{description}
\item[local noetherianness]
Any subrepresentation of finitely generated smooth $J$-representation over $\Oscr$ is again finitely generated. In particular, $\Rep_\Oscr(J)$ is locally noetherian.
\item[finite projective dimension]
Any finitely generated smooth $J$-representation over $\Oscr$ admits a finitely generated projective resolution with finitely many terms. (Note that this hypothesis is only satisfied when $\Oscr$ is a field.)
\end{description}
Both assumptions are satisfied when $\Oscr$ is a field of characteristic~$0$ and $J$ is a $p$-adic reductive group, which can be read off from \cite{Bernstein:Centre} (or alternatively, \cite{SchneiderStuhler:CohSymmSp}). If $\Oscr$ is an artin local ring of positive residue characteristic $l\ne p$, then the ``local noetherianness'' hypothesis is satisfied if $J$ is either a classical group with $p>2$ or of semi-simple rank $\leqslant1$ (\emph{cf.} \cite[Th\'eor\`eme~1.5]{Dat:Finitude}). If furthermore $\Oscr$ is a field of positive \emph{banal} characteristic (in other words, the index of any open subgroup inside an open compact subgroup of $J$ is invertible in $\Oscr$), then the ``finite projective dimension'' assumption can be verified by repeating the argument in \cite{SchneiderStuhler:CohSymmSp}.

Now, let us list some known finiteness results for $\Rrm\Dbf$:
\begin{enumerate}
	\item (\emph{cf.} \cite[Corollaire~1.8]{Fargues:Zelevinsky}) Under the ``local noetherianness'' hypothesis, the derived functor $\Rrm\Dbf$ restricts to $\Rrm\Dbf:D^+_\ft(\Rep_\Oscr(J)) \ra D^-_\ft(\Rep_\Oscr(J))$.
	\item (\emph{cf.} \cite[Proposition~1.18]{Fargues:Zelevinsky}) Under the ``local noetherianness'' and ``finite  projective dimension'' hypotheses, $\Rrm\Dbf$ restricts to $\Rrm\Dbf:D^b_\ft(\Rep_\Oscr(J)) \ra D^b_\ft(\Rep_\Oscr(J))$. Furthermore, the natural morphism of functors $\id \ra \Rrm\Dbf\circ\Rrm\Dbf$ is an isomorphism.
\end{enumerate}
\end{rmksub}

Let us now state the theorem of Mieda's:
\begin{thmsub}[{\cite[Theorems~4.1, 4.11]{Mieda:Zelevinsky}}]\label{thm:Poincare}
	Let $\Xfr$ be a formal scheme locally formally of finite type over $O$, and let $\Oscr$ be $\overline\QQ_l$, a finite extension of $\QQ_l$, or an artinian quotient of a finite extension of $\ZZ_l$. Let us assume the following:
	\begin{enumerate}
		\item\label{thm:Poincare:sm} The adic space generic fibre $\Xcal\coloneqq \Xfr^\ad_C$ is smooth pure of dimension~$d$.
		\item\label{thm:Poincare:cont} The locally pro-$p$ group $J$ acts continuously on $\Xfr$ in the sense of \cite[D\'efinition~2.3.10]{Fargues:AsterisqueLLC}.
		\item\label{thm:Poincare:ParProper} The underlying reduced scheme $\Xfr^\red$ is partially proper over $\kappa$.
		\item\label{thm:Poincare:Finiteness} There exists a quasi-compact open subset $U\subset\Xfr^\red$ such that $\Xfr^\red = \bigcup_{g\in J}gU$, and the subgroup $\{g\in J|\ g\overline U\cap \overline U \ne\emptyset\}\subset J$ is compact.
		\item\label{thm:Poincare:LocAlg} The formal scheme $\Xfr$ is locally algebraisable in the sense of \cite[Definition~3.19]{Mieda:FormalNearbyCycles}; in other words, there exists a Zariski open covering $\{\Ufr\}$ of $\Xfr$ such that each $\Ufr$ can be obtained as a completion of some $O$-scheme.
	\end{enumerate}
	Then we have a natural $J$-equivariant isomorphism for each $i\in \ZZ$: 
	\[
	\coh{2d-i}_{\Ccal_\Xfr}(\Xcal,\Oscr)(d) \cong \Rrm^{-i}\Dbf(\Rrm\Gamma_c(\Xcal,\Oscr)).
	\]
	(In particular, the isomorphism does not depend on the choice of $U$ in (\ref{thm:Poincare:Finiteness}).)
\end{thmsub}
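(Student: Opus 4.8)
The plan is to deduce the statement from Huber's non-equivariant Poincar\'e duality for smooth adic spaces, using the quasi-compactness-up-to-$J$ hypothesis~(\ref{thm:Poincare:Finiteness}) to cut $\Xcal$ into quasi-compact pieces and the formalism of compactly supported distributions to keep track of the $J$-action; this is Mieda's argument \cite{Mieda:Zelevinsky}, which we only outline. First I would reduce to the case that $\Oscr$ is a $0$-dimensional Gorenstein ring in which $p$ is invertible: the cases $\Oscr=\overline\QQ_l$ or a finite extension of $\QQ_l$ follow by passing to a limit over artinian quotients, using the exactness of $\infty_{\Dcal_\Oscr(J)}$, the compatibility of $\Rrm\Dbf^m$ with the transition maps, and the projective-system definition of $\Rrm\Gamma_c$ and $\Rrm\Gamma_{\Ccal_\Xfr}$ for $l$-adic sheaves (one must recall that $\Rrm\Gamma_c$ of an $l$-adic sheaf is not the naive limit of the torsion ones, cf.\ the footnote to Definition~\ref{def:CompSuppCoh}, but the argument goes through because the support conditions are imposed on the projective systems). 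Over such $\Oscr$ the functor $\Hom_\Oscr(-,\Oscr)$ is exact, which is what makes the homological algebra below work.

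Next, using~(\ref{thm:Poincare:Finiteness}) I would choose a compact open subgroup $K\subseteq J$ and a $K$-stable quasi-compact open $U\subseteq\Xfr^\red$ with $\Xfr^\red=\bigcup_{g\in J}gU$ and $K=\Stab_J(U)$ (this is possible because $\{g\mid g\overline U\cap\overline U\neq\emptyset\}$ is compact, so $\Stab_J(U)$ is compact open, and $U$ may be enlarged so that $\Stab_J(U)=K$). Then $\{\spbf^{-1}(gU)\}_{gK\in J/K}$ is a locally finite open cover of $\Xcal$, and by~(\ref{thm:Poincare:LocAlg}) and~(\ref{thm:Poincare:sm}) each finite-intersection tube $\Ucal_\sigma:=\bigcap_{g\in\sigma}\spbf^{-1}(gU)$ is a smooth adic space pure of dimension~$d$ (or empty) which is an open rigid generic fibre, so that Huber's Poincar\'e duality applies to it. The homological \v{C}ech resolution of the constant sheaf attached to this cover, together with $\Rrm\Gamma_c(\Xcal,(j_{\Ucal_\sigma})_!\Oscr)=\Rrm\Gamma_c(\Ucal_\sigma,\Oscr)$, expresses $\Rrm\Gamma_c(\Xcal,\Oscr)$ in $D^+(\Rep_\Oscr(J))$ as the totalisation of a \v{C}ech complex whose term in degree $-p$ is $\bigoplus_{|\sigma|=p+1}\Rrm\Gamma_c(\Ucal_\sigma,\Oscr)$, which grouped by $J$-orbits of simplices becomes a finite direct sum of modules of the form $\cInd_{K_\sigma}^J\Rrm\Gamma_c(\Ucal_\sigma,\Oscr)$. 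Dually, Proposition~\ref{prop:VanCycSpecSeq} rewrites $\Rrm\Gamma_{\Ccal_\Xfr}(\Xcal,\Oscr)=\Rrm\Gamma_c(\Xfr^\red,\Rrm\Psi_\Xfr\Oscr)$, and the same cover of $\Xfr^\red$ expresses it as the totalisation of the analogous \v{C}ech complex with $\Rrm\Gamma(\Ucal_\sigma,\Oscr)$ in place of $\Rrm\Gamma_c(\Ucal_\sigma,\Oscr)$ --- the appearance here of direct sums rather than direct products encoding the fact that $\Ccal_\Xfr$ is generated by the quasi-compact subsets $\spbf^{-1}(\overline{gU})$.

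It then remains to apply $\Rrm\Dbf$ term by term. For a compact open $K'\subseteq J$ and a bounded complex $W^\bullet$ of $\Oscr$-free smooth $K'$-representations, Frobenius reciprocity for $\cInd_{K'}^J$ together with the compatibilities of $\cInd$ with $i_{\Dcal_\Oscr(J)}$ and $\infty_{\Dcal_\Oscr(J)}$ recalled above gives $\Rrm\Dbf(\cInd_{K'}^J W^\bullet)=\cInd_{K'}^J\Rrm\Dbf_{K'}(W^\bullet)$, and for $K'$ compact and $\Oscr$ Gorenstein the $K'$-equivariant duality $\Rrm\Dbf_{K'}$ is simply $\Rrm\Hom_\Oscr(-,\Oscr)$ with the contragredient action (here the exactness of $\Hom_\Oscr(-,\Oscr)$ enters). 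On each tube $\Ucal_\sigma$, Huber's Poincar\'e duality \cite{Huber:EtCohBook} gives a $K_\sigma$-equivariant isomorphism $\Rrm\Hom_\Oscr(\Rrm\Gamma_c(\Ucal_\sigma,\Oscr),\Oscr)\cong\Rrm\Gamma(\Ucal_\sigma,\Oscr)(d)[2d]$. Applying $\Rrm\Dbf$ to the \v{C}ech complex computing $\Rrm\Gamma_c(\Xcal,\Oscr)$, substituting these isomorphisms on every term, and identifying the outcome with the \v{C}ech complex computing $\Rrm\Gamma_{\Ccal_\Xfr}(\Xcal,\Oscr)(d)[2d]$ yields the asserted isomorphism; its independence of $U$ (and of $K$) follows because any two admissible choices are refined by a common one and all the identifications are natural for refinements.

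The main obstacle is the last step: one must verify that $\Rrm\Dbf$ interchanges the two \v{C}ech complexes \emph{as complexes in the derived category of smooth $J$-representations} --- compatibly with the differentials and with the full $J$-action, not merely term by term --- and, what is essentially the heart of the matter, that under $\Rrm\Dbf$ the colimit over quasi-compact supports built into $\Rrm\Gamma_c$ is exchanged for the support condition $\Ccal_\Xfr$ (the two support families $c$ and $\Ccal_\Xfr$ genuinely differ, cf.\ Example~\ref{exa:SuppSet}); this is where Proposition~\ref{prop:VanCycSpecSeq}, the exactness of $\infty_{\Dcal_\Oscr(J)}$, and the Gorenstein hypothesis on $\Oscr$ are all indispensable. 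The underlying analytic input, Huber's Poincar\'e duality for smooth adic spaces, is used here as a black box.
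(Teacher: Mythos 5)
Your outline is correct and follows essentially the same route as the source: the paper does not prove this theorem itself but cites Mieda's Theorems 4.1 and 4.11, and the strategy it sketches in Remark~\ref{rmk:Poincare} --- building explicit $J$-equivariant \v{C}ech-type complexes from the locally finite cover $\{g\cdot\spbf\iv(\overline U)^\circ\}_{g\in J/K}$ that compute $\Rrm\Gamma_{\Ccal_\Xfr}(\Xcal,\Oscr)(d)[2d]$ and $\Rrm\Dbf(\Rrm\Gamma_c(\Xcal,\Oscr))$ and then matching them via Huber's duality on the quasi-compact pieces --- is exactly what you describe. You also correctly isolate the genuinely hard step (compatibility of the matching with the differentials and the full $J$-action, and the exchange of the support families $c$ and $\Ccal_\Xfr$ under $\Rrm\Dbf$), which is the content of Mieda's proof and is reasonably treated as a black box here.
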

\begin{rmksub}\label{rmk:Poincare}
When $\Xfr$ is proper and $J$ is trivial, the above theorem reduces to the usual Poincar\'e duality (as $\Xcal$ is required to be smooth and equi-dimensional).	

Let us comment on the assumptions on $\Xfr$.
We need the $J$-action on $\Xcal$ to extend continuously to the formal model $\Xfr$ in order to be able to define the natural $J$-action on $\coh i_{\Ccal_\Xfr}(\Xcal,\Oscr)$. 

The last two conditions of Theorem~\ref{thm:Poincare} are to ensure that $\Rrm\Gamma_c(\Xcal,\Oscr)\in D^b_\ft(\Rep_\Oscr(J))$. If $K\subset J$ is a compact open subgroup stabilising $\spbf\iv(\overline U)^\circ$ (which exists by Berkovich's Key Lemma), then Theorem~\ref{thm:Poincare}(\ref{thm:Poincare:Finiteness}) ensures that $\{g\cdot\spbf\iv(\overline U)^\circ\}_{g\in J/K}$ is a locally finite open covering of $\Xcal$. Using this, we consider a \v{C}ech resolution of the constant sheaf $\Oscr$; \emph{cf.} \cite[Proposition~3.27 i)]{Mieda:Zelevinsky}. Now, by \v{C}ech-to-derived functor spectral sequence, the desired claim $\Rrm\Gamma_c(\Xcal,\Oscr)\in D^b_\ft(\Rep_\Oscr(J))$ reduces to showing that for any quasi-compact closed subset $Z\subset\Xfr^\red$, we have 
\[
\Rrm\Gamma_c(\spbf\iv(Z)^\circ,\Oscr)\in D^b_{\ft}(\Mod(\Oscr));
\]
\emph{cf.} \cite[Remark~4.12]{Mieda:Zelevinsky}.
It is plausible that this statement should hold in general, but we only know how to prove this claim when $\Xfr$ is locally algebraisable; \emph{cf.} \cite[Proposition~3.21, Theorem~4.35]{Mieda:FormalNearbyCycles}.

Let us now outline the main idea of proof of Theorem~\ref{thm:Poincare}, following \cite[\S~4]{Mieda:Zelevinsky}. Just like the usual non-equivariant cohomology, there are some convenient $J$-equivariant resolutions of $\Fscr\in\Oscr\hyph\widetilde\Xcal_\et/J$, such as the Godement resolution (\emph{cf.} \cite[\S3.3.3]{Mieda:Zelevinsky}) and some variant of \v{C}ech resolutions (\emph{cf.} \cite[Proposition~3.27]{Mieda:Zelevinsky}). The theorem is proved by explicitly constructing certain complexes that compute $\coh{2d-i}_{\Ccal_\Xfr}(\Xcal,\Oscr)(d)$ and $\Rrm^{-i}\Dbf(\Rrm\Gamma_c(\Xcal,\Oscr))$, and match them.
In a sense, what is actually proved is the following (stronger) isomorphism: 
\[
\Rrm\Gamma_{\Ccal_\Xfr}(\Xcal,\Oscr)(d)[2d] \cong \Rrm\Dbf(\Rrm\Gamma_c(\Xcal,\Oscr)) \in D^b_\ft(\Rep_\Oscr(J)).	
\]
Note that we have not defined $\Rrm\Gamma_{\Ccal_\Xfr}(\Xcal,\Oscr)$ in $D^+(\Rep_\Oscr(J))$ (but only in $D^b_\ft(\Mod(\Oscr))$), but in the proof,  Mieda actually constructed a complex of smooth $J$-representations which represents $\Rrm\Gamma_{\Ccal_\Xfr}(\Xcal,\Oscr)$.
\end{rmksub}

\subsection{Derived duality on towers with Hecke correspondence}\label{subsec:ZelevinskyTower}

Let $G$ and $J$ be locally pro-$p$ groups.  In this section, we always work with $\overline{\QQ}_l$-coefficients and write $
\Hcal(J) = \Hcal_{\overline{\QQ}_l}(J)$ and $\Dcal(J) = \Dcal_{\overline{\QQ}_l}(J)$, etc. To avoid confusion, we specify the group in the notation 
\[\Rrm\Dbf_J(V)\coloneqq \infty_{\Dcal(J)}\circ\RHom_{\Dcal(J)}(i_{\Dcal(J)}(V),\Hcal(J)),\]
instead of denoting it by $\Rrm\Dbf$ as in the previous subsections.

We consider  $V\in\Rep(G\times J)$. In the intended situation, $V$ will be obtained from the compactly supported cohomology of some Rapoport-Zink tower, where $G$ and $J$ are some $p$-adic reductive groups. 
Given any $\rho\in\Rep_{\overline{\QQ}_l}(J)$,  we would like to extract some kind of ``derived $\rho$-isotypic parts'' from $V$ as \emph{smooth} $G$-representations. Note that the natural $G$-action on $\Hom_J(V,\rho)$ is not necessarily smooth, and the ``smoothening'' functor $\infty_G$ is not exact in general.
Under some suitable finiteness assumption, we will now fix this problem using a similar idea to the definition of $\Rrm\Dbf_J$.

We first claim that we have a natural isomorphism
\begin{equation}\label{eqn:DistOnProdGrps}
\Dcal(G)\otimes\Dcal(J) \riso \Dcal(G\times J).
\end{equation}
Indeed, this isomorphism essentially boils down to the following natural isomorphism 
\begin{multline*}
\Hcal(G\times J) = \varinjlim_{\Ksf_G\times\Ksf_J} \Hcal(G\times J//\Ksf_G\times\Ksf_J) \\
\cong \big(\varinjlim_{\Ksf_G} \Hcal(G//\Ksf_G)\big)\otimes\big(\varinjlim_{\Ksf_J} \Hcal(J//\Ksf_J)\big) = 
\Hcal(G)\otimes\Hcal(J),
\end{multline*}
where the nontrivial isomorphism is obtained from $(\Ksf_G\times\Ksf_J)\backslash (G\times J) /(\Ksf_G\times\Ksf_J) = (\Ksf_G\backslash G/\Ksf_G)\times (\Ksf_J\backslash J/\Ksf_J)$.

To simplify the notation, we view $\Rep(J)$ and $\Rep(G\times J)$ as full subcategories of $\Mod(\Dcal(J))$ and $\Mod(\Dcal(G\times J))$ via $i_{\Dcal(J)}$ and $i_{\Dcal(G\times J)}$, respectively. 
By (\ref{eqn:DistOnProdGrps}) we may view $V$ (or rather, $\iota_{\Dcal(G\times J)}V$) as a module over $\Dcal(G)\otimes\Dcal(J)$. Therefore, for any $\rho\in\Rep(J)$ the following extension group
\[
\Ext^i_{\Dcal(J)}(V,\rho) (= \Ext^i_{\Dcal(J)}(\iota_{\Dcal(G\times J)}V,\iota_{\Dcal(J)}\rho) )
\]
has a natural structure of a $\Dcal(G)$-module.

Let us now make the following definition:
\begin{defnsub}\label{def:Ecal}
For any $\rho\in\Rep(J)$, we define 
\[
\Ecal^i(V,\rho)\coloneqq \infty_{\Dcal(G)}\Ext^i_{\Dcal(J)}(V,\rho) \in \Rep_G(\overline{\QQ}_l).
\]
\end{defnsub}

\begin{propsub}\label{prop:Infty2Fin}
Let us assume the following properties:
\begin{enumerate}
\item\label{subsec:ZelevinskyHecke:G} For any open pro-$p$ subgroup $K\subset G$, $V^K$ is finitely generated smooth $J$-representation.
\item\label{subsec:ZelevinskyHecke:J} For a finitely generated smooth $J$-representation, any $J$-subrepresentation is also finitely generated. (\emph{Cf.} the ``local noetherianness'' hypothesis in Remark~\ref{rmk:Finiteness}.)
\end{enumerate}
Then for any  $\rho\in\Rep(J)$ we have
\[
\Ecal^i(V,\rho)^K = \Ext^i_J(V^K,\rho)
\]
In particular, we have
\[
\Ecal^i(V,\rho) \cong \varinjlim_{K\subset G}\Ext^i_J(V^K,\rho),
\]
and it can be identified with the subspace of $G$-smooth vectors in $\Ext^i_J(V,\rho)$.
\end{propsub}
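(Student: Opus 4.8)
The plan is to deduce the statement from the exactness of $\infty_{\Dcal(G)}$ together with the defining adjunction $i_{\Dcal(G)} \dashv \infty_{\Dcal(G)}$, by computing $\infty_{\Dcal(G)}$ on the $\Dcal(J)$-$\Ext$-groups term by term. The point is that $\infty_{\Dcal(G)}(M) = \varinjlim_K e_K * M$, where $K$ runs over pro-$p$ open subgroups of $G$ and $e_K \in \Dcal(G)$ is the idempotent attached to the normalised Haar measure on $K$; and $e_K * M$ is precisely the $K$-invariants $M^K$ when $M$ carries a smooth $G$-action, but in general is the largest submodule on which $e_K$ acts as the identity. So the first step is to identify $e_K * \Ext^i_{\Dcal(J)}(V,\rho)$ with $\Ext^i_{\Dcal(J)}(V^K,\rho)$, or rather with $\Ext^i_J(V^K,\rho)$ once we know $V^K$ is smooth.

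\textbf{Step 1: reduce to a statement about $e_K$.} Since $e_K$ is an idempotent in $\Dcal(G)$, and the $\Dcal(G)$-action on $\Ext^i_{\Dcal(J)}(V,\rho)$ commutes with the $\Dcal(J)$-action (both come from the $\Dcal(G)\otimes\Dcal(J)$-module structure on $V$ via \eqref{eqn:DistOnProdGrps}), multiplication by $e_K$ is a projector on $\Ext^i_{\Dcal(J)}(V,\rho)$ whose image is $e_K*\Ext^i_{\Dcal(J)}(V,\rho)$. I would like to commute $e_K*(-)$ past the derived functor $\Ext^i_{\Dcal(J)}(-,\rho)$. The cleanest way: $e_K$ is a central-enough idempotent in the sense that $\Dcal(G)\otimes\Dcal(J) = \Dcal(G)\otimes_{\overline\QQ_l}\Dcal(J)$, so $e_K\otimes 1$ is an idempotent in $\Dcal(G\times J)$, and $e_K * V = (e_K\otimes 1)V = V^K$ as a $\Dcal(J)$-module (here using that $V$ is \emph{smooth} as a $G$-representation, so $e_K$ acts as projection onto $V^K$; in particular $V^K$ is a direct summand of $V$ as a $\Dcal(J)$-module, via the splitting $V = V^K \oplus (1-e_K)V$). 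Because $V^K$ is a $\Dcal(J)$-direct summand of $V$, we get $e_K * \Ext^i_{\Dcal(J)}(V,\rho) = \Ext^i_{\Dcal(J)}(V^K,\rho)$ functorially in $i$ — the projector $e_K$ on $\Ext$ is induced by the projector $e_K$ on $V$, and an idempotent endomorphism of $V$ splits off a complemented submodule whose $\Ext$ is the corresponding summand.

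\textbf{Step 2: identify $\Ext_{\Dcal(J)}$ with $\Ext_J$ and pass to the limit.} Now $V^K$ is finitely generated smooth as a $J$-representation by hypothesis~(\ref{subsec:ZelevinskyHecke:G}). For a smooth $J$-representation $W$ and any $\rho\in\Rep(J)$, one has $\Ext^i_{\Dcal(J)}(i_{\Dcal(J)}W, i_{\Dcal(J)}\rho) \cong \Ext^i_J(W,\rho)$: this is the statement that $i_{\Dcal(J)}$ is fully faithful and exact, with $\Rep(J)$ closed under subquotients and extensions inside $\Mod(\Dcal(J))$, so it identifies $\Ext$-groups — here one uses that a projective resolution of $W$ in $\Rep(J)$ stays acyclic in $\Mod(\Dcal(J))$, which follows because $i_{\Dcal(J)}$ has the exact right adjoint $\infty_{\Dcal(J)}$. (Alternatively invoke \cite[\S1]{Fargues:Zelevinsky}, \cite[\S2]{Mieda:Zelevinsky}, where exactly this comparison is established.) Combining Steps 1 and 2 gives $\Ecal^i(V,\rho)^K = e_K * \Ecal^i(V,\rho) = e_K*\Ext^i_{\Dcal(J)}(V,\rho) = \Ext^i_{\Dcal(J)}(V^K,\rho) = \Ext^i_J(V^K,\rho)$, which is the first claimed identity. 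The second formula is then immediate: $\Ecal^i(V,\rho) = \infty_{\Dcal(G)}\Ext^i_{\Dcal(J)}(V,\rho) = \varinjlim_K e_K*\Ext^i_{\Dcal(J)}(V,\rho) = \varinjlim_{K\subset G}\Ext^i_J(V^K,\rho)$, and since $\infty_{\Dcal(G)}(M)$ is by construction the maximal smooth $G$-subrepresentation of (the $\Oscr[G^{\disc}]$-module underlying) $M$ — here $M = \Ext^i_{\Dcal(J)}(V,\rho)$ — it is identified with the $G$-smooth vectors in $\Ext^i_J(V,\rho)$.

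\textbf{Main obstacle.} The subtle point, and the place where hypothesis~(\ref{subsec:ZelevinskyHecke:J}) (local noetherianness) is needed, is Step 2: I need $\Ext^i_{\Dcal(J)}(V^K, \rho) = \Ext^i_J(V^K,\rho)$, i.e.\ that computing $\Ext$ of the smooth representation $V^K$ inside the larger abelian category $\Mod(\Dcal(J))$ agrees with computing it inside $\Rep(J)$. This is clear in degree $0$ (full faithfulness of $i_{\Dcal(J)}$) but in higher degrees requires that $\Rep(J)$ is a "nice" subcategory — one needs that $V^K$ admits a resolution by objects of $\Rep(J)$ that remain $\Ext$-acyclic in $\Mod(\Dcal(J))$; the natural candidates are the $\cInd^J_{K'}$-modules, for which compatibility of $\cInd^{\Dcal(J)}_{\Dcal(K')}$ with $\cInd^J_{K'}$ (recalled in the excerpt) gives exactly the needed acyclicity, provided $V^K$ is finitely generated so that such a resolution by finitely many "standard" projectives exists at each stage — and finite generation of the kernels at each stage is where local noetherianness enters. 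I would cite \cite[\S1]{Fargues:Zelevinsky} for this comparison rather than redo it. The rest is formal manipulation of idempotents and colimits.
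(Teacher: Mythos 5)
Your proposal is correct and follows essentially the same route as the paper: commute the idempotent $e_K$ past $\Ext^i_{\Dcal(J)}(-,\rho)$ to get $\Ext^i_{\Dcal(J)}(V^K,\rho)$, invoke the comparison $\Ext^i_{\Dcal(J)}(V^K,\rho)\cong\Ext^i_J(V^K,\rho)$ for finitely generated smooth $V^K$ (the paper's Lemma~\ref{lem:Ext}, where both hypotheses enter exactly as you say), and pass to the colimit. The only cosmetic difference is in the identification $\Ecal^i(V,\rho)^K=\Ext^i_J(V^K,\rho)$: you use $\Ecal^i(V,\rho)^K=e_K*\infty_{\Dcal(G)}(M)=e_K*M$ directly, whereas the paper computes $\varinjlim_{K'\subset K}(e_{K'}*M)^{K/K'}$ over normal $K'$ and commutes finite-group invariants with $\Ext$; both are valid.
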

We remark that (\ref{subsec:ZelevinskyHecke:G}) is satisfied if $V^K = \coh i_c(\Xfr_{K,C}^{\red},\overline{\QQ}_l)$ where $\Xfr_K$ is a formal scheme with continuous $J$-action that satisfies the conditions in Theorem~\ref{thm:Poincare}. In practice,  (\ref{subsec:ZelevinskyHecke:J}) is satisfied if $J$ is a $p$-adic reductive group; \emph{cf.} Remark~\ref{rmk:Finiteness}.
\begin{proof}
To ease the notation, we continue to identify $V\in\Rep(G\times J)$ with $\iota_{\Dcal(G\times J)}(V)$.  We have
\[
e_K*\Ext^i_{\Dcal(J)}(V,\rho) \cong \Ext^i_{\Dcal(J)}(e_K*V,\rho) = \Ext^i_{\Dcal(J)}(V^K,\rho) \cong \Ext^i_J(V^K,\rho).
\]
Indeed, the first isomorphism holds since $e_K\in\Dcal(G)$ is an idempotent, and the last isomorphism follows from Lemma~\ref{lem:Ext} below. Recall that for any smooth $G$-representation $V$ we have $V^K = e_K*V$. 

This shows that 
\[
\Ecal^i(V,\rho) = \varinjlim_K e_K*\Ext^i_{\Dcal(J)}(V,\rho)  \cong \varinjlim_K\Ext^i_J(V^K,\rho).
\]

Now it remains to show that $\Ecal^i(V,\rho)^K=\Ext^i_J(V^K,\rho) $. Indeed, we have 
\[
\Ecal^i(V,\rho)^K = \varinjlim_{K'\subset K}(e_{K'}*\Ext^i_{\Dcal(J)}(V,\rho))^{K/K'} \cong  \varinjlim_{K'\subset K}\Ext^i_J(V^{K'},\rho)^{K/K'} = \Ext^i_J(V^K,\rho),
\]
where $K'$ runs through open normal subgroups of $K$. (To see the last isomorphism, since the invariance under the finite group action on $\QQ$-vector space is exact, we can commute the $K/K'$-invariance and $\Ext^i_J$.)
This concludes the proof.
\end{proof}

\begin{lemsub}\label{lem:Ext}
Assume that $\Rep(J)$ satisfies Proposition~\ref{prop:Infty2Fin}(\ref{subsec:ZelevinskyHecke:J}).
Then for any $V,\rho\in\Rep(J)$ with $V$ finitely generated,  we have a natural isomorphism
\[
 \RHom_J(V,\rho) \cong \RHom_{\Dcal(J)}(V,\rho) \cong \Rrm\Dbf_J(V)\otimes^\Lrm_{\Hcal(J)} \rho.
\]
\end{lemsub}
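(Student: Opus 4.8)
The plan is to prove the two isomorphisms separately: the first is essentially formal, and the second is a projection-formula argument combined with the fact that smoothening is invisible after tensoring over $\Hcal(J)$ with a smooth module.

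For the first isomorphism I would exploit that the fully faithful functor $i_{\Dcal(J)}\colon\Rep(J)\to\Mod(\Dcal(J))$ is exact (a short exact sequence of smooth representations is the same datum as a short exact sequence of $\Dcal(J)$-modules all of whose terms are smooth) and admits the \emph{exact} right adjoint $\infty_{\Dcal(J)}$; consequently $i_{\Dcal(J)}$ sends projective objects to projective $\Dcal(J)$-modules. Now $\Rep(J)$ has enough projectives: for $K$ a pro-$p$ open subgroup of $J$ the module $\cInd_K^J\mathbf{1}$ is projective because $\Hom_J(\cInd_K^J\mathbf{1},-)=(-)^K=e_K*(-)$ is exact, and it is finitely generated over $\Dcal(J)$, being already cyclic over $\overline\QQ_l[J^\disc]$. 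Using that $V$ is finitely generated and the ``local noetherianness'' hypothesis, one builds a resolution $P_\bullet\to V$ by finite direct sums of such $\cInd_K^J\mathbf{1}$: surject a finite sum onto $V$, note the kernel is again finitely generated, and iterate. Applying $i_{\Dcal(J)}$ gives a resolution of $V$ by finitely generated projective $\Dcal(J)$-modules, and full faithfulness gives $\Hom_{\Dcal(J)}(P_i,\rho)=\Hom_J(P_i,\rho)$ for every $i$; hence $\RHom_{\Dcal(J)}(V,\rho)\cong\RHom_J(V,\rho)$.

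For the second isomorphism I would run the chain
\[
\RHom_{\Dcal(J)}(V,\rho)\cong\RHom_{\Dcal(J)}(V,\Hcal(J)\otimes^\Lrm_{\Hcal(J)}\rho)\cong\RHom_{\Dcal(J)}(V,\Hcal(J))\otimes^\Lrm_{\Hcal(J)}\rho\cong\Rrm\Dbf_J(V)\otimes^\Lrm_{\Hcal(J)}\rho.
\]
The first step is the identification $\rho\cong\Hcal(J)\otimes^\Lrm_{\Hcal(J)}\rho$, valid because $\rho$ is a nondegenerate $\Hcal(J)$-module and $\Hcal(J)$ is free of rank one over itself. The middle step is the projection formula: using the resolution $i_{\Dcal(J)}(P_\bullet)$ of $V$ by finitely generated projective $\Dcal(J)$-modules from the first part and a resolution $Q_\bullet\to\rho$ by $\Hcal(J)$-modules of the shape $\bigoplus\Hcal(J)e_K$, the canonical map $\Hom_{\Dcal(J)}(P_a,\Hcal(J))\otimes_{\Hcal(J)}Q_b\to\Hom_{\Dcal(J)}(P_a,Q_b)$ is bijective for every $(a,b)$ since $P_a$ is a direct summand of a finite free module, so the two total complexes coincide; one only has to check that this totalization computes the respective derived functors, which follows from the convergence theorem for spectral sequences of bounded-below exhaustive filtrations. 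The last step uses that for any complex $M$ of $\Dcal(J)$-modules the cone of $\infty_{\Dcal(J)}(M)\to M$ is termwise a module $N$ with $\infty_{\Dcal(J)}(N)=0$, and any such $N$ satisfies $N\otimes_{\Hcal(J)}\Hcal(J)e_K=Ne_K=0$, hence $N\otimes^\Lrm_{\Hcal(J)}\rho=0$; together with exactness of $\infty_{\Dcal(J)}$ (so that it commutes with the derived functor) this yields $\Rrm\Dbf_J(V)\otimes^\Lrm_{\Hcal(J)}\rho\cong\RHom_{\Dcal(J)}(V,\Hcal(J))\otimes^\Lrm_{\Hcal(J)}\rho$.

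The step I expect to be the main obstacle is not conceptual but organizational: since only local noetherianness, and not finite projective dimension, is assumed, the resolution $i_{\Dcal(J)}(P_\bullet)$ may be unbounded, so in the second isomorphism one must track carefully which totalizations are used and which left/right $\Dcal(J)$- and $\Hcal(J)$-module structures (and which occurrence of the involution of $\Dcal(J)$ defining $\Rrm\Dbf^m_J$) appear at each step, and verify convergence of the spectral sequences invoked. Everything else reduces to formal manipulation of the adjunction $(i_{\Dcal(J)},\infty_{\Dcal(J)})$ and the nondegeneracy of smooth $\Hcal(J)$-modules.
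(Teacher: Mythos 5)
Your argument is correct, and it reaches the second isomorphism by a genuinely different route from the paper. Both proofs rest on the same resolution $P_\bullet\to V$ by finite direct sums of $\cInd_K^J\mathbf{1}$ (built from finite generation plus local noetherianness), but from there the paper computes both ends of the chain explicitly on compact inductions: it quotes Fargues for $\Rrm\Dbf_J(\cInd_K^J\tau)\cong\cInd_K^J(\tau^*)$ (projectivity giving the vanishing of the higher cohomology of $\Rrm\Dbf$), and then matches $\cInd_K^J(\tau^*)\otimes_{\Hcal(J)}\rho\cong(\tau^*\otimes\rho)_K$ against $\Hom_J(\cInd_K^J\tau,\rho)\cong(\tau^*\otimes\rho)^K$ via invariants $=$ coinvariants in characteristic $0$. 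You instead pass through $\RHom_{\Dcal(J)}$ and use a projection formula together with the observation that the cokernel of $\infty_{\Dcal(J)}(M)\hookrightarrow M$ is killed by every $e_K$ and hence dies under $\otimes^\Lrm_{\Hcal(J)}\rho$. Your route is more module-theoretic and avoids citing Fargues's computation of $\Dbf$ on compact inductions; the paper's is shorter and makes the identity $(\tau^*\otimes\rho)_K\cong(\tau^*\otimes\rho)^K$ the visible crux. The paper also does not separately address the first isomorphism, which your adjunction argument ($i_{\Dcal(J)}$ exact, fully faithful, with exact right adjoint, hence preserving projectives) supplies cleanly.

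The totalization worry you flag at the end is real but avoidable. Since $i_{\Dcal(J)}(\cInd_K^J\mathbf{1})\cong\Dcal(J)e_K$ is a direct summand of the free module $\Dcal(J)$, each term $\Hom_{\Dcal(J)}(P_a,\Hcal(J))$ is a finite direct sum of modules of the form $e_K\Hcal(J)$, and $e_K\Hcal(J)\otimes_{\Hcal(J)}\rho\cong e_K*\rho=\rho^K$ is exact in $\rho$; so these terms are flat over $\Hcal(J)$, the derived tensor with $\rho$ is computed termwise without resolving $\rho$, and the termwise projection formula gives $\Hom_{\Dcal(J)}(P_\bullet,\Hcal(J))\otimes_{\Hcal(J)}\rho\cong\Hom_{\Dcal(J)}(P_\bullet,\rho)$ as complexes. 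No double complex or spectral sequence is needed; what remains is only the (routine but necessary) bookkeeping of left/right module structures through the involution defining $\Dbf^m$.
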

\begin{proof}
Let us first assume that  $V = \cInd_K^J\tau$ for some pro-$p$ open subgroup $K\subset J$ and a finitely generated representation $\tau\in\Rep(K)$. Note that $\cInd_K^J\tau$ is a projective object in $\Rep(J)$; indeed, it suffices to show that $\tau$ is a projective object in $\Rep(K)$ by Frobenius reciprocity, and this follows since the category of  finitely generated smooth $K$-representations over $\overline\QQ_\ell$ is semi-simple ($K$ being profinite).

Then we have 
\begin{equation}\label{eqn:c-ind}
\Rrm\Dbf_J( \cInd_K^J\tau) \cong   \cInd_K^J(\tau)^*;
\end{equation}
indeed, the vanishing of the higher cohomology of the left hand side follows from
 \cite[Corollaire~1.8]{Fargues:Zelevinsky} together with projectivity of  $\cInd_K^J\tau$, and the computation of the zeroth cohomology follows from  \cite[Proposition~1.16]{Fargues:Zelevinsky}.

Next, we show the following isomorphism
\begin{multline*}
\Rrm\Dbf( \cInd_K^J\tau)\otimes^\Lrm_{\Hcal(J)} \rho \cong  \cInd_K^J(\tau^*) \otimes_{\Hcal(J)} \rho \\
\cong \Hom_J ( \cInd_K^J\tau,\rho) \cong \RHom_J ( \cInd_K^J\tau,\rho).
\end{multline*}
The first and last isomorphisms follow from the projectivity of compact induction together with (\ref{eqn:c-ind}). To show the remaining isomorphism, note that we have
\[
\Hom_J ( \cInd_K^J\tau,\rho) \cong \Hom_K(\tau,\rho) \cong (\tau^*\otimes \rho)^K,
\]
and analogously we have
\[
\cInd_K^J(\tau^*) \otimes_{\Hcal(J)} \rho  \cong (\tau^* \otimes \rho)_K.
\]
So our claim follows from $(\tau^* \otimes \rho)_K \cong (\tau^* \otimes \rho)^K$.

It  easily follows that the lemma holds for a bounded-above complex  $V^\bullet$ where each $V^{-i}$ is of the form $\cInd_K^J\tau$ for some open pro-$p$ subgroup $K\subset J$ and a finitely generated representation $\tau\in\Rep_K(\overline{\QQ}_l)$. It now remains to show that  any finitely generated $J$-representation $V$ admits such a resolution. Indeed, we choose finitely many generators of $V$ and let $K$ be an open pro-$p$ group that fixes each of them. Thus we have a surjection from a finite sum $\bigoplus \cInd_K^J\mathbf{1}$ onto $V$, and by assumption any $J$-subrepresentation of $\cInd_K^J\mathbf{1}$ is finitely generated. This concludes the proof.
\end{proof}

\setcounter{axiom}{0}
  \section{Integral models for Shimura varieties of Hodge type} \label{sect kisin-pappas}

 In this section we  recall the construction of integral models for Shimura varieties of Hodge type with Bruhat-Tits level structure at $p$ as presented in \cite{KisinPappas:ParahoricIntModel} and recall the Newton stratification on the special fibre.

\subsection{Review of Dieudonn\'e display theory}\label{ssect displays}

We  review and set up the notation for the theory of Dieudonn\'e displays for \emph{$p$-torsion free} complete local noetherian rings with residue field $\k$, following \cite[\S3.1]{KisinPappas:ParahoricIntModel}. For more standard references for Dieudonn\'e display theory, we refer to Zink's original paper \cite{Zink:DieudonnePDivGpCFT} and Lau's paper \cite{Lau:DisplayCrystals}.

Let $R$ be a complete local noetherian ring with residue field $\k$. In \cite[\S2]{Zink:DieudonnePDivGpCFT}, Zink introduced a $p$-adic subring $\WW(R)$ of the ring of Witt vectors $W(R)$, which fits in the following short exact sequence (of algebras not necessarily with unity):
\[
\xymatrix@1{
0 \ar[r] & \widehat W(\mfr_R) \ar[r] & \WW (R) \ar[r] & W(\Fpbar) \ar[r] &0,
}
\]
where $\widehat W(\mfr_R)\coloneqq \{(a_i)\in W(\mfr_R)|\ a_i\to 0\}$.(Here, $W(\mfr_R)$ is a mild but standard abuse of notation for the kernel of $W(R)\twoheadrightarrow W(\Fpbar)$.) If $p>2$ then $\WW(R)$ is stable under the Frobenius and Verschiebung operators of $W(R)$. Recall that $\WW(R)$ is $p$-torsion free if $R$ is $p$-torsion free (since in that case $W(R)$ is $p$-torsion free).

Let $\II_R$ denote the kernel of the natural projection $\WW(R)\thra R$, which coincides with the injective image of the Verschiebung operator (if $p>2$). We let $\sigma: \WW(R)\to\WW (R)$ denote the Witt vector Frobenius. Furthermore  if $p>2$ then we have a canonical divided power structure $\delta: \II_R \to \WW(R)$ inherited by restricting the canonical divided power structure on $W(R)$ (\cite[Lemma~1.16]{Lau:DisplayCrystals}), which makes $\WW(R)$ a $p$-adic divided power thickening of $R$. (In particular, one can evaluate a crystal over $R$ at $\WW(R)$.)

\begin{defnsub}\label{def:display}
A \emph{Dieudonn\'e display} over $R$ is a tuple
\[
(M, M_1, \Phi, \Phi_1),
\]
where
\begin{enumerate}
\item $M$ is a finite free $\WW(R)$-module;
\item $M_1\subset M$ is a $\WW(R)$-submodule containing $\II_R M$, such that $M/M_1$ is free over $R$;
\item $\Phi:M\to M$ and $\Phi_1:M_1\to M$ are $\sigma$-linear morphism such that $p\Phi_1 = \Phi|_{M_1}$ and the image of $\Phi_1$ generates $M$.
\end{enumerate}
The \emph{Hodge filtration} associated to the display is $M_1/\II_R M \subset M/\II_R M$ viewed as the $0$th filtration (so $M/M_1$ is viewed as the $(-1)$th grading)\footnote{We would like to view $M/\II_R M$ as the ``first de~Rham \emph{homology}'' of some {\BT}, and $M/M_1$ as the Lie algebra of the {\BT}, which is the $(-1)$th grading of the first de~Rham homology. We follow the standard notation to let $M_1\subset M$ denote the $\WW(R)$-submodule defining the Hodge filtration (although $M^0$ may be more natural notation as it defines the $0$th filtration).}.
\end{defnsub}

Now, for any $\WW(R)$-module $M$, we write $M^\sigma\coloneqq M\otimes_{\WW(R),\sigma}\WW(R)$. 
Given a Dieudonn\'e display $(M,M_1,\Phi,\Phi_1)$ over $R$, we let $\widetilde M_1$ denote the image of $M_1^\sigma$ in $M^\sigma$. Then the linearisation of $\Phi_1$ induces  the following isomorphism
\[
\Psi:\widetilde M_1 \riso M.
\]
If $\WW(R)$ is $p$-torsion free (for example, if $R$ is $p$-torsion free), then given $M$ and $M_1\subset M$ where $M/M_1$ is projective over $R$, giving a $\sigma$-linear map $\Phi$ and $\Phi_1$ that makes $(M,M_1,\Phi,\Phi_1)$ a Dieudonn\'e display is equivalent to giving an isomorphism $\Psi$ as above; \emph{cf.} \cite[Lemma~3.1.5]{KisinPappas:ParahoricIntModel}.

For any complete local noetherian ring $R$ with perfect residue field of characteristic $p>2$,
Zink \cite{Zink:DieudonnePDivGpCFT} constructed a natural equivalance between the category of {\BT}s over $R$ and the category of Dieudonn\'e displays over $R$. We denote this functor by 
\[\Xscr\rightsquigarrow M(\Xscr)\]
for a {\BT} $\Xscr$  over $R$. Lau proved that $M(\Xscr)$ is naturally isomorphic to  $\DD(\Xscr)_{\WW(R)}^\vee$, where $\DD(\Xscr)$ denotes the contravariant Dieudonn\'e crystal of $\Xscr$. Moreover, the isomorphism matches the Hodge filtrations on both sides and allows us to recover the crystalline Frobenius and Verschiebung maps from $\Phi$ and $\Phi_1$ on $M(\Xscr)$
\cite[Theorem~B]{Lau:DisplayCrystals}. If $\WW(R)$ is $p$-torsion free, then we can obtain the Dieudonn\'e display $M(\Xscr)$ from the filtered Dieudonn\'e crystal $\DD(\Xscr)$ as follows. (For simplicity, we will write $M(\Xscr)\coloneqq (M,M_1,\Phi,\Phi_1)$.)
\begin{enumerate}
\item $M\coloneqq \DD(\Xscr)_{\WW(R)}^\vee$ and $M_1\coloneqq \ker \left(M \thra \DD(\Xscr)_R^\vee \thra \Lie(\Xscr)\right)$;
\item The contragradient of the crystalline Frobenius $F:M^\sigma[\frac{1}{p}]\to M[\frac{1}{p}]$ restricts to  $\Psi:\widetilde M_1\to M$.
\end{enumerate}

 \subsection{Group theoretic background} \label{sect group theoretic background}

 Let $G$ and $G'$ be connected reductive groups over $\QQ_p$ where either $G' = \GL_n$ or $G' = \GSp_n$. Assume that we have a faithful minuscule representation $\rho:G \mono \GL_n$ which factorises through $G'$. By \cite[1.2.26, 2.3.3]{KisinPappas:ParahoricIntModel} this induces a toral Galois-equivariant embedding of Bruhat Tits buildings $\iota: \Bcal(G,\breve\QQ_p) \mono \Bcal(G',\breve\QQ_p)$, depending on some auxiliary choices, where $\breve\QQ_p$ denotes the completion of the maximal unramified extension of $\QQ_p$. Now assume $x \in \Bcal(G,\breve\QQ_p)^{\Gal(\breve\QQ_p/\QQ_p)}$ with image $x' \in \Bcal(G',\breve\QQ_p)$. We denote by $\Gscr_x,\Gscr'_{x'}$ and $\Gscr_x^\circ,\Gscr_{x'}'^\circ = \Gscr'_{x'}$ the Bruhat-Tits group schemes over $\ZZ_p$ given by the stabilisers of $x, x'$ and their corresponding parahoric group schemes, respectively.

 \begin{propsub}[{\cite[Prop.~1.3.3]{KisinPappas:ParahoricIntModel}}]
  The embedding $G \mono G'$ extends to $\Gscr_x \mono \Gscr'_{x'}$.
 \end{propsub}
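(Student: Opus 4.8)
The plan is to descend the question to $\breve{\ZZ}_p$, to construct the morphism on integral models from the functoriality of Bruhat--Tits group schemes applied to the \emph{toral} embedding $\iota$, and then to show that this morphism is a closed immersion by a fibrewise analysis.

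First observe that the extension is unique if it exists: $\Gscr_x$ and $\Gscr'_{x'}$ are flat and separated over $\ZZ_p$ while $G \mono G'$ is a closed immersion of the generic fibres, so at most one $\ZZ_p$-homomorphism $\Gscr_x \to \Gscr'_{x'}$ can restrict to it. By this uniqueness, the base change to $\breve{\ZZ}_p$ of the morphism we seek is automatically $\Gal(\breve{\QQ}_p/\QQ_p)$-equivariant, hence descends (working, if one prefers, at a finite unramified level $\ZZ_{p^n}$ over which all the objects are defined, and using finite Galois descent along $\ZZ_p \to \ZZ_{p^n}$). Thus it suffices to produce $\Gscr_{x,\breve{\ZZ}_p} \mono \Gscr'_{x',\breve{\ZZ}_p}$.

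Over $\breve{\QQ}_p$ the construction of $\iota$ in \cite[1.2.26, 2.3.3]{KisinPappas:ParahoricIntModel} realises $\rho$ as a toral morphism: it carries a maximal torus $T \supset S$ of $G$ (with $S$ a maximal $\breve{\QQ}_p$-split torus) into a maximal torus $T' \supset S'$ of $G'$, compatibly with the induced affine embedding of apartments $\mathcal{A}(S) \mono \mathcal{A}(S')$, which takes $x$ to $x'$. Hence, by the functoriality of Bruhat--Tits group schemes for such morphisms (cf.\ \cite{KisinPappas:ParahoricIntModel} and the work of Bruhat--Tits and Landvogt cited there), $\rho$ extends uniquely to a homomorphism $\varphi \colon \Gscr_x \to \Gscr'_{x'}$ of smooth affine $\breve{\ZZ}_p$-group schemes: over the parahoric $\Gscr_x^\circ$ it sends the canonical torus model and the root-group schemes generating it into their counterparts inside $\Gscr'_{x'}$, and the extension over the component group is forced by the equality $\Stab_{G(\breve{\QQ}_p)}(x) = G(\breve{\QQ}_p) \cap \Stab_{G'(\breve{\QQ}_p)}(x')$, which holds because $\iota$ is $G$-equivariant and injective.

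It remains to see that $\varphi$ is a closed immersion. On generic fibres this is the hypothesis. Since $\Gscr_x$ is flat over $\ZZ_p$, hence $p$-torsion free, a short approximation argument reduces the claim to the special fibre: if $A' \to A$ denotes the induced map of coordinate rings over $\ZZ_p$ and $B \subseteq A$ its image, surjectivity modulo $p$ gives $B + pA = A$ and surjectivity after inverting $p$ gives that the (finitely generated, cyclic) $A$-module $A/B$ is $p$-power torsion, whence $A/B = p^n(A/B)$ for all $n$ and therefore $A/B = 0$. So one is reduced to showing that $\varphi_{\k} \colon \Gscr_{x,\k} \to \Gscr'_{x',\k}$ is a closed immersion; since a monomorphism of affine group schemes of finite type over a field is a closed immersion, it suffices to check that $\varphi_{\k}$ has trivial scheme-theoretic kernel. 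This is where the toral hypothesis is essential, and where I expect the main difficulty to lie: one has to control the special fibres of the two Bruhat--Tits schemes — their unipotent radicals, reductive quotients, and component groups — and verify that the big-cell decomposition $\prod \mathcal{U}^{-}_{\alpha,x} \times \mathcal{T}_x \times \prod \mathcal{U}^{+}_{\alpha,x} \hookrightarrow \Gscr_{x,\k}$ maps, under $\varphi_{\k}$ and its translates, by closed immersions into the corresponding decomposition of $\Gscr'_{x',\k}$, toral-ness of $\iota$ being precisely what guarantees compatibility of the relevant tori and root subgroups (rather than mere agreement on generic fibres).
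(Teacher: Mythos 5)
The paper contains no proof of this statement---it is quoted verbatim from \cite[Prop.~1.3.3]{KisinPappas:ParahoricIntModel}---so your sketch has to stand on its own. The uniqueness and Galois-descent reductions are fine, and your route to the existence of $\varphi$ over $\breve\ZZ_p$ (the stabiliser identity $\Stab_{G(\breve\QQ_p)}(x)=G(\breve\QQ_p)\cap\Stab_{G'(\breve\QQ_p)}(x')$ forced by equivariance and injectivity of $\iota$, combined with smoothness and the \'etoff\'e property of the Bruhat--Tits stabiliser schemes) is essentially the one Kisin--Pappas take. The closed-immersion step, however, has two genuine gaps.

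First, the reduction to the special fibre is not valid as written. With $B\subseteq A$ the image of $\mathcal{O}(\Gscr'_{x'})\to\mathcal{O}(\Gscr_x)=A$, the quotient $A/B$ is only a $B$-module (not an $A$-module), and it is neither cyclic nor finitely generated in general, since $A$ need not be finite over $B$ (the morphism $\Spec A\to\Spec B$ is a priori only quasi-finite). Without finite generation, the combination ``$A/B=p^n(A/B)$ for all $n$'' and ``$A/B$ is $p$-power torsion'' does not force $A/B=0$: the module $\QQ_p/\ZZ_p$ satisfies both. Some genuine input is needed here, e.g.\ working in the completed local ring along the identity section, where $p$ does lie in the radical, and then translating by $\breve\ZZ_p$-points of the special fibre. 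Second, and more seriously, the step you explicitly defer---that $\varphi_{\FFbar_p}$ has trivial scheme-theoretic kernel, equivalently that the schematic closure of $G$ in $\Gscr'_{x'}$ is smooth and coincides with $\Gscr_x$---is not a routine verification but is precisely the content of the proposition. In Kisin--Pappas this is exactly what the criterion of Prasad--Yu (their Prop.~1.3: a homomorphism of smooth affine group schemes over a DVR that is a closed immersion generically and induces the expected equality on $\breve\ZZ_p$-points is a closed immersion) supplies, and it does not follow from ``compatibility of tori and root subgroups'' without further argument. As it stands, your proposal reduces the proposition to an unproved assertion that is essentially equivalent to it.
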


 Let $\{\mu\}$ be a geometric conjugacy class of minuscule cocharacters of $G$ and let $E$ be its field of definition. Denote by $Fl_\mu = G/P_\mu$, where $P_\mu$ is a parabolic subgroup corresponding to an element $\mu \in \{\mu\}$.
  Denote by $\Mscr^{\rm loc}_{G,\{\mu\},x}$ the local model given by Pappas and Zhu, which is a certain integral model of $Fl_\mu$ parametrising the singularities of the integral model of the Shimura variety. Rather than recalling its definition here, we give an explicit description for the cases in which it occurs below.

 \begin{propsub}[{\cite[Prop.~2.3.7]{KisinPappas:ParahoricIntModel}}]
  Assume that $\{\mu'\} \coloneqq \rho_\ast \{\mu\}$ is minuscule. Then the natural inclusion $Fl_{\{\mu\}} \mono Fl_{\{\mu'\}}$ extends to a closed embedding $\Mscr^{\rm loc}_{G,\{\mu\},x} \mono \Mscr^{\rm loc}_{G',\{\mu'\},x'} \otimes_{\ZZ_p} O_E$.
 \end{propsub}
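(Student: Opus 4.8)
The plan is to realise each local model as a scheme-theoretic closure inside an affine Grassmannian attached to a Bruhat--Tits group scheme, and then to push that closure forward along a closed immersion of such Grassmannians. Recall that Pappas and Zhu produce, via a Beilinson--Drinfeld type degeneration of $\Gscr_x$ (built from a degeneration group scheme $\underline{\Gscr}_x$ over $\AA^1_{\ZZ_p}$ after a tame base change $u\mapsto u^e$), an ind-projective ind-scheme $\Grass_{\Gscr_x}$ over $O_E$ whose generic fibre is the affine Grassmannian $\Grass_{G}\otimes_{\QQ_p}E$ and whose special fibre is the affine flag variety of $\Gscr_x\otimes\FF_p$; by definition $\Mscr^{\rm loc}_{G,\{\mu\},x}$ is the scheme-theoretic closure in $\Grass_{\Gscr_x}$ of the Schubert variety $Fl_{\{\mu\}}\subset\Grass_{G}\otimes_{\QQ_p}E$ cut out by $\{\mu\}$ (which, $\{\mu\}$ being minuscule, equals $G/P_\mu$). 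Since $G'=\GL_n$ or $\GSp_n$ is split over $\QQ_p$, the class $\{\mu'\}\coloneqq\rho_\ast\{\mu\}$ has field of definition $\QQ_p$, and the same recipe gives $\Grass_{\Gscr'_{x'}}$ over $\ZZ_p$ containing $\Mscr^{\rm loc}_{G',\{\mu'\},x'}$ as the scheme-theoretic closure of $Fl_{\{\mu'\}}\subset\Grass_{G'}$.

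The main step is to promote the closed immersion $\Gscr_x\mono\Gscr'_{x'}$ of \cite[Prop.~1.3.3]{KisinPappas:ParahoricIntModel} to a closed immersion of degeneration group schemes $\underline{\Gscr}_x\mono\underline{\Gscr}'_{x'}$, and hence to a closed immersion of ind-schemes $\Grass_{\Gscr_x}\mono\Grass_{\Gscr'_{x'}}\otimes_{\ZZ_p}O_E$ over $O_E$. For this one invokes the functoriality of Pappas--Zhu's construction with respect to $G$ together with its toral Bruhat--Tits datum: the toral embedding $\iota\colon\Bcal(G,\breve\QQ_p)\mono\Bcal(G',\breve\QQ_p)$ and the compatibility $x\mapsto x'$ let all the auxiliary choices entering the two degeneration constructions be made simultaneously --- in particular the tame base change $u\mapsto u^e$ needed to build $\underline{\Gscr}_x$ from the tamely split group $G$ can be applied on the $G'$-side as well, where it is harmless because $G'$ is already split. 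That the resulting map of global affine Grassmannians is a closed immersion then follows from the fact that $\rho(G)$ is reductive, so that $G'/\rho(G)$ is affine and the corresponding quotient of degeneration group schemes is (quasi-)affine over the base --- exactly the hypothesis ensuring that a closed immersion of smooth affine group schemes induces a closed immersion on the associated ind-proper affine Grassmannians. I expect this functoriality to be the principal obstacle: one must carefully align the two degeneration constructions and verify that the quotient stays (quasi-)affine after passing from the reductive groups to the non-reductive $\AA^1$-families.

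Granting this, the conclusion is formal. Since $O_E$ is $\ZZ_p$-flat, base change along $\ZZ_p\to O_E$ commutes with the formation of scheme-theoretic closures, so $\Mscr^{\rm loc}_{G',\{\mu'\},x'}\otimes_{\ZZ_p}O_E$ is the scheme-theoretic closure of $Fl_{\{\mu'\}}\otimes_{\QQ_p}E$ inside $\Grass_{\Gscr'_{x'}}\otimes_{\ZZ_p}O_E$. Under the closed immersion $\Grass_{\Gscr_x}\mono\Grass_{\Gscr'_{x'}}\otimes_{\ZZ_p}O_E$ the generic fibre $Fl_{\{\mu\}}$ maps isomorphically onto its image; because $\rho_\ast\mu$ lies in $\{\mu'\}$, minusculeness forces $\rho^{-1}(P_{\mu'})=P_\mu$, so this image is precisely the standard copy of $Fl_{\{\mu'\}}\otimes_{\QQ_p}E$, i.e.\ the ``natural inclusion'' in the statement. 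As $\Grass_{\Gscr_x}$ is a closed subscheme of $\Grass_{\Gscr'_{x'}}\otimes_{\ZZ_p}O_E$, the scheme-theoretic closure of $Fl_{\{\mu\}}$ computed inside $\Grass_{\Gscr_x}$ equals the one computed inside $\Grass_{\Gscr'_{x'}}\otimes_{\ZZ_p}O_E$, namely $\Mscr^{\rm loc}_{G',\{\mu'\},x'}\otimes_{\ZZ_p}O_E$. Hence $\Mscr^{\rm loc}_{G,\{\mu\},x}$, which is the former closure, is a closed subscheme of $\Mscr^{\rm loc}_{G',\{\mu'\},x'}\otimes_{\ZZ_p}O_E$ restricting to the natural inclusion on generic fibres, as claimed; this reproduces the argument of \cite[Prop.~2.3.7]{KisinPappas:ParahoricIntModel}.
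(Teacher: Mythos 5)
Your proposal is correct and follows essentially the same route as the proof of Kisin--Pappas Prop.~2.3.7 that the paper cites (the paper itself only quotes the result without proof): extend the embedding $\Gscr_x \mono \Gscr'_{x'}$ to the Pappas--Zhu degenerations over $\AA^1$ after aligning the tame base change $u\mapsto u^e$, deduce a closed immersion of the associated ind-proper global affine Grassmannians from quasi-affineness of the quotient, and then take scheme-theoretic closures of the generic-fibre Schubert varieties, using flatness of $O_E$ over $\ZZ_p$. Two sentences are garbled --- the image of $Fl_{\{\mu\}}$ is the standard copy of $Fl_{\{\mu\}}$ \emph{inside} $Fl_{\{\mu'\}}\otimes_{\QQ_p}E$, not all of it, and the closure of $Fl_{\{\mu\}}$ in $\Grass_{\Gscr'_{x'}}\otimes_{\ZZ_p}O_E$ is \emph{contained in}, not equal to, $\Mscr^{\rm loc}_{G',\{\mu'\},x'}\otimes_{\ZZ_p}O_E$ --- but the conclusion you draw from them is the correct one.
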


 We assume from now on that $x'$ is hyperspecial. That is, $\Gscr'_{x'} = \GL(\Lambda)$ if $G' = \GL_n$ or $\Gscr'_{x'} = \GSp(\Lambda)$ if $G' = \GSp_n$ where $\Lambda \subset \QQ_p^n$ is some (self-dual) lattice. Now the local models have the following description (see \cite[4.1.5]{KisinPappas:ParahoricIntModel}). Since $\{\mu'\}$ is defined over $\ZZ_p$,  $Fl_{\{\mu'\}}$ admits a model ${\Flag}_{\{\mu'\}}$ over $\ZZ_p$. We have $\Mscr^{\rm loc}_{G,\{\mu'\},x'} = {\Flag}_{\{\mu'\}}$. Thus by above proposition,
 \[
  \Mscr^{\rm loc}_{G,\{\mu\},x} = \overline{G\cdot y} \subset \Mscr^{\rm loc}_{G',\{\mu'\},x'} \otimes_{\ZZ_p} O_E = {\Flag}_{\{\mu'\}}\otimes_{\ZZ_p} O_E
 \]
 where $y$ is a point corresponding to a cocharacter in $\{\mu\}$.

 \subsection{Local geometry} \label{sect local geom}

 With the notation above, assume that $\mu' = (0^{(n-d)}, -1^{d})$ for some $d$ and furthermore $d = n/2$ if $G' = \GSp_n$. Let $X$ be a {\BT} of height $n$ and dimension $d$ over $\FFbar_p$, together with a polarisation in the case $G' = \GSp_n$. We choose an identification $\DD(X)_{\breve\ZZ_p}^\vee \cong \Lambda \otimes \breve\ZZ_p$ which we assume to be a symplectic similitude in the case that $X$ is polarised. By Grothendieck-Messing theory, its deformation space $\Def(X)$ is canonically identified with the formal neighbourhood $\widehat\Mscr^{\rm loc}_{G', \{\mu'\},x',y'}$ of $\hat\Mscr^{\rm loc}_{G', \{\mu'\},x'}$ at the point $y'$ which corresponds to the Hodge filtration $\Fil^0(X) \subset \DD(X)_{\FFbar_p}^\vee$. (We regard $\DD(X)_{\k}^\vee/\Fil^0(X) = \Lie(X)$ as the $(-1)$th grading of the Hodge filtration. This is compatible with the indexing of Hodge filtrations for Dieudonn\'e displays; \emph{cf.} Definition~\ref{def:display}.)
We assume that  the following conditions hold.
\begin{itemize}
\item $G$ splits after some tame extension of $\Qp$ and $p\nmid|\pi_1(G^{\rm der})|$.
\item The adjoint group of $G$ does not have any factor of type $E_8$.
\item  The image of the embedding $G\mono G'$ contains the scalar matrices; \emph{cf.} \cite[(3.2.4)]{KisinPappas:ParahoricIntModel}.
\end{itemize}
These assumptions imply the technical hypothesis \cite[(3.2.3)]{KisinPappas:ParahoricIntModel} on extending $\Gscr_x^\circ$-torsors over the closed point of the spectrum of a $2$-dimensional regular local ring by \cite[Proposition~1.4.3]{KisinPappas:ParahoricIntModel}. Furthermore, 
$\Mscr^{\rm loc}_{G,\{\mu\},x}$ is known to be normal, and its special fibre $\Mscr^{\rm loc}_{G,\{\mu\},x}\otimes_{O_E}\kappa_E$ is reduced and each irreducible component is normal and Cohen-Macaulay; \emph{cf.} \cite[Theorem~9.1]{PappasZhu:LocMod}.

 We choose a family of tensors $(s_\alpha)$ in $\Lambda^\otimes$ such that $\Gscr_x$ is their stabiliser. Now assume that $X$ is equipped with a family of crystalline Tate-tensors $(t_\alpha)$ satisfying the following conditions.
 \begin{assertionlist}
  \item There exists an isomorphism $\DD(X)_{\breve\ZZ_p}^\vee \cong \Lambda \otimes \breve\ZZ_p$ as above which sends $t_\alpha$ to $s_\alpha \otimes 1$.
  \item The point $y \in \Mscr^\loc_{G',\{\mu'\},x'}$ given by $\Fil^0(X)$ lies in $\Mscr^{\rm loc}_{G,\{\mu\},x}$.
 \end{assertionlist}
 Note that while the point $y$ depends on our choice of isomorphism in (a), it does so only up to $\Gscr_x(\ZZ_p)$-action; in particular condition (b) is independent of this choice.

In this setting, the technical hypotheses (3.2.2)--(3.2.4) in \cite{KisinPappas:ParahoricIntModel} are satisfied so we can apply the result in \cite[\S~3]{KisinPappas:ParahoricIntModel} that gives a deformation-theoretic interpretation of the formal neighbourhood of $\Mscr^{\rm loc}_{G,\{\mu\},x}$ at a closed point.
\begin{defnsub}\label{def KP defor space}
 We denote by $\Def_{\Gscr_x}(X;(t_\alpha)) \subset \Def(X)\otimes_{\W}O_{\breve{E}}$ the formal subscheme corresponding to the inclusion $\widehat\Mscr^{\rm loc}_{G,\{\mu\},x,y} \subset \widehat\Mscr^{\rm loc}_{G',\{\mu'\},x',y}\otimes_{\breve\ZZ_p}O_{\breve E}$ of formal neighbourhoods of the local models. If $(t_\alpha)$ is understood, we write  $ \Def_{\Gscr_x}(X)\coloneqq\Def_{\Gscr_x}(X;( t_\alpha ))$.

 Let $R$ and $R_G$ be the rings of global sections of $\Def(X)$ and $\Def_{\Gscr_x}(X)$, respectively. Denote by $\Xscr^{\rm def}$ the universal deformation of $X$ over $R$ and denote by $\Xscr_G^{\rm def}$ its restriction to $R_G$.
 
 \end{defnsub}
 Note that $R_G$ is \emph{normal} (since we assumed that $\Mscr^{\rm loc}_{G,\{\mu\},x}$ is normal). Under this normality assumption, Kisin and Pappas constructed $\Phi$-invariant tensors $t_{\alpha}^{\rm def} \in M(\Xscr^{\rm def}_G)^\otimes$ lifting $t_\alpha$ so that there exists a $\WW(R_G)$-linear isomorphism  $M(\Xscr^{\rm def}_G)\cong M(X) \otimes_{\W} \WW(R_G) $ sending $(t^{\rm def}_{\alpha})$ to $(t_\alpha\otimes1)$.

To explain the ``universal property'' of the tensors $(t_{\alpha}^{\rm def})$, let us set up some notations. For any finite extension $K/\breve{E}$ and $\Xscr \in \Def(X)(O_K)$, we have a unique $\Psi$-equivariant isomorphism
\begin{equation}\label{eq rigidity quasiisogeny}
M(X) \otimes_{\W} \WW(O_K)_\QQ \cong M(\Xscr)_\QQ
\end{equation}
lifting the identity map on $M(X)_\QQ$ (\emph{cf.} \cite[Lem.~3.1.17]{KisinPappas:ParahoricIntModel}). Let  $u_\alpha\in M(\Xscr)^\otimes_\QQ$ be the tensors obtained as the images of $t_\alpha\otimes 1$.

\begin{propsub}[{\cite[Cor.~3.2.11,~Prop.~3.2.17]{KisinPappas:ParahoricIntModel}}]
Let $\Xscr \in \Def(X)(O_K)$ for some finite extension $K/\breve{E}$, and define the tensors $u_\alpha\in M(\Xscr)^\otimes_\QQ$ as above. Then we have $\Xscr \in \Def_{\Gscr_x}(X)(O_K)$ and the tensors $t_{\alpha}^{\rm def} \in M(\Xscr_G^{\rm def})^\otimes$ pull back to $u_\alpha\in M(\Xscr)^\otimes_\QQ$, if and only if the tensors $u_\alpha\in M(\Xscr)^\otimes_\QQ$ satisfy the following conditions
  \begin{subenv}
   \item We have $u_\alpha\in M(\Xscr)^\otimes$; i.e.,~$u_\alpha$ are integral.
    \item There exists  a $\WW(O_K)$-linear isomorphism $M(\Xscr) \cong  M(X) \otimes_{\W} \WW(O_K)$ sending $u_\alpha$ to $t_\alpha\otimes1$.
    \item We have $(\Fil^0 \Xscr) \otimes_{O_K} K \in \Mscr^{\rm loc}_{G,\{\mu\},x}(K)$, where we view $(\Fil^0 \Xscr) \otimes_{O_K} K$ as a filtration of $M(X)_\QQ\otimes_{\breve{\QQ}_p}K$ (hence, a $K$-point of $\Mscr^{\rm loc}_{G',\{\mu'\},x}$) via the isomorphism (\ref{eq rigidity quasiisogeny}).
  \end{subenv}
 \end{propsub}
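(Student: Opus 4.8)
The plan is to exploit the two complementary descriptions of the deformation spaces and to use the rigidity of quasi-isogenies to control the tensors. I would first record the inputs: by Grothendieck--Messing, $\Def(X)$ is the formal neighbourhood $\widehat{\Mscr}^{\rm loc}_{G',\{\mu'\},x',y}$, with $\Xscr\in\Def(X)(O_K)$ corresponding to the lift $\Fil^0\Xscr$ of the Hodge filtration viewed, via~\eqref{eq rigidity quasiisogeny}, as a point of $Fl_{\{\mu'\}}(K)$; by Definition~\ref{def KP defor space}, $\Def_{\Gscr_x}(X)$ is the formal neighbourhood $\widehat{\Mscr}^{\rm loc}_{G,\{\mu\},x,y}$ of the closed subscheme $\Mscr^{\rm loc}_{G,\{\mu\},x}\otimes_{O_E}O_{\breve E}\subseteq\Mscr^{\rm loc}_{G',\{\mu'\},x'}\otimes_{\ZZ_p}O_{\breve E}$, which is normal, $O_{\breve E}$-flat, and has generic fibre $Fl_{\{\mu\}}$; $u_\alpha$ is by definition the image of $t_\alpha\otimes1$ under~\eqref{eq rigidity quasiisogeny}; and over the normal ring $R_G$ Kisin and Pappas furnish the tensors $t_\alpha^{\rm def}\in M(\Xscr_G^{\rm def})^\otimes$ together with a $\WW(R_G)$-linear isomorphism $\beta\colon M(\Xscr_G^{\rm def})\riso M(X)\otimes_{\W}\WW(R_G)$ carrying $t_\alpha^{\rm def}$ to $t_\alpha\otimes1$; composing $\beta$ with a suitable element of $\Gscr_x(\W)$ I may assume that its reduction modulo $\mfr_{R_G}$ is the identity of $M(X)$.

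For the ``only if'' direction, assume $\Xscr\in\Def_{\Gscr_x}(X)(O_K)$ and that $t_\alpha^{\rm def}$ pulls back to $u_\alpha$. Restricting the corresponding $O_K$-point of $\widehat{\Mscr}^{\rm loc}_{G,\{\mu\},x,y}$ to its generic fibre gives a $K$-point of $Fl_{\{\mu\}}=\Mscr^{\rm loc}_{G,\{\mu\},x}\otimes_{O_E}E$, which unwinds to $(\Fil^0\Xscr)\otimes_{O_K}K$; this is (3). Since $M(-)$ commutes with base change, pulling $\beta$ back along the classifying map $R_G\to O_K$ gives a $\WW(O_K)$-linear isomorphism $M(\Xscr)\riso M(X)\otimes_{\W}\WW(O_K)$ that sends the pullback of $t_\alpha^{\rm def}$ --- equal to $u_\alpha$ by assumption --- to $t_\alpha\otimes1$; this is (2), and (1) is immediate.

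For the ``if'' direction, assume (1), (2), (3). Because $\Def(X)\cong\widehat{\Mscr}^{\rm loc}_{G',\{\mu'\},x',y}$, the deformation $\Xscr\in\Def(X)(O_K)$ is nothing but an $O_K$-point of $\Mscr^{\rm loc}_{G',\{\mu'\},x'}\otimes O_{\breve E}$ with closed point $y$, namely the one attached to $\Fil^0\Xscr$ via~\eqref{eq rigidity quasiisogeny}. Condition (3) says its generic fibre factors through the closed subscheme $\Mscr^{\rm loc}_{G,\{\mu\},x}\otimes O_{\breve E}$; since this subscheme is closed and $O_{\breve E}$-flat, so that its defining ideal is $\varpi$-torsion free, and $\Spec O_K$ is integral with dense generic point, the whole $O_K$-point factors through it, i.e.\ $\Xscr\in\Def_{\Gscr_x}(X)(O_K)$. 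Hence $\Xscr$ is the pullback of $\Xscr_G^{\rm def}$ along some $R_G\to O_K$, and $t_\alpha^{\rm def}$ pulls back to integral tensors $u'_\alpha\in M(\Xscr)^\otimes$. Finally, the base change of $\beta$ is an isomorphism $M(\Xscr)\riso M(X)\otimes_{\W}\WW(O_K)$ taking $u'_\alpha$ to $t_\alpha\otimes1$ and, by the normalisation of $\beta$, reducing to the identity modulo $\mfr_{O_K}$; after inverting $p$ its inverse is therefore $\Psi$-equivariant and lifts $\id_{M(X)_\QQ}$, so by uniqueness it coincides with~\eqref{eq rigidity quasiisogeny}, whence $u'_\alpha=u_\alpha$.

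The main obstacle, as I see it, is the bookkeeping of integral structures over the possibly singular normal ring $R_G$: one must check that the Dieudonn\'e display functor and the Kisin--Pappas trivialisation $\beta$ are compatible with base change along $R_G\to O_K$, that $\beta$ may indeed be normalised to reduce to the identity, and --- crucially for the ``if'' direction --- that the generic-fibre condition (3) genuinely promotes to an integral statement, which is exactly where normality and $O_{\breve E}$-flatness of $\Mscr^{\rm loc}_{G,\{\mu\},x}$ are indispensable. Conditions (1) and (2) serve to record precisely the compatibility between the only-rationally-defined isomorphism~\eqref{eq rigidity quasiisogeny} and the integral trivialisation $\beta$, which is what makes the equivalence sharp rather than merely a comparison of generic fibres.
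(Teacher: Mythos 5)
The paper offers no proof of this proposition --- it is quoted verbatim from Kisin--Pappas (Cor.~3.2.11, Prop.~3.2.17) --- so your argument has to be measured against theirs. Your overall strategy (translate everything to the local model via Grothendieck--Messing, use that $\Spec O_K$ is reduced so that a generic-fibre condition against a closed subscheme propagates to the special fibre, and then identify the two candidate families of tensors by a uniqueness principle) is indeed the right one, and the ``only if'' direction and the factorisation argument in the ``if'' direction are fine.

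There is, however, a genuine error in the last step of the ``if'' direction. You claim that the base change of the Kisin--Pappas trivialisation $\beta\colon M(\Xscr_G^{\rm def})\riso M(X)\otimes_{\W}\WW(R_G)$ becomes $\Psi$-equivariant after inverting $p$, and hence coincides with \eqref{eq rigidity quasiisogeny}. This is false: $\beta$ is only a $\WW(R_G)$-linear isomorphism matching the tensors; it does \emph{not} intertwine the display Frobenii (if it did, the universal deformation would be isotrivial, since the Frobenius of $M(\Xscr_G^{\rm def})$ differs from $\Phi_X\otimes\sigma$ by the ``universal'' twist that parametrises the deformation). So you cannot conclude $u'_\alpha=u_\alpha$ by identifying $\beta\otimes\WW(O_K)$ with the rigidity isomorphism. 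The correct argument applies uniqueness to the \emph{tensors} rather than to the isomorphism: both $u'_\alpha$ (the pullback of the $\Phi$-invariant tensor $t_\alpha^{\rm def}$) and $u_\alpha$ (the image of $t_\alpha\otimes1$ under the $\Phi$-equivariant isomorphism \eqref{eq rigidity quasiisogeny}) are $\Phi$-invariant tensors of $M(\Xscr)_\QQ^\otimes$ whose images under $\WW(O_K)_\QQ\to\L$ equal $t_\alpha$, and a $\Phi$-invariant tensor is uniquely determined by that reduction (this is \cite[Lemma~3.2.13]{KisinPappas:ParahoricIntModel}, invoked elsewhere in \S\ref{sect integral models} and reproved in the guise of Lemma~\ref{lem:Ccris}). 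A smaller point you should also make explicit: in both directions you silently identify the Grothendieck--Messing/display trivialisation $M(\Xscr)\cong M(X)\otimes_{\W}\WW(O_K)$ (which defines the point of $\Mscr^{\rm loc}_{G',\{\mu'\},x'}$) with the rationalised rigidity isomorphism used to formulate condition~(3); this holds precisely because the crystal identification is Frobenius-equivariant and lifts the identity, so uniqueness (\cite[Lemma~3.1.17]{KisinPappas:ParahoricIntModel}) applies --- but it is the same uniqueness principle whose misuse causes the error above, so it is worth stating carefully.
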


 For any {\BT} $\Xscr$ over some ring $R$ (with $R[\frac{1}{p}]\ne0$), we let $T(\Xscr)$ denote the integral Tate module (viewed as a $\ZZ_p$-local system over $\Spec R[\frac{1}{p}]$).
Let us recall the following variant for \'etale tensors.
\begin{propsub}[{\cite[Prop.~3.3.13]{KisinPappas:ParahoricIntModel}}]\label{prop et univ prop}
For any finite extension $K/\breve{E}$ and $\Xscr \in \Def(X)(O_K)$, assume that there exists $\Gal(\ol K/K)$-invariant tensors $t_{\alpha,\et} \in T(\Xscr)^\otimes$ with the following properties.
  \begin{subenv}
   \item \label{prop et univ prop comparison} The $p$-adic comparison isomorphism matches $t_{\alpha,\et}$ with $t_\alpha\in M(X)_\QQ^\otimes$.
    \item Viewing $(\Fil^0 \Xscr) \otimes_{O_K} K\in\Mscr^{\rm loc}_{G',\{\mu'\},x}(K)$ via the isomorphism (\ref{eq rigidity quasiisogeny}), we have $(\Fil^0 \Xscr) \otimes_{O_K} K \in \Mscr^{\rm loc}_{G,\{\mu\},x}(K)$.
  \end{subenv}
Then we have $\Xscr \in \Def_{\Gscr_x}(X)(O_K)$.
\end{propsub}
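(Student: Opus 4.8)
The plan is to reduce to the criterion of \cite[Cor.~3.2.11, Prop.~3.2.17]{KisinPappas:ParahoricIntModel}, applied to the tensors $u_\alpha\in M(\Xscr)^\otimes_\QQ$ defined there as the images of $t_\alpha\otimes1$ under the rigidity isomorphism \eqref{eq rigidity quasiisogeny}. By that proposition, $\Xscr\in\Def_{\Gscr_x}(X)(O_K)$ (with $t_\alpha^{\rm def}$ pulling back to $u_\alpha$) as soon as one checks: (i) $u_\alpha\in M(\Xscr)^\otimes$; (ii) there is a $\WW(O_K)$-linear isomorphism $M(\Xscr)\cong M(X)\otimes_{\W}\WW(O_K)$ sending $u_\alpha$ to $t_\alpha\otimes1$; and (iii) $(\Fil^0\Xscr)\otimes_{O_K}K\in\Mscr^{\rm loc}_{G,\{\mu\},x}(K)$. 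Condition (iii) is literally hypothesis~(b), so everything reduces to (i) and (ii). Observe also that, since $u_\alpha$ is by definition the image of $t_\alpha$ under \eqref{eq rigidity quasiisogeny}, hypothesis~(a) says precisely that $u_\alpha$ corresponds to the integral $\Gal(\ol K/K)$-invariant tensor $t_{\alpha,\et}\in T(\Xscr)^\otimes$ under the crystalline ($p$-adic \'etale--de~Rham) comparison isomorphism for $\Xscr$.

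The main work --- and the main obstacle --- is the $p$-adic Hodge-theoretic transfer of integrality (and of the ``reductive structure'') from the \'etale realization to the crystalline one. I would run this through the Breuil--Kisin module $\mathfrak{M}(\Xscr)$ attached to $\Xscr$: its \'etale $\varphi$-module realization recovers $T(\Xscr)$, evaluating the associated crystal at the divided-power thickening $\WW(O_K)\twoheadrightarrow O_K$ recovers $M(\Xscr)$, and $\mathfrak{M}(\Xscr)$ is itself an intersection of these two lattices inside the common rational object. Hence a Frobenius-fixed tensor that is integral on the \'etale side already lies in $\mathfrak{M}(\Xscr)^\otimes$, and specializing to $\WW(O_K)$ gives $u_\alpha\in M(\Xscr)^\otimes$, which is (i). Moreover, recalling that by construction $(t_\alpha)$ corresponds to the $\Gscr_x$-defining family $(s_\alpha)\subset\Lambda^\otimes$ under a suitable identification $\DD(X)^\vee\cong\Lambda\otimes\W$, the same comparison machinery (together with hypothesis~(a)) shows that $(u_\alpha)$ likewise makes $M(\Xscr)$ into a $\Gscr_x$-torsor over $\WW(O_K)$; equivalently, the stabilizer scheme of $(u_\alpha)$ in $\GL(M(\Xscr))$ is a form of $\Gscr_x\times_{\ZZ_p}\WW(O_K)$. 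This is the one genuinely nontrivial input --- it is Kisin--Pappas's and Kisin's integral comparison theory --- and it is where the running hypotheses ($p>2$, tameness and the condition on $|\pi_1(G^{\rm der})|$, normality of $\Mscr^{\rm loc}_{G,\{\mu\},x}$) are used to make the Dieudonn\'e-display and local-model formalism applicable.

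Finally, for (ii): both $M(\Xscr)$ with $(u_\alpha)$ and $M(X)\otimes_{\W}\WW(O_K)$ with $(t_\alpha\otimes1)$ are now $\Gscr_x$-torsors over $\WW(O_K)$, so the scheme of $\WW(O_K)$-linear isomorphisms between them respecting the tensors is a $\Gscr_x$-torsor over $\WW(O_K)$. Modulo the ideal $\widehat{W}(\mfr_{O_K})$ this torsor carries the canonical section coming from $\WW(O_K)/\widehat{W}(\mfr_{O_K})=\W$ and the fact that \eqref{eq rigidity quasiisogeny} reduces to the identity on $M(X)$ (so $u_\alpha$ reduces to $t_\alpha$). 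Since $\Gscr_x$ is smooth and $\widehat{W}(\mfr_{O_K})$ is a topologically nilpotent ideal in the complete ring $\WW(O_K)$, this section lifts by the infinitesimal criterion to a section over $\WW(O_K)$, which is the isomorphism in (ii). With (i), (ii) and (iii) in hand, \cite[Cor.~3.2.11, Prop.~3.2.17]{KisinPappas:ParahoricIntModel} yields $\Xscr\in\Def_{\Gscr_x}(X)(O_K)$, as claimed.
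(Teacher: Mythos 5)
The paper gives no proof of this statement---it is quoted verbatim as \cite[Prop.~3.3.13]{KisinPappas:ParahoricIntModel}---and your reduction to the crystalline criterion of \cite[Cor.~3.2.11, Prop.~3.2.17]{KisinPappas:ParahoricIntModel}, with the integrality and $\Gscr_x$-torsor structure of $(u_\alpha)$ supplied by the Breuil--Kisin module of $T(\Xscr)$ and the section lifted along $\WW(O_K)\to W(\k)$ by smoothness of $\Gscr_x$, is exactly the route taken in the source, so the approach is the same and the argument is sound. The only loose spot is describing $\mathfrak{M}(\Xscr)$ as an ``intersection of two lattices'': what is actually used is that $T\mapsto\mathfrak{M}(T)$ is a fully faithful tensor functor (so $t_{\alpha,\et}$ transports to a $\varphi$-invariant tensor of $\mathfrak{M}(T(\Xscr))^\otimes$, hence after base change to an integral tensor of $M(\Xscr)^\otimes$), together with hypothesis~(a) and the uniqueness of Frobenius-equivariant lifts of the identity on $M(X)_\QQ$ to identify that tensor with $u_\alpha$---the same compatibility this paper spells out later in Lemma~\ref{lem:Ccris} and Corollary~\ref{cor:Ccris}.
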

%

 \subsection{Integral models} \label{sect integral models}

 Let $(\Gsf,\Xsf)$ be Shimura datum of Hodge type and $\Ksf = \Ksf_p\Ksf^p \subset \Gsf(\AA_f)$ be a small enough open compact subgroup where $\Ksf_p \subset \Gsf(\QQ_p)$ is the stabiliser of a point $x \in \Bcal(\Gsf,\QQ_p)$.  Denote by $\Esf$ the Shimura field (which is a subfield of $\CC$), and let $E$ be its $p$-adic completion for some fixed embeddings $\overline\QQ \mono \CC$ and 
 $\overline\QQ \mono \overline\QQ_p$. Let $O_E$ and $\kappa_E$ respectively denote the ring of integers and the residue field of $E$.

We assume that $G \coloneqq \Gsf_{\QQ_p}$ splits over a tame extension of $\QQ_p$.
 Since $(\Gsf,\Xsf)$ is of Hodge type, there exists an embedding of Shimura data $(\Gsf,\Xsf) \mono ({\sf GSp}_{2g},\Ssf^\pm)$. Using the assumption that $G$ splits over a tame extension of $\QQ_p$, we may alter this embedding such that the image $x' \in \Bcal(\GSp_{2g},\QQ_p)$ of $x\in\Bcal(G,\QQ_p)$  is hyperspecial (\cite[4.1.5]{KisinPappas:ParahoricIntModel} and Zarhin's trick, cf.\ \cite[(1.3.3)]{Kisin:LanglandsRapoport}). Let $\Lambda\subset\QQ_p^{2g}$ denote the unimodular lattice corresponding to $x'$. It follows from \cite[Lemma~1.3.3]{KisinPappas:ParahoricIntModel} that the closed immersion $G\hookrightarrow \GSp_{2g}$ over $\QQ_p$ extends to a closed immersion
\begin{equation}\label{eq PrasadYu}
 \Gscr_x\hookrightarrow  \GSp(\Lambda)
\end{equation}
of smooth algebraic groups over $\ZZ_p$. In particular, we have a closed immersion of the local models $\Mscr^{\rm loc}_{G,\{\mu\},x} \hookrightarrow\Mscr^{\rm loc}_{\GSp_{2g},\{\mu'\},x'}\otimes_{\ZZ_p}O_E$.

In addition to the assumption that $G=\Gsf_{\QQ_p}$ splits over a tame extension of $\QQ_p$, we assume that  $p \nmid |\pi_1(G^{\rm der}) |$. Note that the adjoint group of $G$ cannot have a factor of type $E_8$ (by classification), and the image of the embedding $G\mono \GSp_{2g}$ contains all the scalar matrices. Therefore, the technical conditions imposed at the beginning of \S\ref{sect local geom} are satisfied.

 We choose a unimodular $\ZZ$-lattice $\Lambda_\ZZ \subset \Lambda\cap \QQ^{2g}$ with respect to the standard symplectic pairing. Using this choice, we can interpret the Shimura variety for $(\mathsf{GSp}_{2g},\Ssf^\pm)$ as the moduli space of principally polarised abelian varieties of dimension~$2g$ with some symplectic level structure.

Given $\Ksf\subset\Gsf(\AA_f)$ as in the beginning of the section, we choose $\Ksf' = \Ksf'_p\Ksf'^p \subset {\sf GSp}_{2g}(\AA_f)$ to be a small enough open subgroup such that $\Ksf'_p \subset \GSp_{2g}(\QQ_p)$ is the  stabiliser of $x'$, $\Ksf'^p \subset {\sf GSp}_{2g}(\AA_f^p)$ contains $\Ksf^p$, and $\Sh_\Ksf(\Gsf,\Xsf) \to \Sh_{\Ksf'}({\sf GSp}_{2g},\Ssf^\pm)$ is a closed embedding (\cite[Lemma~2.1.2]{Kisin:IntModelAbType}). If the Shimura data and level are understood, we may abbreviate $\Sh_\Ksf(\Gsf,\Xsf)$ and $\Sh_{\Ksf'}({\sf GSp}_{2g},\Ssf^\pm)$ by $\Sh$ and $\Sh'$, respectively.

 By the standard argument (\emph{cf.} \cite[4.1.7]{KisinPappas:ParahoricIntModel}), the pull-back of the universal abelian variety $\Asf_\Gsf$ comes equipped with certain families of Hodge cycles, which we recall now. (See \cite[2.2]{Kisin:IntModelAbType} or \cite[4.1.7]{KisinPappas:ParahoricIntModel} for the full details.) Let $\Lambda_{\ZZ_{(p)}}\subset \QQ^{2g}$ denote the unimodular $\ZZ_{(p)}$-lattice that descends the unimodular $\ZZ_p$-lattice $\Lambda\subset \QQ_p^{2g}$. From the existence of the closed immersion (\ref{eq PrasadYu}), one can see that the Zariski closure of $\Gsf$ in $\GSp(\Lambda_{\ZZ_{(p)}})$ is a smooth $\ZZ_{(p)}$-model of $\Gscr_x$.
By \cite[Proposition~1.3.2]{Kisin:IntModelAbType} there exist finitely many tensors $s_\alpha\in\Lambda_{\ZZ_{(p)}}^\otimes$ whose pointwise stabiliser is the Zariski closure of $\Gsf$ in $\GL(\Lambda_{\ZZ_{(p)}})$.

Let $T_p(\Asf_\Gsf)$ denote the integral Tate module viewed as a $\Zp$-local system over $\Sh_{\Ksf}(\Gsf,\Xsf)$. Then we can associate to $(s_\alpha)$ the global sections
\[
t_{\alpha,\et} \in\Gamma (T_p(\Asf_\Gsf)^\otimes),
\]
such that for any geometric point $\bar{z}$ of $\Sh_{\Ksf}(\Gsf,\Xsf)$, there exists an isomorphism $\Lambda\cong T_p(\Asf_\Gsf)_{\tilde z}$ sending $s_\alpha$ to the fibre of $(t_{\alpha,\et})$ at $\bar z$. Indeed, over  $\Sh_{\Ksf}(\Gsf,\Xsf)_\CC$ we have an explicit complex analytic construction of such tensors $(t_{\alpha,\et})$  (or to be precise, we construct the tensors on the Betti (co)homology, and view them as tensors on \'etale (co)homology by scalar extension), and by the theory of absolutely Hodge cycles on abelian schemes we can descend the tensors over $\Sh_{\Ksf}(\Gsf,\Xsf)$. Similarly, for $l\ne p$, we can construct the tensors $t_{\alpha,l}\in\Gamma(T_l(\Asf_\Gsf)^\otimes)$.

For $\Ksf'=\Ksf^{\prime p}\Ksf'_p$ as above, denote by $\Sscr'_{\Ksf'}$ the moduli space of principally polarised abelian varieties over $\ZZ_{(p)}$ with $\Ksf'^p$-level structure,  and let $(\Ascr^\univ,\lambda^\univ,\eta^\univ)$ be its universal object. Then $\Sscr'_{\Ksf'}$ is a smooth scheme whose generic fibre is canonically isomorphic to $\Sh_{\Ksf'}(\mathsf{GSp}_{2g},\Ssf^\pm)$ (see e.g.\ \cite[\S~5]{Kottwitz:PtShimuraVarFinFields}). The Kisin-Pappas integral model $\Sscr_{\Ksf}$ of $\Sh_\Ksf(\Gsf,\Xsf)$ is given by the normalisation of the closure $\Sscr_{\Ksf}^-$ of $\Sh_{\Ksf}(\Gsf,\Xsf)$ in $\Sscr'_{\Ksf'} \otimes O_E$. We denote the corresponding morphism by $\iota: \Sscr_{\Ksf} \to \Sscr'_{\Ksf'}$ and by $\Ascr_\Gsf \coloneqq \iota^\ast\Ascr^\univ$. The canonical projection $\Sh(\Gsf,\Xsf)_{\Ksf_1^p \Ksf_p} \epi\Sh(\Gsf,\Xsf)_{\Ksf^p \Ksf_p}$ for $\Ksf_1^p \subset \Ksf^p$ and the $\Gsf(\AA_f^p)$-action on the tower $\{\Sh_{\Ksf^p \Ksf_p}(\Gsf,\Xsf)\}_{\Ksf^p\subset \Gsf(\AA_f^p)}$ both extend uniquely to the integral models. We will omit the level in the subscript (\emph{e.g.} $\Sscr=\Sscr_\Ksf$, $\Sscr'=\Sscr_{\Ksf'}$) if it is given by context or not relevant.

Let $z \in \Sscr(\FFbar_p)$, and choose a lift $\tilde z\in\Sscr(O_K)$ of $z$ for some finite extension $K/\breve{E}$. Then by crystalline-\'etale comparison theorem, the fibre of $t_{\alpha,\et}$ at some geometric point over $\tilde z$ gives rise to $t_{\alpha,z}\in M(\Ascr_{\Gsf,z}[p^\infty])_\QQ^\otimes$.  By \cite[Cor.~3.3.6]{KisinPappas:ParahoricIntModel}  together with standard results from the theory of Kisin modules (\emph{cf.} \cite[Theorem~3.3.2]{KisinPappas:ParahoricIntModel}), it follows that the tensors  $t_{\alpha,z}$ are integral and satisfy the condition (a) and (b) in \S~\ref{sect local geom}. (Indeed, using the notation of \cite[Theorem~3.3.2]{KisinPappas:ParahoricIntModel}, the tensors on the Kisin module $\Mfr(t_{\alpha,\et}) \in \Mfr(T_p(\Ascr_{\Gsf,\tilde z}))^\otimes$ lift $t_{\alpha,z}$, so we get the integrality of $t_{\alpha,z}$. Now, \cite[Cor.~3.3.6]{KisinPappas:ParahoricIntModel} implies the condition (a) in  \S~\ref{sect local geom}, and the condition (b) follows from the relationship between $D_{dR}(V_p(\Ascr_{\Gsf,\tilde z}))$ and $\Mfr(T_p(\Ascr_{\Gsf,\tilde z}))$, together with the fact that the Hodge filtration on the generic fibre of $\Ascr_{\Gsf,\tilde z}$ is given by some cocharacter in the geometric conjugacy class $\{\mu\}$.)    
Therefore, we can define a closed formal subscheme $\Def_{\Gscr_x}(\Ascr_{\Gsf,z}[p^\infty];(t_{\alpha,z})) \subset \Def(\Ascr_{\Gsf,z}[p^\infty])\widehat\otimes_{\Zpbr} O_{\breve E}$; \emph{cf.} Definition~\ref{def KP defor space}.
By the isomorphism (\ref{eq rigidity quasiisogeny}) and \cite[Lemma~3.2.13]{KisinPappas:ParahoricIntModel}, the tensors $t_{\alpha,z}$ do not depend on the choice of lift $\tilde z$ of $z$. Furthermore, the existence of \'etale tensors $t_{\alpha,\et}$ implies that
the deformation of $\Ascr_{\Gsf,z}$ given by a lift $\tilde z\in \Sscr(O_K)$ of $z$ gives rise to an $O_K$-point of $\Def_{\Gscr_x}(\Ascr_{\Gsf,z}[p^\infty];(t_{\alpha,z}))$ (\emph{cf.} Proposition~\ref{prop et univ prop}).

Kisin and Pappas gave the following description of the formal neighbourhood of  $z \in \Sscr(\FFbar_p)$.
 \begin{propsub}[{\cite[Prop.~4.2.2,\ Cor.~4.2.4]{KisinPappas:ParahoricIntModel}}] \label{prop normalisation on formal neighbourhoods}
  Let $z \in \Sscr(\k)$ and $z' = \iota(z)\in\Sscr'(\k)$. The identification $\Sscr_{\Zpbr,z'}^{\prime\wedge}\cong \Def_{\Gscr'_{x'}}(\Ascr^\univ_{z'}[p^\infty];\lambda^\univ_{z'})$, given via the Serre-Tate theorem, yields a canonical isomorphism  $\Sscr_{O_{\breve{E}},z}^\wedge\cong \Def_{\Gscr_x}(\Ascr_{\Gsf,z}[p^\infty]; (t_{\alpha,z}))$ such that the following diagram
   \begin{center}
    \begin{tikzcd}[column sep = small]
     \iota^{-1}(\Sscr_{O_{\breve{E}},z'}^{- \wedge}) \arrow{d} \arrow{r}{\sim} & \coprod\limits_{\iota(z) = z'} \Def_{\Gscr_x}(\Ascr_{\Gsf,z}[p^\infty]; (t_{\alpha,z})) \arrow{d} & \\
     \Sscr_{O_{\breve{E}},z'}^{- \wedge} \arrow{r}{\sim} \arrow[hook]{d} & \bigcup\limits_{\iota(z) = z'} \Def_{\Gscr_x}(\Ascr_{z'}^\univ [p^\infty]; (t_{\alpha,z})) \arrow[hook]{d} \\
     \Sscr'^\wedge_{O_{\breve{E}},z'} \arrow{r}{\sim} & \Def_{\Gscr'_{x'}}(\Ascr_{z'}^\univ[p^\infty];\lambda_{z'}^\univ)\widehat{\otimes}_{\Zpbr}O_{\breve{E}}
    \end{tikzcd}
   \end{center}
   commutes, where
    the vertical maps are induced by the natural inclusions
   \[\Def_{\Gscr_x}(\Ascr_{\Gsf,z}[p^\infty];(t_{\alpha,z})) \mono \Def_{\Gscr'_{x'}}(\Ascr^\univ_{z'}[p^\infty];\lambda^\univ_{z'}).\]
 \end{propsub}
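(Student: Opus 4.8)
The plan is to follow \cite[Prop.~4.2.2,~Cor.~4.2.4]{KisinPappas:ParahoricIntModel}. First I would recall, as in \S\ref{sect local geom}, that the Serre--Tate theorem together with Grothendieck--Messing theory identifies $\Sscr_{\Zpbr,z'}^{\prime\wedge}$ with the deformation space $\Def_{\Gscr'_{x'}}(\Ascr_{z'}^\univ[p^\infty];\lambda_{z'}^\univ)$, which is canonically $\widehat\Mscr^{\rm loc}_{\GSp_{2g},\{\mu'\},x',y'}$ with $y'$ the point given by the Hodge filtration $\Fil^0(\Ascr_{z'}^\univ[p^\infty])$; base-changing to $O_{\breve{E}}$ gives the bottom row of the diagram. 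With this in place the assertion decomposes into two steps: (i) identify the formal completion $\Sscr_{O_{\breve{E}},z'}^{-\wedge}$ of the closure, viewed as a closed formal subscheme of $\Sscr_{O_{\breve{E}},z'}^{\prime\wedge}$, with the union $\bigcup_{\iota(z)=z'}\Def_{\Gscr_x}(\Ascr_{z'}^\univ[p^\infty];(t_{\alpha,z}))$; and (ii) pass to normalisations.

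For step (i) I would note that $\Sscr^-$ is reduced, $O_E$-flat and of finite type, so $\Sscr_{O_{\breve{E}},z'}^{-\wedge}$ is reduced and $O_{\breve{E}}$-flat, hence determined as a closed formal subscheme by its $O_K$-valued points for finite extensions $K/\breve{E}$; the same holds for each $\Def_{\Gscr_x}(\Ascr_{z'}^\univ[p^\infty];(t_{\alpha,z}))$, the local model being flat and reduced. An $O_K$-point of $\Sscr_{O_{\breve{E}},z'}^{-\wedge}$ is a deformation of $\Ascr_{z'}^\univ$ whose generic fibre lies on $\Sh_\Ksf(\Gsf,\Xsf)$; applying the valuative criterion to the finite morphism $\iota$ it lifts uniquely to an $O_K$-point of $\Sscr$, reducing to some $z$ with $\iota(z)=z'$, and pulling back the \'etale Tate tensors $t_{\alpha,\et}$ produces $\Gal(\ol K/K)$-invariant tensors on the integral Tate module of the associated $p$-divisible group which, via the crystalline--\'etale comparison and \cite[Cor.~3.3.6]{KisinPappas:ParahoricIntModel} (\emph{cf.}\ the discussion preceding the proposition), match $t_{\alpha,z}$; hence the point lies in $\Def_{\Gscr_x}(\Ascr_{z'}^\univ[p^\infty];(t_{\alpha,z}))$ by Proposition~\ref{prop et univ prop}. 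Conversely, an $O_K$-point of $\Def_{\Gscr_x}(\Ascr_{z'}^\univ[p^\infty];(t_{\alpha,z}))$ is, via the rigidity isomorphism \eqref{eq rigidity quasiisogeny}, a deformation whose Hodge filtration lies on $\Mscr^{\rm loc}_{G,\{\mu\},x}$ and whose $p$-divisible group carries integral crystalline tensors matching $(t_\alpha)$, hence, again by the comparison, whose Tate module carries a $\Gal(\ol K/K)$-invariant family of tensors realising the $(s_\alpha)$ cutting out $\Gsf$ inside $\GSp_{2g}$; so by the characterisation of $\Sh_\Ksf(\Gsf,\Xsf)\hookrightarrow\Sh_{\Ksf'}(\GSp_{2g},\Ssf^\pm)$ through absolutely Hodge tensors (\cite[2.2,~2.3]{Kisin:IntModelAbType}) its generic fibre lies on $\Sh_\Ksf(\Gsf,\Xsf)$, hence the point lies on $\Sscr^-$. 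This gives both inclusions, proving (i).

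For step (ii) I would use that, by Definition~\ref{def KP defor space}, each $\Def_{\Gscr_x}(\Ascr_{z'}^\univ[p^\infty];(t_{\alpha,z}))$ is the formal completion of $\Mscr^{\rm loc}_{G,\{\mu\},x}\otimes_{O_E}O_{\breve{E}}$ at a point above $y'$, hence --- by the normality of $\Mscr^{\rm loc}_{G,\{\mu\},x}$ assumed in \S\ref{sect local geom} (via \cite[Thm.~9.1]{PappasZhu:LocMod}), together with the fact that a local Noetherian normal ring is a domain --- is the formal spectrum of a normal integral $O_{\breve{E}}$-flat ring. Therefore the natural morphism $\coprod_{\iota(z)=z'}\Def_{\Gscr_x}(\Ascr_{z'}^\univ[p^\infty];(t_{\alpha,z}))\to\Sscr_{O_{\breve{E}},z'}^{-\wedge}$ is finite and, by step (i) and the resulting matching of generic fibres, exhibits its source as the normalisation of $\Sscr_{O_{\breve{E}},z'}^{-\wedge}$. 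Since $O_E$ is excellent, normalisation commutes with completion along the closed point, so the normalisation of $\Sscr_{O_{\breve{E}},z'}^{-\wedge}$ is $\coprod_{\iota(z)=z'}\Sscr_{O_{\breve{E}},z}^\wedge$; comparing the two descriptions componentwise --- the components on both sides being indexed by the fibre $\iota^{-1}(z')$ --- yields the asserted isomorphisms $\Sscr_{O_{\breve{E}},z}^\wedge\cong\Def_{\Gscr_x}(\Ascr_{\Gsf,z}[p^\infty];(t_{\alpha,z}))$, and commutativity of the displayed diagram follows by unwinding the identifications.

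I expect the main obstacle to be the converse direction in step (i): showing that the generic fibre of the $\Gscr_x$-deformation locus is contained in the Shimura subvariety requires the characterisation of $\Sh_\Ksf(\Gsf,\Xsf)$ inside the Siegel variety by its Hodge tensors, which rests on the theory of absolutely Hodge cycles on abelian schemes, together with the compatibility of the \'etale and crystalline realisations of these tensors provided by the $p$-adic comparison and Kisin-module arguments recalled in \S\ref{sect local geom} and \S\ref{sect integral models}. The excellence input in step (ii) and the bijection between the branches of $\Sscr_{O_{\breve{E}},z'}^{-\wedge}$ and the points of $\iota^{-1}(z')$ --- which uses reducedness of the special fibre of the local model --- are more routine but still require care.
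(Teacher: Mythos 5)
Your overall strategy --- identify $\Sscr^{-\wedge}_{O_{\breve{E}},z'}$ with $\bigcup_{\iota(z)=z'}\Def_{\Gscr_x}(\Ascr^\univ_{z'}[p^\infty];(t_{\alpha,z}))$ and then pass to normalisations using excellence --- is the right one and matches \cite[Prop.~4.2.2, Cor.~4.2.4]{KisinPappas:ParahoricIntModel}; the forward inclusion of your step (i) (via the \'etale tensors, the crystalline--\'etale comparison and Proposition~\ref{prop et univ prop}) and all of step (ii) are essentially as in \emph{loc.~cit.}

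However, your argument for the reverse inclusion in step (i) has a genuine gap. You assert that an $O_K$-point of $\Def_{\Gscr_x}(\Ascr^\univ_{z'}[p^\infty];(t_{\alpha,z}))$ has generic fibre on $\Sh_\Ksf(\Gsf,\Xsf)$ ``by the characterisation of $\Sh_\Ksf(\Gsf,\Xsf)\hookrightarrow\Sh_{\Ksf'}(\mathsf{GSp}_{2g},\Ssf^\pm)$ through absolutely Hodge tensors''. No such characterisation is available: \cite[\S2.2--2.3]{Kisin:IntModelAbType} constructs Hodge tensors \emph{on} the Shimura subvariety, but the existence of a Galois-invariant family of \'etale (or crystalline) tensors on the Tate module realising $(s_\alpha)$ does not force a point of the Siegel variety to lie on the Hodge-type subvariety --- one would additionally need the tensors to be absolutely Hodge, a compatible Betti trivialisation landing in $\Xsf$, compatibility with the level structure, and so on; this is exactly the kind of global difficulty that makes Axiom~\ref{axiom RZ} nontrivial. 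The correct route, as in Kisin--Pappas, is to prove only the inclusion $\Sscr^{-\wedge}_{O_{\breve{E}},z'}\subseteq\bigcup\Def_{\Gscr_x}$ directly and then to obtain equality by irreducibility and dimension: each $\Def_{\Gscr_x}$ is the completion of $\Mscr^{\rm loc}_{G,\{\mu\},x}$ at a closed point, hence by the normality of the local model (\cite[Thm.~9.1]{PappasZhu:LocMod}) and excellence the formal spectrum of a normal domain, irreducible of dimension $\dim\Sh+1$; while $\Sscr^{-\wedge}_{O_{\breve{E}},z'}$ is reduced, $O_{\breve{E}}$-flat, and every one of its irreducible components has dimension $\dim\Sh+1$ because $\Sscr^-$ is the flat closure of the $\dim\Sh$-dimensional generic fibre. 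An inclusion of the latter in a finite union of the former then forces each of its irreducible components to coincide with one of the $\Def_{\Gscr_x}$, and the indexing by $\iota^{-1}(z')$ follows since the branches of $\Sscr^-$ at $z'$ are in bijection with the points of the normalisation $\Sscr$ above $z'$ (again by excellence). With this replacement your step (ii) goes through as written.
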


Since $\Def_{\Gscr_x}(\Ascr_{\Gsf,z}[p^\infty]; (t_{\alpha,z})$ is represented by the formal neighbourhood of the local model $\Mscr^{\rm loc}_{G,\{\mu\},x}$, we obtain the following corollary from the local property of $\Mscr^{\rm loc}_{G,\{\mu\},x}$:
\begin{corsub}[\emph{cf.} {\cite[Corollary~0.3]{KisinPappas:ParahoricIntModel}}]\label{cor normalisation on formal neighbourhoods}
The integral model $\Sscr$ is normal. Its geometric special fibre $\Ssb$ is reduced, and each irreducible component is normal and Cohen-Macaulay.
\end{corsub}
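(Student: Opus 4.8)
The plan is to reduce every assertion to the corresponding \emph{local} statement for the Pappas--Zhu local model $\Mscr^{\rm loc}_{G,\{\mu\},x}$ recalled in \S\ref{sect local geom}, using the description of formal neighbourhoods in Proposition~\ref{prop normalisation on formal neighbourhoods}. Normality of $\Sscr$ is in fact immediate from its construction: $\Sscr$ is the normalisation of the closure of $\Sh_\Ksf(\Gsf,\Xsf)$ in $\Sscr'\otimes_{\ZZ_{(p)}}O_E$, which is a scheme of finite type over the excellent ring $O_E$, hence excellent, so its normalisation is finite and the result is normal. (Alternatively one can read normality of $\Sscr$ off Proposition~\ref{prop normalisation on formal neighbourhoods} together with the normality of $\Mscr^{\rm loc}_{G,\{\mu\},x}$, since $\Sscr$ is excellent and normality may be checked on completed local rings, the generic fibre being the smooth variety $\Sh_\Ksf(\Gsf,\Xsf)$.)

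For the special fibre I would first note that $\Ssb$ is of finite type over $\k$, hence Jacobson and excellent, so reducedness, the Cohen--Macaulay property, and normality of an irreducible component may all be tested after passing to the completed local rings $\widehat\Ocal_{\Ssb,z}$ at the closed points $z\in\Ssb(\k)$. By Proposition~\ref{prop normalisation on formal neighbourhoods} and Definition~\ref{def KP defor space}, the completed local ring of $\Sscr\otimes_{O_E}O_{\breve E}$ at such a $z$ is isomorphic to that of $\Mscr^{\rm loc}_{G,\{\mu\},x}\otimes_{O_E}O_{\breve E}$ at the point $y$ cut out by the Hodge filtration of $\Ascr_{\Gsf,z}[p^\infty]$, and reducing modulo a uniformiser gives
\[
\widehat\Ocal_{\Ssb,z}\;\cong\;\bigl(\widehat\Ocal_{\Mscr^{\rm loc}_{G,\{\mu\},x}\otimes_{O_E}\kappa_E,\,\bar y}\bigr)\,\widehat\otimes_{\kappa(\bar y)}\,\k.
\]
Since $\k/\kappa_E$ is separable algebraic, this base change preserves reducedness, and $\Mscr^{\rm loc}_{G,\{\mu\},x}\otimes_{O_E}\kappa_E$ is reduced by \cite[Theorem~9.1]{PappasZhu:LocMod}; so $\Ssb$ is reduced. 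In the same way, as flat base change along a geometrically regular residue field extension and completion both preserve Cohen--Macaulayness, the Cohen--Macaulay (resp.\ normality) statements for the irreducible components of $\Ssb$ would follow from the corresponding statements for the components of $\Mscr^{\rm loc}_{G,\{\mu\},x}\otimes_{O_E}\kappa_E$ in \cite[Theorem~9.1]{PappasZhu:LocMod}, \emph{provided} the isomorphism above matches up the irreducible components through $z$ and $\bar y$ respectively.

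The hard part will therefore be exactly this last compatibility: one must know that the branches of $\Ssb$ at $z$ are in bijection with the global irreducible components of $\Ssb$ passing through $z$ (equivalently, $\Ssb$ is unibranch along each of its components), in a way matched with the analogous fact for $\Mscr^{\rm loc}_{G,\{\mu\},x}$ — otherwise $\widehat\Ocal_{\Ssb_\alpha,z}$ is only a quotient by an intersection of several minimal primes and normality/Cohen--Macaulayness need not pass to it (think of a nodal curve, whose completed local ring has normal branches but which is not normal). Granting the matching, $\widehat\Ocal_{\Ssb_\alpha,z}$ is identified with the base change to $\k$ of $\widehat\Ocal_{(\Mscr^{\rm loc}_{G,\{\mu\},x}\otimes\kappa_E)_\beta,\,\bar y}$ for the corresponding component, and normality and the Cohen--Macaulay property transfer. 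I expect this component-matching — which should be extracted from the structure theory of $\Mscr^{\rm loc}_{G,\{\mu\},x}$, whose special fibre is a union of \emph{normal} affine Schubert varieties and whose completed local rings therefore have the required unibranch behaviour, transported through Proposition~\ref{prop normalisation on formal neighbourhoods} — together with the careful bookkeeping of the successive base changes $O_E\to O_{\breve E}\to\k$, to be the only genuinely non-formal ingredient; the remaining steps are routine descent of reducedness, Cohen--Macaulayness and normality along faithfully flat maps, completions, and separable residue extensions.
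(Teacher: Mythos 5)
Your handling of the first two assertions is correct and matches the paper's (very terse) argument: normality of $\Sscr$ is built into its construction as a normalisation, and reducedness of $\Ssb$ is a property of completed local rings of excellent Jacobson schemes, so it transfers from the reducedness of the special fibre of $\Mscr^{\rm loc}_{G,\{\mu\},x}$ via Proposition~\ref{prop normalisation on formal neighbourhoods}.

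The difficulty you flag for the last assertion is genuine, and the repair you sketch cannot work. The isomorphism $\widehat\Ocal_{\Ssb,z}\cong\widehat\Ocal_{\Mscr^{\rm loc}_{G,\{\mu\},x}\otimes\k,\,\bar y}$ tells you that the \emph{analytic branches} of $\Ssb$ at $z$ are normal and Cohen--Macaulay and that their number equals the number of components of the local model through $\bar y$; it says nothing about how many \emph{global} irreducible components of $\Ssb$ pass through $z$, which is what you need. Unibranchness of the components is not data that can be ``transported through Proposition~\ref{prop normalisation on formal neighbourhoods}'': an irreducible nodal cubic and the union of the two coordinate axes have isomorphic completed local rings at their singular points (namely $k[[u,v]]/(uv)$, two normal branches), yet the former is not a union of normal components while the latter is; worse, the standard $n$-gon degenerate elliptic curves are finite \'etale covers of the irreducible nodal cubic, so ``every irreducible component is normal'' does not even descend along \'etale surjective maps, let alone along isomorphisms of completed local rings. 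The missing ingredient is global, and it is exactly what the cited \cite{KisinPappas:ParahoricIntModel} supplies: the local model diagram $\Sscr\leftarrow\widetilde{\Sscr}\rightarrow\Mscr^{\rm loc}_{G,\{\mu\},x}$, with the left map a torsor under the smooth group scheme $\Gscr_x$ (with connected fibres after passing to the parahoric $\Gscr_x^\circ$) and the right map smooth. Since smooth maps send generic points to generic points, each irreducible component of the special fibre of $\widetilde{\Sscr}$ is a component of the preimage of a single normal, Cohen--Macaulay component of $\Mscr^{\rm loc}_{G,\{\mu\},x}\otimes\kappa_E$ and hence is itself normal and Cohen--Macaulay; these properties then descend along the faithfully flat torsor map to the irreducible components of $\Ssb$, whose preimages are irreducible by connectedness of the fibres. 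Without this diagram (or an equivalent global linkage such as the Kottwitz--Rapoport stratification), the statement about irreducible components does not follow from the formal-neighbourhood description alone.
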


  \subsection{Stratifications of the special fibre}

 Let $\Ssb$ denote the geometric special fibre of the integral model $\Sscr$. For any closed point $z \in \Ssb(\FFbar_p)$, choosing an isomorphism $\DD(\Ascr_{\Gsf,z}[p^\infty])_\W \cong \Lambda \otimes_\O \W$ respecting the additional structure carries the Frobenius $F$ to an element $g_z\sigma$ with $g_z \in G(\L)$. Due to the choice of an isomorphism only the $\Gscr_x(\W)$-$\sigma$-conjugacy class is well-defined, which we denote by $\pot{g_z}$. Similarly, the isogeny class of $(\Ascr_{\Gsf,z}[p^\infty],\lambda_{\Gsf,z}, (t_{\alpha,z}))$ is determined by the $G(\L)$-$\sigma$-conjugacy class $[g_z]$. We denote for $b \in G(\L)$
 \begin{align*}
   \Cbar^{b}(\k) &\coloneqq \{ z \in \Ssb(\k) \mid g_z \in \pot{b}\} \\
   \Ssb^b(\k) &\coloneqq \{ z \in \Ssb(\k) \mid g_z \in [b]\}.
 \end{align*}
 We want that these sets are locally closed, so that we may equip them with reduced structure and view them as subvarieties.  The work of Oort, and Rapoport and Richartz show that these sets are indeed locally closed, provided that the pointwise tensors on the cohomology are induced by some sort of global structure. This is proven in the next section.

 \section{Construction of global tensors on the display of $\Ascr_\Gsf[p^\infty]$}\label{sect tensors}
 
 Since $\Sscr$ is normal, it follows that for $l\ne p$ the $l$-adic \'etale components of the Hodge cycles $\{t_{\alpha,l}\}$ on the generic fibre $\Asf_\Gsf$ extend to the integral model $\Ascr_\Gsf$. On the other hand, this property does not hold for the $p$-adic \'etale tensors $\{t_{\alpha,\et}\}$. When $x\in\Bcal(G,\QQ_p)$ is a hyperspecial vertex, then it is possible to obtain the crystalline Tate tensor on the integral model (as the analogue of the good reduction of $l$-adic \'etale component), which crucially uses the smoothness of $\Sscr$. In general (especially, when $\Sscr$ is not smooth), we have an analogue of this in terms of displays, which we will obtain in Proposition~\ref{prop global tensors}.

\subsection{Review of $p$-adic comparison morphism for Barsotti-Tate groups}
Let $C$ be an algebraically closed complete extension of $\QQ_p$ with ring of integers $O_C$. In practice, $C = \widehat{\overline{K}}$, where $K$ is a   complete discrete valuation field of mixed characteristic $(0,p)$. Let $\Xscr$ be a {\BT} over $O_C$.
We briefly recall the $p$-adic comparison morphism in this case.

Let us briefly recall Fontaine's construction of crystalline period rings.
We define
\[
O_C^\flat \coloneqq \varprojlim_\Phi O_C/p \thra O_C/p
\]
to be the inverse perfection of $O_C/p$. 
And one has the following lift of the natural projection
\[
 \theta\colon W(O_C^\flat) \to O_C
\]
which is surjective and its kernel is generated by a regular element.

Let $\Acris(O_C)\thra O_C$  be the $p$-adically completed divided power hull of $\theta$ over $\ZZ_p$ (with respect to the usual divided power structure on $p\ZZ_p$). It is known that $\Acris(O_C)$ is $p$-torsion free. 

One can check that the Witt vector Frobenius $\sigma$ of $W(O_C^\flat)$ naturally extends to $\Acris(O_C)$, which we also denote by $\sigma$.  If $C = \widehat{\overline{K}}$ then the natural $\Gamma_K$-action on $W(O_C^\flat)$ extends to $\Acris(O_C)$, and the natural projection $\Acris(O_C)\to O_C/p$ and $\sigma:\Acris(O_C)\to\Acris(O_C)$ are $\Gamma_K$-equivariant.

Recall that $\Acris(O_C)\to O_C$ is initial among $p$-adic divided power thickenings of $O_C$ over $\ZZ_p$ (with the natural divided power structure on $p\ZZ_p$); \emph{cf.} \cite[\S\S~2.2]{fontaine:Asterisque223ExpII}. As an application, note that $W(O_C)$ is a $p$-adic divided power thickening of $O_C$ over $\ZZ_p$. Therefore, by the universal property, we have a natural divided power morphism 
\begin{equation}\label{eq Acris to Witt}
\Acris(O_C)\epi W(O_C).
\end{equation}
This morphism commutes with the natural Frobenius endomorphisms on the both sides, and if $C = \widehat{\overline{K}}$ then it is $\Gamma_K$-equivariant.

We choose a compatible sequence of primitive $p^n$th root of unity $\epsilon\in O_C^\flat$, and write $t\coloneqq\log ([\epsilon])\in\Acris(O_C)$, which is a regular element. 
We set  $\Bcris(O_C)\coloneqq\Acris(O_C)[\frac{1}{pt}] = \Acris(O_C)[\frac{1}{t}]$.

Let us  review the crystalline Dieudonn\'e theory over $O_C$. By \cite[Theorem~A]{ScholzeWeinstein:RZ} the covariant Dieudonn\'e module functor for Barsotti-Tate groups over $O_C/p$
\[
\Xscr \rightsquigarrow (\DD(\Xscr)_{\Acris(O_C)}^\vee,  \Phi),
\]
is fully faithful. Note that there is no ambiguity in the notation as the covariant Dieudonne crystal of $\Xscr$ can be canonically identified with the dual of $\DD(\Xscr)$; however the Frobenius satisfies $\Phi = p \cdot \varphi$ where $\varphi$ is the contragradient of the contravariant crystalline Frobenius (after inverting $p$).%

From \cite[Theorem~A]{ScholzeWeinstein:RZ} and the Grothendieck-Messing deformation theory (using $p>2$), we obtain the following \emph{fully faithful} functor
\begin{equation}\label{eq Lau functor}
\Xscr \rightsquigarrow (\DD(\Xscr)_{\Acris(O_C)}^\vee, \Fil^0 (\DD(\Xscr)_{\Acris(O_C)}^\vee), \Phi),
\end{equation}
associating to a Barsotti-Tate group $\Xscr$ over $O_C$ the the covariant Dieudonn\'e module $(\DD(\Xscr)_{\Acris(O_C)}^\vee,  \Phi)$ of $\Xscr_{O_C/p}$ together with the filtration
\[\Fil^0 (\DD(\Xscr)_{\Acris(O_C)}^\vee)\coloneqq \ker( \DD(\Xscr)_{\Acris(O_C)}^\vee \twoheadrightarrow \Lie(\Xscr)).\]
Note that this functor sends $\QQ_p/\ZZ_p$ to $(\Acris(O_C), \Acris(O_C), p\sigma)$. With further work, one can show that $\Phi_1 = \frac{\Phi}{p}:\Fil^0(\DD(\Xscr)_{\Acris(O_C)}^\vee)\to (\DD(\Xscr)_{\Acris(O_C)}^\vee)$ is well defined and its image generates the target, hence defining a ``crystalline Dieudonn\'e window'' in the sense of \cite[Theorem~6.3]{Lau:DieudonneSemiperfect}; \emph{cf.} \cite[Lemma~4]{Kim:ClassifFSmErratum}.

Let $\Xscr$ be a Barsotti-Tate group over $O_C$, and let $T(\Xscr_C)$ denote the integral Tate module. Recall that we have natural  isomorphisms
\[
T(\Xscr_C) \cong  \Hom_{O_C} (\QQ_p/\ZZ_p,\Xscr).
\]
Now, applying the fully faithful functor (\ref{eq Lau functor}), we obtain a natural isomorphism
\[
 T(\Xscr_C) \cong (\Fil^0\DD(\Xscr)_{\Acris(\tilde S)}^\vee)^{\Phi_1=1};
\]
In particular, we obtain a natural comparison morphism
\begin{equation}\label{eq integral Ccris}
\beta_\Xscr:T(\Xscr_C)\otimes_{\Zp}\Acris(O_C) \to  \DD(\Xscr)_{\Acris(O_C)}^\vee,
\end{equation}
which is compatible with the filtrations (where the filtration on the left hand side is defined by setting $\Fil^1\Acris(O_C)$ to be the kernel of the natural projection onto $O_C$). Furthermore, the natural Frobenius endomorphism $1\otimes\sigma$ on the source corresponds to $\Phi_1$ on the target. If  $C = \widehat{\overline{K}}$ and $\Xscr$ is defined over $O_K$, then the morphism (\ref{eq integral Ccris}) is $\Gamma_K$-equivariant in the natural way.

Let us recall the following result of Faltings \cite[\S~6, Theorem~7]{Faltings:IntegralCrysCohoVeryRamBase}.\footnote{Although Faltings \cite{Faltings:IntegralCrysCohoVeryRamBase} works with the divided power completion of $\Acris(O_C)$ in place of $\Acris(O_C)$, the essentially same proof work over $\Acris(O_C)$.}
\begin{propsub}\label{prop p-adic comparision}
The comparison morphism $\beta_\Xscr$ (\ref{eq integral Ccris}) induces a natural isomorphism
\[
T(\Xscr_C)\otimes_{\Zp}\Bcris(O_C) \cong \DD(\Xscr)_{\Acris(O_C)}^\vee[\tfrac{1}{t}]
\]
by inverting $t$.
\end{propsub}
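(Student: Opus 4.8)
This is Faltings' crystalline comparison theorem for {\BT}s (\cite[\S~6, Theorem~7]{Faltings:IntegralCrysCohoVeryRamBase}), so the plan is to run his argument over the ring $\Acris(O_C)$ rather than over its divided-power completion used in \emph{loc.~cit.}; this substitution is harmless because $\Acris(O_C)$ is $p$-torsion free, $t$ is a nonzerodivisor in it, $\Bcris(O_C)=\Acris(O_C)[\tfrac1t]$, and $\DD(\Xscr)^\vee_{\Acris(O_C)}$ is finite free, so every exact sequence and localisation used below is unchanged. I drop $O_C$ from the notation. The argument reduces to two explicit computations plus a duality input.

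First I would record the rank–one cases. Both the source $T(\Xscr_C)\otimes_{\ZZ_p}\Acris$ and the target $\DD(\Xscr)^\vee_{\Acris}$ of $\beta_\Xscr$ are finite free $\Acris$-modules of rank $h\coloneqq\height(\Xscr)$ (the target since $\DD(\Xscr)$ is a locally free crystal of rank $h$). For $\Xscr=\QQ_p/\ZZ_p$ the functor \eqref{eq Lau functor} gives $(\Acris,\Acris,p\sigma)$, and $\beta_{\QQ_p/\ZZ_p}$ is the identity of $\Acris$. For $\Xscr=\mu_{p^\infty}$ one has $\DD(\mu_{p^\infty})^\vee_{\Acris}\cong\Acris$ with $\Phi=\sigma$ and $\Fil^0=\ker(\Acris\to O_C)$; since $\sigma(t)=pt$ we get $\Phi_1(t)=\sigma(t)/p=t$, so $(\Fil^0)^{\Phi_1=1}=\ZZ_p\cdot t=\ZZ_p(1)=T(\mu_{p^\infty,C})$, and $\beta_{\mu_{p^\infty}}$ is multiplication by $t$ on $\Acris$. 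In particular $\beta_{\mu_{p^\infty}}$ is injective with cokernel killed by $t$, hence becomes an isomorphism after inverting $t$.

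For general $\Xscr$ I would use duality. Cartier duality gives a perfect Weil pairing $T(\Xscr_C)\otimes_{\ZZ_p}T(\Xscr^\vee_C)\to T(\mu_{p^\infty,C})$, and the duality of Dieudonn\'e crystals gives a perfect pairing $\DD(\Xscr)^\vee_{\Acris}\otimes_{\Acris}\DD(\Xscr^\vee)^\vee_{\Acris}\to\DD(\mu_{p^\infty})^\vee_{\Acris}$ of filtered $\Phi$-modules. By functoriality of the comparison morphism \eqref{eq integral Ccris} with respect to duality (which one checks on the level of Dieudonn\'e modules) these pairings are compatible via $\beta_\Xscr$, $\beta_{\Xscr^\vee}$ and $\beta_{\mu_{p^\infty}}$. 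Inverting $t$, both pairings become perfect over $\Bcris$ and $\beta_{\mu_{p^\infty}}$ becomes an isomorphism; unwinding the compatibility, and transporting a $\Bcris$-linear functional through the two pairing isomorphisms and $\beta_{\Xscr^\vee}[\tfrac1t]$, produces a $\Bcris$-linear left inverse of $f\coloneqq\beta_\Xscr[\tfrac1t]$. Thus $f$ is a split injection of finite free $\Bcris$-modules of equal rank $h$, so its cokernel is a finite projective $\Bcris$-module of rank $0$ at every prime, hence $0$, and $f$ is an isomorphism — which is the assertion.

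The step I expect to be the main obstacle is organising the duality inputs over the base $\Acris$: the perfectness of the crystalline duality pairing for Dieudonn\'e crystals over the (non-Noetherian) ring $\Acris(O_C)$, and the compatibility of $\beta_\Xscr$ with it and with the Weil pairing, keeping track of the twist in $\DD(\mu_{p^\infty})^\vee$. Granting these, the rest is formal. As a variant one may instead follow Faltings' d\'evissage directly: the connected–\'etale sequence of $\Xscr$ (whose \'etale part is a power of $\QQ_p/\ZZ_p$, since $\pi_1(\Spec O_C)$ is trivial), the exactness of $T(-)$ and of $\DD(-)^\vee_{\Acris}$ on short exact sequences of {\BT}s, and the flatness of $\Acris$ over $\ZZ_p$ reduce the statement to connected $\Xscr$, where one again appeals to the duality compatibility above (or to a direct period computation).
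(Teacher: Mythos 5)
The paper offers no proof of this proposition beyond the citation to Faltings (together with the footnote that the argument transfers from the divided-power completion to $\Acris(O_C)$ itself), and your argument --- the rank-one period computation identifying $\beta_{\mu_{p^\infty}}$ with multiplication by $t$, combined with the compatibility of $\beta_\Xscr$ with the Weil pairing and the crystalline duality pairing to produce an inverse of $\beta_\Xscr$ up to a factor of $t$ --- is precisely the classical Fontaine--Faltings argument being cited. It is correct, with the two inputs you explicitly flag (perfectness of the Berthelot--Breen--Messing duality pairing evaluated on the thickening $\Acris(O_C)\to O_C/p$, and the compatibility of $\beta$ with it and with Cartier duality, including the Tate twist) being exactly the points one must import from crystalline Dieudonn\'e theory rather than reprove.
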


\subsection{Integrality of tensors: the case of complete discrete valuation ring}
Let $K$ be a complete discretely valued extension of $\QQ_p$ with ring of integers $O_K$, and let $\Xscr$ be a {\BT} over $O_K$. Let $C\coloneqq \widehat{\overline{K}}$. In this section, we will investigate the relationship between $\Gamma_K$-invariant tensors $t_{\et}\in T(\Xscr_C)^\otimes$ and Frobenius-invariant tensors $t\in(\DD(\Xscr)_{W(O_K)}^\vee)^\otimes$. Our starting point is the $p$-adic comparison theorem (Proposition~\ref{prop p-adic comparision}).

By the previous section, any tensor $t_{\et}\in T(\Xscr_C)^\otimes$ induces a tensor $t_{\et} \otimes 1$ of $\DD(\Xscr)^\vee_{\Acris(O_C)} [\frac{1}{t}]$. 
We would like to show that (under some extra assumption)  $t_{\et} \otimes 1$ lies in $\DD(\Xscr)^\otimes_{\Acris(O_C)}$, without inverting $t$. This claim does not follow from the integrality of $t_{\et}$ and $\beta_\Xscr(T(\Xscr_C)) \subset \DD(\Xscr)_{\Acris(O_C)}^\vee$ since we have the opposite inclusion for their duals. %

\begin{propsub}\label{prop integrality Tate tensors}
Let $O_K$ be a complete discrete valuation ring of mixed characteristic~$(0,p)$ with fraction field $K$, and write $C\coloneqq \widehat{\overline{K}}$. Let $\Xscr$ be a {\BT} over $O_K$.
Then for any $\Gamma_K$-invariant tensor $t_{\et}\in T(\Xscr_C)^\otimes$, the tensor $t_{\et}\otimes 1\in \DD(\Xscr)_{\Acris(O_C)}^\otimes[\tfrac{1}{t}]$, constructed above, lies in $\DD(\Xscr)_{\Acris(O_C)}^\otimes$.
\end{propsub}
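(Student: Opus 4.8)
The plan is to prove this in two stages: first obtain a crude version of the statement, namely that $t_{\et}\otimes 1$ has at worst a bounded-order pole along $t$, and then remove the pole by exploiting the extra structure $t_{\et}\otimes 1$ carries.

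For the crude bound, I would use that $\Xscr$ is a Barsotti--Tate group, so its Hodge--Tate weights lie in $\{0,1\}$ and the comparison morphism $\beta_\Xscr$ of (\ref{eq integral Ccris}) is injective with cokernel annihilated by $t$; dualizing over $\Acris(O_C)$ (all modules involved are finite free and $\Acris(O_C)$ is $t$-torsion free), the map $\DD(\Xscr)_{\Acris(O_C)}\hookrightarrow T(\Xscr_C)^\vee\otimes_{\Zp}\Acris(O_C)$ is again injective with $t$-torsion cokernel. Since a tensor product of finitely many inclusions with cokernel killed by $t$ has cokernel killed by a power of $t$, the canonical map $T(\Xscr_C)^\otimes\otimes_{\Zp}\Acris(O_C)\to\DD(\Xscr)^\otimes_{\Acris(O_C)}[\tfrac1t]$ carries the integral lattice into $t^{-N}\DD(\Xscr)^\otimes_{\Acris(O_C)}$ for some $N$ depending only on the tensor construction defining $t_{\et}$. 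In particular $t^{N}\cdot(t_{\et}\otimes 1)\in\DD(\Xscr)^\otimes_{\Acris(O_C)}$.

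To remove the pole I would combine Frobenius-invariance with the Hodge filtration. By Proposition~\ref{prop p-adic comparision} the comparison identifies $\DD(\Xscr)^\vee_{\Bcris(O_C)}$ with $T(\Xscr_C)\otimes_{\Zp}\Bcris(O_C)$ compatibly with Frobenius, and on passage to $B^+_{\dR}(C)$ compatibly with the Hodge filtration. Since $T(\Xscr_C)[\tfrac1p]$ is crystalline and $t_{\et}$ is $\Gamma_K$-invariant, the image tensor $t_{\et}\otimes 1$ is fixed by the crystalline Frobenius $\varphi^\otimes$ on $\DD(\Xscr)^\otimes_{\Bcris(O_C)}$ and lies in $\Fil^0\DD(\Xscr)^\otimes_{B^+_{\dR}(C)}$. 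Now suppose for contradiction that $t_{\et}\otimes 1=t^{-N}x$ with $x\in\DD(\Xscr)^\otimes_{\Acris(O_C)}$ not divisible by $t$ and $N\ge 1$. Using $\sigma(t)=pt$, Frobenius-invariance translates into an eigenvalue relation $\varphi^\otimes(x)=p^{N}x$; feeding this, the $\Fil^0$-condition, the divisibility built into the crystalline Dieudonn\'e window (the image of $\Phi_1$ on $\Fil^0\DD(\Xscr)^\vee_{\Acris(O_C)}$ generates $\DD(\Xscr)^\vee_{\Acris(O_C)}$, i.e.\ $\Phi=p\Phi_1$ there), and the $p$-adic separatedness of the finite free module $\DD(\Xscr)^\otimes_{\Acris(O_C)}$ into one another forces $x$ to be divisible by $t$, a contradiction; hence $N=0$, which is the assertion.

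The hard part will be the last step: one has to set up the Frobenius and the Hodge filtration on the \emph{mixed} tensor space $\DD(\Xscr)^\otimes$ with entirely consistent normalizations — reconciling the contravariant crystalline Frobenius with its contragredient $\varphi$, the operators $\Phi$ and $\Phi_1=\Phi/p$, and the fact that dualizing inverts the relevant $p$-divisibilities — and then arrange the crude pole bound, the Frobenius eigenvalue relation and the filtration constraint in exactly the right order so that the descent closes off. (Once the discretely valued case is settled, the version over a general normal base needed for Proposition~\ref{prop global tensors} can be bootstrapped by testing on the formal completions at closed points and on the discretely valued points of the base.)
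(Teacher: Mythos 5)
Your first stage (the crude pole bound) is fine: since $\beta_\Xscr$ is injective with cokernel killed by $t$ and all modules are finite free over the $t$-torsion-free ring $\Acris(O_C)$, any fixed tensor construction carries $T(\Xscr_C)^\otimes\otimes\Acris(O_C)$ into $t^{-N}\DD(\Xscr)^\otimes_{\Acris(O_C)}$ for some $N$. But this is the easy part, and the whole content of the proposition sits in your second stage, which as sketched does not close. The relation $\varphi^\otimes(x)=p^Nx$ together with $x\in\Fil^N$ is, rationally, exactly the fundamental exact sequence $B_{\cris}^{\varphi=p^N}\cap\Fil^NB_{\dR}=t^N\QQ_p$ — and that only recovers what you already know, namely that $t^{-N}x$ is a $\Bcris$-rational (indeed $\QQ_p$-rational, i.e.\ \'etale) tensor. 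What you actually need is the \emph{integral} refinement: that an element of $\DD(\Xscr)^\otimes_{\Acris(O_C)}$ satisfying these conditions is divisible by $t$ \emph{in $\Acris(O_C)$}. This is false as a naive statement about $\Acris$ (e.g.\ $t^{p-1}/p\in\Acris(O_C)$, so $t$-divisibility and $p$-divisibility interact badly), and the Fontaine--Laffaille-type bounds that would rescue it only apply for filtration jumps in $[0,p-2]$, whereas the mixed tensor spaces $\DD(\Xscr)^\otimes$ occurring here have unbounded twists. Also, the "divisibility built into the window" ($\Phi=p\Phi_1$ on $\Fil^0$) is a statement about $\DD(\Xscr)^\vee$ itself and does not transfer to the mixed tensor space, whose Frobenius slopes and filtration jumps are spread over $[-h,h]$; and your closing appeal to $p$-adic separatedness suggests an infinite descent in $p$, but the hypothesis you are descending on is non-divisibility by $t$, so there is no iteration to run.

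The paper avoids all of this by routing the integrality through Breuil--Kisin modules: after reducing to perfect residue field, the functor $\Mfr(\bullet)$ produces an \emph{integral} tensor $\Mfr(t_\et)\in\Mfr(T(\Xscr_C))^\otimes$ over $W(\kappa)\pot u$, and the base-change isomorphism $\Mfr(T(\Xscr_C))\otimes_{W(\kappa)\pot u,\sigma}\Acris(O_C)\cong\DD(\Xscr)^\vee_{\Acris(O_C)}$ makes integrality of $t_\et\otimes 1$ automatic; the only thing left to verify is that this isomorphism matches $\Mfr(t_\et)\otimes1$ with $t_\et\otimes1$, which is checked after inverting $t$ using Kisin's comparison of $\Fil^0(\cdot)^{\varphi=1}$ on both sides. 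In effect, the finite-height structure of the Kisin module is the correct replacement for the eigenvalue-plus-filtration descent you are attempting; without importing that machinery (or an equivalent, such as Fontaine--Laffaille theory in a range that does not cover the tensors needed here), I do not see how to complete your second stage.
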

\begin{proof}
To prove the proposition, we may replace $K$ with some discretely valued subextension of $K$ in $C$. Therefore, it is sufficient to handle the case when the residue field $\kappa$ of $O_K$ is perfect.

Let us choose a uniformiser $\varpi\in O_{K}$ and its $p^n$th root $\varpi_n\in O_C$ in a compatible manner (with $\varpi_0=\varpi$), and set $\varpi^\flat\coloneqq(\varpi_n)_{n\geqslant0}\in O_C^\flat$. Then we have a map $W(\kappa)\pot{u}\to W(O_C^\flat)$ by sending $u$ to $[\varpi^\flat]$, so we also obtain a map $W(\kappa)\pot{u}\to\Acris(O_C)$. Note that this map is Frobenius-equivariant where the Frobenius acts on $W(\kappa)\pot{u}$ by $\sum [a_i] u^i \mapsto \sum [a_i^p] u^{i\cdot p}$%

By the theory of Kisin modules over $O_K$, we have a covariant fully faithful $\otimes$-functor $\Mfr(\bullet)$ from Galois-stable lattices in some crystalline representation of $K$ to Kisin modules (\emph{cf.} \cite[\S\S~2.1]{kisin:fcrys} or \cite[Theorem~3.3.2]{KisinPappas:ParahoricIntModel}).
Therefore, we obtain a finite free $W(\kappa)\pot{u}$-module $\Mfr(T(\Xscr_C))$ and $\tilde t\coloneqq \Mfr(t_\et)\in \Mfr(T(\Xscr_C))^\otimes$. 

On the other hand, we have $\Mfr(T(\Xscr_C))^\vee\otimes_{W(\kappa)\pot u,\sigma}S\cong \DD(\Xscr)_S$ where $S$ is the $p$-adically completed divided power hull of $W(\kappa)\pot u\epi O_K$ with $u\mapsto \varpi$; \emph{cf.} \cite[\S\S~2.2]{kisin:fcrys}.
 (See also \cite[Theorem~1.4.2]{Kisin:IntModelAbType}.)
Since the map $W(\kappa)\pot u\to\Acris(O_C)$ extends to $S\to\Acris(O_C)$, we obtain the following natural isomorphism
\begin{equation}\label{eq Kisin to Breuil}
\Mfr(T(\Xscr_C))\otimes_{W(\kappa)\pot{u},\sigma}\Acris(O_C) = \DD(\Xscr)_{\Acris(O_C)}^\vee,
\end{equation} 
respecting the Frobenius operators and filtrations (where the filtration on $\Mfr(T(\Xscr_C))$ is defined as in \cite[\S\S~1.2]{Kisin:IntModelAbType}).

It remains to show that the isomorphism (\ref{eq Kisin to Breuil}) matches $\tilde t\otimes1$ and $t_{\et}\otimes 1$. We can verify this claim after inverting $t$. 
Now, recall that for any crystalline $\Gamma_K$-representation $V$ and a $\Gamma_K$-stable $\ZZ_p$-lattice $T\subset V$ we have a natural isomorphism %

\[
\Fil^0\left( \Mfr(T)^\otimes\otimes_{W(\kappa)\pot u,\sigma}B_\cris(O_C) \right)^{\varphi = 1} \cong \Fil^0\left( D_\cris(V)^\otimes\otimes_{W(\kappa)}B_\cris(O_C) \right)^{\varphi = 1},
\]
where the filtration on $\Mfr(T)^\sigma$ was defined as in   \cite[3.3.1]{KisinPappas:ParahoricIntModel}. (This isomorphism can be read off from \cite[proof of Proposition~2.1.5]{kisin:fcrys}; indeed, the proof in \emph{loc.~cit.} implies that each of the injective morphisms \cite[(2.1.6), (2.1.7), (2.1.8)]{kisin:fcrys} is an isomorphism, which implies the desired isomorphism above.)  In particular, the above isomorphism should match $\tilde t\otimes 1$ and $D_\cris(t_\et)$, which are fixed by the Frobenius operator and lie in the $0$th filtration. Therefore, the isomorphism (\ref{eq Kisin to Breuil}) matches $\tilde t\otimes1$ and $t_{\et}\otimes 1$.
\end{proof}

Recall that we have a  divided power morphism $\Acris(O_C)\epi W(O_C)$ commuting with the natural Frobenius endomorphisms and $\Gamma_K$-action; \emph{cf.} (\ref{eq Acris to Witt}).
So we have a natural $\Gamma_K$-equivariant isomorphism 
\[
\DD(\Xscr)^\vee_{\Acris(O_C)}\otimes_{\Acris(O_C)}W(O_C) \cong \DD(\Xscr)^\vee_{W(O_C)},
\]
commuting with the natural Frobenius operators.
Therefore, we may view $t_\et\otimes1$ as a  tensor of $ \DD(\Xscr)^\vee_{W(O_C)}$, invariant under the Frobenius- and $\Gamma_K$- actions. 

\begin{corsub}\label{cor integrality Tate tensors}
The tensor $t_\et\otimes1\in (\DD(\Xscr)^\vee_{W(O_C)})^\otimes$, defined above, lies in  $(\DD(\Xscr)^\vee_{W(O_K)})^\otimes$.
\end{corsub}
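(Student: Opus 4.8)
The plan is to combine Proposition~\ref{prop integrality Tate tensors} with a Galois descent along $W(O_C)/W(O_K)$; the genuine content has already been proved in the Proposition, so what remains is a formal argument keeping track of the $\Gamma_K$-equivariance. First I would observe that Proposition~\ref{prop integrality Tate tensors} gives $t_\et\otimes 1\in(\DD(\Xscr)^\vee_{\Acris(O_C)})^\otimes$ \emph{without} inverting $t$; pushing this forward along the divided power surjection $\Acris(O_C)\epi W(O_C)$ of \eqref{eq Acris to Witt} and using the $\Gamma_K$-equivariant, Frobenius-compatible identification $\DD(\Xscr)^\vee_{\Acris(O_C)}\otimes_{\Acris(O_C)}W(O_C)\cong\DD(\Xscr)^\vee_{W(O_C)}$ recalled just above, one gets that $t_\et\otimes1$, viewed in $(\DD(\Xscr)^\vee_{W(O_C)})^\otimes$, is $\Gamma_K$-invariant. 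It therefore suffices to descend this tensor from $W(O_C)$ to $W(O_K)$.

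For the descent the key point is that the crystalline base-change isomorphism $\DD(\Xscr)^\vee_{W(O_C)}\cong\DD(\Xscr)^\vee_{W(O_K)}\otimes_{W(O_K)}W(O_C)$ (induced by the morphism of $p$-adic divided power thickenings $W(O_K)\to W(O_C)$ over $\ZZ_p$, since $\Xscr_{O_C}$ is the base change of $\Xscr$ along $O_K\to O_C$) is $\Gamma_K$-equivariant with $\Gamma_K$ acting through the second factor only: the module $\DD(\Xscr)^\vee_{W(O_K)}$ is canonically attached to $\Xscr/O_K$ and is fixed by $\Gamma_K$. Since $\DD(\Xscr)^\vee_{W(O_K)}$ is finite free over $W(O_K)$, the formation of its tensor algebra (tensor powers, duals, and finite sums thereof) commutes with the base change $-\otimes_{W(O_K)}W(O_C)$, and the map $(\DD(\Xscr)^\vee_{W(O_K)})^\otimes\to(\DD(\Xscr)^\vee_{W(O_C)})^\otimes$ is injective (as $W(O_K)\hookrightarrow W(O_C)$ and tensoring with a free module preserves injections). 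Thus $(\DD(\Xscr)^\vee_{W(O_C)})^\otimes=(\DD(\Xscr)^\vee_{W(O_K)})^\otimes\otimes_{W(O_K)}W(O_C)$, still with $\Gamma_K$ acting on the second factor only.

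Next I would compute $W(O_C)^{\Gamma_K}$. By the Ax--Sen--Tate theorem $C^{\Gamma_K}=K$ (here $K$ is complete and discretely valued), hence $O_C^{\Gamma_K}=O_C\cap K=O_K$; and since the $\Gamma_K$-action on $W(O_C)$ is induced functorially from that on $O_C$, it is componentwise on Witt coordinates, so $W(O_C)^{\Gamma_K}=W(O_C^{\Gamma_K})=W(O_K)$. Consequently, for any finite free $W(O_K)$-module $N$ we have $(N\otimes_{W(O_K)}W(O_C))^{\Gamma_K}=N$ (choose a $W(O_K)$-basis and note $\Gamma_K$ acts on the $W(O_C)$-coefficients). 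Applying this with $N$ the finite free homogeneous summand of $(\DD(\Xscr)^\vee_{W(O_K)})^\otimes$ containing $t_\et\otimes1$, the $\Gamma_K$-invariance established in the first step forces $t_\et\otimes1\in(\DD(\Xscr)^\vee_{W(O_K)})^\otimes$, which is the assertion.

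The step requiring the most care is not any computation but the equivariance bookkeeping in the second paragraph: one must make sure that through the crystalline comparison \eqref{eq integral Ccris}, the reduction to $W(O_C)$, and the crystal base change, the resulting $\Gamma_K$-action on $\DD(\Xscr)^\vee_{W(O_C)}$ lives purely on the base ring $W(O_C)$, so that descent along the elementary identity $W(\widehat{\overline K})^{\Gamma_K}=W(O_K)$ actually applies. The hard input, namely integrality over $\Acris(O_C)$ before inverting $t$, is already supplied by Proposition~\ref{prop integrality Tate tensors}, so no further $p$-adic Hodge-theoretic work is needed here.
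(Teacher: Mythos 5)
Your argument is correct and is essentially the paper's own proof, written out in full: the paper likewise reduces the claim to the base-change identity $\DD(\Xscr)^\vee_{W(O_C)}\cong\DD(\Xscr)^\vee_{W(O_K)}\otimes_{W(O_K)}W(O_C)$ (with $\Gamma_K$ acting only on the second factor) together with the Galois descent $O_C^{\Gamma_K}=O_K$, hence $W(O_C)^{\Gamma_K}=W(O_K)$. The one place where you are too quick is the appeal to ``Ax--Sen--Tate'' for $C^{\Gamma_K}=K$: in the situation where this corollary is actually used (namely $O_K=\widehat S_\pfr$ for a minimal open prime $\pfr$), the residue field of $O_K$ is \emph{imperfect}, only admitting a finite $p$-basis, whereas the classical Ax--Sen--Tate statement is usually formulated for perfect residue fields. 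This is precisely why the paper invokes Hyodo's theorem on $H^0(\Gamma_K,C)$ in the imperfect residue field case at this point; with that reference (or a verification that Ax's elementary approximation argument does not use perfectness of the residue field), your proof is complete.
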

\begin{proof}
Note that $\DD(\Xscr)^\otimes_{W(O_C)}\cong \DD(\Xscr)^\otimes_{W(O_K)}\otimes_{W(O_K)}W(O_C)$, so to prove the corollary it suffices to show that $O_C^{\Gamma_K} = O_K$. If the residue field of $O_K$ admits a finite $p$-basis, this assertion follows from  \cite[Theorem~1]{Hyodo:ImperfectHT}.
\end{proof}

So far, we have associated, to a $\Gamma_K$-invariant tensor $t_{\et} \in T(\Xscr)^\otimes$, a Frobenius-invariant tensor $t_\et\otimes 1\in (\DD(\Xscr)^\vee_{W(O_K)})^\otimes$. We finish this section by discussing the way to reverse engineering.

We continue to assume that $O_K$ is a complete discrete valuation ring of mixed characteristic~$(0,p)$, and we additionally assume that its residue field $\kappa$ is perfect. Then for any {\BT} $\Xscr$ over $O_K$, we have a Frobenius-equivariant isomorphism
\[
\DD(\Xscr)_{W(O_K)} \cong \DD(\Xscr_{O_K/p})_{W(O_K)},
\]
where the right hand side makes sense since $W(O_K)$ is a $p$-adic divided power thickening of $O_K/p$.

\begin{subequations}
Recall that $\Xscr_{O_K/p}$ is isogenous to $\Xscr_{O_K/p}^{\sigma^n}$ for any $n$ by $n$th iterated Frobenius, and for $n\gg1$ we have $\Xscr_{O_K/p}^{\sigma^n}\cong (\Xscr_\kappa^{\sigma^n})_{O_K/p}$, which is in turn isogenous to $(\Xscr_\kappa)_{O_K/p}$ by the $n$th iterated Frobenius. Therefore, choosing $n\gg1$ we obtain a Frobenius-equivariant isomorphism
\begin{equation}\label{eqn:DworkTrick}
\DD(\Xscr_\kappa)^\vee_{W(\kappa)}\otimes_{W(\kappa)}W(O_K)[\tfrac{1}{p}] \cong \DD(\Xscr)^\vee_{W(O_K)}[\tfrac{1}{p}],
\end{equation}
lifting the identity map on $\DD(\Xscr_\kappa)^\vee_{W(\kappa)}[\frac{1}{p}]$. One can  show without difficulty that there is at most one Frobenius-equivariant lift (\ref{eqn:DworkTrick}) of the the identity map on $\DD(\Xscr_\kappa)^\vee_{W(\kappa)}[\frac{1}{p}]$, by the same proof as \cite[Lemma~3.1.17]{KisinPappas:ParahoricIntModel}. Similarly, by scalar extension with respect to $W(O_K)_\QQ\twoheadrightarrow K$, we obtain
\begin{equation}
\DD(\Xscr_\kappa)^\vee_{W(\kappa)}\otimes_{W(\kappa)}K  \cong\DD(\Xscr)_{O_K}^\vee\otimes_{O_K}K,
\end{equation}
which coincides with the isomorphism defining the admissible filtered isocrystal structure on $\DD(\Xscr_\kappa)^\vee_{W(\kappa)}[\frac{1}{p}]$.
\end{subequations}

Then the following statements are immediate from the above discussion;
\begin{lemsub}\label{lem:Ccris}
Let $t$ be a Frobenius-invariant tensor of $\DD(\Xscr)^\vee_{W(O_K)}[\frac{1}{p}]$.
Then $t$ is uniquely determined by its image  $\bar t\in (\DD(\Xscr_\kappa)^\vee_{W(\kappa)}[\frac{1}{p}])^\otimes$.
\end{lemsub}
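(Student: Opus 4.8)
The plan is to deduce the statement directly from the Dwork‑trick isomorphism~(\ref{eqn:DworkTrick}) together with its rigidity. Write $N \coloneqq \DD(\Xscr_\kappa)^\vee_{W(\kappa)}[\tfrac1p]$, an isocrystal over $L_0 \coloneqq W(\kappa)[\tfrac1p]$, and $N_B \coloneqq \DD(\Xscr)^\vee_{W(O_K)}[\tfrac1p]$. By~(\ref{eqn:DworkTrick}) we may identify $N_B \cong N \otimes_{L_0} W(O_K)[\tfrac1p]$ compatibly with the Frobenii $\varphi$, $\varphi_B = \varphi\otimes\sigma$; since this identification lifts the identity on $N$, reduction along the ring map $W(O_K) \to W(\kappa)$ induced by $O_K \to \kappa$ recovers the identity, and under it the assignment $t \mapsto \bar t$ is identified with the base‑change reduction $T(N_B) = T(N) \otimes_{L_0} W(O_K)[\tfrac1p] \to T(N)$ for any tensor construction $T$. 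Thus the lemma is equivalent to the injectivity of $T(N_B)^{\varphi_B=1} \to T(N)^{\varphi=1}$; equivalently, writing $\bar t\otimes 1\in T(N_B)$ for the (automatically Frobenius‑invariant) image of $\bar t$, we must show that any Frobenius‑invariant $\delta = t - \bar t\otimes 1 \in T(N) \otimes_{L_0} \widehat{\mathfrak I}$ vanishes, where $\widehat{\mathfrak I} \coloneqq \ker\bigl(W(O_K)[\tfrac1p] \to L_0\bigr)$. This is the tensor‑analogue of the uniqueness assertion already invoked for~(\ref{eqn:DworkTrick}) (``the same proof as \cite[Lemma~3.1.17]{KisinPappas:ParahoricIntModel}''), and I would run the same successive‑approximation argument.

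Concretely, I would first reduce to the slope‑zero part. The isocrystal $T(N)$ over $L_0$ carries a canonical slope decomposition $T(N) = T(N)_{<0} \oplus T(N)_0 \oplus T(N)_{>0}$, defined over $L_0$ since $\kappa$ is perfect, hence Frobenius‑equivariant; base‑changing to $W(O_K)[\tfrac1p]$ it splits $\delta$ into Frobenius‑invariant pieces. The $T(N)_{>0}$‑piece vanishes: choosing a $\varphi$‑stable $W(\kappa)$‑lattice $L$ with $\varphi(L) \subseteq pL$, the Witt Frobenius preserves $W(O_K)$, so $\varphi_B^{\,n}\bigl(L \otimes_{W(\kappa)} W(O_K)\bigr) \subseteq p^n\bigl(L \otimes_{W(\kappa)} W(O_K)\bigr)$, and any Frobenius‑fixed vector lies in $\bigcap_n p^n\bigl(L\otimes_{W(\kappa)}W(O_K)\bigr)[\tfrac1p] = 0$ by $p$‑adic separatedness of $W(O_K)$. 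The $T(N)_{<0}$‑piece is handled dually, using that a Frobenius‑fixed vector automatically lies in the image of every power of $\varphi_B$ (so one may run the same estimate after inverting $\varphi_B$ on the relevant submodule), or by passing to the contragredient isocrystal, which has strictly positive slopes, together with a perfect $\sigma$‑equivariant pairing.

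For the slope‑zero part, choose a $\varphi$‑stable $W(\kappa)$‑lattice $\Lambda \subseteq T(N)_0$ on which $\varphi$ is bijective; a Frobenius‑fixed $\delta$ in $\Lambda \otimes_{W(\kappa)} W(O_K)$ with $\bar\delta = 0$ lies in $\Lambda \otimes_{W(\kappa)} W(\mathfrak m_K)$, hence in $\bigcap_n \varphi_B^{\,n}\bigl(\Lambda \otimes W(\mathfrak m_K)\bigr) = \Lambda \otimes \bigcap_n \sigma^n\bigl(W(\mathfrak m_K)\bigr)$. The crucial point is that $\bigcap_n \sigma^n(W(\mathfrak m_K)) = 0$: since $O_K$ is a discrete valuation ring, $\mathfrak m_K$ is $p$‑adically topologically nilpotent ($\mathfrak m_K^m \subseteq p^{\lfloor m/e\rfloor}O_K$), and a direct estimate shows that for $z \in W(\mathfrak m_K)$ the $p$‑adic valuation of the $m$‑th ghost component $z^{(m)}$ is at least $\min_{0\le i\le m}(ie + p^{m-i})$, which tends to $\infty$ with $m$; as every ghost component of an element of $\bigcap_n \sigma^n(W(\mathfrak m_K))$ has the form $z_n^{(j+n)}$ for suitable $z_n \in W(\mathfrak m_K)$ and all $n$, it must vanish, and the ghost map is injective over the $p$‑torsion‑free ring $O_K$. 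Inverting $p$ then passes from these lattice statements back to $T(N)$ and $T(N_B)$, giving the injectivity of $T(N_B)^{\varphi_B=1}\to T(N)^{\varphi=1}$.

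I expect the main obstacle to be the slope‑$<0$ part: unlike the $>0$ part (clean, via $p$‑adic separatedness of $W(O_K)$) and the $0$ part (clean, via the ghost‑component estimate and topological nilpotence of $\mathfrak m_K$), killing Frobenius‑fixed vectors of strictly negative slope over $W(O_K)[\tfrac1p]$ must cope with the Witt Frobenius not being surjective on $W(O_K)$, so that ``$\varphi_B^{-1}$'' is only defined on the image of $\varphi_B$; this is precisely the bookkeeping that the cited proof of \cite[Lemma~3.1.17]{KisinPappas:ParahoricIntModel} carries out on the rigidity side, and it is the only genuinely delicate point.
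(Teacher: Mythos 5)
Your reduction of the lemma to the injectivity of $T(N_B)^{\varphi_B=1}\to T(N)^{\varphi=1}$ is faithful to the statement, and your slope-$\ge 0$ arguments are correct (the ghost-component estimate giving $\bigcap_n\sigma^n(W(\mathfrak{m}_K))=0$ is exactly right). The gap is the slope-$<0$ part, and it is not the bookkeeping issue you anticipate: the intermediate statement your strategy needs there is \emph{false} for slopes $\le -1$. Concretely, take $O_K=\ZZ_p$ and an integer $r\ge 1$. By Dwork's lemma the sequence $(p^{rn+1})_{n\ge 0}$ is the ghost vector of a unique $w\in W(\ZZ_p)$ (the required congruence is $v_p(p^{rn+1}(p^r-1))=rn+1\ge n+1$); since the Witt Frobenius acts on ghost components by the shift, $\sigma(w)=p^rw$; and the inductive estimate $p^iw_i=w^{(i)}-\sum_{j<i}p^jw_j^{p^{i-j}}$ with $v_p(w^{(i)})=ri+1\ge i+1$ and $v_p(p^jw_j^{p^{i-j}})\ge j+p^{i-j}\ge i+1$ shows $v_p(w_i)\ge 1$ for all $i$, i.e.\ $0\neq w\in W(\mathfrak{m}_K)$. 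Hence for any rank-one sub-isocrystal $L_0\cdot e\subseteq T(N)$ with $\varphi(e)=p^{-r}e$ (such lines occur, e.g.\ inside $(N^\vee)^{\otimes r}$ as soon as $N$ has a slope-one constituent, which already happens for $\Xscr=\QQ_p/\ZZ_p$), the element $e\otimes w$ is a nonzero Frobenius-fixed element of $T(N)\otimes_{L_0}W(\mathfrak{m}_K)[\tfrac1p]$ reducing to $0$. The same construction works for any slope $-r/s\le -1$ via $\sigma^s(w)=p^rw$. Your two suggested repairs also fail on their own terms: $\sigma$ is not surjective on $W(O_K)$ when $O_K$ is imperfect, so ``inverting $\varphi_B$'' is not available; and dualizing converts a fixed vector $\mathbf{1}\to M$ into a morphism $M^\vee\to\mathbf{1}$, not into a fixed vector of $M^\vee$, so the contragredient does not move the problem to positive slopes.

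The upshot is that the lemma cannot be proved by slope considerations over $W(O_K)$ alone: the $\sigma$-module structure does not pin down a Frobenius-invariant tensor once the tensor space has constituents of slope $\le -1$, and since your reduction is an equivalence, the counterexample above shows that the unqualified statement needs additional input. The paper's own proof is a deferral to the argument of Kisin--Pappas Lemma~3.2.13, which is formulated over Zink's subring $\WW(O_K)\subset W(O_K)$ and is applied (here via Corollary~\ref{cor:Ccris} and Proposition~\ref{prop local tensors}) to tensors that additionally lie in $\mathrm{Fil}^0$ and are matched with \'etale tensors under the comparison isomorphism; it is this extra (filtration/\'etale) structure, not a finer Frobenius estimate, that excludes elements like $e\otimes w$. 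To complete a proof you would have to import that structure rather than argue slope by slope, so the proposal as written does not establish the statement.
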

\begin{proof}
To show that $t$ is uniquely determined by $\bar t$, it suffices to show that the  isomorphism (\ref{eqn:DworkTrick}) sends $\bar t\otimes 1$ to $t$, which can be seen by repeating the proof of \cite[Lemma~3.2.13]{KisinPappas:ParahoricIntModel}. The rest of the claim is clear.
\end{proof}

\begin{corsub}\label{cor:Ccris}
We continue to use the setting of Lemma~\ref{lem:Ccris}, and assume in addition that the image of $t$ in $(\DD(\Xscr)_{O_K}^\vee\otimes_{O_K}K)^\otimes$ lies in the $0$th filtration with respect to the Hodge filtration. We consider the $\Gamma_K$-invariant tensor $t_\et\coloneqq  V_\cris(t)\in T(\Xscr_{\overline K})_\QQ^\otimes$ (with the usual covariant definintion of $V_\cris$ as in  \cite[5.5.1]{fontaine:Asterisque223ExpIII}).

Then, $t$ is uniquely determined by $t_\et$. If furthermore $t_\et$ is integral (i.e., if $t_\et\in T(\Xscr_{\overline K})^\otimes$), then $t$ coincides with the tensor obtained from $t_{\et}\otimes 1$ in Corollary~\ref{cor integrality Tate tensors}.
\end{corsub}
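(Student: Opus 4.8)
The plan is to deduce both statements from the Fontaine equivalence between weakly admissible filtered isocrystals and crystalline Galois representations, using Lemma~\ref{lem:Ccris} to replace $t$ by its reduction $\bar t$. Write $D\coloneqq\DD(\Xscr_\kappa)^\vee_{W(\kappa)}[\tfrac1p]$, equipped with the Hodge filtration described just before Lemma~\ref{lem:Ccris}. First I would record that $D$ is weakly admissible: by Proposition~\ref{prop p-adic comparision} (or directly by inverting $t$ in the comparison morphism (\ref{eq integral Ccris})) one has $V_\cris(D)\cong T(\Xscr_{\overline K})_\QQ$, so $D$ is the filtered isocrystal of a crystalline representation, hence admissible. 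Consequently, by tensor-functoriality of $D_\cris$ on crystalline representations (equivalently, the theorem of Faltings and Totaro), every constituent $D^{\otimes a}\otimes (D^\vee)^{\otimes b}$ of $D^\otimes$ is again weakly admissible and corresponds under $V_\cris$ to $T(\Xscr_{\overline K})_\QQ^{\otimes a}\otimes(T(\Xscr_{\overline K})_\QQ^\vee)^{\otimes b}$.

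Next I would observe that the extra hypothesis of the corollary exactly says that $\bar t$ lies in $\Fil^0(D^\otimes)$: indeed $\bar t$ is the image of $t$ under the isomorphism $D\otimes_{W(\kappa)}K\riso\DD(\Xscr)^\vee_{O_K}\otimes_{O_K}K$ (the scalar extension of (\ref{eqn:DworkTrick}) defining the Hodge filtration on $D$), so $\bar t\in\Fil^0$ if and only if the image of $t$ in $(\DD(\Xscr)^\vee_{O_K}\otimes_{O_K}K)^\otimes$ does. Since moreover (\ref{eqn:DworkTrick}) is Frobenius-equivariant, $\bar t$ is $\varphi$-invariant. Thus $\bar t$ is a morphism $\mathbf 1\to D^\otimes$ in the $\otimes$-category of weakly admissible filtered isocrystals over $K$. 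Applying the Colmez--Fontaine theorem — $V_\cris$ is a $\otimes$-equivalence onto crystalline $\Gamma_K$-representations — turns $\bar t$ into a $\Gamma_K$-invariant tensor of $T(\Xscr_{\overline K})_\QQ^\otimes$, which is precisely $t_\et=V_\cris(t)$ by definition; and since $V_\cris$ is an equivalence this assignment $\bar t\mapsto t_\et$ is bijective. Combined with the fact (Lemma~\ref{lem:Ccris}) that $t$ is determined by $\bar t$, this proves that $t$ is determined by $t_\et$.

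For the second assertion, assume $t_\et\in T(\Xscr_{\overline K})^\otimes$ is integral and let $t'$ denote the tensor of $(\DD(\Xscr)^\vee_{W(O_K)})^\otimes$ produced from $t_\et\otimes1$ by Proposition~\ref{prop integrality Tate tensors} and Corollary~\ref{cor integrality Tate tensors}. Since the comparison morphism $\beta_\Xscr$ matches $1\otimes\sigma$ on the source with $\Phi_1=\varphi$ on the target and is compatible with filtrations, $t'$ is $\varphi$-invariant and its image in $(\DD(\Xscr)^\vee_{O_K}\otimes_{O_K}K)^\otimes$ lies in $\Fil^0$; so $t'$ satisfies the hypotheses imposed on $t$. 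Moreover $V_\cris(t')=t_\et$, because $t'$ is by construction the image of $t_\et$ under the crystalline comparison, which on crystalline objects is inverse to $V_\cris$. By the uniqueness established above, $t=t'$.

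The content here is really just the organisation of the Fontaine equivalence together with the lemmas already in hand; the one genuinely substantial input is the weak admissibility of tensor powers (Faltings--Totaro), and the only point demanding care is the bookkeeping of Frobenius normalisations — covariant versus contravariant Dieudonn\'e modules and the relations $\Phi=p\varphi$, $\Phi_1=\varphi$ — when identifying the ``constant'' tensor $t_\et\otimes1$ on $T(\Xscr_C)\otimes\Acris(O_C)$ with a $\varphi$-fixed tensor on $\DD(\Xscr)^\vee$. Phrasing everything $\otimes$-categorically, as above, hides these normalisations inside the definitions of $D^\otimes$ and of $V_\cris$.
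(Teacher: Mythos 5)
Your argument is correct and follows essentially the same route as the paper: the first claim is exactly the paper's ``$t_\et$ determines $\bar t$, which determines $t$ by Lemma~\ref{lem:Ccris}'', with the Fontaine full-faithfulness spelled out, and the second claim is again reduced to comparing reductions over $W(\kappa)$. The only remark worth making is that you do not actually need the Colmez--Fontaine/Faltings--Totaro input on weak admissibility of tensor powers: since $D=D_\cris(T(\Xscr_{\overline K})_\QQ)$ with $T(\Xscr_{\overline K})_\QQ$ crystalline and $D_\cris$ a $\otimes$-functor, $D^\otimes=D_\cris(T(\Xscr_{\overline K})_\QQ^\otimes)$ is admissible for free, and full faithfulness of $V_\cris$ on admissible objects suffices.
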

\begin{proof}
We use the notation of Lemma~\ref{lem:Ccris}, and assume that the image of $t$ lies in the $0$th filtration with respect to the Hodge filtration. Then $t_\et$ determines $\bar t$, which in turn determines $t$ by Lemma~\ref{lem:Ccris}. 

Assume furthermore that $t_\et$ is integral. To show the last claim, it suffices to compare  $t$ and $t_\et\otimes 1$ after inverting $p$. Then by  Lemma~\ref{lem:Ccris}, it suffices to compare their images in $(\DD(\Xscr_\kappa)^\vee_{W(\kappa)}[\frac{1}{p}])^\otimes)$. On the other hand, both tensors reduces to $\bar t$ by construction.
\end{proof}

\subsection{Descent to display tensors over $\Sscr$}
For any {\BT} $\Xscr$  over a $p$-adic ring $S$, we set
\[
P(\Xscr)\coloneqq \DD(\Xscr)_{W(S)}^\vee
\]
for the simplicity of notation.

Let $\Sfr$ be a $p$-adic formal scheme. Then by the fpqc descent theory for Witt rings \cite[\S1.3, Lemma~30]{Zink:DisplayFormalGpAsterisq278}, we obtain a sheaf of rings $W(\Ocal_\Sfr)$ over $\Sfr$ such that for any open affine formal subscheme $\Spf S\subset \Sfr$ we have $\Gamma(\Spf S,W(\Ocal_\Sfr)) = W(S)$. Furthermore, also from \emph{loc.~cit.} it follows that given any {\BT} $\Xscr$ over $\Sfr$ we obtain a locally free $W(\Oscr_\Sfr)$-module $P(\Xscr)$ such that for any open affine formal subscheme $\Spf S\subset \Sfr$ we have
\[
\Gamma(\Spf S, P(\Xscr)) = P(\Xscr_S) = \DD(\Xscr_S)_{W(S)}^\vee.
\]
When $\Sfr$ is affine, Lau \cite[Proposition~2.1]{Lau:Smoothness} gave a natural structure of display to $P(\Xscr)$ compatible with the crystalline Frobenius and Verschiebung operator. When $\Sfr$ is any $p$-adic formal scheme, the display structure on $P(\Xscr)$ over any affine open formal subschemes patches together by fpqc descent theory for displays \cite[\S1.3, Theorem~32]{Zink:DisplayFormalGpAsterisq278}; in other words, we obtain a $W(\Ocal_\Sfr)$-submodule $P_1(\Xscr)$ of $P(\Xscr)$, and $\sigma$-linear morphisms $\Phi\colon P(\Xscr)\to P(\Xscr)$ and  $\Phi_1\colon P_1(\Xscr)\to P(\Xscr)$ by glueing Zariski-locally defined display structure over affine open formal subschemes. And just as in the case of Dieudonn\'e displays,
if $\Sfr$ is topologically flat over $\ZZ_p$ then $\Phi$ and $\Phi_1$ are determined by the $W(\Ocal_\Sfr)$-linear isomorphism $\Psi\colon \widetilde P_1(\Xscr) \isom P(\Xscr)$, where $ \widetilde P_1(\Xscr)$ is the image of $P_1(\Xscr)^\sigma$ in $P(\Xscr)^\sigma$ and $\Psi$ is induced by the linearisation of $\Phi_1$.

If $S$ is a complete local noetherian ring with perfect residue field and $\Xscr$ is a {\BT} over $S$, then we have a Dieudonn\'e display $\M(\Xscr)$ (\S\S~\ref{ssect displays}). Then by comparison with the Dieudonn\'e crystal, it follows that 
\[P(\Xscr) \cong M(\Xscr)\otimes _{\WW(S)}W(S).\] Therefore, when $S=R_G$ is the Kisin-Pappas deformation ring (Definition~\ref{def KP defor space}), the tensors $t_{\alpha}^{\rm def}\in M(\Xscr^{\rm def})^\otimes$ constructed by Kisin-Pappas (\emph{cf.} the paragraph below Definition~\ref{def KP defor space}) can be viewed as elements in $P(\Xscr^{\rm def})^\otimes$.

The main result of this section is the following proposition:
  \begin{propsub} \label{prop global tensors}
 We continue to assume that $p>2$. Let $\wh\Sscr$ denote the $p$-adic completion of $\Sscr\otimes_{O_E}O_{\breve{E}}$, and let $\Xscr_\Gsf$ as the universal $p$-divisible group over $\widehat\Sscr$.  
Then  there exists a family of $\Psi$-invariant tensors $(t_{\alpha})$ in $P(\Xscr_{\Gsf})^\otimes$ such that for every $z \in \Sscr(\k)$ the restriction of $t_{\alpha}$ to the formal neighbourhood of $z$ coincides with $t_{\alpha,z}^{\rm def}$.
 \end{propsub}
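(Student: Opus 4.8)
The plan is to construct the tensors $(t_\alpha)$ by gluing along an open cover of $\wh\Sscr$, using the uniqueness of crystalline tensors to check compatibility on overlaps, and then use the description of the formal neighbourhoods (Proposition~\ref{prop normalisation on formal neighbourhoods}) to verify the required property at each $z \in \Sscr(\k)$. First I would reduce to a local statement: since $P(\Xscr_\Gsf)$ is the sheaf associated to a display over the $p$-adic formal scheme $\wh\Sscr$, a $\Psi$-invariant tensor in $P(\Xscr_\Gsf)^\otimes$ is the same as a compatible family of $\Psi$-invariant tensors in $P(\Xscr_{\Gsf,\Ufr})^\otimes$ for $\Ufr = \Spf A$ ranging over an affine open cover of $\wh\Sscr$, agreeing on overlaps. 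So it suffices to (i) produce, for each such $A$, a family $(t_{\alpha,A}) \in P(\Xscr_{\Gsf,A})^\otimes$ of $\Psi$-invariant tensors whose restriction to the formal neighbourhood of each $z \in \Spf A(\k)$ coincides with $t^{\rm def}_{\alpha,z}$; and (ii) show any such family is unique, which immediately forces the local families to glue.

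For uniqueness (ii), the key point is that $A$ is normal (Corollary~\ref{cor normalisation on formal neighbourhoods}) and $p$-torsion free, hence so is $W(A)$, and $W(A)$ embeds into $\prod_{z} W(\hat{\Ocal}_{\Sscr,z})$ as the $z$ range over closed points of $\Spec(A/p)$ — more precisely, into $\prod_z W(A^\wedge_z)$, where $A^\wedge_z$ is the completed local ring; this uses that $A \hra \prod_z A^\wedge_z$ for a normal noetherian ring (or after inverting $p$, that a section of a locally free sheaf on a normal scheme vanishes iff it vanishes at all closed points of a dense open, combined with the fact that a display tensor is determined by its image in $P(\Xscr)_\QQ$ together with integrality, which is a closed condition). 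Thus a $\Psi$-invariant tensor is determined by its restrictions to the formal neighbourhoods of all $z$, giving uniqueness; in particular the $(t_{\alpha,A})$ are canonical and glue to the desired global $(t_\alpha)$.

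For existence (i), I would work with a single affine $\Spf A \subset \wh\Sscr$ and use the $l$-adic Hodge cycles as a scaffold. Recall that $\Sscr$ carries the $\ZZ_p$-local system $T_p(\Ascr_\Gsf)$ on the generic fibre with its Tate tensors $t_{\alpha,\et} \in \Gamma(T_p(\Ascr_\Gsf)^\otimes)$. The idea is to spread out: for any $\tilde z \in \Sscr(O_K)$ lifting a point of $\Spf A(\k)$, Corollary~\ref{cor integrality Tate tensors} and the discussion preceding Proposition~\ref{prop global tensors} produce a Frobenius-invariant integral tensor in $\DD(\Ascr_{\Gsf,\tilde z}[p^\infty])^\vee_{W(O_K)}$ lifting $t_{\alpha,z}$, and by Proposition~\ref{prop et univ prop} and Proposition~\ref{prop normalisation on formal neighbourhoods} these match $t^{\rm def}_{\alpha,z}$ on the formal neighbourhood. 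So the content is that these pointwise Frobenius-invariant tensors, a priori only in $P(\Xscr_{\Gsf,A})_\QQ^\otimes = \DD(\Xscr_{\Gsf,A})^\vee_{W(A)}[\tfrac1p]^\otimes$, actually assemble into a single element of $P(\Xscr_{\Gsf,A})^\otimes$ that is $\Psi$-invariant. Here I would argue: the map $W(A) \to \prod_z W(A^\wedge_z)$ is injective with $p$-torsion-free target, so a compatible family of integral $\Psi$-invariant tensors over the $A^\wedge_z$ that also lies, after $\otimes\QQ$, in the image of $P(\Xscr_{\Gsf,A})_\QQ^\otimes$ automatically defines an element of $P(\Xscr_{\Gsf,A})^\otimes$; the $l$-adic (or better, $p$-adic étale) cycle $t_{\alpha,\et}$ on the generic fibre provides exactly such a global rational tensor via the comparison isomorphism (\ref{eq integral Ccris}) over a "generic" crystalline point, and its specialisations at the $z$ are the $t^{\rm def}_{\alpha,z}$ by the Kisin–Pappas description. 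Concretely I would: pass to a dense open $\Spf A \cap \{$finite type over $O$-locus$\}$ where the display structure is computable, invoke that $A$ is normal so $W(A)^\otimes$-valued tensors descend from their restrictions to all completed local rings, and finally check $\Psi$-invariance — which holds because $\Psi$-invariance can be tested after the faithfully flat (indeed injective) base change $W(A) \to \prod_z W(A^\wedge_z)$ and holds on each factor by the Kisin–Pappas construction.

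\textbf{Main obstacle.} The delicate point is \emph{integrality globally}: making precise that a family of Frobenius-invariant tensors, integral and compatible at every closed point of $\Spec(A/p)$, is itself integral over $W(A)$, i.e.\ that $W(A) = \bigcap_z W(A^\wedge_z)$ inside $W(A)[\tfrac1p]$ (equivalently $W(A) \hra \prod_z W(A^\wedge_z)$ and the image is the "integral locus"). This requires normality of $A$ — which we have from Corollary~\ref{cor normalisation on formal neighbourhoods} — plus a compatibility of the Witt-vector functor with this intersection; the $p$-torsion-freeness of $W(A)$ (from $p$-torsion-freeness of $A$) and the fact that $W$ preserves injections and commutes with products for the relevant rings are what make it go through. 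A secondary nuisance is bookkeeping the various comparison isomorphisms — between the display $P(\Xscr)$, the Dieudonné crystal $\DD(\Xscr)^\vee$, and the étale realization — to ensure that the tensor one writes down at a generic crystalline point really does specialize to $t^{\rm def}_{\alpha,z}$ rather than to some $t$-scaled variant; this is controlled by Proposition~\ref{prop integrality Tate tensors} and Corollaries~\ref{cor integrality Tate tensors}, \ref{cor:Ccris}, but needs to be threaded carefully.
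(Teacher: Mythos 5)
Your skeleton is broadly the right one — localise to affine opens $\Spf S\subset\wh\Sscr$, feed in the \'etale tensors $t_{\alpha,\et}$ through $p$-adic comparison, reduce integrality to a statement about normal rings checked on completions, and glue — and you correctly single out global integrality as the crux. But there is a genuine gap in your existence step (i): your integrality argument takes as input ``a global rational tensor in $P(\Xscr_{\Gsf,A})_\QQ^\otimes$'' restricting to the $t^{\rm def}_{\alpha,z}$, and then intersects $W(A)[\tfrac1p]$ with $\prod_z W(A^\wedge_z)$. No such global rational Frobenius-invariant tensor is available a priori: the comparison isomorphism (\ref{eq integral Ccris}) lives over period rings of a \emph{point}, not over $W(A)[\tfrac1p]$, and producing a global section of $P(\Xscr_{\Gsf,A})_\QQ^\otimes$ is essentially the content of the proposition (in the hyperspecial case one spreads out from a point using smoothness; that is exactly what fails here). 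The paper's route is different in a way that matters: one first produces honest \emph{integral} tensors over $W(\widehat S_\pfr)$ for each minimal open prime $\pfr\subset S$ — i.e.\ over the completions at the generic points of the special fibre, which are $p$-adic discrete valuation rings with imperfect residue field, so that Proposition~\ref{prop integrality Tate tensors} and Corollary~\ref{cor integrality Tate tensors} (the latter resting on Hyodo's theorem $O_C^{\Gamma_K}=O_K$) apply. The candidate element then lives in $\prod_\pfr W(\widehat S_\pfr)$, \emph{not} in $W(A)[\tfrac1p]$, and the descent to $W(S)$ is Lemma~\ref{lem glueing}: an element of $\prod_\pfr \widehat S_\pfr$ whose image in $\prod_\pfr(\widehat S_z)^\wedge_\pfr$ lies in $\widehat S_z$ for every closed point $z$ already lies in $S$. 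That lemma is proved by induction modulo $\varpi^n$ and uses reducedness and normality of the irreducible components of $\bar S$ (so that $\bar S$ injects into its total fraction ring $\prod_\pfr\kappa_\pfr$) together with $\bar S$ being Jacobson; it is not the statement $W(A)=W(A)[\tfrac1p]\cap\bigcap_z W(A^\wedge_z)$ you propose.

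A second, smaller gap: the matching of the \'etale-derived tensors with the Kisin--Pappas tensors $t^{\rm def}_{\alpha,z}$ over $(\widehat S_z)^\wedge_\pfr$ (Proposition~\ref{prop local tensors}) is not a formal consequence of the pointwise constructions you cite. Since $t^{\rm def}_{\alpha,z}$ is built by deformation theory at the closed point while $t_{\alpha,\et}$ is compared at characteristic-zero points, one needs the \emph{relative} de~Rham comparison over the adic generic fibre (Scholze's $\Ocal\BB_{\dR}$-theory, Proposition~\ref{prop:CdR} and Corollary~\ref{cor:CdR}) together with Blasius--Wintenberger, plus the uniqueness statements of Lemma~\ref{lem:Ccris} and Corollary~\ref{cor:Ccris}, to see that both families have the same image there. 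Your ``threading the comparison isomorphisms carefully'' points at the right corollaries but omits this relative comparison, which is a separate ingredient. Your uniqueness/gluing step (ii) is fine and agrees with the paper's use of fpqc descent for Witt rings.
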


The proof   uses the absolutely Hodge cycles on the universal abelian varieties, which we briefly review now.
We view the Tate module $T(\Xscr_\Gsf)$ of $\Xscr_\Gsf$ as a $\Zp$-local system over $\Sh_\Ksf(\Gsf,\Xsf)$, so that its fibre at any geometric generic point $\bar\eta$ of $\Sh_\Ksf(\Gsf,\Xsf)$ is $T(\Xscr_{\Gsf,\bar\eta})$. Then from the \'etale components of absolute Hodge cycles $(t_{\alpha,\et})$ on the universal abelian schemes, we obtain the following maps of $\Zp$-local systems
\[
t_{\alpha,\et}:\mathbf{1} \to T(\Xscr_\Gsf)^\otimes.
\]
or equivalently, $\pi_1(\Sh_\Ksf(\Gsf,\Xsf),\bar\eta)$-invariant sections $(t_{\alpha,\et})$ of $T(\Xscr_{\Gsf,\bar\eta})^\otimes$.

Furthermore, as a byproduct of the construction of Kisin-Pappas integral models $\Sscr$, the de~Rham components of absolute Hodge cycles extend integrally and give the following tensors
\[
t_{\alpha,\dR} \in (P(\Xscr_\Gsf) \otimes_{W(\Ocal_{\widehat\Sscr})}\Ocal_{\widehat\Sscr})^\otimes.
\]  
Furthermore, the tensors $(t_{\alpha,\dR})$ globalise the ``de~Rham components'' of $(t^{\rm def}_{\alpha,z})$ constructed at the formal neighbourhood at any $z\in\Sscr(\overline{\FF}_p)$. (This statement can be read off from the statements and proof of  \cite[Propositions~4.2.2,~4.2.6]{KisinPappas:ParahoricIntModel}.)

We now show that $(t_{\alpha,\et})$ and $(t_{\alpha,\dR})$ are associated via the rigid analytic de~Rham comparison isomorphism in the sense of \cite[Theorem~8.8]{Scholze:CdR}. To explain, let $\pi:\Ascr_{\Gsf}\to\Sscr$ denote the structure sheaf. We set $\Scal\coloneqq \widehat\Sscr^\ad_{\breve E}$ and pull back the abelian scheme to obtain $\pi:\Acal\to\Scal$. We use the basic notation from \cite{Scholze:CdR} without reviewing it here.

By \cite[Theorem~8.8]{Scholze:CdR}, we have the following horizontal isomorphism of pro\'etale  $\Ocal\BB_{\dR,\Scal}$-modules:
\[
(\Rrm^1\pi_{\et,\ast}\ZZ_p)^\wedge \otimes_{\hat\ZZ_p}\Ocal\BB_{\dR,\Scal} \cong (\Rrm^1\pi_{\dR,\ast}\Ocal_{\Acal}) \otimes_{\Ocal_\Scal}\Ocal\BB_{\dR,\Scal}.
\]
(If $\Lscr$ is an \'etale $\Zp$-local system, $(\Lscr)^\wedge$ denote the associated pro\'etale $\hat\ZZ_p$-local system.)
Furthermore, the horizontal parts (i.e., sections annihilated by $\nabla$) we obtain an isomorphism of $\BB_{\dR,\Scal}$-local systems:
\[
(\Rrm^1\pi_{\et,\ast}\ZZ_p)^\wedge \otimes_{\hat\ZZ_p}\BB_{\dR,\Scal} \cong \left( (\Rrm^1\pi_{\dR,\ast}\Ocal_{\Acal}) \otimes_{\Ocal_\Scal}\Ocal\BB_{\dR,\Scal}\right)^{\nabla = 0}.
\]

 Both $(t_{\alpha,\et})$ and $(t_{\alpha,\dR})$ induce tensors on this $\BB_{\dR}$-local system. Indeed, the \'etale tensors $(t_{\alpha,\et})$ can be pulled back to the tensors on the $\BB_{\dR}$-local system, and the \emph{horizontal} tensors $(t_{\alpha,\dR})$ of $\Rrm^1\pi_{\dR,\ast}\Ocal_{\Acal} $ give rise to horizontal tensors of $(\Rrm^1\pi_{\dR,\ast}\Ocal_{\Acal}) \otimes_{\Ocal_\Scal}\Ocal\BB_{\dR}$. 

\begin{propsub}\label{prop:CdR}
The tensors of $\BB_{\dR,\Scal}$-local systems defined by $(t_{\alpha,\et})$ and $(t_{\alpha,\dR})$ are equal.
\end{propsub}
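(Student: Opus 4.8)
The plan is to reduce the equality of these two tensors on the $\BB_{\dR,\Scal}$-local system to a pointwise statement over closed points of the special fibre, where it follows from the compatibility of the de Rham and crystalline comparison isomorphisms already recorded in the previous section. First I would observe that both $(t_{\alpha,\et})$ and $(t_{\alpha,\dR})$, viewed as sections of the $\BB_{\dR,\Scal}$-local system $(\Rrm^1\pi_{\et,\ast}\ZZ_p)^\wedge \otimes_{\hat\ZZ_p}\BB_{\dR,\Scal}$, are horizontal; hence their difference is a horizontal section of a $\BB_{\dR,\Scal}$-local system, and it suffices to check that it vanishes after restricting to a single geometric point of each connected component of $\Scal$. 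By a standard spreading-out/reduction argument one may even check this at a classical point coming from a lift $\wt z \in \Sscr(O_K)$ of a closed point $z \in \Sscr(\k)$ (such points are Zariski dense and every connected component contains one), so the problem becomes: for $\Xscr \coloneqq \Ascr_{\Gsf,\wt z}[p^\infty]$ over $O_K$, the tensor on $\DD(\Xscr)_{\Acris(O_C)}^\vee[\tfrac1t]$ induced by the étale tensor $t_{\alpha,\et}$ agrees with the one induced by the de Rham tensor $t_{\alpha,\dR}$.

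The next step is to identify both sides with $t_{\alpha,z}$ from Section~\ref{sect integral models}. On one hand, by the very construction of $t_{\alpha,z} \in M(\Ascr_{\Gsf,z}[p^\infty])_\QQ^\otimes$ recalled there, $t_{\alpha,z}$ is obtained from the fibre of $t_{\alpha,\et}$ via the crystalline–étale comparison theorem; so the tensor induced on $\DD(\Xscr)^\vee_{\Acris(O_C)}[\tfrac1t]$ by $t_{\alpha,\et}$ is exactly $t_{\alpha,z}\otimes 1$, using the comparison morphism $\beta_\Xscr$ of \eqref{eq integral Ccris} and Proposition~\ref{prop p-adic comparision}. On the other hand, by the discussion preceding the statement, the de Rham tensors $(t_{\alpha,\dR})$ globalise the ``de Rham components'' of $(t_{\alpha,z}^{\rm def})$ at the formal neighbourhood of $z$, and the latter restrict, at the closed point, to $t_{\alpha,z}$ viewed inside $\DD(\Xscr)^\vee_{O_K}\otimes_{O_K}K$ (this is how the Kisin–Pappas tensors on the universal display are set up). Transporting this along the de Rham comparison isomorphism of \cite[Theorem~8.8]{Scholze:CdR} — whose compatibility with Berthelot's crystalline comparison for $p$-divisible groups over $O_K$ is standard — gives that the tensor induced by $t_{\alpha,\dR}$ on the $\BB_{\dR}$-stalk is likewise $t_{\alpha,z}\otimes 1$. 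Hence the two tensors coincide at $\wt z$.

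The final step is to propagate the pointwise equality back to all of $\Scal$: since both sides are horizontal global sections and $\Scal$ is (geometrically) connected on each component with a dense set of such classical points $\wt z$, the difference, being a horizontal section of a $\BB_{\dR,\Scal}$-local system that vanishes at a point of each component, vanishes identically. I expect the main obstacle to be the bookkeeping of the various comparison isomorphisms and the verification that they are compatible with tensor products and with the horizontal (connection-respecting) structure — in particular, matching Scholze's de Rham comparison for the abelian scheme $\Acal/\Scal$ with the crystalline comparison $\beta_\Xscr$ used in Section~\ref{sect tensors}, at the level of the $p$-divisible group and compatibly with the $\Gamma_K$-action at the classical points $\wt z$. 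Once that compatibility is in hand (it is essentially the functoriality of $p$-adic Hodge theory applied to $\Xscr = \Acal[p^\infty]$, together with Corollary~\ref{cor:Ccris}), the rest is formal.
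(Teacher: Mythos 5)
Your overall architecture is the same as the paper's: reduce the equality of the two $\BB_{\dR,\Scal}$-tensors to a comparison of fibres at classical points $\tilde z$, where the statement is exactly the theorem of Blasius--Wintenberger on the compatibility of the \'etale and de~Rham components of absolute Hodge cycles under the $p$-adic comparison isomorphism (this is also what your unwinding through $t_{\alpha,z}$ and the Kisin--Pappas construction secretly uses, so that part of your argument is essentially correct, if under-referenced). The problem is the reduction step. You justify ``it suffices to check at one geometric point of each connected component'' by saying the difference is a \emph{horizontal} section of a $\BB_{\dR,\Scal}$-local system. But a $\BB_{\dR,\Scal}$-local system carries no connection of its own: it is (identified with) the sheaf of horizontal sections of the corresponding $\Ocal\BB_{\dR,\Scal}$-module, so every section is ``horizontal'' by definition and the adjective gives you no further rigidity. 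There is no identity theorem available here: a global section of a $\BB_{\dR,\Scal}$-local system is a priori an element of something like $\Lambda^\otimes\otimes H^0(\Scal_{\proet},\BB_{\dR,\Scal})$ only after trivialisation, and $\BB_{\dR,\Scal}$ is a large sheaf on the pro-\'etale site (its zeroth graded piece is $\wh{\Ocal}_\Scal$, whose global sections are rigid functions, which can vanish at a point of a connected space without vanishing identically). The difference $t_{\alpha,\et}\otimes 1-t_{\alpha,\dR}\otimes 1$ is neither a section of the $\hat\ZZ_p$-lattice (where monodromy would pin it down by one fibre per component) nor a flat section of the coherent de~Rham bundle (where the rigid-analytic identity theorem would apply), so neither of the two available rigidity mechanisms applies to it directly.

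The missing idea is the one the paper supplies: pull back along the pro-\'etale cover $\Scal_{\Gamma(p^\infty)}\to\Scal$ coming from the perfectoid Shimura variety at infinite level at $p$. Since tensors of $\BB_{\dR,\Scal}$-local systems are determined pro-\'etale-locally, the comparison may be performed after this pullback; there the $\ZZ_p$-local system $(\Rrm^1\pi_{\et,\ast}\ZZ_p)^\wedge$ becomes constant, hence so does the associated $\BB_{\dR}$-local system, and only for a \emph{constant} system does the equality of two tensors reduce to the equality of their fibres at a point of each connected component. Evaluating at the preimages of the classical points $\tilde z$ then lands you exactly in the Blasius--Wintenberger statement you invoke. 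If you replace your horizontality argument by this trivialisation step (or by some other genuine rigidity statement for sections of $\BB_{\dR}$-local systems, which you have not provided), the rest of your proof goes through.
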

\begin{proof}
Let $\Scal_{\Gamma(p^\infty)}$ denote the preimage of $\Scal$ in the perfectoid Shimura variety associated to $(\Gsf,\Xsf)$; \emph{cf.} \cite[Theorem~IV.1.1]{Scholze:TorsionCycles}. Since the $\BB_{\dR,\Scal}$-tensors are determined pro\'etale-locally and $\Scal_{\Gamma(p^\infty)}$ is a pro\'etale covering of $\Scal$, it suffices to compare the tensors over $\Scal_{\Gamma(p^\infty)}$.

By construction the \'etale $\Zp$-local system $\Rrm^1\pi_{\et,\ast}\ZZ_p$ pulls back to a constant local system over $\Scal_{\Gamma(p^\infty)}$, so the associated $\BB_{\dR,\Scal_{\Gamma(p^\infty)}}$-local system is also constant. Therefore, the proposition can be verified at the fibre at a point in each connected component. On the other hand,  the $\BB_{\dR,\Scal_{\Gamma(p^\infty)}}$-tensors $(t_{\alpha,\et})$ and $(t_{\alpha,\dR})$ have the same fibre at the preimage in $\Scal_{\Gamma(p^\infty)}$ of any classical point of $\Scal$, by the theorem of Blasius and Wintenberger; \emph{cf.} \cite[proof of Proposition~4.2.2]{KisinPappas:ParahoricIntModel}.
\end{proof}

Let $\Spf S\subset\widehat\Sscr$ be an open affine connected formal subscheme.
Since $\Sscr$ is normal, it follows that  $S$ is a normal domain. We choose a minimal open prime $\pfr\subset S$. Then the completion $\widehat S_\pfr$ is a $p$-adic discrete valuation ring. 

Let $C$ be the completion of the algebraic closure of $\Frac(\widehat S_\pfr)$. Then, by Proposition~\ref{prop p-adic comparision} we have a natural isomorphism \[T(\Xscr_{\Gsf,C})\otimes_{\ZZ_p}\Bcris(O_C)\cong \DD(\Xscr_{\Gsf,O_C})_{\Acris(O_C)}[\tfrac{1}{t}].\] 
Therefore we have the following natural isomorphism:
\begin{equation}\label{eqn:CdR}
\begin{aligned}
T(\Xscr_{\Gsf,C})\otimes_{\ZZ_p} B_\dR(O_C)
&\cong \DD(\Xscr_{\Gsf,O_C})^\vee_{\Acris(O_C)}\otimes_{\Acris(O_C)}B_\dR(O_C) \\
&\cong \DD(\Xscr_{\Gsf,\kappa_\pfr})^\vee_{W(\kappa_\pfr)}\otimes_{W(\kappa_\pfr)}B_\dR(O_C) \\
& \cong \DD(\Xscr_{\Gsf,\widehat S_\pfr})^\vee_{\widehat S_\pfr}\otimes_{\widehat S_\pfr}B_\dR(O_C),
\end{aligned}
\end{equation}
where $\kappa_\pfr$ is the residue field of $\widehat S_\pfr$. (To see the last two isomorphisms, we choose a complete discretely valued subfield $K\subset C$ containing $\widehat S_\pfr$, with perfect residue field $\kappa$. Then for a {\BT} $\Xscr$ over $O_K$, we have $\DD(\Xscr)_{\Acris(O_C)}[\frac{1}{p}]\cong \DD(\Xscr_\kappa)_{W(\kappa)}\otimes_{W(\kappa)} \Bcris^+(O_C)$ and $\DD(\Xscr_\kappa)_{W(\kappa)}\otimes K \cong \DD(\Xscr)_{O_K}\otimes_{O_K}K$. We apply these isomorphisms to $\Xscr = \Xscr_{\Gsf, O_K}$.)

\begin{corsub}\label{cor:CdR}
The isomorphism (\ref{eqn:CdR}) matches $(t_{\alpha,\et}\otimes1)$ and $(t_{\alpha,\dR}\otimes1)$.
\end{corsub}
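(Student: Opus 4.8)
The plan is to deduce Corollary~\ref{cor:CdR} directly from Proposition~\ref{prop:CdR} by exhibiting the isomorphism \eqref{eqn:CdR} as the fibre, at a suitable geometric point, of the global $\BB_{\dR,\Scal}$-comparison used there. First I would note that the maps $S\to\widehat S_\pfr\hookrightarrow C$ determine a geometric point $\bar x\colon\Spa(C,O_C)\to\Scal$ lying over the rank-one point of $\Scal$ attached to the minimal prime $\pfr$. Pulling the data of Proposition~\ref{prop:CdR} back along $\bar x$ gives an isomorphism of $B_{\dR}(O_C)$-modules under which the fibres of the $\BB_{\dR,\Scal}$-tensors $(t_{\alpha,\et})$ and $(t_{\alpha,\dR})$ — which are exactly $t_{\alpha,\et}\otimes1$ and $t_{\alpha,\dR}\otimes1$ — correspond. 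So everything reduces to identifying this $\bar x$-fibre with \eqref{eqn:CdR}.

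That identification is the technical heart of the argument. Recall that \eqref{eqn:CdR} is assembled from the crystalline comparison morphism $\beta_{\Xscr_{\Gsf,C}}$ of Proposition~\ref{prop p-adic comparision} for the $p$-divisible group $\Xscr_{\Gsf,O_C}$, followed by the two rigidity (``Dwork trick'') identifications $\DD(\Xscr_{\Gsf,O_C})^\vee_{\Acris(O_C)}[\tfrac1p]\cong\DD(\Xscr_{\Gsf,\kappa})^\vee_{W(\kappa)}\otimes_{W(\kappa)}\Bcris^+(O_C)$ and $\DD(\Xscr_{\Gsf,\kappa})^\vee_{W(\kappa)}\otimes K\cong\DD(\Xscr_{\Gsf,\widehat S_\pfr})^\vee_{\widehat S_\pfr}\otimes K$ used in its construction. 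On the other side, the $\bar x$-fibre of \cite[Theorem~8.8]{Scholze:CdR} for the abelian scheme $\Acal\to\Scal$, restricted to $\Rrm^1$, is the de~Rham comparison for $\Acal_{\bar x}$; via the standard identifications of $\Rrm^1\pi_{\dR,\ast}\Ocal_{\Acal}$ (resp.\ $\Rrm^1\pi_{\et,\ast}\ZZ_p$) with $P(\Xscr_\Gsf)$ (resp.\ $T(\Xscr_\Gsf)$), up to the usual homology/cohomology and Tate-module/\'etale-$H^1$ dualities, this should coincide with \eqref{eqn:CdR} by the compatibility between Scholze's relative de~Rham comparison and the crystalline (Dieudonn\'e) comparison for the $p$-divisible group of an abelian scheme of good reduction. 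If one wants this compatibility from scratch, I would reduce to classical points of the relevant moduli/deformation spaces, where both sides specialise to the usual $p$-adic de~Rham comparison theorem for abelian varieties (compare the proof of Proposition~\ref{prop:CdR}, where the analogous propagation from classical points is carried out, and the Kisin/Breuil-module computations in the proof of Proposition~\ref{prop integrality Tate tensors}).

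Granting this identification the proof closes at once: by Proposition~\ref{prop:CdR} the tensors $(t_{\alpha,\et})$ and $(t_{\alpha,\dR})$ are equal as tensors on the $\BB_{\dR,\Scal}$-local system, so their fibres at $\bar x$ agree, and under \eqref{eqn:CdR} these fibres are precisely $t_{\alpha,\et}\otimes1$ and $t_{\alpha,\dR}\otimes1$. The hard part will be the middle step: carefully tracking all the dualities and keeping the Frobenius- and filtration-structures aligned through the passage $\Acris(O_C)\rightsquigarrow B_{\dR}^+(O_C)\rightsquigarrow B_{\dR}(O_C)$, so that the construction underlying \eqref{eqn:CdR} literally reproduces the $\bar x$-fibre of Scholze's comparison; once this bookkeeping is in place the corollary is immediate.
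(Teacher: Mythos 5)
Your proposal is correct and follows essentially the same route as the paper: the paper's proof also takes the rank-one point $\xi\in\Spa(S[\frac{1}{p}],S)\subset\Scal$ determined by $v_\pfr$, chooses a geometric point $\bar\xi$ over it valued in $(C,O_C)$, identifies \eqref{eqn:CdR} with the $\bar\xi$-fibre of the relative de~Rham comparison, and concludes from Proposition~\ref{prop:CdR}. The only difference is emphasis: the paper asserts the fibre identification in one sentence, whereas you correctly flag it as the step requiring the bookkeeping of dualities, Frobenius and filtrations.
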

\begin{proof}
Let $v_{\pfr}$ denote a $p$-adic norm on $\widehat S_\pfr[\frac{1}{p}]$. Then 
$v_{\pfr}$ restricts to a continuous rank-$1$ valuation of $S[\frac{1}{p}]$, so it defines a rank-$1$ point $\xi\in\Spa(S[\frac{1}{p}],S) \subset \Scal$. By construction, the residue field at $\xi$ is $(\widehat S_\pfr[\frac{1}{p}],\widehat S_\pfr)$.

We choose a geometric point $\bar\xi$ of $\Scal$ over $\xi$, valued in $(C,O_C)$.  Then the isomorphism (\ref{eqn:CdR}) can be obtained as the fibre at $\bar\xi$ of the de~Rham comparison isomorphism $
(\Rrm^1\pi_{\et,\ast}\hat\ZZ_p)^\wedge \otimes_{\hat\ZZ_p}\BB_{\dR,\Scal} \cong ((\Rrm^1\pi_{\dR,\ast}\Ocal_{\Acal}) \otimes_{\Ocal_\Scal}\Ocal\BB_{\dR,\Scal})^{\nabla=0}$. Therefore, the corollary now follows from Proposition~\ref{prop:CdR}.
\end{proof}

Let $\pfr\subset S$ be a minimal open prime as above. Then the residue field $\kappa_\pfr\coloneqq \Frac(S/\pfr)$ admits a finite $p$-basis given by local coordinates of the smooth locus in $\Spec S/pS$ containing $\pfr$. Therefore, by Corollary~\ref{cor integrality Tate tensors} we obtain tensors $(t_{\alpha,\et}\otimes 1)$ of $P(\Xscr_{\Gsf,\widehat S_\pfr})$. 

We want to compare these tensors with the ``display tensors'' $t^{\rm def}_{\alpha, z}\in M(\Xscr_{\Gsf,\widehat S_z})^\otimes$ at the formal neighbourhood of $z\in(\Spf S)(\k)$ constructed via the deformation theory (\emph{cf.} \S\S~\ref{sect local geom}, Proposition~\ref{prop normalisation on formal neighbourhoods}). 
Since we can associate to $t_{\alpha,\et}$  ``display tensors''  only  over $\widehat S_\pfr$ for minimal open primes $\pfr\subset S$, we first compare the tensors over the formal neighbourhood of $\pfr\widehat S_z$. (Note that $\pfr\widehat S_z$ is a prime ideal; i.e., the completion $(S/\pfr)^\wedge_z \cong \widehat S_z/\pfr\widehat S_z$ is a domain. This is because $S/\pfr$ is normal by Corollary~\ref{cor normalisation on formal neighbourhoods}.)

\begin{propsub} \label{prop local tensors}
The two tensors $(t_{\alpha,\et}\otimes1)$ and $(t_{\alpha,z}^{\rm def}\otimes1)$ have the same image over $W((\widehat S_z)^\wedge_\pfr)$.
\end{propsub}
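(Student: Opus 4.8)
The plan is to compare the two families of display tensors after passing to the larger base $W((\widehat S_z)^\wedge_\pfr)$, where $(\widehat S_z)^\wedge_\pfr$ is a complete discrete valuation ring over which both tensors are defined, and to exploit the fact that a Frobenius-invariant tensor of a Dieudonn\'e display over such a ring is rigid, i.e.\ is determined by its value over the residue field (Lemma~\ref{lem:Ccris}, Corollary~\ref{cor:Ccris}). The point is that $(\widehat S_z)^\wedge_\pfr$ has residue field $\kappa_\pfr = \Frac(S/\pfr)$, and both $(t_{\alpha,\et}\otimes 1)$ and $(t_{\alpha,z}^{\rm def}\otimes 1)$ reduce, over $\DD(\Xscr_{\Gsf,\kappa_\pfr})^\vee_{W(\kappa_\pfr)}[\tfrac1p]$, to the de~Rham tensor $t_{\alpha,\dR}$; once this is established, uniqueness forces the two tensors to agree.

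First I would spell out that $(\widehat S_z)^\wedge_\pfr$ really is a complete discrete valuation ring of mixed characteristic $(0,p)$ with residue field $\kappa_\pfr$: indeed $\widehat S_z$ is a complete local noetherian normal domain, $\pfr\widehat S_z$ is a minimal prime by the remark preceding the proposition (using that $S/\pfr$ is normal, Corollary~\ref{cor normalisation on formal neighbourhoods}), and localising and completing at it yields a complete DVR whose residue field is $\Frac(\widehat S_z/\pfr\widehat S_z) = \Frac((S/\pfr)^\wedge_z)$, which is a (completion of a) localisation of $\kappa_\pfr$; more precisely one checks that $\kappa_\pfr$ maps into it and that $\DD(\Xscr_\Gsf)^\vee$ base-changed along $W(\kappa_\pfr)\to W((\widehat S_z)^\wedge_\pfr)$ computes both sides. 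Then I would observe that, by construction, the tensor $t_{\alpha,\et}\otimes 1 \in P(\Xscr_{\Gsf,\widehat S_\pfr})^\otimes$ of Corollary~\ref{cor integrality Tate tensors} pushed forward to $W((\widehat S_z)^\wedge_\pfr)$ agrees with the tensor obtained by applying the same construction directly over the DVR $(\widehat S_z)^\wedge_\pfr$ to the restriction of the Galois-invariant Tate tensor $t_{\alpha,\et}$ — this is just functoriality of the comparison isomorphism $\beta_\Xscr$ and of Proposition~\ref{prop integrality Tate tensors} along the map $\widehat S_\pfr \to (\widehat S_z)^\wedge_\pfr$ of DVRs (both having perfect-enough residue fields, so that Hyodo's theorem and the Kisin-module machinery apply).

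Next, for the deformation-theoretic tensor: the restriction of $\Xscr_\Gsf$ to the formal neighbourhood $\widehat S_z$ is, by Proposition~\ref{prop normalisation on formal neighbourhoods}, identified with the universal object over the Kisin-Pappas deformation ring, and $t_{\alpha,z}^{\rm def}$ is the Kisin-Pappas tensor. Over the DVR $(\widehat S_z)^\wedge_\pfr$ this tensor is Frobenius-invariant and its image in $(\DD(\Xscr_{\Gsf})^\vee_{O_K}\otimes K)^\otimes$ lies in the $0$th Hodge filtration (since the Hodge filtration at the generic fibre is of the expected type, as recorded in \S\ref{sect integral models}), so by Corollary~\ref{cor:Ccris} it is determined by the Galois-invariant \'etale tensor $V_\cris(t_{\alpha,z}^{\rm def})$; and Proposition~\ref{prop et univ prop} together with the universal property of the Kisin-Pappas tensors identifies this \'etale tensor with $t_{\alpha,\et}$ restricted to the punctured neighbourhood. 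Now both $(t_{\alpha,\et}\otimes 1)$ and $(t_{\alpha,z}^{\rm def}\otimes 1)$ are Frobenius-invariant tensors over $W((\widehat S_z)^\wedge_\pfr)[\tfrac1p]$ whose associated \'etale tensors coincide (both equal to the pullback of $t_{\alpha,\et}$), so by the rigidity of Lemma~\ref{lem:Ccris}/Corollary~\ref{cor:Ccris} they are equal after inverting $p$; integrality of both (Proposition~\ref{prop integrality Tate tensors} on one side, and the Kisin-Pappas construction on the other) then gives equality over $W((\widehat S_z)^\wedge_\pfr)$ itself. I expect the main obstacle to be the bookkeeping around base change of crystalline Dieudonn\'e modules and Kisin modules between the various DVRs with only \emph{finitely generated} (not perfect) residue fields — specifically, checking that the comparison isomorphism (\ref{eqn:CdR}) and Corollary~\ref{cor:CdR} are compatible with specialising from $\widehat S_\pfr$ to $(\widehat S_z)^\wedge_\pfr$, so that the common \'etale-to-de~Rham matching really does descend to identify the two display tensors; the Frobenius-rigidity input itself is then a routine invocation of the lemmas already proved.
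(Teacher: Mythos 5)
Your overall shape — pass to the complete discrete valuation ring $(\widehat S_z)^\wedge_\pfr$, show both display tensors have the same associated Galois-invariant \'etale tensor, and conclude by the Frobenius-rigidity of Lemma~\ref{lem:Ccris}/Corollary~\ref{cor:Ccris} — is the same as the paper's. But the pivotal middle step is not justified by what you cite. You identify the \'etale tensor $V_\cris(t^{\rm def}_{\alpha,z})$ with the restriction of $t_{\alpha,\et}$ by appealing to Proposition~\ref{prop et univ prop} ``together with the universal property of the Kisin-Pappas tensors''. Both of those statements are formulated only for points $\Xscr\in\Def(X)(O_K)$ with $K$ a \emph{finite} extension of $\breve E$ (i.e.\ classical points with residue field $\k$), and moreover Proposition~\ref{prop et univ prop} only concludes membership $\Xscr\in\Def_{\Gscr_x}(X)(O_K)$ — it does not identify \'etale tensors. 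The DVR $(\widehat S_z)^\wedge_\pfr$ is a ``generic point'' of the deformation space with large imperfect residue field $\kappa_\pfr$, so neither result applies; this identification is exactly the non-formal content of the proposition, not a routine invocation.

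The paper supplies it by routing through the de~Rham component: (i) Corollary~\ref{cor:CdR} (whose proof goes through the perfectoid cover and the theorem of Blasius--Wintenberger at classical points) shows the $B_{\dR}$-comparison over $(\widehat S_z)^\wedge_\pfr$ matches $t_{\alpha,\et}\otimes 1$ with $t_{\alpha,\dR}\otimes 1$; (ii) the Kisin--Pappas construction (proof of their Prop.~4.2.6) shows $t^{\rm def}_{\alpha,z}$ and $t_{\alpha,\dR}$ have the same image in $\DD(\Xscr)^\otimes_{O_K}$; combining (i), (ii) and Corollary~\ref{cor:CdR} applied to $t^{\rm def}_{\alpha,z}$ gives $t_{\alpha,\et}=t^{\rm def}_{\alpha,\et,K}$, after which rigidity finishes the proof. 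You flagged the compatibility of \eqref{eqn:CdR} with specialisation as ``the main obstacle'', which is the right instinct, but your proposed substitute (the universal property) does not close it. A secondary point: Lemma~\ref{lem:Ccris} and Corollary~\ref{cor:Ccris} are proved under the hypothesis that the residue field is \emph{perfect}, so before invoking them you must first extend $(\widehat S_z)^\wedge_\pfr$ to a complete DVR $O_K$ with perfect residue field, as the paper does in its first line.
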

\begin{proof}
We may verify the proposition over $W(O_K)$ for some extension $O_K$ of $(\widehat S_z)^\wedge_\pfr$ as complete discrete valuation rings.
Let us choose a complete discrete valuation ring  $O_K$ over $ (\widehat S_z)^\wedge_\pfr$ with perfect residue field $\kappa$, and  write $\Xscr\coloneqq \Xscr_{\Gsf,O_K}$.

Let $C:=\widehat{\overline K}$. Then by Corollary~\ref{cor:Ccris} the tensors $(t^{\rm def}_{\alpha,z})$ induce $\Gamma_K$-invariant tensors, say $(t^{\rm def}_{\alpha,\et, K})$, of $T(\Xscr_C)[\frac{1}{p}]$. By construction of $(t^{\rm def}_{\alpha,\et, K})$,
it follows from Corollary~\ref{cor:CdR} that the natural isomorphism
\[
T(\Xscr_{C})\otimes_{\ZZ_p}B_{\dR}(O_C)\cong  \DD(\Xscr)_{O_K}^\vee\otimes_{O_K}B_{\dR}(O_C)
\]
matches $(t^{\rm def}_{\alpha,\et, K})$ and the image of the display tensors $(t^{\rm def}_{\alpha,z})$ in $ \DD(\Xscr)_{O_K}^\otimes$.

Now, recall that the images of  $t^{\rm def}_{\alpha, z}$ and $t_{\alpha,\dR}$ in $\DD(\Xscr)_{O_K}^\otimes$ coincide; \emph{cf.} \cite[proof of Proposition~4.2.6]{KisinPappas:ParahoricIntModel}. Since $t_{\alpha,\et}$ and $t_{\alpha,\dR}$ are related by the $p$-adic de~Rham comparison (by the theorem of Blasius and Wintenberger), we may  apply Corollary~\ref{cor:CdR} to $t_{\alpha,\et}$ and conclude that the  tensors $(t_{\alpha,\et})$ and $(t^{\rm def}_{\alpha,\et, K})$  have the same images in $(T(\Xscr_{C})\otimes_{\ZZ_p} B_\dR(O_C))^\otimes$. This shows that  $t_{\alpha,\et} = t^{\rm def}_{\alpha,\et, K}$. Now, we conclude the proof since $\Psi$-invariant tensors of $P(\Xscr)$ are uniquely determined by the associated $\Gamma_K$-invariant \'etale tensors (if they exist) by Corollary~\ref{cor:Ccris}.
\end{proof}

We want to deduce from Proposition~\ref{prop local tensors}  that $(t_{\alpha,\et})\subset P(\Xscr_{\Gsf,\widehat S_\pfr})^\otimes$ extends to $P(\Xscr_{\Gsf,S})^\otimes$. For this, we need the following lemma:
\begin{lemsub}\label{lem glueing}
Let $S$ be a topologically finitely generated normal flat $O_{\breve E}$-algebra such that $\Spec(S\otimes_{O_{\breve{E}}}\k)$ is reduced with normal irreducible components. Let $t\in \prod_\pfr \widehat{S}_\pfr$ where the product is over all minimal open prime ideals $\pfr\in\Spf S$, and assume that for any $z\in(\Spf S)(\k)$, the image of $t$ in $\prod_\pfr (\widehat S_z)^\wedge_\pfr$ lies in $\widehat S_z$. Then $t\in S$.

The same statement holds if we replace $S$, $\widehat S_z$ and $\widehat{S}_\pfr$ with the rings of Witt vectors thereof.
\end{lemsub}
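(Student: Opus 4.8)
The plan is to prove the statement by a local-to-global descent argument for sections of the sheaf $W(\Ocal_{\widehat\Sscr})$ (resp.\ $\Ocal_{\widehat\Sscr}$), reducing the claim for Witt rings to the claim for the structure sheaf itself, and then proving the latter by a normality argument. First I would reduce to the non-Witt case: if $(a_i)_{i\geq 0}$ is the Witt-coordinate description of $t\in\prod_\pfr W(\widehat S_\pfr)$, then the hypothesis that the image of $t$ in $\prod_\pfr W((\widehat S_z)^\wedge_\pfr)$ lies in $W(\widehat S_z)$ translates coordinatewise into: each $a_i\in\prod_\pfr\widehat S_\pfr$ has image in $\prod_\pfr(\widehat S_z)^\wedge_\pfr$ landing in $\widehat S_z$. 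So it suffices to prove the statement for $S$ itself, i.e.\ that an element $t\in\prod_\pfr\widehat S_\pfr$ which lands in $\widehat S_z$ for every $\k$-point $z$ of $\Spf S$ actually lies in $S$.

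For the core statement, the key observation is that $S$ is a normal flat $O_{\breve E}$-algebra whose special fibre $S\otimes\k$ is reduced with normal irreducible components (Corollary~\ref{cor normalisation on formal neighbourhoods}). I would argue as follows. Since $S$ is $p$-torsion free and $S[\tfrac1p]$ is normal, $S$ is the intersection inside $S[\tfrac1p]$ of the localizations $S_\pfr$ over height-one primes; the ones containing $p$ are precisely the minimal open primes $\pfr\in\Spf S$ (because $\Spec(S\otimes\k)$ is reduced, so $pS$ has no embedded primes and its minimal primes are exactly these $\pfr$'s, each of height one in $S$ by flatness). Hence it is enough to show that $t$, a priori only an element of $\prod_\pfr\widehat S_\pfr$, actually comes from a single element of $S[\tfrac1p]$ lying in each $S_\pfr$: then $t\in S$ by normality together with the fact that at height-one primes not containing $p$ the integrality is automatic once we know $t\in S[\tfrac1p]$. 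To produce such an element I would use the $\k$-points: the completions $\widehat S_z$ for $z\in(\Spf S)(\k)$ form a faithfully flat ``cover'' in the sense that $S=\bigcap_z \widehat S_z \cap S[\tfrac1p]$ inside an ambient ring (this uses that $S$ is excellent normal and the special fibre is reduced, so the map $S\to\prod_z\widehat S_z$ is injective and its image is cut out by the integrality conditions one checks locally). The hypothesis gives an element $t_z\in\widehat S_z$ for each $z$, compatibly with the $t_\pfr$'s under $\widehat S_\pfr\to(\widehat S_z)^\wedge_\pfr$; I would check these $t_z$ glue (two of them agree on any $(\widehat S_z)^\wedge_\pfr\cap(\widehat S_{z'})^\wedge_\pfr$ because they agree after pushing to a common $\widehat S_\pfr$, and $\widehat S_z\hookrightarrow\prod_{\pfr\ni z}(\widehat S_z)^\wedge_\pfr$ is injective as $\widehat S_z$ is reduced). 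The glued data then lies in $\bigcap_z\widehat S_z$, and a standard argument (clearing denominators, using that the $t_\pfr$ already have bounded $p$-denominator since each $\widehat S_\pfr$ is a DVR and there are finitely many of them) shows it lies in $S[\tfrac1p]$, hence in $S$.

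Finally, the extension to Witt vectors is formal once the base case is done: $W(S)\to\prod_\pfr W(\widehat S_\pfr)$ and $W(S)\to\prod_z W(\widehat S_z)$ are injective because $W$ preserves injections of (flat, or merely torsion-free) rings, and $W(S)$ sits inside $\prod_\pfr W(\widehat S_\pfr)$ as exactly those tuples whose image in each $\prod_{\pfr\ni z}W((\widehat S_z)^\wedge_\pfr)$ lands in $W(\widehat S_z)$ — this is the statement applied coordinatewise, as explained above, together with the fact that $W(-)$ commutes with the relevant intersections of rings (each Witt coordinate is handled separately). I expect the main obstacle to be the bookkeeping in the gluing step: one must verify carefully that the local elements $t_\pfr\in\widehat S_\pfr$ and $t_z\in\widehat S_z$ are mutually compatible in a way that produces a genuine global section, and in particular that no ``denominators at $p$'' are introduced when passing from the height-one data to a global element — here the reducedness of $S\otimes\k$ and the finiteness of the set of minimal open primes through a given closed point are what make the argument work, and these are exactly the hypotheses that have been put in.
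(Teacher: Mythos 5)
Your overall skeleton (reduce the Witt case to the structure-sheaf case coordinatewise, then use normality of $S$ and the identification of the minimal open primes of $\Spf S$ with the height-one primes containing $p$) is reasonable, and the Witt reduction agrees with the paper's. But there is a genuine gap at the heart of the argument: you assert that $t$ ``comes from a single element of $S[\tfrac1p]$'' and propose to produce it by gluing the local elements $t_z\in\widehat S_z$, finishing with ``a standard argument (clearing denominators)''. Nothing in your proposal establishes this algebraicity. The rings $\widehat S_\pfr$ and $\widehat S_z$ are \emph{completions}, vastly larger than the localizations $S_\pfr$ and $S_z$: a tuple $(t_\pfr)\in\prod_\pfr\widehat S_\pfr$ whose images land in every $\widehat S_z$ has, a priori, nothing to do with $\Frac(S)$, and clearing denominators only controls powers of $p$, not transcendence over $S$. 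Likewise ``$S=\bigcap_z\widehat S_z\cap S[\tfrac1p]$ inside an ambient ring'' presupposes the element already lies in $S[\tfrac1p]$; there is no natural common overring containing all the $\widehat S_z$, and the compatibility you actually have (agreement in the rings $(\widehat S_z)^\wedge_\pfr$) is far weaker than faithfully flat descent data for $S\to\prod_z\widehat S_z$. So the step you call standard is in fact the entire content of the lemma.

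The paper supplies exactly this missing ingredient by working $\varpi$-adically. Modulo $\varpi$ the completion $\widehat S_\pfr$ collapses to its residue field $\kappa_\pfr=\Frac(S/\pfr)$, so $\bar t\in\prod_\pfr\kappa_\pfr$ lies in the total fraction ring of the reduced special fibre $\bar S$, i.e.\ it is an honest rational function; the hypothesis at the $\k$-points says it is regular at every closed point, and since $\bar S$ is finitely generated over $\k$, hence Jacobson, it is regular everywhere, so $\bar t\in\bar S$. One then lifts successively: if $t\equiv t^{(n)}\pmod{\varpi^n}$ with $t^{(n)}\in S$, the element $\varpi^{-n}(t-t^{(n)})$ again satisfies the mod-$\varpi$ hypotheses, giving the next approximation, and closedness of $S$ in $\prod_\pfr\widehat S_\pfr$ for the $\varpi$-adic topology concludes. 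If you want to keep your normality-plus-height-one framing, you would still need an argument of this type to show $t\in\Frac(S)$ first; as written, that step is missing.
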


\begin{proof}
The claim for the Witt vectors follows from the first claim by inspecting each Witt component.

For convenience, we choose a uniformiser $\varpi$ of $\breve E$. Then $\varpi$ is a uniformiser of $\widehat{S}_\pfr$ for any minimal open prime ideal $\pfr\in\Spf S$. Let $\kappa_\pfr=\widehat{S}_\pfr/(\varpi)$ denote the residue field. Let $t\in \prod_\pfr \widehat{S}_\pfr$ be as in the statement. Then since $S$ is closed in $\prod_\pfr \widehat{S}_\pfr$ for the $\varpi$-adic topology, it suffices to show that $t\bmod{(\varpi^n)}$ lies in $S/\varpi^nS$ for each $n$.

Let us write $\bar S\coloneqq S/\varpi S$ and $\widehat{\bar S}_z\coloneqq\widehat S_z/\varpi\widehat S_z$. For any $f$ in $S$, $\widehat S_z$ or $\widehat{S}_\pfr$, we write $\bar f\coloneqq f\bmod{(\varpi)}$.
For $t$ as in the statement, let us first show that $\bar t\in \bar S$. By reducedness,  $\bar S$ injects into $\prod_\pfr \kappa_\pfr$, which is the total fraction ring of $\bar S$. Therefore, 
$\bar t$ defines a section of the structure sheaf over some non-zero open $U\subset \Spec \bar S$. On the other hand, the condition on the image of $\bar t$ on $\prod_\pfr \Frac(\widehat{\bar S}_z/\pfr\widehat{\bar S}_z)$ implies that $\bar t$ lies in the local ring $\bar S_z$ for any closed point $z \in (\Spec\bar S)(\k)$, so we may take $U$ to contain all closed points. Now, since $\bar S$ is finitely generated over $\k$ (so $\bar S$ is Jacobson), we may take $U = \Spec\bar S$ by Zariski density. This shows that $\bar t\in \Gamma(\Spec \bar S,\Ocal) = \bar S$.

Now, assuming that $t\bmod{(\varpi^n)}$ lies in $S/\varpi^n S$,  we want to show that $t\bmod{(\varpi^{n+1})}$ lies in $S/\varpi^{n+1} S$. We choose a lift $t^{(n)}\in S$ of $t\bmod{(\varpi^n)}$, and set $\bar u\coloneqq \varpi^{-n}(t-t^{(n)}) \in \prod_\pfr \kappa_\pfr$. Then by the assumption on $t$, it follows that the image of $\bar u$ in $\prod_\pfr \Frac(\widehat{\bar S}_z/\pfr\widehat{\bar S}_z)$ lies in $\widehat{\bar S}_z$ for any $z\in(\Spf S)(\k)$. Therefore, by the previous paragraph, we have $\bar u\in \bar S$. Choosing a lift $u\in S$ of $\bar u$, $t^{(n)} + \varpi^n u \in S$ is congruent to $t$ modulo $(\varpi^{n+1})$. Now by sending $n\to \infty$, we conclude that $t\in S$ by completeness.
\end{proof}

\begin{proof}[Proof of Proposition~\ref{prop global tensors}]
To prove the proposition, it suffices to construct the tensors $(t_{\alpha})$ over any open affine formal subscheme $\Spf S\subset \wh\Sscr$ with the desired  description at each formal neighbourhood; indeed, the description at each formal neighbourhood allows the Zariski-locally constructed tensors to glue via the fpqc descent theory for flat modules over Witt rings (\emph{cf.} \cite[\S1.3, Lemma~30]{Zink:DisplayFormalGpAsterisq278}). We may and do assume that each $S$ is a normal domain, and the display $P(\Xscr_{\Gsf,S})$ is free over $W(S)$. (To ensure the freeness, we refine the open covering $\{\Spf S\}$ so that $P(\Xscr)_{\Gsf,S}\otimes_{W(S)}S$ is free over $S$. Then it follows that $P(\Xscr_{\Gsf,S})$ is also free over $W(S)$ by Nakayama's lemma.)

For any minimal open prime ideal $\pfr\subset S$, we have already constructed the tensors $(t_{\alpha,\et}\otimes 1)$ of $P(\Xscr_{\Gsf,\widehat S_\pfr})$ associated to the \'etale tensors $(t_{\alpha,\et})$ via Proposition~\ref{prop integrality Tate tensors}. Furthermore, by Proposition~\ref{prop local tensors} these tensors $(t_{\alpha,\et}\otimes 1)$ coincide with Kisin-Pappas tensors $(t_{\alpha,z}^{\rm def}\otimes1)$  over $W((\widehat S_z)^\wedge_\pfr)$ for any $z\in(\Spf S)(\overline\FF_p)$.

Note that $\Spec(S\otimes_{O_{\breve{E}}}\k)$ is reduced with normal irreducible components (\emph{cf.} Corollary~\ref{cor normalisation on formal neighbourhoods}). So by fixing a $W(S)$-basis of $P(\Xscr_{\Gsf,S})$ and inspecting the individual coordinates of
\[
t_{\alpha,\et}\otimes 1 \in \prod_\pfr  P(\Xscr_{\Gsf,S})\otimes_{W(S)} W(\widehat{S}_\pfr),
\]
it follows from Lemma~\ref{lem glueing} that $t_{\alpha,\et}\otimes 1\in P(\Xscr_{\Gsf,S})^\otimes$. And by construction, it has the desired description at the  formal neighbourhood of $z\in(\Spf S)(\k)$.
  \end{proof}

\begin{corsub} \label{cor global tensors}

 Let $ \bar\Sscr^\perf$ denote the perfection of the geometric special fibre $\bar\Sscr$, and let $\nu: \bar\Sscr^\perf \to \bar\Sscr$ denote the natural projection. Then there exists a family of crystalline Tate tensors $(\bar{t}_{\alpha})$ on $\nu^\ast \bar\Ascr_{\Gsf}[p^\infty]$ such that for every $z \in\Ssb(\k)$ the restriction of $(\bar{t}_{\alpha})$ to the formal fibre over $z$ coincides with the pullback of $(\bar{t}_{\alpha,z}^{\rm def})$. Furthermore, for any  $\tilde z\in\Sscr(O_K)$ lifting $z\in\Ssb(\k) = \Ssb^\perf(\k)$, where $K$ is a finite extension of $\breve E$, the crystalline comparison matches $t_{\alpha,\et,\tilde z}\in T_p(\Asf_{\Gsf,\tilde z})^\otimes$ with $ \bar{t}_{\alpha,z}\in M(\Ascr_{\Gsf,z}[p^\infty])^\otimes$.
  \end{corsub}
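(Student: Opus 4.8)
The plan is to deduce Corollary~\ref{cor global tensors} from Proposition~\ref{prop global tensors} by passing from the $p$-adic completion $\wh\Sscr$ to the special fibre and its perfection, using that ``display tensors over a perfect base come from crystalline Tate tensors''. First I would note that the universal $p$-divisible group $\Xscr_\Gsf$ over $\wh\Sscr$ restricts, over the geometric special fibre $\Ssb$ (which is an honest $\FFbar_p$-scheme inside $\wh\Sscr$), to $\bar\Ascr_\Gsf[p^\infty]$; pulling back along $\nu\colon\Ssb^\perf\to\Ssb$ and using that $\Ssb^\perf$ is perfect, Zink--Lau theory (together with \cite{Lau:Smoothness}, \cite{Zink:DisplayFormalGpAsterisq278} quoted in \S\ref{sect tensors}) identifies the display $P(\nu^\ast\bar\Ascr_\Gsf[p^\infty])$ with $\DD(\nu^\ast\bar\Ascr_\Gsf[p^\infty])^\vee_{W(\Ocal_{\Ssb^\perf})}$, the evaluation of the Dieudonn\'e crystal on $W(\Ocal_{\Ssb^\perf})$, which \emph{is} the crystalline realisation. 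So the $\Psi$-invariant tensors $(t_\alpha)$ of Proposition~\ref{prop global tensors} pull back to Frobenius-invariant (i.e.\ $\Psi$-invariant) tensors of this crystal; I would then invoke that a Frobenius-invariant tensor of $\DD(\cdot)^\vee$ over a perfect scheme which lies in the $0$th Hodge filtration at every point defines a crystalline Tate tensor in the sense used here, and this is exactly what ``$(\bar t_\alpha)$ is a family of crystalline Tate tensors on $\nu^\ast\bar\Ascr_\Gsf[p^\infty]$'' should mean. Set $\bar t_\alpha\coloneqq$ the image of $t_\alpha$ under $P(\Xscr_\Gsf)^\otimes\to P(\nu^\ast\bar\Ascr_\Gsf[p^\infty])^\otimes$.

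The second step is to check the compatibility at formal neighbourhoods. By Proposition~\ref{prop global tensors}, the restriction of $t_\alpha$ to the formal neighbourhood $\wh\Sscr_z$ of $z\in\Sscr(\k)$ is the Kisin--Pappas tensor $t^{\rm def}_{\alpha,z}\in M(\Xscr_{\Gsf,\wh S_z})^\otimes\subset P(\Xscr_{\Gsf,\wh S_z})^\otimes$. The formal fibre of $\Ssb^\perf\to\wh\Sscr$ over $z$ is the perfection of $\Spf(\wh S_z\otimes_{O_{\breve E}}\k)=\Spf(\wh S_z/\varpi)$, and the base change of the display along $\wh S_z\to(\wh S_z/\varpi)^\perf$ carries $t^{\rm def}_{\alpha,z}$ to what the statement calls the pullback of $\bar t^{\rm def}_{\alpha,z}$; this is a routine compatibility of base change of displays. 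Hence the restriction of $\bar t_\alpha$ to the formal fibre over $z$ is the pullback of $\bar t^{\rm def}_{\alpha,z}$, as required.

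For the last assertion I would argue as follows. Let $\tilde z\in\Sscr(O_K)$ lift $z$, with $K/\breve E$ finite. By construction of the integral model (\S\ref{sect integral models}) the fibre of the \'etale tensor $t_{\alpha,\et}$ at a geometric point over $\tilde z$ gives $t_{\alpha,\et,\tilde z}\in T_p(\Asf_{\Gsf,\tilde z})^\otimes$, and by \cite[Cor.~3.3.6]{KisinPappas:ParahoricIntModel} plus the discussion in \S\ref{sect integral models} the crystalline comparison matches it with a tensor in $M(\Ascr_{\Gsf,z}[p^\infty])_\QQ^\otimes$; on the other hand the isomorphism $\Sscr^\wedge_{O_{\breve E},z}\cong\Def_{\Gscr_x}(\Ascr_{\Gsf,z}[p^\infty];(t_{\alpha,z}))$ of Proposition~\ref{prop normalisation on formal neighbourhoods} together with the ``universal property'' of $t^{\rm def}_{\alpha,z}$ (\cite[Cor.~3.2.11]{KisinPappas:ParahoricIntModel}, or Proposition~\ref{prop et univ prop} in the \'etale form) shows that the \'etale tensors attached to the deformation $\tilde z$ are precisely $t_{\alpha,\et,\tilde z}$ and that their crystalline counterpart is the pullback of $t^{\rm def}_{\alpha,z}$, i.e.\ of $\bar t_{\alpha,z}$. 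Concretely: both $t_{\alpha,\et,\tilde z}$ and $\bar t_{\alpha,z}$ are related through the isomorphism \eqref{eq rigidity quasiisogeny} applied to $\tilde z$, and one reduces to Corollary~\ref{cor:Ccris}, which gives uniqueness of a $\Psi$-invariant tensor with prescribed \'etale realisation. I expect the main obstacle to be bookkeeping: making precise the identification of $P$ of a $p$-divisible group over a perfect scheme with the crystal, tracking that base change of displays to the perfection of the formal fibre is compatible with the Kisin--Pappas construction, and checking that the ``crystalline Tate tensor'' condition (integrality, Frobenius-invariance, lying in the Hodge filtration at each point) is inherited from Proposition~\ref{prop global tensors} — none of these is deep, but each requires care with the various incarnations of the Dieudonn\'e functor in \S\ref{sect tensors}.
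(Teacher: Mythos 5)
Your proposal is correct and follows essentially the same route as the paper: define $\bar t_\alpha$ as the base change $t_\alpha\otimes 1$ of the global display tensors of Proposition~\ref{prop global tensors} along $W(\Ocal_{\wh\Sscr})\to W(\Ocal_{\Ssb^\perf})$, read off the formal-neighbourhood compatibility from the construction, and deduce the crystalline–\'etale matching at a lift $\tilde z$ by restricting to the formal neighbourhood of $z$ and invoking Proposition~\ref{prop normalisation on formal neighbourhoods} together with the earlier comparison results (the paper cites Proposition~\ref{prop local tensors}, whose proof rests on the same ingredients you name, namely the rigidity isomorphism and Corollary~\ref{cor:Ccris}). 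The extra care you take in identifying $P(\cdot)$ over the perfect base with the Dieudonn\'e crystal is implicit in the paper's definition of $P$ and does not change the argument.
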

\begin{proof}
We use the notation from the proofs of Proposition~\ref{prop global tensors} and Lemma~\ref{lem glueing}. We set
\[\{\bar t_{\alpha}\coloneqq t_{\alpha}\otimes 1 \in P(\Xscr_{\Gsf})^\otimes\otimes_{W(\Ocal_{\wh\Sscr})}W(\Ocal_{\Ssb^\perf}) \}\]
over the perfection $\Ssb^\perf$. By construction of $\{t_\alpha\in P(\Xscr_{\Gsf}^\otimes)\}$, these tensors at the formal neighbourhood of $z\in (\Ssb)(\k)$ coincide with the pullback of $\{t_{\alpha,z}^{\rm def}\}$.  Finally, to obtain the comparison of $\{\bar t_\alpha\}$ with the \'etale tensors $\{t_{\alpha,\et}\}$ at $\tilde z\in\Sscr(O_K)$ (lifting a closed point $z$), it suffices to restrict the tensors to the formal neighbourhood of $z$. Now, using the description of the formal neighbourhoods given in Proposition~\ref{prop normalisation on formal neighbourhoods}, the claim follows from Proposition~\ref{prop local tensors}.
\end{proof}

\begin{corsub}\label{cor stratifications}
 Let $b \in G(\L)$.
 \begin{subenv}
  \item The Newton stratum $\Ssb^b(\k)$ is locally closed in $\Ssb(\k)$.
  \item The central leaf $\Cbar^b(\k)$ is closed in $\Ssb^b(\k)$.
 \end{subenv}
\end{corsub}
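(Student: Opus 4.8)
The plan is to deduce Corollary~\ref{cor stratifications} from Proposition~\ref{prop global tensors} (and its Corollary~\ref{cor global tensors}) together with the general results of Rapoport--Richartz and Oort on the specialisation behaviour of $F$-isocrystals with additional structure. The key point is that once we have a family of $\Psi$-invariant tensors $(\bar t_\alpha)$ on $\nu^\ast\bar\Ascr_\Gsf[p^\infty]$ over $\Ssb^\perf$ that globalise the pointwise crystalline Tate tensors, the datum $(\Ascr_\Gsf[p^\infty], (\bar t_\alpha))$ becomes a $G$-isocrystal (equivalently, an $F$-isocrystal with $G$-structure) over $\Ssb$, up to perfection. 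Since the Newton stratification and the central leaves are insensitive to passing to the perfection (the map $\nu$ is a universal homeomorphism), it suffices to establish the claims for this $G$-isocrystal over $\Ssb^\perf$, or even over $\Ssb$ itself after noting that the tensors and the induced $\sigma$-conjugacy class $[g_z]$ are already well-defined on $\Ssb(\k)$.

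For part (a), I would invoke the theorem of Rapoport and Richartz \cite{RapoportRichartz:FilIsocrys} on the local constancy and semicontinuity of the Newton point in families of $F$-isocrystals; the presence of the tensors $(\bar t_\alpha)$ upgrades this to the statement that the map $z\mapsto [g_z] = \nu_{G}(z) \in B(G,\{\mu\})$ is such that, for each $b$, the locus where the Newton point is $\geqslant \nu_G(b)$ is closed. Combined with the fact that $\nu_G$ takes finitely many values (as $B(G,\{\mu\})$ is finite), this shows that each stratum $\Ssb^b(\k)$ is locally closed. Concretely, one chooses an open affine $\Spf S\subset\wh\Sscr$ with $P(\Xscr_{\Gsf,S})$ free and the tensors $(t_\alpha)$ defined, reduces mod $p$ and passes to the perfection so that Dieudonné theory applies uniformly, and then applies the semicontinuity result to the resulting $F$-isocrystal with $G$-structure on $(\Spec \bar S)^\perf$. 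Since $\bar S$ is of finite type over $\k$, the closed loci glue and descend.

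For part (b), the central leaf $\Cbar^b(\k)$ is by definition the locus inside $\Ssb^b(\k)$ where the $\Gscr_x(\W)$-$\sigma$-conjugacy class $\pot{g_z}$ is constant equal to $\pot{b}$; equivalently, where the $p$-divisible group with its tensors is, geometrically pointwise, constant (isomorphic to a fixed one). Here I would appeal to Oort's result on central leaves \cite{Oort:FoliationsSVar}, in the form extended to the Hodge-type setting (e.g. as in \cite{Hamacher:ShVarProdStr} or \cite{Kim:CentralLeaves}): within a single Newton stratum the locus where the (polarised, tensor-decorated) $p$-divisible group is geometrically fibrewise constant is closed. Again the globalisation of the tensors from Proposition~\ref{prop global tensors}/Corollary~\ref{cor global tensors} is exactly what is needed to make the fibrewise isomorphism class vary algebraically, so that the "locally constant implies closed" argument of Oort goes through verbatim after passing to the perfection.

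The main obstacle is bookkeeping rather than a genuinely new difficulty: one must check that the tensors $(\bar t_\alpha)$ over $\Ssb^\perf$ really do allow one to form an honest $F$-isocrystal with $G$-structure over a scheme (so that the Rapoport--Richartz and Oort machinery applies), and that the resulting stratifications on $\Ssb^\perf$ descend to $\Ssb$ — the latter is harmless since $\nu$ is a homeomorphism and the data on $\Ssb(\k)=\Ssb^\perf(\k)$ agree by Corollary~\ref{cor global tensors}. A secondary point requiring care is that $\Gscr_x$ may not be reductive (it is a Bruhat--Tits stabiliser, hence possibly non-connected with non-reductive special fibre), so one must make sure that the cited semicontinuity and closedness statements are available in the generality of the group-theoretic data at hand; this is where one would cite the parahoric-level refinements in \cite{Hamacher:ShVarProdStr}, \cite{Zhou:ModpPts}, or the foundational work on $G$-isocrystals of Rapoport--Richartz directly, all of which accommodate this level of generality.
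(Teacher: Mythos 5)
Your proposal is correct and follows essentially the same route as the paper: globalise the tensors via Corollary~\ref{cor global tensors}, build from them an $F$-isocrystal with $G$-structure on $\Ssb^\perf$, and quote Rapoport--Richartz for (a) and Oort (plus the deformation-theoretic lemma from the product-structure papers) for (b). The one step you defer as ``bookkeeping'' is exactly what the paper's proof supplies: the scheme of tensor-preserving trivialisations of the Dieudonn\'e module is shown to be a flat $\Gscr_x$-torsor (flatness checked on formal fibres and spread out using that $\Ssb^\perf$ is Jacobson), hence \'etale-locally trivial by smoothness of $\Gscr_x$, which produces the $LG$-torsor to which \cite[Thm.~3.6]{RapoportRichartz:Gisoc} applies.
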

\begin{proof}
  For simplicity let $S \coloneqq \Ssb^\perf$ and $\Vfr \coloneqq \DD(\Ascr_G[p^\infty]_S)(W(\Ocal_S))$. Note that $(S,W(\Ocal_S))$ is a formal scheme, more precisely the unique flat lift of $S$ over $\Spf \ZZ_p$. Since $\underline\Isom\big((\Vfr,(t_\alpha)),(M\otimes W(\Ocal_S), (s_\alpha \otimes 1))\big)$ is a closed subfunctor of $\Isom(\Vfr,M\otimes W(\Ocal_S))$, it is representable by a formal scheme $\Gfr$, which is affine and of finite type over $(S,W(\Ocal_S))$. %
 As $\Gfr$ is isomorphic to $\Gscr_x$ over the (Witt vectors of the) formal fibre of any $z\in \Ssb(\k)$, we get that it is flat over any $z \in S(\k)$ by flat descent. Since $S$ is Jacobson, it follows that $\Gfr \to (S,W(\Ocal_S))$ is flat and thus a $\Gscr_x$-torsor. Since $\Gscr_x$ is smooth, $\Gfr$ is a torsor for the \'etale topology; in other words, $(\Vfr,t_\alpha)$ becomes isomorphic to $(M,s_\alpha)$ after an \'etale base change.

 Denote by $LG$ the sheaf on the \'etale site of $S$ given by $LG(U) = G(W(\Ocal_S(U))[\frac{1}{p}])$. This is indeed a sheaf since $W(\Ocal_S)[\frac{1}{p}]$ is (cf.~\cite[009F]{AlgStackProj}). By the above observation $\Pcal \coloneqq \underline\Isom((\Vfr[\frac{1}{p}],t_\alpha),(M \otimes W(\Ocal_S)[\frac{1}{p}],s_\alpha \otimes 1))$ is an \'etale $LG$-torsor and the Frobenius corresponds to an isomorphism $\tau\colon\sigma^\ast\Pcal \isom \Pcal$. For any $V \in {\rm Rep}_{\QQ_p} G$ let 
 \[
  \Fcal(V) \coloneqq (\Pcal \times LV)/LG, \tau \times \id)
 \]
where $LV \coloneqq V \otimes_{\QQ_p} W(\Ocal_S)[\frac{1}{p}]$ and the $LG$-action on $\Pcal \times LV$ is given by $g.(p,v) = (g.p,g^{-1}(v))$. Then $\Fcal$ is an $F$-isocrystal with $G$-structure in the sense of Rapoport and Richartz, whose isomorphism class over any $\k$-point $z$ is given by $[g_z]$. Thus the Newton stratum $\Ssb^b(\k) \subset \Ssb(\k)$ is locally closed \cite[Thm.~3.6]{RapoportRichartz:Gisoc}. Now $\Cbar^b(\k)$ is closed in $\Ssb^b(\k)$ by \cite[Thm.~2.2]{Oort:Foliations} and \cite[sect.~2.3, Lemma~2]{Hamacher:DeforSpProdStr}.
\end{proof}
 
 \section{Rapoport-Zink spaces of Hodge type with parahoric level} \label{sect RZ}

 In this section we define Rapoport-Zink spaces with parahoric and Bruhat-Tits level. We follow the construction of Howard and Pappas in the hyperspecial case in \cite{HowardPappas:GSpin}; however we will not obtain a nice moduli description as in the hyperspecial case since the Rapoport-Zink spaces with parahoric level fail to be smooth.

 \subsection{Local Shimura data}
 We recall the following definition from the article of Rapoport and Viehmann \cite{RapoportViehmann:LocShVar}

 \begin{defnsub}[{\cite[Def.~5.1]{RapoportViehmann:LocShVar}}]
  A local Shimura datum over $\QQ_p$ is a triple $(G,[b],\{\mu\})$ in which $G$ is a connected reductive group, $[b]$ is a $\sigma$-conjugacy class in $G(\breve\QQ_p)$ and $\{\mu\}$ is a $G(\QQbar_p)$-conjugacy class of cocharacters $\mu:\GG_{m,\QQbar_p} \to G_{\QQbar_p}$ such that the following conditions are satisfied.
  \begin{numlist}
   \item $\{\mu\}$ is minuscule.
   \item $[b]$ is contained in the set of neutral acceptable elements $B(G,\{\mu\})$.
  \end{numlist} 
 \end{defnsub}

 We will consider a subclass of local Shimura data with the following two additional assumptions.

 \begin{defnsub}
  Let $(G,[b],\{\mu\})$ be a local Shimura datum over $\QQ_p$.
  \begin{enumerate}
   \item $(G,[b],\{\mu\})$ is called tamely ramified if $G$ splits over a tamely ramified extension of $\QQ_p$.
   \item $(G,[b],\{\mu\})$ is called of Hodge type if there exists a faithful representation $\rho: G \mono \GL_n$ such that the central torus is contained in the image of $\rho$ and $(\GL_n,\rho([b]),\{\rho(\mu)\})$ is a local Shimura datum. We call the quadruple $(G,[b],\{\mu\}, \rho)$ an embedded local Shimura datum of Hodge type.
  \end{enumerate}
 \end{defnsub}

 Let $S \subset G$ be a maximal $\L$-split torus defined over $\F$ and $T$ its centraliser. By Steinberg's theorem $G_{\L}$ is quasi-split, thus $T$ is a maximal torus. Let $N$ be its normaliser and let $T(\L)_1 \subset T(\L)$ be its (unique) parahoric subgroup. Denote by
 \[
  W_0 \coloneqq N(\L)/T(\L)
 \]
 the finite Weyl group of $G_{\L}$, and denote by
 \[
  \Wtilde \coloneqq N(\L)/T(\L)_1
 \]
 its Iwahori-Weyl group. Choosing a special vertex, we have $\Wtilde = X_\ast(T)_I \rtimes W_0$. Moreover, the affine Weyl group embeds into $\Wtilde$ as  semi direct factor; a splitting corresponds to the choice of an alcove in the apartment $\Acal(T,\L)$ of $T$. Fixing such a choice, we extend the Bruhat order on the affine Weyl group to $\Wtilde$, denoted by $\leq$, via the canonical choice. Recall that $\Wtilde$ parametrises double cosets of $G(\L)$ by parahoric subgroups corresponding to points in $\Acal(T,\L)$. More precisely, let $W^x \coloneqq N(\L) \cap \Gscr_x^\circ(\W)/T(\L)_1$ for $x \in \Acal(T,\L)$. By \cite[Appendix, Prop.~8]{PappasRapoport:AffineFlag} the canonical embedding $N(\L) \mono G(\L)$ induces an isomorphism
 \[
  W^{x_1}\backslash \Wtilde / W^{x_2} \isom \Gscr_{x_1}^\circ(\W)\backslash G(\L) / \Gscr_{x_2}^\circ(\W).
 \]
We will mostly be interested in the double cosets corresponding the $\mu$-admissible set defined in \cite[(3.4)]{Rapoport:GuideRednShVar}, which can be described as
\[
 \Adm(\mu) \coloneqq \{\wtilde \in \Wtilde \mid \exists w \in W_0: \wtilde \leq w(\mu)\}.
\]

 We will later on require the existence of a completely slope divisible {\BT} with additional structure corresponding to some embedded Shimura datum. The group theoretic analogue of ``completely slope divisible'' for an element $b \in G(\L)$ is being descent, i.e.\ for some integer $s$ we have
  \[
   (b\sigma)^s = (s\cdot\nu)(p) \sigma^s.
  \]

 \begin{lemsub} \label{lemma decent}
  There exists a decent element $b_0 \in [b] \cap \bigcup_{\wtilde \in \Adm(\mu)} \Gscr_x^\circ(\breve\ZZ_p) \wtilde \Gscr_x^\circ(\breve\ZZ_p)$.
 \end{lemsub}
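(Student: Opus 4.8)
The plan is to combine two standard facts: that $[b]\cap B(G,\{\mu\})$ always contains a decent representative (in the sense of Rapoport--Zink/Kottwitz), and that the $\mu$-admissible set $\Adm(\mu)$ inside $\Gscr_x^\circ(\W)\backslash G(\L)/\Gscr_x^\circ(\W)$ is exactly the ``support'' of the Newton strata we care about. More precisely, since $[b]\in B(G,\{\mu\})$, Kottwitz's theory (\emph{cf.}\ the discussion in \cite{RapoportViehmann:LocShVar}, or the original \cite{RZ}-style construction of decent elements) produces some $b_1\in[b]$ with $(b_1\sigma)^s=(s\nu_{b_1})(p)\sigma^s$ for a suitable $s$, where $\nu_{b_1}$ is the Newton cocharacter. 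The point of the lemma is to arrange such a decent representative to \emph{also} lie in one of the double cosets indexed by $\Adm(\mu)$.

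First I would recall that, via the isomorphism $W^{x}\backslash\Wtilde/W^{x}\isom\Gscr_x^\circ(\W)\backslash G(\L)/\Gscr_x^\circ(\W)$ from \cite[Appendix, Prop.~8]{PappasRapoport:AffineFlag}, the union $\bigcup_{\wtilde\in\Adm(\mu)}\Gscr_x^\circ(\W)\wtilde\Gscr_x^\circ(\W)$ is a $\Gscr_x^\circ(\W)$-bi-invariant subset of $G(\L)$; call it $K_x\Adm(\mu)K_x$. The key input is the group-theoretic fact (due to Gashi, and in the parahoric generality essentially Haines--He, or He's work on affine Deligne--Lusztig varieties) that the $\mu$-admissible subset is ``saturated'' with respect to $\sigma$-conjugacy within $B(G,\{\mu\})$: every $[b']\in B(G,\{\mu\})$ meets $K_x\Adm(\mu)K_x$. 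Granting that, $[b]\cap K_x\Adm(\mu)K_x$ is nonempty, and one must upgrade a representative there to a decent one.

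For the upgrade, I would take any $b'\in[b]\cap K_x\Adm(\mu)K_x$ and then apply the standard argument producing decent elements inside a $\sigma$-conjugacy class: $\sigma$-conjugating $b'$ by an element of $G(\breve\ZZ_p)$ (or more precisely of $\Gscr_x^\circ(\W)$) can be used to move $b'$ to $b_0$ with $(b_0\sigma)^s=(s\nu)(p)\sigma^s$, because the slope cocharacter $\nu$ factors through a maximal $\Qp$-torus after conjugation and the obstruction to decency is killed by a suitable compact-group twist; crucially, $\sigma$-conjugation by $\Gscr_x^\circ(\W)$ preserves the double coset $K_x\Adm(\mu)K_x$. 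Concretely: choose $b'$ decent in $[b]$ first (this is always possible, independently of $x$), then use the $\sigma$-conjugacy relation $b'=g^{-1}b_0\sigma(g)$ with $b_0\in K_x\Adm(\mu)K_x$; one has to check that the element $g$ realizing the nonemptiness in the previous paragraph can be chosen so that this $\sigma$-conjugate stays decent, or alternatively that within $K_x\Adm(\mu)K_x$ one can re-run the decency construction. Either order works, but the bookkeeping differs.

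\textbf{Main obstacle.} The hard part is the compatibility of the two conditions — decency and membership in $\Adm(\mu)$ — simultaneously. Each one alone is classical, but ensuring that the $\sigma$-conjugation step that achieves decency does not push the element out of the bi-$\Gscr_x^\circ(\W)$-invariant set $\bigcup_{\wtilde\in\Adm(\mu)}\Gscr_x^\circ(\W)\wtilde\Gscr_x^\circ(\W)$ requires care: decency is achieved by $\sigma$-conjugation in $G(\L)$, which need \emph{not} preserve a fixed double coset. The clean way around this, which I expect is the intended argument, is to invoke the result (He, Haines--He) that $B(G,\{\mu\})$ is precisely the image of $K_x\Adm(\mu)K_x$ under $b\mapsto[b]$ \emph{and} that for each such class a representative can be chosen in the \emph{finite} set $\Adm(\mu)\subset\Wtilde\subset N(\L)$ itself (a lift of an admissible Iwahori--Weyl element), for which decency is automatic or immediate because such a representative is a product of a translation part $t^\lambda$ and a finite Weyl element, and one computes $(b_0\sigma)^s$ directly. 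So the real content to cite/verify is: (i) $[b]\cap N(\L)_{\Adm(\mu)}\neq\emptyset$ for $[b]\in B(G,\{\mu\})$, and (ii) such a Weyl-group representative can be taken decent. I would structure the proof around these two citations and a short direct computation of $(b_0\sigma)^s$ for a lift of $\wtilde=t^\lambda w$.
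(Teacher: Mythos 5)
Your final paragraph lands on exactly the paper's argument: the proof consists of citing He's theorem on the Kottwitz--Rapoport conjecture for the nonemptiness of $\Adm(\mu)\cap[b]$, together with the fact (from Kim's product-structure paper) that any element of $\Wtilde$ admits a decent lift to $N(\L)$. The $\sigma$-conjugation bookkeeping you worry about in your first two paragraphs is unnecessary once you take this two-citation route, so the proposal is correct and essentially the same as the paper's.
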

 \begin{proof}
  By \cite[Lemma~2.2.10]{Kim:ProductStructure} any element of $\Wtilde$ has a decent lift to $N(\L)$. Thus it suffices to prove that $\Adm(\mu) \cap [b]$ is non-empty. This is proven in \cite[Theorem~2.1]{He:KottwitzRapoportConj}.
 \end{proof}

 \subsection{Affine Deligne-Lusztig varieties of Hodge type} \label{sect ADLV}
 We fix a  embedded local Shimura datum $(G,[b],\{\mu\},\rho)$ with the property that $(G,[b],\{\mu\})$ is tamely ramified for the remainder of this section and denote $[b'] \coloneqq \rho([b]), \{\mu'\} \coloneqq \{\rho(\mu)\}$ and by $c\colon \GG_m \to G$ the cocharacter giving scalar multiplication. Since we are interested in the affine Deligne-Lusztig varieties  associated to Shimura varieties of Hodge type, we moreover assume that $\rho$ is a symplectic embedding and that $\mu' = (0^{(g)},1^{(g)})$. Let $x,x',\Gscr_x^\circ,\Gscr_x,\Gscr'_{x'}$ be as in section~\ref{sect group theoretic background} where we set $G' = \GSp_n$ and assume $x'$ hyperspecial. We denote by $(\Lambda,\psi) \subset \QQ_p^n$ the self-dual lattice corresponding to $x'$.

 For any smooth affine group scheme $\Gscr$ over $\ZZ_p$ with reductive generic fibre we denote by $\Grass_{\Gscr}$ the corresponding Witt vector affine Grassmannian (\cite[\S1.4.1]{Zhu:Satake}). The functor $\Grass_{\Gscr}$ is represented by an inductive limit of perfection of projective varieties by \cite[Cor.~9.6]{BhattScholze:WittVectorAffGr}. In particular, any locally closed perfect subscheme is uniquely determined by its $\FFbar_p$-valued points. 
 
 First consider the classical affine Deligne-Lusztig variety $X_{\mu'}(b')_{\Ksf_p'} \subset \Grass_{\Gscr'_{x'}}$ given by
 \[
  X_{\mu}(b')_{\Ksf_p'}(\FFbar_p) \coloneqq \{ g\cdot \Ksf_p' \in G'(\breve\QQ_p)/\Ksf_p' \mid g^{-1}b\sigma(g) \in \Ksf_p'\mu'(p)\Ksf_p' \}
 \] 
  We assume that $b \in G(\L)$ is a decent element satisfying the constraints of Lemma~\ref{lemma decent} and that $b' = \rho(b)$. We denote by $(\XX,\lambda_\XX)$ the {\BT} with polarised Dieudonn\'e module $(\Lambda \otimes_\O \W,\psi \otimes 1, b'\sigma^\ast)$. Using Dieudonn\'e theory (see e.g.\ \cite[Prop.~3.11]{Zhu:Satake}), one obtains that $X_{\mu'}(b')$ is the perfection of the reduced special fibre of the Rapoport-Zink space $\Mfr^{b'}$ which is the formal scheme over $\Spf \W$ given by
 \[
  \Mfr^{b'}(R) = \left\{(\Xscr,\lambda,\varphi) \mid \begin{array}{l} (\Xscr,\lambda) \textnormal{ polarised BT of height } 2g \textnormal{ over } R \\ \varphi\colon (\XX,\lambda_\XX) \otimes R/p \to (\Xscr,\lambda) \otimes R/p \textnormal{ quasi-isogeny} \end{array} \right\}.
 \]
 
 Similarly as above, we define $X_\mu(b) \subset \Grass_{\Gscr_x}$ by
 \[
  X_{\mu}(b)_{\Ksf_p}(\FFbar_p) \coloneqq \{ g\cdot \Ksf_p \in G(\breve\QQ_p)/\Ksf_p \mid g^{-1}b\sigma(g) \in \bigcup_{\wtilde \in \Adm(\mu)} \Ksf_p \wtilde \Ksf_p \}.
 \]
  Let $J_b$ denote the twisted centraliser of $b\sigma$, i.e. it denotes the linear algebraic group over $\F$ given by
 \[
  J_b(R) = \{g\in G(R\otimes_\F \L) \mid g^{-1}b(1 \otimes \sigma)^\ast(g) = b\}.
 \]
 Then $J_b(\F)$ acts on $X_{\mu}(b)_{\Ksf_p}$ via left multiplication, as this does not change the value of $g^{-1}b\sigma(g)$.
 
 \begin{rmksub} \label{rmk parahoric level}
  To compare our definition with the more standard definition of the affine Deligne-Lusztig variety with parahoric level, let $\Gscr_x^\circ$ denote the parahoric group scheme corresponding to $x$, $\Ksf_p^\circ = \Gscr_x^\circ(\ZZ_p)$ and let
   \[
    X_\mu(b)_{\Ksf_p^\circ}(\FFbar_p) = \{ g\cdot \Ksf_p^\circ \in G(\L)/\Ksf_p^\circ \mid g^{-1} b \sigma(g) \in \bigcup_{\wtilde\in \Adm(\mu)} \Ksf_p^\circ \wtilde \Ksf_p^\circ\}.
 \]  
 Since the positive loop group of the parahoric group scheme $L^+\Gscr_x^\circ$ is the unit component of $L^+\Gscr_x$ by \cite[Appendix, Prop.~3]{PappasRapoport:AffineFlag}, the canonical morphism $\Grass_{\Gscr_x^\circ} \to \Grass_{\Gscr_x}$ can be identified with $\coprod_{\Ksf_p/\Ksf_p^\circ} \Grass_{\Gscr_x} \to \Grass_{\Gscr_x}$. In particular the (standard) affine Deligne-Lusztig variety $X_\mu(b)_{\Ksf_p^\circ}$ with parahoric level is also a disjoint union of copies of $X_\mu(b)_{\Ksf_p}$.
 \end{rmksub}

 In order to give a modular description of $X_\mu(b)_{\Ksf_p}$, we choose tensors $(s_\alpha)$ in $\Lambda^\otimes$ such that $\Gscr_x$ is their stabiliser. This yields crystalline Tate-tensors $t_{\XX,\alpha} \coloneqq s_\alpha \otimes 1$ on $\XX$. Note that $\rho$ induces an embedding $\rho_{\Grass}\colon \Grass_{\Gscr_x} \mono \Grass_{\Gscr'_{x'}}$ mapping $X_\mu(b)_{\Ksf_p}$ into $X_{\mu'}(b')_{\Ksf_p'}$. Moreover, it was proven in \cite[Prop.~3.4]{Zhou:ModpPts} that the image of $\bigcup_{\wtilde\in \Adm(\mu)} \Ksf_p \wtilde \Ksf_p$ in $\Ksf_p'\mu(p) \Ksf_p'/ \Ksf_p' \cong (\Mscr_{G',\mu',x',\k}^{\loc})^{\perf}$ equals $(\Mscr_{G,\mu,x,\k}^{\loc})^{\perf}$
 \footnote{Note that the two conditions Zhou imposed at the beginning of section 3  in \cite{Zhou:ModpPts}, i.e.\ that $G$ is quasi-split and $\Ksf_p = \Ksf_p^\circ$, are not needed for this proposition. Since the construction of local models commutes with base change, we may exchange $\QQ_p$ by an unramified extension $F$ such that $G_F$ is quasi-split. Moreover $\Ksf_p = \Ksf_p^\circ$ is not needed in the proof; while \cite[Prop.~3.4]{Zhou:ModpPts} then proves the claim for $\Ksf_p^\circ$, we know that the canonical morphism $\bigcup_{\wtilde\in \Adm(\mu)} \Ksf_p^\circ \wtilde \Ksf_p^\circ /\Ksf_p^\circ \to \bigcup_{\wtilde\in \Adm(\mu)} \Ksf_p \wtilde \Ksf_p$ is an isomorphism by the argument of Remark~\ref{rmk parahoric level}}.

 \begin{deflemsub}
  $X_\mu(b)_{\Ksf_p} \subset X_{\mu'}(b')_{\Ksf_p'}$ is the subfunctor parametrising quasi-isogenies $\varphi$ which satisfy the following additional assumptions.
  \begin{assertionlist}
   \item $(\varphi_\ast t_{\XX,\alpha})$ are a collection of crystalline Tate tensors on $\Xscr$ such that
   \[
    \Isom((\DD(\Xscr),\psi\otimes 1,\varphi_\ast t_{\XX,\alpha}),(\Lambda \otimes_{\ZZ_p} W(R),\psi\otimes 1, s_\alpha \otimes 1))
   \]
   is a crystal of $\Gscr_x$-torsors.
   \item For some (or equivalently any) \'etale cover $U  \to \Spec R$ such that there exists an isomorphism $\DD(\Xscr_U)_U \cong \Lambda \otimes_{\ZZ_p} \Ocal_U$ respecting tensors, the Hodge filtration $\Fil^0(\Xscr_U) \subset \DD(\Xscr_U)_U$ gets identified with a point of $\Mscr^{\rm loc}_{G,\{\mu\},x}$.
  \end{assertionlist}
   We denote by $(\overline\Xscr_G, \overline\lambda_G, \overline t_{\alpha,G},\overline\varphi_G)$ the universal object over $X_\mu(b)_{\Ksf_p}$.
 \end{deflemsub}
 \begin{proof}
  The proof is the same as that of \cite[Prop.~3.11]{Zhu:Satake}.
 \end{proof}

 \subsection{Definition of Rapoport-Zink spaces} \label{ssect construction of RZ}

 In the following we assume that $\rho$ is a symplectic representation and that $(G,\{\mu\},[b],\rho)$ is induced by an embedding of Shimura data $(\Gsf,\Xsf) \to ({\sf GSp}_{n}, \Ssf^\pm)$ as described in section~\ref{sect integral models}. At the beginning of our construction, we choose a pair $\ztilde =(z,j)$ where $z \in \Ssb^b(\FFbar_p)$ and $ j:\XX\to \Ascr_{\Gsf,z}[p^\infty]$ is a quasi-isogeny compatible with additional structure. We will later show that the constructed Rapoport-Zink space is independent of the choice of $\ztilde$.

 Let $\Theta_{\tilde{z}}': \Mfr^{b'} \to \Sscr'_{\breve\ZZ_p}$ denote the Rapoport-Zink uniformisation map, which associates to a quasi-isogeny $\varphi$ the abelian variety $\Ascr_{\Gsf,z}/\ker(\varphi\circ j^{-1})$ with the induced polarisation and level structure. The following lifting property of $\Theta_{\tilde{z}}'$ is a key ingredient in the construction of the Rapoport Zink space. 
As it is not yet proven for all cases, we formulate it as an axiom.
 \begin{axiom} \label{axiom RZ}
  The induced map on $\k$-valued points
  \[
   X_{\sigma(\mu)}(b)_{\Ksf_p}(\k) \mono X_{\mu'}(b')(\k) \stackrel{\Theta_{\tilde{z}}'}{\longrightarrow} \Ascr_g(\k)
  \]
  factors through $\Sscr^-(\k)$. Moreover, there exists a unique lift $f_{\tilde{z}}: X_{\sigma(\mu)}(b)(\k) \to \Sscr(\k)$ such that $(\overline t_{\alpha,G})_P = f^\ast (t_{\alpha,f(P)})$ for every $P \in X_{\sigma(\mu)}(b)_{\Ksf_p}(\k)$.
 \end{axiom}

 Kisin has proven this statement in the case that $\Ksf_p$ is hyperspecial in \cite[Prop.~1.4.4]{Kisin:LanglandsRapoport} and currently Zhou has finished its proof under the assumption that $G$ is residually split (\cite[Prop.~6.7]{Zhou:ModpPts}). Here we have consider $\sigma(\mu)$ rather than $\mu$ because we work with the contravariant Dieudonn\'e module (see \cite[\S~5.4]{Zhou:ModpPts} for details).

 \begin{rmksub} \label{rmk nonemptiness of central leaves}
  Note that if Axiom~\ref{axiom RZ} holds true, then the non-emptiness of $\Ssb^b$ implies the non-emptiness of $\Cbar^b$ as $f_{\tilde{z}}(\id_{\XX}) \in \Cbar^b$.
 \end{rmksub}

 In order to define the Rapoport-Zink space with Bruhat-Tits level, we first construct larger formal scheme containing additional connected components. We define the pull-back
 \begin{center}
  \begin{tikzcd}
   \Mfr^{b,\diamond} \arrow{r} \arrow{d}{\Theta_{\ztilde}^\diamond} & \Mfr^{b'} \arrow{d}{\Theta_{\ztilde}'} \\
   \Sscr \arrow{r}{\iota} & \Sscr'.
  \end{tikzcd}
 \end{center}

 \begin{lemsub} \label{lemma M diamond}
  The formal scheme is $\Mfr^{b, \diamond}$ is normal and locally of finite type and the morphism $\Mfr^{b,\diamond} \to \Mfr^{b'}$ is finite.
 \end{lemsub}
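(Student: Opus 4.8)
The statement asserts normality, local finite type, and that $\Mfr^{b,\diamond}\to\Mfr^{b'}$ is finite. Since $\Mfr^{b,\diamond}$ is defined as the fibre product $\Sscr\times_{\Sscr'}\Mfr^{b'}$, the plan is to reduce each property to known properties of the morphism $\iota\colon\Sscr\to\Sscr'$ and of $\Theta'_{\ztilde}\colon\Mfr^{b'}\to\Sscr'$. First I would recall that $\iota$ is finite: by construction (\S\ref{sect integral models}), $\Sscr$ is the normalisation of the closure $\Sscr^-$ of $\Sh(\Gsf,\Xsf)$ in $\Sscr'\otimes_{\ZZ_{(p)}}O_E$, so $\Sscr\to\Sscr'_{O_E}$ is finite (normalisation of an excellent scheme is finite, and $\Sscr^-\hookrightarrow\Sscr'_{O_E}$ is a closed immersion, hence finite). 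Pulling back along a morphism preserves finiteness, so $\Mfr^{b,\diamond}=\Mfr^{b'}\times_{\Sscr'_{O_E}}\Sscr\to\Mfr^{b'}$ is finite. This is the content of the last assertion.

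\textbf{Local finite type.} The Rapoport--Zink formal scheme $\Mfr^{b'}$ is locally formally of finite type over $\Spf\breve\ZZ_p$; in fact, since $\mu'=(0^{(g)},1^{(g)})$ is minuscule the associated local model is $\Flag_{\{\mu'\}}$, so $\Mfr^{b'}$ is a $p$-adic formal scheme locally of finite type over $\W$, and $\Theta'_{\ztilde}$ is a morphism of $p$-adic formal schemes locally of finite type (this is the Rapoport--Zink uniformisation morphism, which is formally étale onto the formal completion of $\Sscr'$ along the relevant Newton stratum, hence in particular locally of finite type). Since $\iota$ is finite, hence locally of finite type, the base change $\Mfr^{b,\diamond}\to\Mfr^{b'}$ is locally of finite type, and composing with $\Mfr^{b'}\to\Spf\W$ gives that $\Mfr^{b,\diamond}$ is locally (formally) of finite type over $\Spf\W$. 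I would also note $\Mfr^{b,\diamond}$ is a genuine $p$-adic formal scheme (uniformiser of $\W$ topologically nilpotent and generating an ideal of definition) since this holds for $\Mfr^{b'}$ and is preserved under the affine morphism $\Theta^\diamond_{\ztilde}$.

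\textbf{Normality.} This is the step requiring actual input. I would argue as follows: $\Theta'_{\ztilde}\colon\Mfr^{b'}\to\Sscr'_{\W}$ is formally étale — more precisely, it identifies $\Mfr^{b'}$ (after restricting to an open-and-closed piece, or rather the whole $\Mfr^{b'}$ maps to the completion of $\Sscr'$ along its basic/relevant locus) with the formal completion of $\Sscr'$ along a union of Newton strata, in the strong sense that for each point it induces an isomorphism of complete local rings via Serre--Tate and Grothendieck--Messing. Therefore $\Mfr^{b,\diamond}=\Mfr^{b'}\times_{\Sscr'}\Sscr$ has, at each point, complete local ring isomorphic to the corresponding complete local ring of $\Sscr$ pulled back along the same étale-local identification; concretely, the square in Proposition~\ref{prop normalisation on formal neighbourhoods} shows that the formal completion of $\iota^{-1}(\Sscr'^{-\wedge}_{\breve\ZZ_p,z'})$ is a disjoint union of the $\Def_{\Gscr_x}(\Ascr_{\Gsf,z}[p^\infty];(t_{\alpha,z}))$, which are normal because $\Mscr^{\rm loc}_{G,\{\mu\},x}$ is normal (\S\ref{sect local geom}). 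More cleanly: $\Theta'_{\ztilde}$ being formally étale, the base-changed map $\Theta^\diamond_{\ztilde}\colon\Mfr^{b,\diamond}\to\Sscr_{\W}$ is formally étale as well, and $\Sscr$ is normal by Corollary~\ref{cor normalisation on formal neighbourhoods}; normality is étale-local on the source for schemes locally of finite type over an excellent base, and the same passes to the $p$-adic formal completions since all rings involved are excellent. Hence $\Mfr^{b,\diamond}$ is normal.

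\textbf{Main obstacle.} The delicate point is making precise the sense in which $\Theta'_{\ztilde}$ (equivalently $\Theta^\diamond_{\ztilde}$) is ``formally étale'' and that this transfers normality; one must be careful that $\Mfr^{b'}$ is not of finite type over $\Sscr'$ globally but only locally, and that the uniformisation morphism is an isomorphism only onto a formal completion, not onto a Zariski-open. The clean way around this is to work entirely with complete local rings: for $P\in\Mfr^{b,\diamond}(\k)$ with image $Q=\Theta^\diamond_{\ztilde}(P)\in\Sscr(\k)$ and $z'=\iota(Q)$, the Serre--Tate/Grothendieck--Messing identifications give $\widehat{\Ocal}_{\Mfr^{b'},P'}\cong\widehat{\Ocal}_{\Sscr',z'}$ where $P'$ is the image in $\Mfr^{b'}$, and then $\widehat{\Ocal}_{\Mfr^{b,\diamond},P}\cong\widehat{\Ocal}_{\Mfr^{b'},P'}\widehat{\otimes}_{\widehat{\Ocal}_{\Sscr',z'}}\widehat{\Ocal}_{\Sscr,Q}\cong\widehat{\Ocal}_{\Sscr,Q}$, which is normal by Corollary~\ref{cor normalisation on formal neighbourhoods} together with the description in Proposition~\ref{prop normalisation on formal neighbourhoods}. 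A noetherian local ring whose completion is normal is itself normal, and a locally noetherian formal scheme all of whose complete local rings are normal domains (and which is locally of finite type, hence with appropriate connectedness of components) is normal; this finishes the argument.
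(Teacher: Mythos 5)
Your proposal is correct and follows essentially the same route as the paper: finiteness of $\Mfr^{b,\diamond}\to\Mfr^{b'}$ by base change from the finite morphism $\iota$, local finite type as a consequence, and normality checked on complete local rings using the fact that rigidity of quasi-isogenies plus Serre--Tate identifies $\widehat{\Ocal}_{\Mfr^{b'},P'}$ with $\widehat{\Ocal}_{\Sscr',z'}$, so that $\widehat{\Ocal}_{\Mfr^{b,\diamond},P}\cong\widehat{\Ocal}_{\Sscr,Q}$ is normal by Corollary~\ref{cor normalisation on formal neighbourhoods}. The ``formally \'etale onto a formal completion'' framing in your first normality paragraph is the imprecise part (the uniformisation map is neither injective nor surjective onto such a completion globally), but your ``clean way around this'' via completed local rings is exactly the paper's argument, so no gap remains.
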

 \begin{proof}
  Since $\iota:\Sscr \to \Sscr'$ is finite so is its pull-back $\Mfr^{b,\diamond} \to \Mfr^{b'}$. Hence $\Mfr^{b,\diamond}$ is locally of finite type.  The normality can be checked on the formal neighbourhood of closed points. By the rigidity of quasi-isogenies the formal neighbourhood of $\Mfr^{b'}$ at any point is canonically isomorphic to the deformation space of the corresponding polarised {\BT} and hence $\Theta_{\ztilde}'$ induces isomorphisms of formal neighbourhoods. Thus $\Theta_z^\diamond$ induces isomorphisms $(\Mfr^{b,\diamond})^\wedge_y \isom (\Sscr_{G,O_{\breve{E}}})^\wedge_{\Theta_{\tilde{z}}^\diamond(y)}$, proving the normality of formal neighbourhoods.
 \end{proof}

 We denote by $(\Xscr^\diamond,\lambda^\diamond,\varphi^\diamond)$ the pull-back of the universal object over $\Mfr^{b'}$ to $\Mfr^{b,\diamond}$. Note that $\DD(\Xscr^\diamond)[\frac{1}{p}]$ is equipped with two different families of tensors. We denote by $(t_\alpha^\diamond)$ the family of tensors obtained from $(t_{\XX,\alpha})$ via the identification $\DD(\XX)[\frac{1}{p}] \cong_{\varphi^\diamond} \DD(\Xscr^\diamond)[\frac{1}{p}]$ and denote $u_{\alpha,y}^\diamond \coloneqq \Theta_{\tilde{z}}^{\diamond\, \ast}(t_{\alpha,\Theta_{\tilde{z}}^\diamond(y)})$ for every geometric point $y \in \Mfr^{b, \diamond}(\k)$.

 \begin{lemsub} \label{lemma clopen}
  The set of $y \in \Mfr^{b,\diamond}(\k)$ with $t_{\alpha,y}^\diamond = u_{\alpha,y}^\diamond$ is a union of connected components.
 \end{lemsub}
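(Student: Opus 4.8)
\textbf{Proof plan for Lemma~\ref{lemma clopen}.}

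The plan is to show that the two families of tensors $(t_{\alpha,y}^\diamond)$ and $(u_{\alpha,y}^\diamond)$ on the Dieudonné module of $\Xscr^\diamond$ vary in a locally constant fashion, so that the locus where they agree is open and closed. First I would reduce to a comparison over the formal neighbourhood of a closed point. By Lemma~\ref{lemma M diamond}, the uniformisation morphism $\Theta_{\ztilde}^\diamond$ induces an isomorphism $(\Mfr^{b,\diamond})^\wedge_y \isom (\Sscr_{G,O_{\breve E}})^\wedge_{\Theta_{\ztilde}^\diamond(y)}$ of formal neighbourhoods for every closed point $y$. Under this identification, Proposition~\ref{prop global tensors} (or rather its pullback) provides the global display tensors $(t_{\alpha})$ on $P(\Xscr_\Gsf)$ whose restriction to the formal neighbourhood of $\Theta_{\ztilde}^\diamond(y)$ is exactly $t^{\rm def}_{\alpha,\Theta_{\ztilde}^\diamond(y)}$; pulling back along $\Theta_{\ztilde}^\diamond$, we obtain a family $(u_{\alpha}^\diamond)$ of $\Psi$-invariant tensors of the display $P(\Xscr^\diamond)$ over all of $\Mfr^{b,\diamond}$ whose fibre at each closed point $y$ is $u_{\alpha,y}^\diamond$. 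Likewise $(t^\diamond_\alpha)$, coming by rigidity of quasi-isogenies from $(t_{\XX,\alpha})$ via $\varphi^\diamond$, is a global family of $\Psi$-invariant tensors of $P(\Xscr^\diamond)[\tfrac1p]$.

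The key point is then that both $(t^\diamond_\alpha)$ and $(u^\diamond_\alpha)$ are sections of the same (pro-étale, or crystalline) local system over $\Mfr^{b,\diamond}$, and agreement of two such sections is an open and closed condition. Concretely: the display $P(\Xscr^\diamond)$ is a locally free $W(\Ocal_{\Mfr^{b,\diamond}})$-module, and the difference $t^\diamond_\alpha - u^\diamond_\alpha$ (working over $P(\Xscr^\diamond)[\tfrac1p]^\otimes$, after noting both lie integrally wherever they can be compared) is a section that is $\Psi$-invariant; by the rigidity statement for $\Psi$-invariant tensors — the analogue of \cite[Lemma~3.1.17]{KisinPappas:ParahoricIntModel} used throughout \S\ref{sect tensors}, and recorded in the proof of Proposition~\ref{prop local tensors} — such a section is determined on each connected component of the reduced special fibre by its value at a single point. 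More precisely, on each connected component $Z$ of $\Mfr^{b,\diamond}$ the set of $\k$-points where $t^\diamond_\alpha = u^\diamond_\alpha$ is either empty or all of $Z(\k)$: if the tensors agree at one point of a connected component, then by the rigidity of $\Psi$-invariant lifts they agree in the formal neighbourhood of that point, hence on an open subset, and by connectedness together with the same rigidity argument applied at every closed point of the overlap the agreement propagates across all of $Z$. Since $\Mfr^{b,\diamond}$ is locally of finite type (Lemma~\ref{lemma M diamond}) its connected components are open and closed, so the locus in question is a union of connected components.

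The main obstacle will be making the propagation argument precise: one needs to know that "two $\Psi$-invariant tensors of a display over a connected normal base agreeing at one point agree everywhere." The cleanest route is via the crystalline-étale dictionary — over the generic fibre the tensors $t^\diamond_\alpha$ and $u^\diamond_\alpha$ correspond, after the $p$-adic comparison of \S\ref{sect tensors}, to étale tensors on $T(\Xscr^\diamond)$, and two Galois-invariant sections of a $\ZZ_p$-local system over a connected space agree as soon as they agree at one geometric point; then Corollary~\ref{cor:Ccris} (unique determination of $\Psi$-invariant tensors by the associated étale tensors) transports this back to the displays. One must be mildly careful that $u^\diamond_\alpha$ is a priori only defined via the deformation-theoretic construction at closed points, but Proposition~\ref{prop global tensors} (applied after pullback along the finite morphism $\Theta_{\ztilde}^\diamond$, using that $\Mfr^{b,\diamond}$ is normal by Lemma~\ref{lemma M diamond}) globalises it, so this causes no real difficulty. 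Granting that, the lemma follows formally.
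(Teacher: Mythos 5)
Your overall strategy (globalise the $u_{\alpha,y}^\diamond$ via the display tensors of \S\ref{sect tensors}, then argue that agreement of two Frobenius-invariant tensor families is an open and closed condition) is the right one, and the first half matches the paper: the paper pulls back the global tensors of Corollary~\ref{cor global tensors} along $\Theta_{\ztilde}^{\diamond,\perf}$ exactly as you propose. But the second half — the ``propagation'' — has a genuine gap, and it is precisely the step where the paper invokes a tool you never name.

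The problematic claim is: ``if the tensors agree at one point, then by rigidity they agree in the formal neighbourhood of that point, hence on an open subset.'' The first implication is fine (this is the \cite[Lemma~3.2.13]{KisinPappas:ParahoricIntModel}-type rigidity over complete local rings), but the second is not. The tensors are sections of $P(\Xscr^\diamond)^\otimes$, a locally free module over the sheaf of Witt vectors $W(\Ocal_{\Mfr^{b,\diamond}})$, not over $\Ocal_{\Mfr^{b,\diamond}}$ itself; $W$ does not commute with filtered colimits of rings, so agreement of two sections in $W(\widehat\Ocal_{\Mfr^{b,\diamond},y})$ does not spread to a Zariski open neighbourhood by any coherence argument. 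The vanishing locus of a section of a $W(\Ocal)$-module is a priori only closed, which gives the wrong half of ``clopen.'' Openness is exactly the content of Proposition~\ref{prop homomorphisms of F-crystals} (the Rapoport--Richartz rigidity in Appendix~\ref{appendix RR}): over a \emph{perfect} base, the functor of homomorphisms between constant $F$-crystals is represented by a totally disconnected scheme, i.e.\ Frobenius-invariant sections are genuinely locally constant. This is why the paper's proof begins by passing to the perfection $\Mfr^{b,\diamond,\red,\perf}$ — the representability statement requires a perfect base — and then places both tensor families inside a single \emph{finite} sub-$F$-crystal $T\subset\DD(\Xscr^\diamond)^\otimes$ so that $\Hom(\mathbf{1},T)$ is totally disconnected and the agreement locus is automatically a union of connected components. (The finiteness of $T_\Lambda$ is needed because $\Lambda^\otimes$ itself is an infinite sum.)

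Your fallback route via the generic fibre and the crystalline--\'etale dictionary could in principle be made to work, but as written it is only a sketch of a sketch: you would need to show that both families define Galois-invariant global sections of $T(\Xscr^\diamond)^\otimes[\tfrac1p]$ over the adic generic fibre, relate connected components of the generic fibre to those of the formal scheme, and then descend agreement back to the $\k$-points of the special fibre — none of which is easier than citing Proposition~\ref{prop homomorphisms of F-crystals} directly. I would rewrite the proof around that proposition.
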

 \begin{proof}
  It suffices to prove the claim after taking perfection. Then by Corollary~\ref{cor global tensors} the $(u_{\alpha,y})$ are interpolated by $u_\alpha \coloneqq \Theta_{\ztilde}^{\diamond, \perf, \ast} (\overline{t}_\alpha)$.

   Let $T_\Lambda \subset \Lambda^\otimes$ be a \emph{finite} direct sum of combinations of tensor products, duals, alternating sums and symmetric sum of $\Lambda$ such that it contains $(s_\alpha)$.  Then the corresponding sub-$F$-crystal $T \subset \DD(\Xscr^\diamond_{\Mfr^{b,\diamond,{\rm red},\perf}})^\otimes$ contains both families of crystalline Tate-tensors. Since the perfect scheme representing $\Hom(\mathbf{1},T)$ is totally disconnected by Proposition~\ref{prop homomorphisms of F-crystals}, the locus where the tensors coincide is indeed a union of connected components.
 \end{proof}

 \begin{defnsub}
  The Rapoport-Zink space $\Mfr^b$ is defined as the formal subscheme of $\Mfr^{b,\diamond}$ corresponding to the union of connected components $\{t_{\alpha,y}^\diamond = u_{\alpha,y}^\diamond\}$. We define the uniformisation morphism $\Theta_{\tilde{z}}: \Mfr^b \to\Sscr$ as the restriction of $\Theta_{\tilde{z}}^\diamond$.
 \end{defnsub}

 \begin{propsub} \label{prop RZ space}
  The canonical morphism $\Mfr^b \to \Mfr^{b'}$ is a closed immersion and is uniquely determined by the following two properties.
  \begin{assertionlist}
   \item Its restriction to the perfection of the underlying reduced subscheme coincides with $X_{\mu}(b)_{\Ksf_p} \subset X_{\mu'}(b')$.
   \item For any $y \in X_{\mu}(b)_{\Ksf_p}(\k)$, its restriction to the formal neighbourhood at $y$ coincides with $\Def(\Xscr^\diamond_y,\lambda^\diamond_y,(t_{\alpha,y}^\diamond)) \mono \Def(\Xscr^\diamond_y,\lambda^\diamond_y)$.
  \end{assertionlist}
  In particular the formal scheme $\Mfr^b$ and the embedding $\Mfr^b \to \Mfr^{b'}$ only depend on $b,\mu$ and $\iota$ and not on the choice of $\ztilde$.
 \end{propsub}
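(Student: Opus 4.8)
The plan is to reduce everything to the behaviour of $g\colon\Mfr^b\to\Mfr^{b'}$ on the formal completions at closed points, which we control via Proposition~\ref{prop normalisation on formal neighbourhoods} and the proof of Lemma~\ref{lemma M diamond}. Since $\Mfr^b$ is by construction a union of connected components of $\Mfr^{b,\diamond}$ (Lemma~\ref{lemma clopen}), the inclusion $\Mfr^b\hookrightarrow\Mfr^{b,\diamond}$ is open and closed, so composing with the finite morphism $\Mfr^{b,\diamond}\to\Mfr^{b'}$ of Lemma~\ref{lemma M diamond} we see that $g$ is finite. A finite morphism is a closed immersion exactly when $\Ocal_{\Mfr^{b'}}\to g_\ast\Ocal_{\Mfr^b}$ is surjective, and by faithful flatness of the completion maps together with density of closed points in $\Mfr^{b'}$ this may be checked after completing at each $y'\in\Mfr^{b'}(\k)$. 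First I would record that $g$ is injective on $\k$-points: for $y\in\Mfr^b(\k)$ the defining identity $t^\diamond_{\alpha,y}=u^\diamond_{\alpha,y}$ together with Corollary~\ref{cor global tensors}---which identifies the $u^\diamond_\alpha$ with the pullback of the global crystalline tensors on $\Sscr$, a $\Gscr_x$-torsor crystal, as in the proof of Corollary~\ref{cor stratifications}---forces the two conditions in the modular description of $X_\mu(b)_{\Ksf_p}\subset X_{\mu'}(b')$ to hold at the image $y'$ of $y$, so $y'\in X_\mu(b)_{\Ksf_p}(\k)$; the uniqueness clause of Axiom~\ref{axiom RZ} then says that at most one $z\in\Sscr(\k)$ lies over $\Theta'_{\tilde z}(y')$ with crystalline tensors matching those of $\XX$, so $g^{-1}(y')$ is a single point $y$. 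Consequently the completion of $g$ at $y'$ is $(\Mfr^b)^\wedge_y=(\Mfr^{b,\diamond})^\wedge_y$; setting $z=\Theta^\diamond_{\tilde z}(y)$ and $z'=\iota(z)=\Theta'_{\tilde z}(y')$, rigidity of quasi-isogenies identifies $(\Mfr^{b'})^\wedge_{y'}$ with $\Def(\Xscr^\diamond_y,\lambda^\diamond_y)\cong(\Sscr')^\wedge_{z'}$, and the proof of Lemma~\ref{lemma M diamond} identifies $(\Mfr^{b,\diamond})^\wedge_y$ with $(\Sscr_{O_{\breve E}})^\wedge_z$, which by Proposition~\ref{prop normalisation on formal neighbourhoods} is the closed formal subscheme $\Def_{\Gscr_x}(\Ascr_{\Gsf,z}[p^\infty];(t_{\alpha,z}))$ of $(\Sscr')^\wedge_{z'}\widehat\otimes_\W O_{\breve E}$. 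Since these identifications are compatible with the Cartesian square defining $\Mfr^{b,\diamond}$, the completed map is a closed immersion, and therefore so is $g$.

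Property (b) follows from the same chain of identifications, provided one checks that under the quasi-isogeny $\Xscr^\diamond_y\to\Ascr_{\Gsf,z}[p^\infty]$ (compatible with polarisations up to scalar) the deformation subscheme $\Def_{\Gscr_x}(\Ascr_{\Gsf,z}[p^\infty];(t_{\alpha,z}))$ corresponds to $\Def(\Xscr^\diamond_y,\lambda^\diamond_y,(t^\diamond_{\alpha,y}))$. This holds because on $\Mfr^b$ one has $t^\diamond_{\alpha,y}=u^\diamond_{\alpha,y}=\Theta^{\diamond,\ast}_{\tilde z}(t_{\alpha,z})$, so the quasi-isogeny carries $t^\diamond_{\alpha,y}$ to $t_{\alpha,z}$, while the Kisin--Pappas deformation subscheme of Definition~\ref{def KP defor space}---being cut out inside $\Def(\Xscr^\diamond_y,\lambda^\diamond_y)$ by integrality of the tensors and by the condition that the Hodge filtration lie in $\Mscr^{\rm loc}_{G,\{\mu\},x}$---is intrinsic to the rational Dieudonn\'e crystal with its tensors, hence invariant under such a quasi-isogeny (\emph{cf.}~\cite[Lem.~3.2.13]{KisinPappas:ParahoricIntModel} and \eqref{eq rigidity quasiisogeny}). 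For property (a), applying $g$ to reduced special fibres and perfecting yields a closed immersion $\Mfr^{b,\red,\perf}\hookrightarrow\Mfr^{b',\red,\perf}=X_{\mu'}(b')$; its image and the closed subscheme $X_\mu(b)_{\Ksf_p}$ are both reduced, so it suffices to compare $\k$-points. The inclusion of $\k$-points into $X_\mu(b)_{\Ksf_p}(\k)$ was obtained above, and conversely Axiom~\ref{axiom RZ} attaches to each $P\in X_\mu(b)_{\Ksf_p}(\k)$ a point of $\Sscr(\k)$ over $\Theta'_{\tilde z}(\rho_{\Grass}(P))$ with matching tensors, i.e.\ a point of $\Mfr^b(\k)$ mapping to $\rho_{\Grass}(P)$; hence the two subschemes agree.

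Finally, a closed formal subscheme of $\Mfr^{b'}$ is determined by its underlying reduced subscheme---equivalently, by the corresponding reduced closed subscheme of $\Mfr^{b',\red,\perf}=X_{\mu'}(b')$---together with the formal completions at all its closed points, again by faithfully flat descent along completions and density of closed points. By (a) and (b) these data are $X_\mu(b)_{\Ksf_p}$ and the subschemes $\Def(\Xscr^\diamond_y,\lambda^\diamond_y,(t^\diamond_{\alpha,y}))$, neither of which involves the chosen base point $\tilde z=(z,j)$; so $\Mfr^b$, together with the embedding $\Mfr^b\hookrightarrow\Mfr^{b'}$, depends only on $b$, $\mu$ and $\iota$. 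The step I expect to require the most care is the comparison underlying property (b): one has two deformation-theoretic descriptions of $(\Mfr^b)^\wedge_y$---the one inherited from the Kisin--Pappas model $\Sscr$, with the Shimura-variety tensors $(t_{\alpha,z})$, and the intrinsic Rapoport--Zink one, with the tensors $(t^\diamond_{\alpha,y})$ imported from $\XX$---and it is precisely the identity $t^\diamond_\alpha=u^\diamond_\alpha$ cutting out $\Mfr^b$, combined with the rigidity of quasi-isogenies, that makes them coincide.
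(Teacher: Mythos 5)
Your proposal is correct and follows essentially the same route as the paper: finiteness from Lemma~\ref{lemma M diamond}, the identification of formal neighbourhoods via $\Theta^\diamond_{\tilde z}$ and Proposition~\ref{prop normalisation on formal neighbourhoods} for (b) and for upgrading finiteness to a closed immersion, Axiom~\ref{axiom RZ} for the bijection on $\k$-points giving (a), and the ``reduced subscheme plus formal completions'' principle for uniqueness. The only differences are cosmetic (order of steps, and checking surjectivity on completed local rings directly rather than via openness of the closed-immersion locus of a finite morphism).
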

 \begin{proof}
  Recall that we showed in the proof of Lemma~\ref{lemma M diamond} that $(\Mfr^{b})^\wedge_y \isom (\Sscr_{G,O_{\breve{E}}})^\wedge_{\Theta_{\tilde{z}}^\diamond(y)}$. Since we have $(\Sscr_{G,O_{\breve{E}}})^\wedge_{\Theta_{\tilde{z}}^\diamond(y)} \cong \Def(\Xscr^\diamond_y,\lambda^\diamond_y,(t_{\alpha,y}^\diamond))$ by Proposition~\ref{prop normalisation on formal neighbourhoods}, it follows that $\Mfr^b$ satisfies (b).
 
 To show that it also satisfies (a), we examine the $\k$-point of $\Mfr^b$. By definition, we have 
   \[
    \Mfr^b(\k) = \{(X,\lambda,\varphi;x) \in \Mfr^{b'}(\k) \times_{\Sscr'(k)} \Sscr(k) \mid \varphi_\ast (t_{\XX,\alpha}) = \Theta^{\diamond, \ast}_{\ztilde} t_{\alpha,x} \in \DD(X)^\otimes\}. 
   \]
   In particular, we get $(X,\lambda,\varphi) \in X_\mu(b)_{\Ksf_p}(\k)$. On the other hand, Axiom~\ref{axiom RZ} says that for any  $(X,\lambda,\varphi) \in X_\mu(b)_{\Ksf_p}(\k)$ there is a unique lift $x \in \Sscr(\k)$ of $\Theta'_{\ztilde}(X,\lambda,\varphi)$ such that $\varphi_{\ast} (t_{\XX,\alpha}) = \Theta_{\ztilde}^{\diamond, \ast} (t_{\alpha,x})$. Hence the canonical morphism $\Mfr^b \to \Mfr^{b'}$ maps $\Mfr^b(\k)$ bijectively onto $X_\mu(b)_{\Ksf_p}(\k)$. Thus in order to deduce (a) it remains to show  that $\Mfr^{b} \to \Mfr^{b'}$ is a closed immersion. As we already know that $\Mfr^{b} \to \Mfr^{b'}$ is finite by Lemma~\ref{lemma M diamond}, the locus where it is a closed immersion is open on the target. By (b) this locus must contain all closed points, thus the morphism is indeed a closed immersion.

   To show uniqueness, assume that there exist two closed formal subschemes $Z_1, Z_2$ satisfying (a) and (b). Let $Z$ denote their intersection. Then $Z \mono Z_i$ is a surjective closed immersion which is an isomorphism on formal neighbourhoods, which implies that it is also \'etale and in particular open. Thus $Z_1 = Z = Z_2$.
 \end{proof}

\begin{rmksub}
In general, it is not yet known whether the integral model $\Sscr=\Sscr_{\Ksf^p\Ksf_p}$ of $\Sh_{\Ksf^p\Ksf_p}(\Gsf,\Xsf)$, constructed by Kisin and Pappas when $\Ksf_p = \Gscr_x(\Zp)$, is independent of the choice of embedding $(\Gsf,\Xsf)\mono (\mathsf{GSp}_{2g},\Ssf)$. (This is only known if  $x$ is a \emph{very special} vertex of $G=\Gsf_{\QQ_p}$ and $G^{\ad}$ is absolutely simple; \emph{cf.} \cite[Prop.~4.6.28]{KisinPappas:ParahoricIntModel}). Therefore, it is not known in general whether $\Mfr^b$ is independent of the choice of the embedding $\rho:(G,[b],\{\mu\})\mono(\GL_n,[b'],\{\mu'\})$ of local Shimura data coming from some global embedding of Shimura data.
\end{rmksub}

 \subsection{Group actions on $\Mfr^{b}$}
 In \cite{RapoportZink:RZspace} Rapoport and Zink equipped $\Mfr^{b'}$ with the action of the group of self quasi-isogenies $J_{b'}(\QQ_p)$ of $(\XX,\lambda_\XX)$ and a Frobenius-twisted action of the Weil group of $\QQ_p$. In this section we discuss how to restrict these actions to $\Mfr^b$.

 First consider the group functor
  \begin{align*}
   \QIsog_{G'}(\XX)\colon ({\rm Nilp}_{\W}) &\to ({\rm Sets}) \\
   R &\mapsto \{\varphi\colon \XX_{R/p} \to \XX_{R/p} \textnormal{ quasi-isogeny, commuting with } \lambda_\XX \textnormal{ up to scalar }\}.
  \end{align*}
  It is representable by a flat formal group scheme over $\Spf \W$ with perfect special fibre by \cite[Lemma~4.2.10]{CaraianiScholze:ShVar} (see also \cite[\S~3.2]{Kim:ProductStructure}. By Dieudonne theory, the group of locally constant self quasi-isogenies is represented by a locally pro-$p$ group $J_{b'}(\QQ_p)$ viewed as a locally constant formal scheme over $\Spf \W$. By \cite[Lemma~4.2.11]{CaraianiScholze:ShVar} the canoncial embedding $J_{b'}(\QQ_p) \mono \QIsog_{G'}(\XX)$ splits, yielding an isomorphism $\QIsog_{G'}(\XX) \cong \QIsog_{G'}(\XX)^\circ \rtimes J_{b'}(\QQ_p)$, where $\QIsog_{G'}(\XX)^\circ$ denotes the identity component of $\QIsog_{G'}(\XX)$. Obviously, the group $\QIsog_{G'}(\XX)$ acts via precomposition on $\Mfr^{b'}$, exteding the action of $J_b(\QQ_p)$ defined in \cite{RapoportZink:RZspace}.

 As the quasi-isogenies in $\QIsog_{G'}(\XX)$ do not need to fix $(t_{\XX,\alpha})$, the Rapoport-Zink space $\Mfr^b$ is in general not $\QIsog_{G'}(\XX)$ stable. We denote by $\QIsog_G(\XX) \subset \QIsog_{G'}(\XX)$ the formal subgroup defined in \cite[Def.~3.2.1]{Kim:ProductStructure}. While a moduli description for general $R \in {\rm Nilp}_{\W}$ is unknown, in the case where $R/p$ is semiperfect it parametrises precisely those quasi-isogenies which preserve $(t_{\XX,\alpha})$.

 \begin{lemsub} \label{lemma J_b-action}
  $\Mfr^b$ is stable under the action of the subgroup $\QIsog_G(\XX)$.
 \end{lemsub}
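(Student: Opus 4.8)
The plan is to exploit the characterisation of $\Mfr^b$ in Proposition~\ref{prop RZ space} together with the description of $\QIsog_G(\XX)$ as the formal subgroup of $\QIsog_{G'}(\XX)$ which, over semiperfect bases, parametrises precisely the self quasi-isogenies of $\XX$ preserving the crystalline tensors $(t_{\XX,\alpha})$. Throughout I regard $\Mfr^b$ as the closed formal subscheme of $\Mfr^{b'}$ cut out by the closed immersion of Proposition~\ref{prop RZ space}, and I recall from \emph{loc.\ cit.}\ that this closed subscheme, together with its embedding into $\Mfr^{b'}$, depends only on $b$, $\mu$ and $\iota$ and not on the auxiliary base point $\ztilde=(z,j)$.

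I would first treat $g\in J_b(\QQ_p)$ by a base-point change. Write $[g]$ for the automorphism of $\Mfr^{b'}$ induced by the precomposition action of $g$ on the universal quasi-isogeny. Since $g$ preserves $(t_{\XX,\alpha})$, the pair $\ztilde''\coloneqq(z,\,j\circ g)$ (or $(z,\,j\circ g^{-1})$, according to the sign conventions of \S\ref{ssect construction of RZ}) is again a base point compatible with the additional structure --- and it is exactly here that $g\in\QIsog_G(\XX)$, rather than merely $g\in\QIsog_{G'}(\XX)$, enters. A direct computation on the uniformisation morphisms gives $\Theta'_{\ztilde''}=\Theta'_{\ztilde}\circ[g]$, so the fibre product $\Mfr^{b,\diamond}_{\ztilde''}$ is canonically isomorphic to $\Mfr^{b,\diamond}_{\ztilde}$ by an isomorphism lying above $[g]$ and compatible with the projections to $\Sscr$; by the uniqueness in Axiom~\ref{axiom RZ} the analogous compatibility holds for the lifts $f_{\ztilde}$. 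Because $g$ fixes $(t_{\XX,\alpha})$, this isomorphism identifies the tensors $t^\diamond_{\alpha}$ with $t^\diamond_{\alpha}$ and $u^\diamond_{\alpha}$ with $u^\diamond_{\alpha}$ on the two sides, hence restricts to an isomorphism between the loci $\{t^\diamond_{\alpha}=u^\diamond_{\alpha}\}$, i.e.\ between $\Mfr^b_{\ztilde''}$ and $\Mfr^b_{\ztilde}$. Combining this with the base-point independence of $\Mfr^b\hookrightarrow\Mfr^{b'}$ in Proposition~\ref{prop RZ space}, one concludes that $[g]$ carries the closed subscheme $\Mfr^b$ onto itself, and the induced automorphism of $\Mfr^b$ is the desired action of $g$.

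It then remains to upgrade this to the whole formal group $\QIsog_G(\XX)=\QIsog_G(\XX)^\circ\rtimes J_b(\QQ_p)$. Here I would either (i) reduce to the adic generic fibre: since $\QIsog_G(\XX)$ and $\Mfr^b$ are flat over $O_{\breve{E}}$ --- the latter because $\Mscr^{\mathrm{loc}}_{G,\{\mu\},x}$ is flat, cf.\ Lemma~\ref{lemma M diamond} and Proposition~\ref{prop normalisation on formal neighbourhoods} --- it is enough that the action preserve $(\Mfr^b)^{\ad}_{\breve E}\subset(\Mfr^{b'})^{\ad}_{\breve E}$, which is cut out by the condition that the universal quasi-isogeny match $(t_{\XX,\alpha})$ with the de~Rham (equivalently, via the comparison isomorphism, the \'etale) Tate tensors of the universal $p$-divisible group compatibly with the $\Gscr_x$-structure, a condition manifestly stable under precomposition by tensor-preserving quasi-isogenies; or (ii) invoke the deformation-theoretic description of $\QIsog_G(\XX)$ from \cite[Def.~3.2.1]{Kim:ProductStructure} together with the functoriality of the Kisin--Pappas deformation subspaces $\Def_{\Gscr_x}(-;(t_{\alpha}))$ under tensor-preserving quasi-isogenies, to see that $\QIsog_G(\XX)^\circ$ preserves $(\Mfr^b)^\wedge_y=\Def_{\Gscr_x}(\Xscr^\diamond_y,\lambda^\diamond_y;(t^\diamond_{\alpha,y}))$ at every closed point $y$. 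I expect this last step to be the main obstacle: Proposition~\ref{prop RZ space} controls $\Mfr^b$ only through its $\k$-points and formal neighbourhoods, so to handle the ``infinitesimal'' (semiperfect) directions of $\QIsog_G(\XX)$ acting on the \emph{non-open} closed subscheme $\Mfr^b\subset\Mfr^{b'}$ one must supplement it by one of the two devices above and carefully match the moduli-theoretic meaning of $\QIsog_G(\XX)$ over semiperfect rings with the interpolation of $(u^\diamond_{\alpha})$ afforded by Corollary~\ref{cor global tensors}.
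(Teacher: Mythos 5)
Your proposal is essentially correct and, in its branch (ii), reproduces the paper's argument. The paper also splits $\QIsog_G(\XX)\cong\QIsog_G(\XX)^\circ\rtimes J_b(\QQ_p)$ and reduces everything to the uniqueness characterisation of $\Mfr^b\subset\Mfr^{b'}$ in Proposition~\ref{prop RZ space}: for $g\in J_b(\QQ_p)$ it checks directly that $g_\ast\Mfr^b$ satisfies condition (a) (via Axiom~\ref{axiom RZ}) and condition (b) (via the canonical isomorphism $g^\ast(\Xscr^\diamond,\lambda^\diamond,(t_\alpha^\diamond))\cong(\Xscr^\diamond,\lambda^\diamond,(t_\alpha^\diamond))$, which identifies the relevant deformation subspaces), whereas you route the same verification through a change of base point $\ztilde\rightsquigarrow(z,j\circ g)$ and the base-point independence of $\Mfr^b$ --- the two are interchangeable, and your observation that this is where $g\in\QIsog_G(\XX)$ rather than $\QIsog_{G'}(\XX)$ is used is exactly right. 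For the identity component --- the step you rightly single out as the crux --- the paper observes that condition (a) is automatic because $\QIsog_G(\XX)^{\circ,\red}=\{\id\}$, and that condition (b) is precisely \cite[Thm.~4.3.1]{Kim:ProductStructure}, i.e.\ the statement that the $\QIsog_G(\XX)^\circ$-action on $\Def(\Xscr^\diamond_y,\lambda^\diamond_y)$ preserves $\Def(\Xscr^\diamond_y,\lambda^\diamond_y,(t^\diamond_{\alpha,y}))$; this is exactly your device (ii), so no further work is needed beyond that citation. I would steer away from your alternative device (i): the paper never establishes the moduli description of $(\Mfr^b)^{\ad}_{\breve E}$ in terms of \'etale/de~Rham tensors that you invoke, and one would additionally have to know that the flat closed formal subscheme $\Mfr^b$ is recovered from its Berthelot generic fibre (e.g.\ as a schematic closure inside $\Mfr^{b'}$), so (i) would require more foundational input than it saves.
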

 \begin{proof}
 By Proposition~\ref{prop RZ space}, we need to show that the action of $\QIsog_G(\XX)$ on $\Mfr^{b'}$ preserves conditions (a) and (b) of Proposition~\ref{prop RZ space}.
  Since $\QIsog_G(\XX) \cong \QIsog_G(\XX)^\circ \rtimes J_b(\QQ_p)$ by \cite[Rmk.~3.2.5]{Kim:ProductStructure}, we may handle the action of  $J_b(\QQ_p)$ and $\QIsog_G(\XX)^\circ$ separately.

 Let us first verify that for every $g \in J_b(\QQ_p)$ the formal subscheme $g_\ast\Mfr^{b} \subset \Mfr^{b'}$ satisfies  conditions (a) and (b) of Proposition~\ref{prop RZ space}. Condition (a) follows from Axiom~\ref{axiom RZ} in section \ref{ssect construction of RZ}. To check condition (b), from the canonical isomorphism $g^\ast(\Xscr^\diamond,\lambda^\diamond,(t_\alpha^\diamond)) \cong (\Xscr^\diamond,\lambda^\diamond,(t_\alpha^\diamond))$  we obtain
  \[
   (g_\ast\Mfr^b)_{g\cdot z}^\wedge = g_\ast\Def(\Xscr^\diamond_z,\lambda^\diamond_z,(t_{\alpha,z}^\diamond)) = \Def(g^\ast(\Xscr^\diamond_z,\lambda^\diamond_z,(t_{\alpha,z}^\diamond))) = \Def(\Xscr^\diamond_{g\cdot z},\lambda^\diamond_{g\cdot z},(t_{\alpha,g\cdot z}^\diamond)).
  \]

It remains to show that the action of $\QIsog_G(\XX)^\circ$ on $\Mfr^{b'}$ preserves conditions (a) and (b) of Proposition~\ref{prop RZ space}. Condition (a) is automatic since $\QIsog_G(\XX)^{\circ, \red} = \{\id\}$, and condition (b) is just a reformulation of \cite[Thm.~4.3.1]{Kim:ProductStructure}.
 \end{proof}

 \begin{rmksub}
  In particular we get an action of $J \coloneqq J_b(\QQ_p) \subset \QIsog_G(\XX)$ on $\Mfr^b$. Let us briefly remark why the conditions of Theorem~\ref{thm:Poincare} are satisfied. First note that the action of $J_{b'}(\QQ_p)$ on $\Mfr^{b'}$ satisfies these properties by \cite[\S~5.2]{Mieda:Zelevinsky}. Since $\Mfr^b \subset \Mfr^{b'}$ is a closed formal subscheme, it immediately follows that the $J$-action is continuous in the sense of Fargues and that $\Mfr^{b,{\rm red}}$ is partially proper. 
  
  Moreover, following Mieda's argument, we see that the finiteness condition (\ref{thm:Poincare:Finiteness}) of Theorem~\ref{thm:Poincare} follows from the finiteness of $J$-orbits on the set of irreducible components of $\Mfr^b$. The latter assertion was proven in \cite{RapoportZink:BTbuildings} (see also \cite[Lemma~1.3]{HamacherViehmann:IrredComp}).
  
  Finally, the local algebraisability of $\Mfr^b$, and the smoothness and equidimensionality of the generic fibre can be deduced from the Rapoport-Zink uniformisation. Following the proof of \cite[Cor.~3.1.4]{Fargues:AsterisqueLLC}, we see for any quasi-compact open $U' \subset \Mfr^{b'}$ there exists a level $\Ksf'^p \subset \mathsf{GSp}_{2g}$ such that $\Theta_{\ztilde}'$ maps $U'$ isomorphically onto the completion of a closed subscheme of $\Sscr_{G'}$. Thus $\Theta_{\ztilde}$ identifies $U \coloneqq U' \cap \Mfr^b$ with the completion of a closed subscheme of $\Sscr_G$; in particular $\Mfr^b$ is locally algebraisable and $U^{\rm ad}$ is an open subspace of $(\widehat\Sscr_G)^{\rm ad}$, thus proving that the generic fibre of $\Mfr^b$ is smooth of pure dimension $\langle 2\rho, \mu\rangle$.
 \end{rmksub} 

 Next, we define the action of the Weil group $W_E$ on $\Mfr^b$. Since $\Mfr^b$ is a formal $O_{\Ebreve}$-scheme, the action of the inertia group will be trivial. Thus the $W_E$-action is given by a Weil descent datum, that is a $\sigma_{\kappa_E}$-semilinear isomorphism $\alpha_\F\colon \Mfr^{b'} \isom {\Mfr^{b'}}$ defining the action of the Frobenius (cf.~\cite[Def.~3.45]{RapoportZink:RZspace}). The Weil descent datum for $\Mfr^{b'}$ is given by
 \[
  (\Xscr,\varphi) \mapsto (\Xscr,\varphi \circ F^{-1}).
 \]
 Since any quasi-isogeny commutes with the Frobenius, the action of $\QIsog_{G'}(\XX)$ and $W_{\QQ_p}$ commute.

 Note that since $\Mscr^{\rm loc}_{G,\mu,x}$ is defined over $O_E$ and not $\ZZ_p$, we should not expect $\Mfr^b$ to be stable under $W_{\QQ_p}$. However, the following holds.

 \begin{lemsub} \label{lem:RZWeilDescentDatum}
  The $W_{\QQ_p}$ descent datum on $\Mfr^{b'}$ restricts to a $W_E$ descent datum in $\Mfr^b$.
 \end{lemsub}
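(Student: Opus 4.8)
The plan is to verify that the Weil descent datum $\alpha$ on $\Mfr^{b'}$, given by $(\Xscr,\varphi)\mapsto (\Xscr,\varphi\circ F^{-1})$, carries the closed formal subscheme $\Mfr^b\subset\Mfr^{b'}$ into the base change of $\Mfr^b$ along the Frobenius of $\kappa_E$, and that over the unramified subextension this base change is canonically identified with $\Mfr^b$ again; the obstruction to descending further to $\QQ_p$ is precisely that $\Mscr^{\rm loc}_{G,\{\mu\},x}$ lives over $O_E$ rather than $\ZZ_p$, so the correct statement is a $W_E$-descent datum. Concretely, I would first recall from Proposition~\ref{prop RZ space} that $\Mfr^b\hookrightarrow\Mfr^{b'}$ is the unique closed formal subscheme characterised by the two conditions (a) on the perfection of the reduced subscheme (it is $X_\mu(b)_{\Ksf_p}$), and (b) on formal neighbourhoods (it is $\Def(\Xscr^\diamond_y,\lambda^\diamond_y,(t^\diamond_{\alpha,y}))$). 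So it suffices to check that $\alpha^{-1}$ applied to the $\sigma_{\kappa_E}$-twist $\Mfr^{b,(\sigma)}$ — equivalently, that the formal subscheme $\alpha(\Mfr^b)$ — satisfies the analogues of (a) and (b) with the structures transported by $\sigma_{\kappa_E}$.

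For condition (a): on $\k$-points, $\alpha$ sends $(\Xscr,\lambda,\varphi)$ to $(\Xscr,\lambda,\varphi\circ F^{-1})$, and since the tensors $t_{\XX,\alpha}$ are Frobenius-invariant crystalline Tate tensors on $\XX$, the condition $\varphi_\ast(t_{\XX,\alpha})=u^\diamond_{\alpha}$ defining $\Mfr^b(\k)$ inside $\Mfr^{b'}(\k)$ is preserved up to applying the crystalline Frobenius $F$ on $\DD(\Xscr)$; but $F$ matches the crystalline Tate tensors of a BT over $\k$ with themselves, so the defining condition is stable, i.e.\ $\alpha$ permutes the $\k$-points of $\Mfr^b$ compatibly with the $\sigma$-twist of the $G$-structure on $X_\mu(b)_{\Ksf_p}$. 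I would phrase this by noting that Frobenius descent of $F$-crystals with $G$-structure identifies the $\sigma_{\kappa_E}$-twist of the crystalline-Tate-tensor condition with the original one (this is essentially the same mechanism used in the proof of Corollary~\ref{cor stratifications}). For condition (b): at a point $y$, $\alpha$ induces on the formal neighbourhood the isomorphism of deformation spaces coming from the quasi-isogeny $F^{-1}$, which by the rigidity of quasi-isogenies is the identity identification of $\Def(\Xscr^\diamond_y,\lambda^\diamond_y)$ up to the $\sigma$-semilinear twist; and the subspace cut out by $(t^\diamond_{\alpha,y})$ is carried to the subspace cut out by the $\sigma$-twisted tensors, which is the local model condition for $\Mscr^{\rm loc}_{G,\{\mu\},x}$ now viewed after base change along $\sigma_{\kappa_E}$. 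Over $O_{\breve E}$ this local model is $\sigma_{\kappa_E}$-equivariantly defined because $E/\QQ_p$ may be ramified but $\breve E$ absorbs the unramified twists, so the two subschemes agree, giving the $W_E$-descent datum.

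The main obstacle is bookkeeping the two competing Frobenii — the Witt-vector $\sigma$ already built into the definition of $\Mfr^{b'}$ and its Weil descent datum, versus the residue-field Frobenius $\sigma_{\kappa_E}$ defining the twist — and being careful about which extension of $\QQ_p$ each structure is defined over. In particular one must check that the obstruction really is $[\kappa_E:\FF_p]$-periodic, i.e.\ that applying $\alpha^{[\kappa_E:\FF_p]}$ (Frobenius of the residue field of $E$) returns $\Mfr^b$ to itself on the nose, which amounts to the fact that $\Mscr^{\rm loc}_{G,\{\mu\},x}$ descends to $O_E$ together with the already-established compatibility of the tensors $(t_\alpha)$ with this structure. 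Once the local-model/tensor compatibility is in place, both conditions (a) and (b) are stable under $\alpha$ composed with the $\sigma_{\kappa_E}$-twist, and the uniqueness clause of Proposition~\ref{prop RZ space} then forces $\alpha$ to restrict to the desired $W_E$-descent datum on $\Mfr^b$; I would record that this descent datum is moreover effective after a finite unramified base change since it is already so for $\Mfr^{b'}$ and $\Mfr^b\hookrightarrow\Mfr^{b'}$ is a closed immersion.
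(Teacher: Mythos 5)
Your proposal is correct and follows essentially the same route as the paper: one checks that the image of $\Mfr^b$ under the descent datum still satisfies the two characterising conditions (a) and (b) of Proposition~\ref{prop RZ space} — condition (a) because the admissibility/local-model condition is preserved precisely since $\Mscr^{\rm loc}_{G,\{\mu\},x}$ is defined over $O_E$, and condition (b) because $\Def_{\Gscr_x}(X^{(\sigma_E)})=\Def_{\Gscr_x}(X)^{(\sigma_E)}$ — and then invokes the uniqueness clause. The only cosmetic difference is that you distribute the tensor-invariance and local-model arguments between (a) and (b) slightly differently than the paper does, which is immaterial.
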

 \begin{proof}
  The proof is basically the same as the proof of Lemma~\ref{lemma J_b-action}. A short calculation shows that the diagram
  \begin{center}
   \begin{tikzcd}
    X_{\mu'}(b') \arrow{r} \arrow{d}{\alpha_{\QQ_p}} &  \left[\Gscr'_{x'} \backslash {\Mscr^{\rm loc}_{G',\mu'-c,x'}}\right] \arrow{d}{\sigma_{\QQ_p}}  \\
    X_{\mu'}(b') \arrow{r}  &  \left[\Gscr'_{x'}\backslash\Mscr^{\rm loc}_{G',\mu'-c,x'} \right]
   \end{tikzcd}
  \end{center}
  commutes, implying condition (a) in Proposition~\ref{prop RZ space} for $\alpha_E(\Mfr^b)$ since $\Mscr^{\loc}_{G,\mu,x}$ is defined over $O_E$. The embedding of formal neighbourhoods at $(X,\varphi) \in \Mfr^b(\k)$ is identified with $\Def_{\Gscr_x}(X) \subset \Def_{\Gscr'_{x'}}(X)$ which under $\alpha_E$ is mapped to $\Def_{\Gscr_x}(X^{(\sigma_E)}) \subset \Def_{\Gscr'_{x'}}(X)^{(\sigma_E)}$. As $\Def_{\Gscr_x}(X^{\sigma_E}) = \Def_{\Gscr_x}(X)^{\sigma_E}$ by construction, condition (b) in Proposition~\ref{prop RZ space} follows.
 \end{proof}

 \begin{rmksub}
  Since any quasi-isogeny commutes with the actions of Frobenius, the action of $\QIsog_{G'}(\XX)$ and $W_{\QQ_p}$ on $\Mfr^{b'}$ commute. In particular, the actions of $\QIsog_G(\XX)$ and $W_E$ on $\Mfr^b$ commute.
 \end{rmksub}

 Naturally, we do not have an action of $\Gsf(\AA_f^p)$ on $\Mfr^b$. However, we can say that the construction in section~\ref{ssect construction of RZ} is compatible with the $G(\AA_f^p)$-action.

 \begin{lemsub}
  Let $g \in \Gsf(\AA_f^p)$ and $\Ksf_1^p \subset \Gsf(\AA_f^p)$ small enough compact open such that $g^{-1} \Ksf_1^p g \subset \Ksf^p$. We fix $z_1 \in \Sscr_{\Ksf_1^p\Ksf_p}(\k)$ and a quasi-isogeny $j_1:\XX \to \Ascr_{\Gsf,z_1}[p^\infty]$ which is compatible with additional structure and let $z = g(z_1) \in \Sscr_{\Ksf^p\Ksf_p}(\k)$ and $j$ the concatenation of $j_1$ with the canonical isomorphism $\Ascr_{\Gsf,z_1}[p^\infty] \cong \Ascr_{\Gsf,z}[p^\infty]$. Then the diagram
  \begin{center}
   \begin{tikzcd}
    & \Sscr_{\Ksf_1^p\Ksf_p} \arrow{d}{g} \\
    \Mfr^b \arrow{ru}{\Theta_{(z_1,j_1)}} \arrow{r}[swap]{\Theta_{(z,j)}} & \Sscr_{\Ksf^p\Ksf_p}
   \end{tikzcd}
  \end{center}
  commutes.
 \end{lemsub}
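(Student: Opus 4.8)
The statement is a compatibility of the Rapoport--Zink uniformisation with the prime-to-$p$ Hecke action, and the plan is to reduce it to the already-established characterisation of $\Theta_{(z,j)}$ by conditions (a) and (b) of Proposition~\ref{prop RZ space}. First I would observe that the prime-to-$p$ Hecke correspondence $g\colon \Sscr_{\Ksf_1^p\Ksf_p}\to\Sscr_{\Ksf^p\Ksf_p}$ is finite \'etale (it is the extension to integral models of the finite \'etale Hecke map on the generic fibre, and both sides are normal), and that it carries $\Ascr_{\Gsf,z_1}[p^\infty]$ to $\Ascr_{\Gsf,z}[p^\infty]$ via a canonical isomorphism since changing the prime-to-$p$ level structure does not change the underlying abelian variety up to prime-to-$p$ quasi-isogeny (in fact, with our choice $g^{-1}\Ksf_1^p g\subset\Ksf^p$ one genuinely has the universal abelian scheme pulled back). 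Hence $g$ induces an isomorphism on each $p$-divisible group fibre compatible with the crystalline Tate tensors $(t_{\alpha})$ of Proposition~\ref{prop global tensors}, because those tensors are characterised by their restrictions to formal neighbourhoods and $g$ is an isomorphism on formal neighbourhoods.

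Next I would show that the composite $g\circ\Theta_{(z_1,j_1)}\colon\Mfr^b\to\Sscr_{\Ksf^p\Ksf_p}$ satisfies the two defining properties of $\Theta_{(z,j)}$. For property (a): on $\k$-points, $\Theta_{(z_1,j_1)}$ sends a quasi-isogeny class $(X,\lambda,\varphi)\in X_\mu(b)_{\Ksf_p}(\k)$ to the point parametrising $\Ascr_{\Gsf,z_1}/\ker(\varphi\circ j_1^{-1})$, and applying $g$ then replaces $z_1$ by $z$ and $j_1$ by $j$, which is exactly the recipe defining $\Theta_{(z,j)}$ on $\k$-points via Axiom~\ref{axiom RZ} — here I would use that $j$ is by definition the concatenation of $j_1$ with the canonical isomorphism $\Ascr_{\Gsf,z_1}[p^\infty]\cong\Ascr_{\Gsf,z}[p^\infty]$, so the quasi-isogeny data match up on the nose and the tensor condition $(\overline t_{\alpha,G})_P = \Theta^\ast(t_{\alpha,\cdot})$ is preserved because $g^\ast t_{\alpha,z} = t_{\alpha,z_1}$ under the canonical identification of $p$-divisible groups. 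For property (b): $\Theta_{(z_1,j_1)}$ induces the isomorphism of formal neighbourhoods $(\Mfr^b)^\wedge_y\isom \Def_{\Gscr_x}(\Xscr^\diamond_y,\lambda^\diamond_y,(t^\diamond_{\alpha,y}))$ (as recorded in the proof of Lemma~\ref{lemma M diamond} and Proposition~\ref{prop RZ space}), and since $g$ is an isomorphism on formal neighbourhoods compatible with the Serre--Tate/Kisin--Pappas deformation-theoretic descriptions (because it identifies the universal $p$-divisible groups with their tensors), the composite $g\circ\Theta_{(z_1,j_1)}$ still induces the embedding $\Def_{\Gscr_x}(\cdots)\hookrightarrow\Def_{\Gscr'_{x'}}(\cdots)$ at every closed point.

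Then, invoking the uniqueness clause of Axiom~\ref{axiom RZ} (unique lift $f_{\tilde z}$ making the tensors match) together with the fact that a morphism of normal formal schemes locally of finite type is determined by its restriction to $\k$-points and formal neighbourhoods of closed points — which is precisely how the uniqueness in Proposition~\ref{prop RZ space} was argued — I conclude $g\circ\Theta_{(z_1,j_1)} = \Theta_{(z,j)}$. I would phrase the last step exactly as in the proof of Proposition~\ref{prop RZ space}: if two morphisms $\Mfr^b\to\Sscr_{\Ksf^p\Ksf_p}$ agree on $\k$-points and induce the same map on each formal neighbourhood, then their equaliser is a closed subscheme containing all closed points and \'etale over $\Mfr^b$ near them, hence all of $\Mfr^b$.

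\textbf{Main obstacle.} The only genuinely delicate point is verifying that the Hecke map $g$ really does identify the universal $p$-divisible groups \emph{together with} the global crystalline tensors $(t_\alpha)$ of Proposition~\ref{prop global tensors}, rather than merely up to prime-to-$p$ quasi-isogeny; this needs the compatibility of those tensors with the $\Gsf(\AA_f^p)$-action, which in turn follows because the tensors are built (via Proposition~\ref{prop local tensors} and Lemma~\ref{lem glueing}) from the \'etale tensors $(t_{\alpha,\et})$ on $T_p(\Ascr_\Gsf)$, and those are manifestly $\Gsf(\AA_f^p)$-equivariant by their construction from absolute Hodge cycles on the tower $\{\Sh_{\Ksf^p\Ksf_p}\}$. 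Making this equivariance precise — and checking it descends through the normalisation defining $\Sscr$ — is where I would spend most of the care; everything else is a bookkeeping exercise against the already-proven Proposition~\ref{prop RZ space}.
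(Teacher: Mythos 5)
Your proposal is correct and follows essentially the same route as the paper's (much terser) proof: reduce to the Siegel case via the moduli description, note that $g$ gives a canonical identification of the universal $p$-divisible groups compatible with the crystalline tensors and induces isomorphisms on formal neighbourhoods since it is \'etale, and conclude by the uniqueness argument of Proposition~\ref{prop RZ space}. The only cosmetic imprecision is that conditions (a) and (b) of Proposition~\ref{prop RZ space} characterise the closed immersion $\Mfr^b\hookrightarrow\Mfr^{b'}$ rather than $\Theta_{(z,j)}$ itself, but your actual argument (agreement on $\k$-points via Axiom~\ref{axiom RZ} plus agreement on formal neighbourhoods) is exactly the intended uniqueness mechanism.
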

 \begin{proof}
  For $\Gsf = \Gsf'$, the commutativity follows from the moduli description (cf.~\cite[\S~6.13]{RapoportZink:RZspace}). Since $g$ induces a canonical isomorphism $\Ascr_{\Gsf,z_1}[p^\infty] \cong \Ascr_{\Gsf,g(z_1)}[p^\infty]$, which is compatible with crystalline Tate-tensors, for any $z_1 \in \Sscr_{G,\Ksf_1^p}(\k)$ and a canonical isomorphism on formal neighbourhoods since it is \'etale, the claim now follows from the uniqueness of the lift in Proposition~\ref{prop RZ space}.
 \end{proof}

 \section{The geometry of Newton strata}\label{sect Almost Product}
 The almost product structure on the Newton stratification is the key input from geometry to prove Mantovan's formula for Shimura varieties of Hodge type. We take the product structure over $(\Ssb')^{b'}$ constructed by Caraiani and Scholze as a starting point and show that one can restrict it to obtain the product structure over  $\Ssb^{b}$.

 \subsection{Igusa varieties}\label{ssect Igusa Var}
 We continue to assume that $b$ is a decent element as in Lemma~\ref{lemma decent} and that $\Ssb^b$ is non-empty. In particular $\Cbar^b$ is non-empty by Remark~\ref{rmk nonemptiness of central leaves}. We denote by $\Cbar^{b'} \subset\Ssb'$ the central leaf in the Siegel moduli space. Let $\bar\Ig^{b'}$ denote the special fibre of the Igusa variety over $\Cbar^{b'}$, i.e. the scheme parametrising isomorphisms $(\XX,\lambda_\XX) \isom (\Ascr^\univ[p^\infty],\lambda^\univ)$. We define
 \[
  \bar\Ig^{b,\diamond} \coloneqq \bar\Ig^{b'} \times_{(\Ssb')^{\rm perf}} \Ssb^{\rm perf}.
 \]

 Note that $\bar\Ig^{b'}$ is a perfect scheme by \cite[Prop.~4.3.8]{CaraianiScholze:ShVar}, hence $\bar\Ig^{b,\diamond}$ is also perfect. Indeed since the absolute Frobenius defines compatible isomorphisms on the base and both factors of the fibre product, it also defines an isomorphism on the fibre product itself.
Therefore, we have unique flat lifts $\Igfr^{b'}$ and $\Igfr^{b,\diamond}$ over $\Spf O_{\breve{E}}$, which are given by
 \[
  \Igfr^{b'}(R) = \bar\Ig^{b'}(R/\varpi) \text{ and }   \Igfr^{b,\diamond} (R) = \bar\Ig^{b,\diamond}(R/\varpi)
 \]
 for any $O_{\breve{E}}$-algebra $R$ with $\varpi$ nilpotent in $R$.
 By construction, we have an isomorphism $j: \XX_b \times \bar\Ig^{b,\diamond} \isom \Ascr_{\Gsf,\Cbar^b}[p^\infty] \times_{\Cbar^b} \bar\Ig^{b,\diamond}$.

 \begin{deflemsub}
  The locus of geometric points in $\bar\Ig^{b}$ where $j$ respects the crystalline Tate-tensors is a closed union of connected components. We denote by $\bar\Ig^{b,\diamond} \subset \bar\Ig^{b,\diamond}$ and $\Igfr^b \subset \Igfr^{b, \diamond}$ the corresponding (formal) subschemes.
 \end{deflemsub}
 \begin{proof}
  This follows from the existence of global tensors and Proposition~\ref{prop homomorphisms of F-crystals} by an identical argument as in the proof of Lemma~\ref{lemma clopen}.
 \end{proof}
Recall that $\QIsog_{G'}(\XX)$ naturally acts on $\bar\Ig^{b'}$ by \cite[Corollary~4.3.5]{CaraianiScholze:ShVar}, and clearly this action extends to the flat lift $\Igfr^{b'}$.
 \begin{propsub} \label{prop Igusa variety}
  $\bar\Ig^b \to \bar\Ig^{b'}$ is a closed immersion with $\QIsog_G(\XX)_{\FFbar_p}$-stable image. On $\FFbar_p$-points this action is given as follows. For any $g \in J_b(\F)$, $\tilde{z}=(z;j) \in \bar\Ig_b(\FFbar_p)$ the $\QIsog_G(\XX)$ action is given by
  \[
   g.\ztilde = (\Theta_{\tilde{z}}(g^{-1}.z);(g^{-1})_\ast j).
  \]
  In particular, $\Igfr^b \to \Igfr^{b'}$ is a closed immersion.
 \end{propsub}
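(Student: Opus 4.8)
The plan is to follow the template already established in the proofs of Lemma~\ref{lemma clopen} and Proposition~\ref{prop RZ space}, transporting the argument from the Rapoport-Zink side to the Igusa side. First I would observe that $\bar\Ig^{b,\diamond} = \bar\Ig^{b'} \times_{(\Ssb')^{\rm perf}} \Ssb^{\rm perf}$ is the perfection of the fibre product of $\bar\Ig^{b'} \to (\Ssb')^{\rm perf}$ with the closed immersion $\Ssb^{\rm perf} \hookrightarrow (\Ssb')^{\rm perf}$ (which is a closed immersion by Corollary~\ref{cor stratifications}, restricting the closed immersion $\Sscr \hookrightarrow \Sscr'$ to special fibres and Newton strata), so $\bar\Ig^{b,\diamond} \to \bar\Ig^{b'}$ is a closed immersion; passing to the flat lifts over $\Spf O_{\breve E}$ preserves this, so $\Igfr^{b,\diamond} \to \Igfr^{b'}$ is a closed immersion. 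Next, by the previous Definition/Lemma, $\bar\Ig^b \subset \bar\Ig^{b,\diamond}$ is a closed union of connected components, hence $\bar\Ig^b \to \bar\Ig^{b'}$ is a closed immersion, and likewise $\Igfr^b \to \Igfr^{b'}$ is a closed immersion. This settles the ``closed immersion'' part with no real work.

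The substantive point is the description of the $\QIsog_G(\XX)$-action and the fact that it preserves $\bar\Ig^b$. I would first recall that $\QIsog_{G'}(\XX)$ acts on $\bar\Ig^{b'}$ by Caraiani-Scholze \cite[Cor.~4.3.5]{CaraianiScholze:ShVar}, where on $\k$-points an element precomposes the trivialisation $j\colon \XX \riso \Ascr^\univ[p^\infty]$ with its inverse and transports the base point along the induced quasi-isogeny of $\Ascr^\univ[p^\infty]$; this gives the stated formula $g.\tilde z = (\Theta_{\tilde z}(g^{-1}.z);(g^{-1})_\ast j)$ once one unwinds that on the central leaf the quasi-isogeny $\XX \to \XX$ induces an isomorphism of the associated Barsotti-Tate group, and that $\Theta_{\tilde z}$ is the Rapoport-Zink uniformisation map. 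Using the semidirect product decomposition $\QIsog_G(\XX) \cong \QIsog_G(\XX)^\circ \rtimes J_b(\Qp)$ of \cite[Rmk.~3.2.5]{Kim:ProductStructure}, I would handle the two factors separately, exactly as in Lemma~\ref{lemma J_b-action}: the action of $\QIsog_G(\XX)^{\circ}$ is trivial on the reduced (hence perfect) scheme $\bar\Ig^{b,\diamond}$ since $\QIsog_G(\XX)^{\circ,\red} = \{\id\}$, so it trivially preserves $\bar\Ig^b$; for $g \in J_b(\Qp)$, the canonical isomorphism $g^\ast(\XX,\lambda_\XX,(t_{\XX,\alpha})) \cong (\XX,\lambda_\XX,(t_{\XX,\alpha}))$ of the standard Barsotti-Tate group with its crystalline Tate tensors shows that $g$ carries the locus where $j$ respects the tensors into itself. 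More precisely, I would argue that the condition cutting out $\bar\Ig^b$ inside $\bar\Ig^{b,\diamond}$ — equality of the tensors $j_\ast(t_{\XX,\alpha})$ with the global crystalline tensors $\bar t_\alpha$ on $\Ascr_{\Gsf}[p^\infty]$ pulled back from Corollary~\ref{cor global tensors} — is manifestly compatible with the $J_b(\Qp)$-action because $g$ fixes $(t_{\XX,\alpha})$ on the source and the target tensors $\bar t_\alpha$ are intrinsic to $\Ascr_\Gsf[p^\infty]$ over the Shimura variety. Hence $\QIsog_G(\XX)$ stabilises $\bar\Ig^b$, and the $\k$-point formula is the restriction of the one for $\bar\Ig^{b'}$.

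The main obstacle I anticipate is bookkeeping around the base-point translation: verifying carefully that for $g \in J_b(\Qp)$ the composite $(g^{-1})_\ast j$ again trivialises $\Ascr_{\Gsf}[p^\infty]$ over a point lying in the \emph{same} central leaf $\bar\Ig^b$ rather than merely in $\bar\Ig^{b,\diamond}$, i.e.\ that the new base point $\Theta_{\tilde z}(g^{-1}.z)$ indeed lands in $\Ssb^b$ and that the induced trivialisation respects the Tate tensors. This is exactly where Axiom~\ref{axiom RZ} (through the existence of the lift $\Theta_{\tilde z}$ and its compatibility with tensors) and the global tensors of Corollary~\ref{cor global tensors} enter, and it is the only place where something beyond formal manipulation is needed; once this is in place the rest is, as in Lemma~\ref{lemma clopen}, a matter of quoting Proposition~\ref{prop homomorphisms of F-crystals} to see that the locus where two families of crystalline Tate tensors agree on an $F$-crystal over a perfect base is a union of connected components, and then lifting flatly to $\Spf O_{\breve E}$. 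I would finish by noting that, since the whole construction is compatible with the forgetful map to $\bar\Ig^{b'}$ and the latter's $\QIsog_{G'}(\XX)$-action is known, no independent continuity or representability check is required.
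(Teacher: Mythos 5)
Your first paragraph contains a genuine gap that undoes the ``closed immersion'' part of the argument. You assert that $\Sscr\hookrightarrow\Sscr'$ is a closed immersion and deduce that $\bar\Ig^{b,\diamond}\to\bar\Ig^{b'}$ is one. But $\iota\colon\Sscr\to\Sscr'$ is the \emph{normalisation} of the Zariski closure of $\Sh_\Ksf$ in $\Sscr'$, hence only finite: Proposition~\ref{prop normalisation on formal neighbourhoods} explicitly describes the formal fibre of $\iota$ over $z'$ as a disjoint union $\coprod_{\iota(z)=z'}\Def_{\Gscr_x}(\cdots)$, so $\iota$ (and a fortiori $\Ssb^{\rm perf}\to(\Ssb')^{\rm perf}$ and its base change $\bar\Ig^{b,\diamond}\to\bar\Ig^{b'}$) can fail to be injective on points. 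Corollary~\ref{cor stratifications} says nothing about this map. Consequently the closed-immersion claim is precisely where the work lies, and it is not purely formal: one must show that $\bar\Ig^b\to\bar\Ig^{b'}$ is universally injective. The paper does this using the \emph{uniqueness} clause of Axiom~\ref{axiom RZ}: if $\ztilde_1=(z_1,j)$ and $\ztilde_2=(z_2,j)$ in $\bar\Ig^b(\k)$ have the same image, the uniqueness of the tensor-compatible lift forces $\Theta_{\ztilde_1}=\Theta_{\ztilde_2}$, hence $z_1=\Theta_{\ztilde_1}(\id_\XX)=\Theta_{\ztilde_2}(\id_\XX)=z_2$; then finite $+$ universally injective gives a universal homeomorphism onto a closed subset, and perfectness (via \cite[Lemma~3.8]{BhattScholze:WittVectorAffGr}) upgrades this to a closed immersion. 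Your proposal invokes Axiom~\ref{axiom RZ} only for the stability of $\bar\Ig^b$ under the group action, not here, so the gap is not repaired elsewhere in your text.

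The remainder of your proposal --- restricting the Caraiani--Scholze action of $\QIsog_{G'}(\XX)$, splitting $\QIsog_G(\XX)\cong\QIsog_G(\XX)^\circ\rtimes J_b(\QQ_p)$, noting that the identity component acts trivially on the perfect scheme $\bar\Ig^{b,\diamond}$, and checking that $J_b(\QQ_p)$ preserves the tensor-matching locus --- is consistent with the paper's (terse) treatment, which delegates to the hyperspecial case and to the moduli description of the $J_{b'}(\QQ_p)$-action on $\bar\Ig^{b'}$. But you should rewrite the first step along the lines above before the rest can stand.
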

 \begin{proof}
  The proof is identical to the proof of \cite[Prop.~4.10]{Hamacher:ShVarProdStr}, which gives the proof in the hyperspecial case. We give a sketch of how the arguments apply for the reader's convenience.
  
  To see injectivity, assume that $\ztilde_1 = (z_1,j_1), \ztilde_2 = (z_2,j_2) \in \bar\Ig^b(\k)$ have the same image in $\Ig^{b'}(\k)$. Note that since $\bar\Ig^b \subset \Ig^{b'} \times_{\Sscr'} \Sscr$, it suffices to show that $z_1 = z_2$. This follows from the uniqueness of the lift in Axion~\ref{axiom RZ}, as it implies $\Theta_{\ztilde_1} = \Theta_{\ztilde_2}$ and in particular $z_1 = \Theta_{\ztilde_1}(\id_\XX) = \Theta_{\ztilde_2}(\id_\XX) = z_2$. Thus $\bar\Ig^b \to \bar\Ig^{b'}$ is universally injective. As it is finite, it is universally closed; thus a universal homeomorphism onto its (closed) image. It now follows from \cite[Lemma~3.8]{BhattScholze:WittVectorAffGr} that $\bar\Ig^b \to \bar\Ig^{b'}$ is a closed immersion. Now the group theoretic description of the $J_b(\QQ_p)$-action follows from the analogue formula for the action of $J_{b'}(\QQ_p)$ on $\bar\Ig^{b'}$ given by the moduli description. 
 \end{proof}

 \begin{corsub} \label{cor:IgusaGroupAction}
  The canonical morphism $\Igfr^{b} \to \Igfr^{b'}$ is a closed immersion with $\QIsog_G(\XX)$-stable image.
 \end{corsub}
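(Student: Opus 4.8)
The final statement to prove is Corollary~\ref{cor:IgusaGroupAction}: the canonical morphism $\Igfr^b \to \Igfr^{b'}$ is a closed immersion with $\QIsog_G(\XX)$-stable image.

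The plan is to deduce this immediately from Proposition~\ref{prop Igusa variety} by passing from the reduced special fibre to the flat lift over $\Spf O_{\breve E}$. First I would recall that both $\Igfr^{b'}$ and $\Igfr^b$ are, by construction, the unique flat $p$-adic formal lifts over $\Spf O_{\breve{E}}$ of the perfect schemes $\bar\Ig^{b'}$ and $\bar\Ig^b$, characterised by $\Igfr^{b'}(R) = \bar\Ig^{b'}(R/\varpi)$ and $\Igfr^b(R) = \bar\Ig^b(R/\varpi)$ for $O_{\breve E}$-algebras $R$ in which $\varpi$ is nilpotent; in particular the morphism $\Igfr^b \to \Igfr^{b'}$ is obtained by applying this functorial flat-lift construction to the closed immersion $\bar\Ig^b \hra \bar\Ig^{b'}$ of Proposition~\ref{prop Igusa variety}. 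Since taking the flat lift of a perfect scheme is the inverse of reduction modulo $\varpi$ on the relevant categories (the Witt-vector lifting of \cite[Prop.~4.3.8]{CaraianiScholze:ShVar} used for $\bar\Ig^{b'}$), a morphism of flat lifts is a closed immersion as soon as its reduction modulo $\varpi$ is; concretely, if $\bar\Ig^b = V(\Ical) \subset \bar\Ig^{b'}$ for a quasi-coherent ideal sheaf $\Ical$, then $\Igfr^b$ is cut out inside $\Igfr^{b'}$ by the unique ideal sheaf lifting $\Ical$ flatly, which is again a closed immersion of formal schemes. Thus the closed-immersion assertion follows formally.

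For the $\QIsog_G(\XX)$-stability of the image, I would argue as follows. The group $\QIsog_G(\XX)$ is a formal group scheme over $\Spf \W$ with perfect special fibre $\QIsog_G(\XX)_{\FFbar_p}$ (it is a closed formal subgroup of $\QIsog_{G'}(\XX)$, which has perfect special fibre by \cite[Lemma~4.2.10]{CaraianiScholze:ShVar}), so its action on the flat lift $\Igfr^{b'}$ is likewise the unique flat lift of the action of $\QIsog_G(\XX)_{\FFbar_p}$ on $\bar\Ig^{b'}$. By Proposition~\ref{prop Igusa variety} the closed subscheme $\bar\Ig^b \subset \bar\Ig^{b'}$ is stable under $\QIsog_G(\XX)_{\FFbar_p}$, i.e. the action morphism restricts to $\QIsog_G(\XX)_{\FFbar_p} \times \bar\Ig^b \to \bar\Ig^b$. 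Applying the flat-lift functor to this restricted action morphism — and using that products and the formal group structure are all compatible with reduction modulo $\varpi$ since everything in sight is perfect in the special fibre — yields an action morphism $\QIsog_G(\XX) \times \Igfr^b \to \Igfr^b$ lifting it, which is precisely the statement that the image of $\Igfr^b$ in $\Igfr^{b'}$ is $\QIsog_G(\XX)$-stable.

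I do not anticipate a serious obstacle here: the corollary is a routine ``spread out over $\Spf O_{\breve E}$'' consequence of Proposition~\ref{prop Igusa variety}, and the only point requiring a moment of care is to confirm that the flat-lift construction is genuinely functorial and exact enough to preserve closed immersions and to transport a group action — which is immediate from the fact that for perfect $\FF_p$-schemes the lift is computed by Witt vectors and agrees with the Serre--Tate/Dieudonné-theoretic lift used throughout Section~\ref{sect Almost Product}. One could also phrase the whole argument purely on the level of $R$-points using $\Igfr^b(R) = \bar\Ig^b(R/\varpi)$ and $\Igfr^{b'}(R) = \bar\Ig^{b'}(R/\varpi)$, which makes both the injectivity/closedness and the stability transparent, but checking closedness still requires the Witt-vector lifting input, so I would keep the argument as above.
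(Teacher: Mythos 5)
Your argument is correct and is essentially the paper's own proof: the paper likewise invokes the universal property of the flat lift of perfect $\FFbar_p$-schemes to reduce both assertions to the special fibre, where they are exactly the content of Proposition~\ref{prop Igusa variety}. Your added detail on why the Witt-vector lift preserves closed immersions and transports the group action is a harmless elaboration of the same one-line reduction.
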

 \begin{proof}
  By the universal property of the flat lift of perfect $\FFbar_p$-schemes to $\W$, it suffices to check on special fibre. Hence the claim follows by the above proposition.
 \end{proof}

 We equip $\Igfr^b$ with the $\QIsog_G(\XX)$-action defined by above corollary. We also define a twisted Weil group action given by the Weil descent datum obtained by the Frobenius lift.

 If we wish to emphasize the level away from $p$, we will write $\Igfr_{\Ksf^p}^{b}$ and $\bar\Ig_{\Ksf^p}^b$ for the respective Igusa varieties over $\Sscr_{\Ksf^p\Ksf_p}$. The construction of Igusa varieties is compatible with the $\Gsf(\AA_f^p)$-action on $\{\Sscr_{\Ksf^p\Ksf_p}\}_{\Ksf^p \subset G(\AA_f^p)}$ in the following sense.

 \begin{lemsub} \label{lemma Hecke on Igusa tower}
  Let $g \in \Gsf(\AA_f^p)$ and $\Ksf_1^p \subset \Gsf(\AA_f^p)$ small enough compact open such that $g^{-1}\Ksf_1^p g \subset \Ksf^p$. Then the morphism $g: \Sscr_{\Ksf_1^p\Ksf_p} \to \Sscr_{\Ksf^p\Ksf_p}$ extends canonically to a morphism $\bar\Ig_{\Ksf_1^p}^b \to \bar\Ig_{\Ksf^p}^b$ and the diagram
  \begin{center}
   \begin{tikzcd}
    \bar\Ig^b_{\Ksf_1^p} \arrow{r}{g} \arrow{d} & \bar\Ig^b_{\Ksf^p} \arrow{d} \\
    \Sscr_{\Ksf_1^p\Ksf_p} \arrow{r}{g} & \Sscr_{\Ksf^p\Ksf_p}
   \end{tikzcd}
  \end{center}
 is cartesian. In particular, $\Gsf(\AA_f^p)$ acts on the tower $\{\Igfr_{\Ksf^p}^b\}_{\Ksf^p \subset \Gsf(\AA_f^p)}$
 \end{lemsub}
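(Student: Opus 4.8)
The plan is to reduce the statement to the already-established compatibility of the Rapoport-Zink uniformisation with the $\Gsf(\AA_f^p)$-action (more precisely, to the commutative square in the previous lemma relating $\Theta_{(z_1,j_1)}$, $\Theta_{(z,j)}$ and the Hecke operator $g$), together with the fact that Igusa varieties are defined as closed subschemes of the corresponding Siegel Igusa varieties and are functorial in the same way. First I would recall that in the Siegel case the Hecke operator $g\colon\Sscr'_{\Ksf_1'^p\Ksf_p'}\to\Sscr'_{\Ksf'^p\Ksf_p'}$ lifts canonically to a morphism $\bar\Ig^{b'}_{\Ksf_1'^p}\to\bar\Ig^{b'}_{\Ksf'^p}$ making the analogous square cartesian: this is immediate from the moduli description of $\bar\Ig^{b'}$ (it parametrises isomorphisms $(\XX,\lambda_\XX)\isom(\Ascr^\univ[p^\infty],\lambda^\univ)$, and $g$ induces a canonical isomorphism $\Ascr^\univ_{z_1}[p^\infty]\cong\Ascr^\univ_{g(z_1)}[p^\infty]$ respecting the polarisation), and it also follows from \cite[Cor.~4.3.5]{CaraianiScholze:ShVar}. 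Since $\bar\Ig^b_{\Ksf^p}=\bar\Ig^{b'}_{\Ksf^p}\times_{(\Ssb')^{\perf}}\Ssb_{\Ksf^p\Ksf_p}^{\perf}$ intersected with the clopen locus where the tautological isomorphism respects the crystalline Tate tensors, and since $g$ is compatible with the base change $\Ssb_{\Ksf_1^p\Ksf_p}\to\Ssb_{\Ksf^p\Ksf_p}$, it suffices to check that the canonical isomorphism $\Ascr_{\Gsf,z_1}[p^\infty]\cong\Ascr_{\Gsf,g(z_1)}[p^\infty]$ respects the crystalline Tate tensors $(t_{\alpha,z})$; but this was already observed in the proof of the previous lemma (the global tensors of Corollary~\ref{cor global tensors} are pulled back through the isomorphism of $p$-divisible groups induced by $g$). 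Hence the clopen ``tensor-matching'' locus in $\bar\Ig^{b'}_{\Ksf_1'^p}\times_{(\Ssb')^{\perf}}\Ssb^{\perf}_{\Ksf_1^p\Ksf_p}$ is exactly the preimage of the corresponding locus in $\bar\Ig^{b'}_{\Ksf'^p}\times_{(\Ssb')^{\perf}}\Ssb^{\perf}_{\Ksf^p\Ksf_p}$, which gives both the existence of the extension $\bar\Ig^b_{\Ksf_1^p}\to\bar\Ig^b_{\Ksf^p}$ and the cartesianness of the square.

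Next I would descend the group-theoretic description of the action on $\FF_p$-points, using Proposition~\ref{prop Igusa variety}: under the identification $g.\ztilde=(\Theta_{\ztilde}(g^{-1}.z),(g^{-1})_\ast j)$, the compatibility is just a restatement of the commuting triangle for $\Theta_{(z_1,j_1)}$, $\Theta_{(z,j)}$ and $g$ proven in the last lemma of Section~\ref{sect RZ}. Concretely: a point of $\bar\Ig^b_{\Ksf_1^p}$ is a pair $(z_1,j_1)$ with $z_1\in\Cbar^b_{\Ksf_1^p}(\k)$ and $j_1\colon\XX\isom\Ascr_{\Gsf,z_1}[p^\infty]$ respecting tensors; its image under $g$ is the pair $(g(z_1),j)$ with $j$ the composite of $j_1$ with the canonical isomorphism $\Ascr_{\Gsf,z_1}[p^\infty]\cong\Ascr_{\Gsf,g(z_1)}[p^\infty]$ — which again respects tensors by the cited observation — so the map is well-defined on $\k$-points and commutes with projection to $\Sscr$. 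Cartesianness on $\k$-points then follows because the projection $\bar\Ig^b\to\Sscr\times_{\Sscr'}\bar\Ig^{b'}$ is an isomorphism onto a locally closed subscheme and $g$ on $\bar\Ig^{b'}$ is cartesian over $g$ on $\Sscr'$; one then promotes this to a scheme-theoretic statement using that all schemes in sight are perfect (so determined by their $\k$-points among locally closed perfect subschemes of the affine Grassmannian / Igusa tower) and then passing to the flat lifts by the universal property of $W(-)$, exactly as in Corollary~\ref{cor:IgusaGroupAction}.

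Finally, the claim that $\Gsf(\AA_f^p)$ \emph{acts} on the tower $\{\Igfr^b_{\Ksf^p}\}_{\Ksf^p}$ is a formal consequence: we have the maps for each admissible $(g,\Ksf_1^p)$, they are compatible with composition because the corresponding maps on $\{\Sscr_{\Ksf^p\Ksf_p}\}$ and on $\{\bar\Ig^{b'}_{\Ksf'^p}\}$ are, and the identity acts trivially; cocycle/associativity conditions are checked on $\k$-points, where they reduce to the already-known identities $\Theta_{(z,j)}\circ\Theta\text{'s}$ being associative in the Siegel case and to the transitivity of the canonical isomorphisms $\Ascr_{\Gsf,z}[p^\infty]\cong\Ascr_{\Gsf,g(z)}[p^\infty]$. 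I expect no serious obstacle here: the only point requiring care is the verification that the canonical isomorphism induced by a prime-to-$p$ Hecke operator respects the crystalline Tate tensors, and this is precisely where Corollary~\ref{cor global tensors} (globality of the tensors) is used — it is what makes the ``tensor-matching'' clopen condition stable under the $\Gsf(\AA_f^p)$-action. Everything else is bookkeeping transported from the Siegel case and from the previous lemma.
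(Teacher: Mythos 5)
Your proposal is correct and follows essentially the same route as the paper's (much terser) proof: reduce to the Siegel case via the moduli description, obtain the statement for the fibre product $\bar\Ig^{b,\diamond}$ by base change, and observe that the tensor-matching condition cutting out $\bar\Ig^b\subset\bar\Ig^{b,\diamond}$ is preserved under the prime-to-$p$ Hecke action. The additional detail you supply (why the canonical isomorphism of $p$-divisible groups respects the crystalline Tate tensors, and the passage from $\k$-points to perfect schemes to flat lifts) is a correct elaboration of what the paper leaves implicit.
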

 \begin{proof}
  For $\Gsf=\Gsf'$, this follows from the moduli description. Thus the assertion of the lemma holds if we replace $\bar\Ig^b$ by $\bar\Ig^{b, \diamond}$. Now $\bar\Ig^b \subset \bar\Ig^{b,\diamond}$ is cut out by conditions on the crystal, which is preserved under the $\Gsf(\AA_f^p)$-action. Thus the claim follows.
 \end{proof}

 \subsection{The product structure of Newton strata} \label{ssect product structure}

 Recall that in \cite[\S~4]{CaraianiScholze:ShVar} Caraiani and Scholze constructed an isomorphism
 \begin{equation} \label{eq moduli structure}
  \Igfr^{b'} \times \Mfr^{b'} \isom \Xfr^{b'}
 \end{equation}
 where $\Xfr^{b'}$ is defined by
 \begin{multline*}
  \Xfr^{b'}(R) \coloneqq \{(\Ascr,\lambda,\eta; \psi) \mid (\Ascr,\lambda,\eta) \in \Sscr'(R), \\ \psi\colon (\XX,\lambda_\XX) \otimes R/\varpi \to (\Ascr[p^\infty],\lambda) \otimes R/\varpi \textnormal{ quasi-isogeny} \}.
 \end{multline*}
 for every $O_E$-algebra $R$ with $\varpi \in R$ nilpotent. More precisely, the isomorphism maps a tuple $(\Ascr,j;\Xscr,\varphi)$ to $(\Ascr',\lambda',\eta',\psi)$ constructed as follows. Let us first assume that $\varphi$ is an isogeny. Then we define
 \[
  \Ascr' \otimes R/\varpi \coloneqq \bigslant{(\Ascr \otimes R/\varpi)}{j(\ker\varphi)}
 \]
  with the induced polarisation $\bar\lambda'$ and level structure $\eta'$. Then $j$ identifies $(\Ascr' \otimes R/\varpi)[p^\infty]$ with $\Xscr \otimes R/\varpi$; thus the Serre-Tate theorem yields a deformation $(\Ascr',\lambda')$. Since $\Ascr$ and $\Ascr'$ have isomorphic $\AA_f^p$-Tate modules, a level-$\Ksf^p$-structure on $\Ascr' \otimes R/\varpi$ is the same as a level-$\Ksf^p$-structure on $\Ascr'$. We define $\psi$ as the composition
 \begin{center}
  \begin{tikzcd}
   \XX \otimes R/\varpi \arrow{r}{j} & \Ascr[p^\infty] \otimes R/\varpi \arrow[two heads]{r}{\varphi} & \Ascr'[p^\infty] \otimes R/\varpi.
  \end{tikzcd}
 \end{center}
Observe that we have $(\Ascr,j;\Xscr,p^n\varphi)\mapsto (\Ascr', \lambda',\eta',p^n\psi)$ for any $n\geqslant0$, so this construction naturally extends to the case when $\varphi$ is not necessarily an isogeny but a quasi-isogeny.
 We denote by $\pi_\infty'\colon \Xfr^{b'} \to \Sscr'$ the canonical projection.

 \begin{rmksub}
  This construction is slightly different from the one in \cite{CaraianiScholze:ShVar}, where they fix a lift of $\XX$ to characteristic $0$. This is important for our purposes since we may not be able to lift $\XX$  together with its crystalline Tate tensors to $O_{\Ebreve}$ as $\Def_G(\XX)$ may not be smooth. However, it is easily seen that the isomorphism is the same as in \cite{CaraianiScholze:ShVar} due to the rigidity of quasi-isogenies.
 \end{rmksub}

 Let $\Xfr^b \subset \Xfr^{b'}$ denote the image of the closed formal subscheme $\Mfr^b \times \Igfr^b \subset \Mfr^{b'} \times \Igfr^{b'}$ with respect to the above isomorphism. Note that the canonical $\QIsog_{G'}(\XX)$ action on $\Xfr^{b'}$ restricts to a $\QIsog_{G}(\XX)$-action on $\Xfr^b$ by Lemma~\ref{lemma J_b-action} and Proposition~\ref{prop Igusa variety}.

 Since our schemes are not necessarily smooth, crystalline structures are not so well behaved and we lack such a moduli interpretation for $\Xfr^b$. However, we will construct a lift $\pi_\infty\colon \Xfr^b \to \Sscr$, which behaves as one would expect from a moduli space parametrising quasi-isogenies $\XX \to \Ascr_\Gsf[p^\infty]$ respecting additional structures.

 In order to lift $\pi_\infty'$ to a morphism $\Xfr^b \to \Sscr$, we first study its restriction to a formal neighbourhood. Recall that the formal neighbourhood $\Xfr_x^\wedge$ of a formal scheme $\Xfr$ at a point $x \in \Xfr$ is the set-valued functor on the category  of zero-dimensional local rings given by
 \[
  \Xfr_x^\wedge(R) = \{\varphi \in \Xfr(R) \mid \image \varphi^{\rm top} = \{x\}\}.
 \]
 For our purposes it suffices to consider the case where the maximal ideal $\mfr_{\Xfr,x}$ of the stalk $\Ocal_{\Xfr,x}$ contains a finitely generated ideal $I \subset \mfr_{\Xfr,x}$ such that $\sqrt{I} = \mfr_{\Xfr,x}$. One easily checks that in this case $\Xfr_x^\wedge$ is pro-represented by the $I$-adic completion $\hat\Ocal_{\Xfr,x}$ of $\Ocal_{\Xfr,x}$. Moreover, for $\Xfr = \Xfr^b$ the following holds.
 
 \begin{lemsub} \label{lem formal nbhd of X^b}
  Let $x = (y,\ztilde) \in \Xfr^b(\k)$. Then there exists a finitely generalted ideal $I \subset \Ocal_{\Xfr^b,x}$ such that $\sqrt{I} = \mfr_{\Xfr^b,x}$. Moreover, for any $n$ the completion $\Ocal_{\Xfr^b,x}/p^n \to \hat\Ocal_{\Xfr^b,x}/p^n$ is faithfully flat.
 \end{lemsub}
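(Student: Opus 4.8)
The statement is local on the source, so I would fix the point $x = (y, \ztilde) \in \Xfr^b(\k)$ and work with the stalk $\Ocal_{\Xfr^b,x}$ and its completion. The plan is to produce the ideal $I$ by transporting structure from the known model $\Mfr^b \times \Igfr^b$. Recall that $\Xfr^b$ was \emph{defined} as the image of the closed formal subscheme $\Mfr^b \times \Igfr^b \subset \Mfr^{b'} \times \Igfr^{b'}$ under the Caraiani--Scholze isomorphism $\Igfr^{b'} \times \Mfr^{b'} \isom \Xfr^{b'}$ of \eqref{eq moduli structure}; since that map is an isomorphism of formal schemes, $\Xfr^b \cong \Mfr^b \times \Igfr^b$ as formal schemes, and it suffices to prove the assertion there. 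Writing $x$ as the pair $(y,\ztilde)$ with $y \in \Mfr^b(\k)$ and $\ztilde \in \Igfr^b(\k)$, the stalk factors as a completed tensor product of the stalk of $\Mfr^b$ at $y$ and the stalk of $\Igfr^b$ at $\ztilde$. For the $\Mfr^b$-factor: by Lemma~\ref{lemma M diamond} and Proposition~\ref{prop RZ space}, $\Mfr^b$ is locally of finite type and normal over $\Spf O_{\breve E}$, so a Zariski-affine open neighbourhood of $y$ is of the form $\Spf A$ with $A$ a complete Noetherian local ring after completing at $y$; here $\mfr_{A,y}$ is finitely generated and $p \in \mfr_{A,y}$, so one may take $I$ generated by $p$ together with lifts of generators of $\mfr/(\mfr^2 + pA)$ — equivalently, since $\Mfr^b$ is locally noetherian, any ideal of definition works. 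For the $\Igfr^b$-factor: $\Igfr^b$ is the unique flat $O_{\breve E}$-lift of the \emph{perfect} $\FFbar_p$-scheme $\bar\Ig^b$, so its structure ring modulo $p$ is a perfect $\FFbar_p$-algebra; the maximal ideal of the stalk there is \emph{not} finitely generated, but its radical is generated by $p$ alone (in a perfect ring every element of the maximal ideal has all $p$-power roots, so modulo $p$ the nilradical-style argument gives $\sqrt{(p)} = \mfr$ after noting $\bar\Ig^b$ is reduced — more precisely $\sqrt{(p)} = \mfr_{\Igfr^b,\ztilde}$ since $\bar\Ig^b$ is a perfect, hence reduced, $\FFbar_p$-scheme whose local rings have trivial $\mfr$ modulo $p$ up to radical). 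Combining the two factors via the completed tensor product gives a finitely generated $I \subset \Ocal_{\Xfr^b,x}$ with $\sqrt{I} = \mfr_{\Xfr^b,x}$.

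For the faithful flatness of $\Ocal_{\Xfr^b,x}/p^n \to \wh\Ocal_{\Xfr^b,x}/p^n$, I would again split off the two factors. On the $\Mfr^b$ side the local ring at $y$ is Noetherian (finite type over $O_{\breve E}$), so completion at the maximal ideal is faithfully flat by standard commutative algebra (\cite[Tag 00MC]{AlgStackProj} or the equivalent), and this persists after reduction modulo $p^n$. On the $\Igfr^b$ side, modulo $p^n$ the ring is built (by flatness of the lift and dévissage in $n$) from the perfect $\FFbar_p$-algebra $\Ocal_{\bar\Ig^b,\ztilde}$; the key point is that $I$-adic completion of a (localization of a) perfect $\FFbar_p$-algebra at the ideal $I = (p^{1/\infty})$-analogue is faithfully flat — this is exactly the kind of statement used by Bhatt--Scholze for Witt-vector affine Grassmannians and by Caraiani--Scholze, and one can cite \cite[Prop.~4.3.8]{CaraianiScholze:ShVar} or argue directly that perfect rings have no higher Tor against their completions. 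Finally I would assemble: the completed tensor product of two faithfully flat maps modulo $p^n$ is faithfully flat modulo $p^n$, using that $\wh\Ocal_{\Xfr^b,x} \cong \wh\Ocal_{\Mfr^b,y} \,\wh\otimes\, \wh\Ocal_{\Igfr^b,\ztilde}$ and compatibility of completion with the product.

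The main obstacle I expect is the $\Igfr^b$-factor, precisely because perfect rings are non-Noetherian: one cannot invoke the standard Noetherian completion theorems, and one must be careful that "the $I$-adic completion'' is the right object (the relevant $I$ on the Igusa side is generated just by $p$, so one is really completing modulo $p$-powers, which meshes with the flat-lift description $\Igfr^b(R) = \bar\Ig^b(R/\varpi)$). The cleanest route is probably to avoid ring-theoretic gymnastics entirely and observe that on the Igusa side completing at $\ztilde$ modulo $p^n$ literally \emph{is} base change along a localization of the perfect $\FFbar_p$-scheme followed by the flat lift, and invoke that localizations of perfect rings are flat (indeed faithfully flat at a point in the locus) — a fact already implicit in the Bhatt--Scholze machinery cited elsewhere in the paper. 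A secondary, purely bookkeeping, subtlety is checking that the formal neighbourhood of the product really is the completed tensor product of the formal neighbourhoods when one factor is non-Noetherian; this should follow from the fact that $\Mfr^b$ is locally Noetherian so the completion there is well-behaved, and the $\Igfr^b$-factor is completed only in the $p$-direction.
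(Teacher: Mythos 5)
Your reduction to the two factors $\Mfr^b$ and $\Igfr^b$ is the right first move, and the $\Mfr^b$-factor is handled correctly (it is locally noetherian, so any ideal of definition works). The genuine error is in your treatment of the Igusa factor: you claim that $\sqrt{(p)}=\mfr_{\Igfr^b,\ztilde}$ because $\bar\Ig^b$ is perfect. This is false whenever the central leaf $\Cbar^b$ has positive dimension (i.e.\ for all non-basic $b$): $\bar\Ig^b$ is the perfection of a pro-\'etale cover of $\Cbar^b$, so $\Ocal_{\bar\Ig^b,\ztilde}$ is a \emph{reduced} local ring of positive Krull dimension, and hence $\sqrt{(p)}=(p)\ne\mfr_{\Igfr^b,\ztilde}$. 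Perfectness gives reducedness, not that the maximal ideal is nilpotent mod $p$. Consequently the ideal you propose (generated by $p$ and generators of $\mfr_{\Mfr^b,y}$) does not have the maximal ideal as its radical. The paper's fix is to take $I$ to be generated by the maximal ideal $\mfr_{\Cbar^b,z}$ of the \emph{noetherian} central leaf at the image $z$ of $\ztilde$: since $\Ocal_{\Cbar^b,z}\to\Ocal_{\Cbar^b,z}^{\perf}\to\Ocal_{\bar\Ig^b,\ztilde}$ is a composite of perfection with an ind-\'etale map, one gets $\sqrt{\mfr_{\Cbar^b,z}\cdot\Ocal_{\bar\Ig^b,\ztilde}}=\mfr_{\bar\Ig^b,\ztilde}$, and $\mfr_{\Cbar^b,z}$ is finitely generated by noetherianity of $\Cbar^b$. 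Your argument nowhere brings in the central leaf, which is the essential input.

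This error propagates into your flatness argument: since the completion of the Igusa factor is \emph{not} merely $p$-adic but is taken with respect to the full maximal ideal (equivalently, with respect to $\mfr_{\Cbar^b,z}$), you cannot reduce to the statement that $p$-adic completion of a perfect ring is well behaved; you are completing a non-noetherian ring in the "horizontal'' directions of the central leaf. Moreover, the step "the completed tensor product of two faithfully flat maps modulo $p^n$ is faithfully flat'' is exactly the non-noetherian difficulty you flag as "bookkeeping'' but do not resolve. The paper's route is different and does resolve it: one shows that $\Ocal_{\bar\Ig^b,\ztilde}\otimes\Ocal_{\Mfr^b_{\k},y}$ is \emph{coherent}, by writing $\Ocal_{\bar\Ig^b,\ztilde}$ as a filtered colimit of regular (noetherian) local rings along flat (\'etale or Frobenius) transition maps, and then invokes the theorem that completion of a coherent ring with respect to a finitely generated ideal is flat (and remains coherent). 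Faithful flatness then follows because the map is local, and the reduction to $n=1$ uses the local flatness criterion. If you want to salvage your outline, replace the "radical of $(p)$'' claim by the central-leaf argument and replace the assembly step by the coherence argument.
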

 \begin{proof}
  Obviously it suffices to prove the first assertion modulo $p$. Since $\Ocal_{\Mfr^b_{\k,y}}\otimes \Ocal_{\bar\Ig^b,\ztilde}$ is dense in $\Ocal_{\Xfr^\wedge_x}$ , it suffices to show the first assertion for $\Mfr^b_{\k}$ and $\bar\Ig^b$. Since $\Mfr^b$ is locally noetherian, the $\mfr_{\Mfr^b_{\k},y}$ is already finitely generated and the statement trivial. Now denote by $z$ the image of $\ztilde$ in $\Cbar^b$. For $\Ocal_{\bar\Ig^b,\ztilde}$, we claim that the ideal $I \coloneqq \mfr_{\Cbar^b,z}$ satisfies the assumptions. First note that it is finitely generated since $\Cbar^b$ is noetherian. In order to see that $\sqrt{I} = \mfr_{\bar\Ig^b,\ztilde}$, we decompose the canonical morphism $\Ocal_{\Cbar^b,z} \to \Ocal_{\bar\Ig^b,\ztilde}$ as
  \begin{equation} \label{eq local Igusa}
   \Ocal_{\Cbar^b,z} \to \Ocal_{\Cbar^b,z}^\perf \to \Ocal_{\bar\Ig^b,\ztilde}.
  \end{equation}
  Now
  \[
   \mfr_{\bar\Ig^b,\ztilde} \supseteq \sqrt{I} \supseteq \sqrt{\mfr_{\Cbar^b,z}\cdot\Ocal_{\Cbar^b,z}^\perf} \cdot \Ocal_{\bar\Ig^b,\ztilde} = \mfr_{\Cbar^\perf,z} \cdot \Ocal_{\bar\Ig^b,\ztilde} = \mfr_{\bar\Ig^b,\ztilde},
  \]
  where the last equality follows from the fact that the second morphism of (\ref{eq local Igusa}) is ind-\'etale. This finishes the proof of the first assertion.
  
  For the second assertion we may assume that $n=1$ by the local flatness criterion (see for example \cite[Thm.~22.3]{matsumura:crt}). Since the completion of a coherent ring with respect to a finitely generated ideal is flat by \cite[Thm.~3.58]{Hassan:CoherentRings}, it suffices to show that $\Ocal_{\Xfr^b_{\k},x}$ is coherent. Moreover, as
  $
   \Ocal_{\Xfr^b_{\k},x}$ and $ \Ocal_{\bar\Ig^b,\ztilde} \hat\otimes \Ocal_{\Mfr^b_{\k},y}
  $
  define the same completion
  and the completion of coherent ring with respect to a finitely generated ideal is still coherent (e.g. \cite[Thm.~2.4.1]{Glaz:CoherentRings} combined with \cite[Prop.~3.59]{Hassan:CoherentRings}), it suffices to show that $\Ocal_{\bar\Ig^b,\ztilde} \hat\otimes \Ocal_{\Mfr^b_{\k},y}$ is coherent.
  
   First note that by (\ref{eq local Igusa}) the ring $\Ocal_{\bar\Ig,\ztilde}$ can be written as direct limit $\varinjlim R_i$ of regular local rings where the transition morphisms are either \'etale or the absolute Frobenius; in particular they are flat. Thus
  \[
   \Ocal_{\bar\Ig^b,\ztilde} \otimes \Ocal_{\Mfr^b_{\k},y} \cong \varinjlim R_i \otimes  \Ocal_{\Mfr^b_{\k},y}
  \]
  can be written as a direct limit of noetherian rings with flat transition morphisms and is thus coherent (see for example\cite[Th.~2.3.3]{Glaz:CoherentRings}).
 \end{proof}

 Now fix a point $x = (y,\ztilde) \in \Xfr^b(\k)$ and let $w' \coloneqq \pi_\infty'(x) = \Theta'_{\ztilde}(y)$ and $w \coloneqq \Theta_{\ztilde}(y)$. Then $\Theta_{\ztilde}'$ and $\Theta_{\ztilde}$ induce compatible isomorphisms of formal neighbourhoods $\Mfr_y^{b',\wedge} \isom \Sscr_{w'}'^\wedge$ and $\Mfr_y^{b,\wedge} \isom \Sscr_{w}^\wedge$. Writing $\ztilde = (z,j_0)$, we moreover obtain
 \begin{align*}
  \Ig^{b',\wedge}_{\ztilde}(S,\mfr_S) &= \left\{(A,\lambda,\eta;j) \left\vert
  \begin{array}{ll}
   (A,\lambda,\eta) \in \Sscr_{z}'^\wedge(S), \\
   j\colon (\XX,\lambda_\XX) \isom (A[p^\infty],\lambda) \textnormal{ with } \restr{j}{\{\mfr_S\}} = j_0
  \end{array} \right. \right\} \\
  &\isom \left\{(X,\lambda_X,\rho;j) \left\vert 
 \begin{array}{ll}
   (X,\lambda_X,\rho) \in \Mfr^{b'}(S) \textnormal{ such that } \restr{\rho}{\mfr_S} = \id_{\XX} \\ 
   j\colon (\XX,\lambda_\XX) \isom (X,\lambda_X) \textnormal{ such that } \restr{j}{\mfr_S} = \restr{\rho}{\mfr_S}
  \end{array} \right. \right\} \\ 
  &\isom \QIsog_{G'}(\XX)^\circ_{\k}(S),                                        
 \end{align*}
 where the first isomorphism is induced by $\Theta_{\ztilde}'$ and the second given by $(X,\lambda_X,\rho;j) \mapsto \rho \circ j^{-1}$. Using the uniqueness of flat lifts we conclude $\bar\Ig^{b',\wedge}_{\ztilde} \cong \QIsog_{G'}(\XX)_{\k}^\circ$ and using the uniqueness of flat lifts this extends to $\Igfr^{b',\wedge}_{\ztilde} \cong \QIsog_{G'}(\XX)$. By \cite[Cor.~5.1.3]{Kim:ProductStructure} this isomorphism restricts to $\Igfr^{b,\wedge}_{\ztilde} \cong \QIsog_{G}(\XX)$. Tracing through the identifications above, we see that the restriction to formal neighbourhoods $\pi_{\infty,x}'^\wedge\colon \Xfr_x^{b',\wedge} \to \Sscr_{w'}'^\wedge$ becomes the canonical group action $\QIsog_{G'}(\XX)^\circ \times \Mfr_y^{b',\wedge} \to \Mfr_y^{b',\wedge}$. By \cite[Thm.~4.3.1]{Kim:ProductStructure} this restricts to $\QIsog_{G}(\XX)^\circ \times \Mfr_y^{b,\wedge} \to \Mfr_y^{b,\wedge}$, i.e. $\pi_{\infty,x}'^\wedge$ restricts to $\pi_{\infty,x}^\wedge\colon \Xfr^{b,\wedge}_x \to \Sscr_{w}^\wedge$. We may reformulate our observation as follows.

 \begin{lemsub} \label{lem lift on formal fibres}
  Let $x = (y,\ztilde) \in \Xfr^b(\k)$. Then the restriction $\restr{\pi_\infty'}{\Xfr^{b,\wedge}_x}$ has a unique lift $\pi_{\infty,x}^\wedge\colon \Xfr_x^{b,\wedge} \to \Sscr$. Moreover, $\pi_{\infty,x}^\wedge(x) = \Theta_{\ztilde}(y)$ and the isomorphism in Proposition~\ref{prop normalisation on formal neighbourhoods} identifies the induced map $\Xfr^{b,\wedge}_x \to \Sscr^\wedge_{\pi_{\infty,x}^\wedge(x)}$ with the $\QIsog_G(\XX)^\circ$ action on $\Def_G(\Ascr_{\Gsf,\pi_{\infty,x}^\wedge(x)}[p^\infty])$ constructed in \cite[\S~4.3]{Kim:ProductStructure}.
 \end{lemsub}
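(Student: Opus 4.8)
\textbf{Proof plan for Lemma~\ref{lem lift on formal fibres}.} The statement has in fact already been essentially established in the discussion preceding it; the task is to package that discussion into a clean argument. The plan is as follows. First I would recall that by Lemma~\ref{lem formal nbhd of X^b} the formal neighbourhood $\Xfr^{b,\wedge}_x$ is pro-represented by the $I$-adic completion $\hat\Ocal_{\Xfr^b,x}$ for a suitable finitely generated ideal $I$ with $\sqrt I = \mfr_{\Xfr^b,x}$, so that it makes sense to speak of morphisms out of $\Xfr^{b,\wedge}_x$ and to test them on the category of zero-dimensional local rings. Next I would invoke the identifications already set up: via $\Theta'_{\ztilde}$ and $\Theta_{\ztilde}$ one has compatible isomorphisms $\Mfr^{b',\wedge}_y \isom \Sscr'^\wedge_{w'}$ and $\Mfr^{b,\wedge}_y \isom \Sscr^\wedge_w$ (these come from Lemma~\ref{lemma M diamond} and Proposition~\ref{prop RZ space}), and the chain of isomorphisms $\Igfr^{b',\wedge}_{\ztilde} \cong \QIsog_{G'}(\XX)$ restricting by \cite[Cor.~5.1.3]{Kim:ProductStructure} to $\Igfr^{b,\wedge}_{\ztilde} \cong \QIsog_G(\XX)$. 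Under these identifications, the Caraiani--Scholze isomorphism (\ref{eq moduli structure}) shows that the restriction $\restr{\pi'_\infty}{\Xfr^{b',\wedge}_x}$ is identified with the canonical group action $\QIsog_{G'}(\XX)^\circ \times \Mfr^{b',\wedge}_y \to \Mfr^{b',\wedge}_y$.

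The existence of the lift then follows by applying \cite[Thm.~4.3.1]{Kim:ProductStructure}, which says precisely that this $\QIsog_{G'}(\XX)^\circ$-action on the deformation space of the polarised Barsotti--Tate group restricts to a $\QIsog_G(\XX)^\circ$-action on $\Def_G(-)$; transporting back through the identifications, this restricted action is a morphism $\Xfr^{b,\wedge}_x \to \Sscr^\wedge_w \hookrightarrow \Sscr$ whose composition with $\Sscr \to \Sscr'$ is $\restr{\pi'_\infty}{\Xfr^{b,\wedge}_x}$. The additional assertions ($\pi^\wedge_{\infty,x}(x) = \Theta_{\ztilde}(y)$ and the identification of the induced map on formal neighbourhoods with the $\QIsog_G(\XX)^\circ$-action on $\Def_G(\Ascr_{\Gsf,\pi^\wedge_{\infty,x}(x)}[p^\infty])$ via Proposition~\ref{prop normalisation on formal neighbourhoods}) are read off directly from the construction, since at the point $x=(y,\ztilde)$ the quasi-isogeny component is $\id_\XX$ and $\Theta_{\ztilde}(y)$ is by definition the image.

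For uniqueness I would argue as follows: $\Sscr \to \Sscr'$ is finite (in particular separated and unramified on the relevant locus after normalisation), and the two lifts agree on $\k$-points by Axiom~\ref{axiom RZ} (the uniqueness clause in the lift $f_{\ztilde}$), so they agree on the reduced subscheme; since $\Sscr^\wedge_w \to \Sscr'^\wedge_{w'}$ is a closed immersion of formal schemes and both candidate lifts land in $\Sscr^\wedge_w$ and induce the same map there (both being identified with the same restricted group action on $\Def_G$), they coincide. Alternatively, and more cleanly, uniqueness is immediate from the fact that $\iota\colon\Sscr\to\Sscr'$ is a monomorphism on the formal completions in question: $\Theta_{\ztilde}$ and $\Theta'_{\ztilde}$ induce \emph{isomorphisms} on formal neighbourhoods (Lemma~\ref{lemma M diamond}), so $\Sscr^\wedge_w \hookrightarrow \Sscr'^\wedge_{w'}$ is injective on $R$-points for $R$ a zero-dimensional local ring, forcing any two lifts of $\restr{\pi'_\infty}{\Xfr^{b,\wedge}_x}$ to be equal.

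\textbf{Main obstacle.} None of the steps is genuinely hard given the machinery already in place; the only point requiring care is the bookkeeping of the various identifications, in particular checking that the isomorphism $\Igfr^{b,\wedge}_{\ztilde}\cong\QIsog_G(\XX)$ is compatible with the one obtained from $\Igfr^{b',\wedge}_{\ztilde}\cong\QIsog_{G'}(\XX)$ and the closed immersion $\QIsog_G(\XX)\hookrightarrow\QIsog_{G'}(\XX)$, so that \cite[Thm.~4.3.1]{Kim:ProductStructure} really applies on the nose. This is exactly what \cite[Cor.~5.1.3]{Kim:ProductStructure} provides, so the lemma is essentially a formal consequence of the preceding discussion together with the cited results of Kim.
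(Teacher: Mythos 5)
Your existence argument coincides with the paper's: the existence clause of the lemma is exactly the discussion preceding it (the identifications $\Mfr^{b,\wedge}_y\isom\Sscr^\wedge_w$ and $\Igfr^{b,\wedge}_{\ztilde}\cong\QIsog_G(\XX)$ via \cite[Cor.~5.1.3]{Kim:ProductStructure}, and the restriction of the group action via \cite[Thm.~4.3.1]{Kim:ProductStructure}), and both you and the paper simply invoke that discussion. The compatibility bookkeeping you flag as the "main obstacle" is indeed all that is needed there.

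The uniqueness part, however, has a genuine gap. A lift of $\restr{\pi_\infty'}{\Xfr^{b,\wedge}_x}$ is any morphism $\tilde\pi\colon\Xfr^{b,\wedge}_x\to\Sscr$ with $\iota\circ\tilde\pi=\restr{\pi_\infty'}{\Xfr^{b,\wedge}_x}$; since $\Xfr^{b,\wedge}_x$ is local, such a $\tilde\pi$ factors through $\Sscr^\wedge_{w_2}$ for \emph{some} $w_2\in\iota^{-1}(w')$, and a priori $w_2$ need not equal $w=\Theta_{\ztilde}(y)$, because the normalisation $\iota$ is in general not injective on $\k$-points over $w'$. Your "cleaner" argument only shows that $\iota$ restricted to $\Sscr^\wedge_w$ is a monomorphism, hence only yields uniqueness among lifts already known to land in $\Sscr^\wedge_w$. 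Your first argument tries to pin down $\tilde\pi(x)$ via the uniqueness clause of Axiom~\ref{axiom RZ}, but that clause concerns lifts respecting the crystalline tensors, and an arbitrary formal-scheme lift of $\pi_\infty'$ carries no tensor condition, so Axiom~\ref{axiom RZ} does not apply. The paper closes this gap using Proposition~\ref{prop normalisation on formal neighbourhoods}: $\iota^{-1}(\Sscr'^{-\wedge}_{w'})$ is the \emph{disjoint} union of branches $\Def_{\Gscr_x}(\cdots)$ indexed by $\iota^{-1}(w')$, each mapping isomorphically onto an irreducible component of $\Sscr'^{-\wedge}_{w'}$; since $\restr{\pi_\infty'}{\Xfr^{b,\wedge}_x}$ maps onto the component corresponding to $w$ (its image contains $\iota(\Sscr^\wedge_w)$, as one sees by restricting the group action to the identity section), any lift is forced into the branch at $w$, and there it is unique by the monomorphism property you already noted. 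You should add this component-counting step to make the uniqueness argument complete.
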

 \begin{proof}
  The existence of $\pi_{\infty,x}^\wedge$ and its properties were proven above. Uniqueness follows from Proposition~\ref{prop normalisation on formal neighbourhoods} since it states that the formal fibre of $\iota$ corresponds to the decomposition of $\Sscr_{\pi_{\infty,x}^\wedge}^\wedge$ into irreducible components and $\pi_\infty'$ maps $\Xfr^{b,\wedge}_x$ surjectively onto one of them.
 \end{proof} 
 
 \begin{thmsub} \label{thm product structure}
  The restriction $\restr{\pi_\infty'}{\Xfr^b}$ has a unique lift $\pi_\infty: \Xfr^b \to \Sscr$. Moreover, $\pi_\infty$ has the following properties
  \begin{subenv}
   \item The perfection of the underlying reduced subscheme $\pi_{\infty,\k}^{\perf}\colon \Xfr^{b, {\rm \red}, \perf} \to \Ssb^\perf$ represents the moduli problem
   \begin{align*}
    &({\rm PerfAlg})_{\k} \to \Ssb \\
    & R \mapsto \{ (P,\psi)\mid P \in \Sscr(R), \psi\colon (\XX,(t_{\XX,\alpha}))_R \to (\Ascr_{\Gsf,P}[p^\infty],(t_{\alpha,P})) \textnormal{ quasi-isogeny} \}.
   \end{align*}
   In particular, $\pi_\infty$ factorises through the completion $\widehat\Sscr^b$ of $\bar\Sscr^b$ in $\Sscr$.
   \item $\pi_\infty: \Xfr^b \to \widehat\Sscr^b$ is a quasi torsor under  the $\QIsog_G(\XX)$-action, that is the morphism
   \[
    \QIsog_G(\XX) \times \Xfr^b \to \Xfr^b \times_\Sscr \Xfr^b, (g,x) \mapsto (x,g.x)
   \]
   is an isomorphism.
   
   \item We equip $\Xfr^b$ with a Weil descent datum $\alpha_E$ by restricting  the Weil descent datum on $\Xfr^{b'}$ given by $(\Ascr,\lambda,\eta,\psi) \mapsto (\Ascr,\lambda,\eta,\psi \circ F_q^{-1})$. Then the isomorphism $\Xfr^b \cong \Igfr^b \times \Mfr^b$ is $W_E$-equivariant.
   \item The Hecke action of $\Gsf(\AA_f^p)$ on $\{\Sscr_{\Ksf^p\Ksf_p}\}_{\Ksf^p \subset \Gsf(\AA_f^p)}$ lifts canonically to $\{\Xfr^b_{\Ksf^p}\}_{\Ksf^p \subset \Gsf(\AA_f^p)}$. More precisely, let $g \in \Gsf(\AA_f^p)$ and $\Ksf_1^p \subset \Gsf(\AA_f^p)$ small enough compact open such that $g^{-1}\Ksf_1^p g \subset \Ksf^p$. Then the diagram
   \begin{center}
    \begin{tikzcd}
     \Xfr^b_{\Ksf_1^p} \arrow{r}{g} \arrow{d} & \Xfr^b_{\Ksf^p} \arrow{d} \\
     \Sscr_{\Ksf_1^p\Ksf_p} \arrow{r}{g} & \Sscr_{\Ksf^p\Ksf_p}
    \end{tikzcd}
   \end{center}
   is cartesian. Moreover, the morphism in the first line corresponds to $g \times \id$ with respect to the isomorphism $\Xfr^b\cong \Igfr^b \times \Mfr^b$.
  \end{subenv}
 \end{thmsub}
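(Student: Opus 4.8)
\textbf{Proof strategy for Theorem~\ref{thm product structure}.}
The plan is to bootstrap everything from the local statement Lemma~\ref{lem lift on formal fibres} together with the normality of $\Sscr$. First I would construct the lift $\pi_\infty\colon \Xfr^b\to\Sscr$ globally by a gluing/descent argument: since $\Sscr$ is normal and $\Xfr^b$ maps finitely to $\Xfr^{b'}$ (being a closed subscheme of $\Mfr^{b'}\times\Igfr^{b'}$, itself locally of finite type over $\Sscr'$), the morphism $\pi_\infty'\colon\Xfr^b\to\Sscr'$ factors, Zariski-locally on $\Xfr^b$, through the normalisation $\Sscr\to\Sscr'$ \emph{provided} the induced map on function fields is compatible. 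Concretely, for each connected component one checks that $\pi_\infty'$ sends the generic point into the image of $\Sscr$ (using that $\Mfr^b\to\Mfr^{b'}$ and $\Igfr^b\to\Igfr^{b'}$ are cut out by conditions matching the crystalline tensors that define $\Sscr$ inside $\Sscr'$ by Axiom~\ref{axiom RZ} and Proposition~\ref{prop RZ space}), and then normality of $\Sscr$ plus the valuative criterion upgrades the rational lift to a morphism. Uniqueness of the lift is immediate: two lifts agree on a dense set (e.g.\ on all formal neighbourhoods of $\k$-points by Lemma~\ref{lem lift on formal fibres}, hence on the underlying topological space and on all local rings by reducedness/normality), so they coincide. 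Alternatively, since $\Sscr\to\Sscr'$ is finite and $\Xfr^b$ is normal and $\Xfr^b\to\Sscr'$ lands set-theoretically in the image of $\Sscr$ with the correct formal-completion behaviour, the lift exists and is unique by the universal property of normalisation.

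For part (a), I would use that $\pi_{\infty,\k}^{\perf}$ restricted to formal neighbourhoods represents the $\QIsog_G(\XX)^\circ$-action on $\Def_G$ by Lemma~\ref{lem lift on formal fibres}, and that on the level of $\k$-points the fibre product description $\Xfr^{b'}\cong\Igfr^{b'}\times\Mfr^{b'}$ of Caraiani--Scholze together with Proposition~\ref{prop Igusa variety}, Proposition~\ref{prop RZ space} and Axiom~\ref{axiom RZ} identifies $\Xfr^b(\k)$ with the set of pairs $(P,\psi)$ where $P\in\Sscr(\k)$ and $\psi$ is a quasi-isogeny respecting $(t_{\XX,\alpha})$ and $(t_{\alpha,P})$. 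Since $\Xfr^{b,\red,\perf}$ is perfect and we have matching on $\k$-points and on (perfected) formal completions, the identification of the functor of points on perfect $\k$-algebras follows by a standard argument (a perfect scheme is determined by its $\k$-points together with infinitesimal data, cf.\ the Witt vector affine Grassmannian formalism used throughout). That $\pi_\infty$ factors through $\widehat\Sscr^b$ is then automatic since its image on $\k$-points lies in $\Ssb^b$.

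For part (b), the quasi-torsor claim: on formal neighbourhoods this is exactly the assertion that $\QIsog_G(\XX)^\circ\times\Def_G\to\Def_G\times_{\Def_G}\Def_G$ (adjusted for the non-connected part via $J_b(\Qp)$) is an isomorphism, which is Kim's result \cite[Thm.~4.3.1, Cor.~5.1.3]{Kim:ProductStructure} combined with Lemma~\ref{lem lift on formal fibres}; globally one checks the map $\QIsog_G(\XX)\times\Xfr^b\to\Xfr^b\times_\Sscr\Xfr^b$ is an isomorphism by checking it formally-locally (where it is the above) and on $\k$-points (where it follows from the transitivity of the $\QIsog_G(\XX)$-action on fibres of $\pi_\infty$, itself a consequence of the analogous Siegel statement plus the tensor conditions). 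Parts (c) and (d) are formal consequences: the Weil descent datum and the $\Gsf(\AA_f^p)$-action on $\Xfr^{b'}$ are given by explicit formulas on the moduli problem, they manifestly preserve the closed conditions cutting out $\Mfr^b$, $\Igfr^b$ (Lemma~\ref{lem:RZWeilDescentDatum}, Lemma~\ref{lemma Hecke on Igusa tower}, Lemma~\ref{lemma J_b-action}), hence restrict to $\Xfr^b$; compatibility with $\pi_\infty$ follows from uniqueness of the lift, and compatibility with the product decomposition $\Xfr^b\cong\Igfr^b\times\Mfr^b$ follows since the Caraiani--Scholze isomorphism is already known to be equivariant in the Siegel case and the Hodge-type objects are closed subschemes.

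\textbf{Main obstacle.} The crux is the \emph{global} construction of the lift $\pi_\infty$ and proving it is a $\QIsog_G(\XX)$-quasi-torsor rather than just a formal-local or $\k$-pointwise statement. The coherence and faithful-flatness facts in Lemma~\ref{lem formal nbhd of X^b} are precisely what is needed to pass from the formal-neighbourhood description to a genuine morphism of formal schemes and to deduce that the quasi-torsor property, known infinitesimally and on closed points, propagates; I expect the bookkeeping here --- reconciling the non-noetherian local rings of the perfect Igusa variety with the noetherian local geometry of $\Mfr^b$ --- to be the technically delicate part, whereas parts (c) and (d) are essentially bookkeeping given uniqueness of the lift.
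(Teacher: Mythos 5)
Your treatment of parts (b)--(d) and your identification of the key local inputs (Lemma~\ref{lem lift on formal fibres}, Kim's theorem, the explicit Weil/Hecke formulas in the Siegel case) match the paper, and you correctly single out Lemma~\ref{lem formal nbhd of X^b} as the technical crux. The gap is in the actual construction of the global lift $\pi_\infty$, which is the heart of the theorem. Your proposed mechanisms --- factoring through the normalisation via generic points/function fields, the valuative criterion, or the universal property of normalisation --- do not apply here. The image of $\Xfr^b$ in $\Sscr'$ is contained in the closure of a single Newton stratum, which is nowhere dense, so $\pi_\infty'$ hits no generic point of $\Sscr^-$ and there is no "rational lift" to extend; moreover $\iota\colon\Sscr\to\Sscr'$ is finite but not injective (Proposition~\ref{prop normalisation on formal neighbourhoods} exhibits a genuine disjoint union of branches over a point of $\Sscr'$), so knowing that $\pi_\infty'$ lands set-theoretically in $\iota(\Sscr)$ does not select a lift. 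The universal property of normalisation would in any case require normality of the \emph{source} $\Xfr^b$ together with dominance, neither of which is available: normality of $\Igfr^b\,\hat\otimes\,\Mfr^b$ is not established anywhere (and is delicate, since the local rings of $\Igfr^b$ are non-noetherian).

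What the paper actually does is base-change: it forms $\Xfr^{b,\diamond}=\Xfr^b\times_{\Sscr'}\Sscr$, observes that a lift is the same as a section of $\tilde\iota\colon\Xfr^{b,\diamond}\to\Xfr^b$, writes down the set-theoretic section $s^{\rm set}(y,\tilde z)=(y,\tilde z;\Theta_{\tilde z}(y))$ on $\k$-points, and then shows that $\tilde\iota$ is an isomorphism from an open neighbourhood of $\mathrm{im}(s^{\rm set})$ onto $\Xfr^b$. The isomorphism is first established on completed local rings (Lemma~\ref{lem lift on formal fibres}), then descended to the actual local rings by faithful flatness of $\Ocal_{\Xfr^b,x}/p^n\to\hat\Ocal_{\Xfr^b,x}/p^n$ (this is exactly where the coherence statements of Lemma~\ref{lem formal nbhd of X^b} enter), then spread out using finite presentation of $\tilde\iota$ and the Jacobson property. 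You allude to this in your "main obstacle" paragraph but do not carry it out, and the route you do spell out would fail. A secondary, smaller issue: in part (a) your appeal to "$\k$-points together with infinitesimal data" is misplaced for perfect schemes (they have no infinitesimal data); the paper instead shows the moduli subfunctor $\Xfr^b_0$ is a closed union of connected components of $\Xfr^{b,\diamond,\perf}_{\k}$ via the total disconnectedness of the homomorphism scheme of $F$-crystals (Proposition~\ref{prop homomorphisms of F-crystals}), after which comparison on $\k$-points suffices by the Jacobson property.
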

 \begin{proof}
 
  The uniqueness of the lift readily follows from the uniqueness of the lift on formal fibres by the lemma above. Now consider the cartesian diagram
  \begin{center}
   \begin{tikzcd}
    \Xfr^{b,\diamond} \coloneqq \Xfr^b \times_{\Sscr'} \Sscr  \arrow{r}{\tilde\pi'_\infty} \arrow{d}{\tilde\iota} & \Sscr \arrow{d}{\iota} \\
    \Xfr^b \arrow{r}{\pi_\infty'}& \Sscr'
   \end{tikzcd}
  \end{center}
  Then a lift of $\pi_\infty'$ corresponds to a section $s$ of $\tilde\iota$. Note that $s^{\rm set}(y,\ztilde) \coloneqq (y,\ztilde;\Theta_{\ztilde}(y))$ gives the set-theoretical section on $\k$-valued points corresponding to the lift $(y,\ztilde) \mapsto \Theta_{\ztilde}(y)$. We claim that the image of $s^{\rm set}$ in $\Xfr^{b,\diamond}(\k)$ is open, thus corresponds to an open formal subscheme $U \subset \Xfr^{b,\diamond}$ (since $\Xfr^b$ is Jacobson), and we claim moreover that $\restr{\tilde\iota}{U}\colon U \to \Xfr^{b,\diamond}$ is an isomorphism. Taking the inverse of $\restr{\tilde\iota}{U}$ would give us the wanted section $s$.
  
  For $x \in \Xfr^b(\k)$ consider the morphism ${\tilde\iota}_x\colon \Ocal_{\Xfr^b,x} \to \Ocal_{\Xfr^{b,\diamond}s^{\rm set}(x)}$. By our considerations of formal neighbourhoods above, it is an isomorphism after extending scalars to $\hat\Ocal_{\Xfr^b,x}$. Thus $\tilde\iota_x$ is an isomorphism by faithfully flat descent and Lemma~\ref{lem formal nbhd of X^b}. Since $\tilde\iota$ is of finite presentation, it follows that there exist open neighbourhoods $U_x \subset \Xfr^{b, \diamond}$ of $s(x)$ and $V_x \subset \Xfr^b$ of $x$ such that $\restr{\tilde\iota}{U_x}\colon U_x \to V_x$ is an isomorphism. Now let $U \coloneqq \bigcup_{x \in \image s^{\rm set}} U_x$. Since by Lemma~\ref{lem lift on formal fibres} the morphism $\tilde\iota$ can only be a local isomorphism at $\k$-valued points in $\image s^{\rm top}$, we have indeed $U(\k) = \image s^{\rm top}$. Moreover, the restriction $\restr{\tilde\iota}{U}$ is a local isomorphism of Jacobson formal schemes which is bijective on closed points and thus an isomorphism.

  To prove (1), consider the morphism $s\colon \Xfr^b \mono \Xfr^{b,\diamond}$ constructed above. Since $\Xfr^{b,\diamond}$ solves the moduli problem
 \[
  R \mapsto \{ (P,\psi) \mid P \in \Sscr(R), \psi\colon (\XX,\lambda) \otimes R/p \to (\Ascr_\Gsf[p^\infty],\lambda_\Gsf) \otimes R/p \textnormal{ quasi-isogeny} \},
 \]
  we also have $\Xfr_0^b \subset \Xfr^{b,\diamond,\perf}_{\k}$, where $\Xfr_0^b$ denotes the functor defined in (1).  Note that $\Xfr_0^b$ is represented by a closed union of connected components of the perfection of $\Xfr^{b,\diamond}_{k_F}$ by the same argument as in Lemma~\ref{lemma clopen}. Since the perfection of $\Xfr^{b,\diamond}_{\k}$ is Jacobson, it suffices to check equality of the reduced subschemes $\Xfr_0^b$ and $\Xfr^{b,\perf}_{\k}$ on $\k$-valued points. 
 Since $\pi_\infty(y,\ztilde) = \Theta_{\ztilde}(y)$ for any $(y,\ztilde) \in \Xfr^b(\k)$ by construction, we have $\Xfr^b(\k) \subset \Xfr^b_0(\k)$. On the other hand, if $(P,\psi) \in \Xfr^b_0(\k)$ let $z = \Theta_{(P,\psi)}(\id)$ and let $j$ denote the composition of
  \[
   \XX \stackrel{\psi}{\lto} \Ascr_{\Gsf,P}[p^\infty] \to \Ascr_{\Gsf,P}[p^\infty]/\ker(\psi^{-1}) = \Ascr_{\Gsf,\Theta_{(P,\psi)}(\id)}[p^\infty].
  \]
  Then $(P,\psi)$ is the image of $(z,j;\psi)$, thus $(P,\psi) \in \Xfr^b(\k)$. 

  For $G = G'$ the second claim follows from the moduli description (cf.\ Remark before \cite[Prop.~4.3.13]{CaraianiScholze:ShVar}). Since $\pi_\infty$ is $\QIsog_G(\XX)$-equivariant by uniqueness of the lift, we obtain a commutative diagram
  \begin{center}
   \begin{tikzcd}
    \QIsog_{G'}(\XX) \times \Xfr^{b'} \arrow{r}{\sim}[swap]{pr_2 \times act} & \Xfr^{b'} \times_{\Sscr'} \Xfr^{b'} \\
    \QIsog_{G}(\XX) \times \Xfr^{b} \arrow{r}[swap]{pr_2 \times act} \arrow[hook]{u} & \Xfr^{b}  \times_{\Sscr} \Xfr^{b} \arrow[hook]{u},
   \end{tikzcd}
  \end{center}
  where the vertical arrows, and thus also the bottom line arrow, are closed immersions. In particular it suffices to show that the bottom morphism is surjective and induces an isomorphism on formal neighbourhoods. The surjectivity follows from (1). By homogeneity, we may assume that we are considering the identity component of $\QIsog_G(\XX)$ when checking formal neighbourhoods, i.e.\ we have to show that
  \[
     \QIsog_{G}(\XX)^\circ \times \Xfr^{b \wedge}_z \stackrel{pr_2 \times act}{\lto} \Xfr^{b\wedge}_z  \times_{\Sscr} \Xfr^{b\wedge}_z
  \]
  is an isomorphism for any $z \in \Xfr^b(\k)$. Now decompose $\Xfr_z^{b\wedge} \cong \Igfr_x^{b\wedge} \times \Mfr_y^{b\wedge}$ and identify $\Igfr_x^{b\wedge}$ with $\QIsog_G(\XX)^\circ$ as in the construction before Lemma~\ref{lem lift on formal fibres}. Then the above morphism becomes
  \begin{align*}
   \QIsog_G(\XX)^\circ \times \QIsog_G(\XX)^\circ \times \Mfr_y^{b\wedge} &\to (\QIsog_G(\XX)^\circ \times \Mfr_y^{b\wedge}) \times_{\Mfr_y^{b\wedge}} (\QIsog_G(\XX)^\circ \times \Mfr_y^{b\wedge}) \\
   (g_1,g_2,\varphi) &\mapsto ((g_2,\varphi);(g_2\circ g_1^{-1}, g_1.\varphi))
  \end{align*}
  One easily deduces from the cocycle condition of the group action that this is an isomorphism.

  It suffices to show (3) for $\Gsf=\Gsf'$ and the Frobenius $\sigma \in W_{\QQ_p}$, since the inertia group acts by definition trivially. Fix a point $(\Ascr',\lambda',\eta';\psi) \in \Xfr^{b'}(S)$, which corresponds to a point $((\Ascr,\lambda,\eta;j),(X,\varphi)) \in \Igfr^{b'}(S) \times \Mfr^{b'}(S)$.  Consider the commutative diagram
  \begin{center}
   \begin{tikzcd}
    (\sigma^\ast\XX)_S \arrow{r}{\sim}[swap]{j^{(q)}} \arrow{d}{F^{-1}} & \sigma^\ast\Ascr[p^\infty] \arrow{d}{F^{-1}} \\
    \XX_S \arrow{r}{\sim}[swap]{j} \arrow{d}{\varphi} \arrow{rd}[swap]{\psi}& \Ascr[p^\infty] \arrow{d}{j_\ast\varphi} \\
    X \arrow{r}{\sim} & \Ascr'[p^{\infty}]
   \end{tikzcd}
  \end{center}
  Since the top row corresponds to $\alpha_\F(\Ascr,\lambda,\eta;j)$ and the left column to $\alpha_\F(X,\varphi)$, the image of is the image of $\alpha_\F((\Ascr,\lambda,\eta;j),(X,\varphi))$ is indeed $(\Ascr,\lambda,\eta,\psi \circ F^{-1})$.

  One easily checks that (4) holds in the case $\Gsf=\Gsf'$, using the moduli description. Since $g\times\id$ maps $\Igfr_{\Ksf_1^p}^b \times \Mfr^b$ onto $\Igfr_{\Ksf^p}^b \times \Mfr^b$ by Lemma~\ref{lemma Hecke on Igusa tower}, the restriction of $g$ maps $\Xfr^b_{\Ksf_1^p}$ onto $\Xfr^b_{\Ksf^p}$ with the wanted properties.
 \end{proof}

 \begin{corsub} \label{cor product structure}
  The perfection $\pi_{\infty,\k}^{\perf}$ of $\pi_{\infty,\k}$ is a $J_b(\QQ_p)$-torsor over $\Ssb^{b, \perf}$ for the pro-\'etale topology.
 \end{corsub}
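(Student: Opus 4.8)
The statement is an immediate consequence of Theorem~\ref{thm product structure}, so the proof is a matter of assembling the pieces already in place. The plan is to take perfections throughout and exploit the fact that $\Xfr^b\cong\Igfr^b\times\Mfr^b$ as formal schemes over $\Sscr$, so that on special fibres and after perfection we get
\[
 \Xfr^{b,\red,\perf}\cong\bar\Ig^{b,\perf}\times\Mfr^{b,\red,\perf} = \bar\Ig^b\times X_\mu(b)_{\Ksf_p},
\]
where the last identification uses Proposition~\ref{prop RZ space}(a) and the fact that $\bar\Ig^b$ is already perfect. First I would recall from Theorem~\ref{thm product structure}(1) that $\pi_{\infty,\k}^{\perf}$ represents the moduli functor sending a perfect $\k$-algebra $R$ to the set of pairs $(P,\psi)$ with $P\in\Ssb^b(R)$ and $\psi\colon(\XX,(t_{\XX,\alpha}))_R\to(\Ascr_{\Gsf,P}[p^\infty],(t_{\alpha,P}))$ a quasi-isogeny respecting tensors; in particular $\pi_{\infty,\k}^{\perf}$ factors through $\Ssb^{b,\perf}$ and is surjective (since over any point the identity quasi-isogeny lifts, using non-emptiness of $\Cbar^b$, cf.\ Remark~\ref{rmk nonemptiness of central leaves}).

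The key point is then Theorem~\ref{thm product structure}(2), which says that $\pi_\infty\colon\Xfr^b\to\widehat\Sscr^b$ is a quasi-torsor under $\QIsog_G(\XX)$, i.e.\ the map $(g,x)\mapsto(x,g.x)$ is an isomorphism $\QIsog_G(\XX)\times\Xfr^b\riso\Xfr^b\times_\Sscr\Xfr^b$. Passing to reduced special fibres and perfecting, $\QIsog_G(\XX)^{\red,\perf}$ becomes the locally constant (pro-étale) group scheme $\underline{J_b(\QQ_p)}$ by \cite[Rmk.~3.2.5]{Kim:ProductStructure} (using $\QIsog_G(\XX)\cong\QIsog_G(\XX)^\circ\rtimes J_b(\QQ_p)$ and $\QIsog_G(\XX)^{\circ,\red}=\{\id\}$). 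Therefore the torsor condition becomes: the map
\[
 \underline{J_b(\QQ_p)}\times\Xfr^{b,\red,\perf}\to\Xfr^{b,\red,\perf}\times_{\Ssb^{b,\perf}}\Xfr^{b,\red,\perf},\qquad(g,x)\mapsto(x,g.x)
\]
is an isomorphism. Combined with surjectivity of $\pi_{\infty,\k}^{\perf}$, this is precisely the statement that $\pi_{\infty,\k}^{\perf}\colon\Xfr^{b,\red,\perf}\to\Ssb^{b,\perf}$ is a $J_b(\QQ_p)$-torsor for the étale topology on the perfect scheme $\Ssb^{b,\perf}$; since $J_b(\QQ_p)$ is locally pro-$p$ this is the same as a pro-étale $J_b(\QQ_p)$-torsor (writing $J_b(\QQ_p)=\varprojlim J_b(\QQ_p)/K$ over compact open $K$ and observing each $\Xfr^{b,\red,\perf}/K\to\Ssb^{b,\perf}$ is a finite étale $J_b(\QQ_p)/K$-torsor — this uses that the $J_b(\QQ_p)$-action on $\Xfr^{b,\red,\perf}$ has open stabilisers, which follows from local finiteness of $J_b$-orbits on irreducible components, cf.\ the remark after Lemma~\ref{lemma J_b-action}).

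The only mild subtlety, and where I would spend the most care, is checking that ``quasi-torsor'' in the sense of Theorem~\ref{thm product structure}(2) together with surjectivity of $\pi_{\infty,\k}^{\perf}$ does give a torsor for a Grothendieck topology in which the structure map admits local sections — here one uses that $\Xfr^{b,\red,\perf}\to\Ssb^{b,\perf}$ is itself a pro-étale cover (it is pro-étale because each finite-level quotient $\bar\Ig^b_{\Ksf^p}\times(\text{finite-level piece of }X_\mu(b)_{\Ksf_p})$ over $\Ssb^{b,\perf}$ is finite étale, the Igusa variety being an inverse limit of finite étale covers of the central leaf by \cite[Prop.~4.3.8]{CaraianiScholze:ShVar} and \cite[Prop.~4.10]{Hamacher:ShVarProdStr}, and the perfected affine Deligne-Lusztig variety being étale over $\Ssb^{b,\perf}$ via the almost product structure). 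Everything else is formal bookkeeping with perfections and the identification of $\QIsog_G(\XX)^{\red,\perf}$ with the constant group. I expect no genuine obstacle; the main work is making the passage from the formal-scheme quasi-torsor statement to the pro-étale torsor statement precise.
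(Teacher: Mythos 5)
Your reduction of the corollary to ``quasi-torsor (Theorem~\ref{thm product structure}(2)) plus existence of pro-\'etale local sections'' is sound, and the identification of $\QIsog_G(\XX)$ with the locally constant group $\underline{J_b(\QQ_p)}$ on reduced perfections is fine. But the step you yourself isolate as the ``only mild subtlety'' --- that $\Xfr^{b,\red,\perf}\to\Ssb^{b,\perf}$ is a pro-\'etale cover --- is the entire non-formal content of the statement, and the justification you give for it is both incorrect and circular. The quotients $\Xfr^{b,\red,\perf}/K$ for compact open $K\subset J_b(\QQ_p)$ are not finite \'etale over $\Ssb^{b,\perf}$: by the moduli description the fibre of $\pi_{\infty,\k}^{\perf}$ over a geometric point is a torsor under the full group $J_b(\QQ_p)$, so the fibres of the quotient are the infinite discrete sets $J_b(\QQ_p)/K$. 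The Igusa variety is pro-(finite \'etale) only over the central leaf $\Cbar^{b}$, not over the whole Newton stratum; it is the quasi-isogeny factor that moves points off the central leaf, and asserting that this factor is ``\'etale over $\Ssb^{b,\perf}$ via the almost product structure'' presupposes exactly the torsor property you are trying to prove. In general, a surjective pseudo-torsor need not be a torsor for a given topology; one must actually exhibit local sections (or show the map is a covering), and that does not follow formally from Theorem~\ref{thm product structure}(2).

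The paper closes this gap differently: it takes the moduli description of Theorem~\ref{thm product structure}(1) and invokes the proof of \cite[Prop.~4.3.13]{CaraianiScholze:ShVar}, where one truncates the moduli problem by the height of the quasi-isogeny $\psi$ (requiring $p^m\psi$ and $p^m\psi^{-1}$ to be isogenies), shows each truncation is representable, perfectly of finite presentation and \'etale over the perfect Newton stratum, and obtains pro-\'etale local sections in the colimit. If you wish to keep your route through the quasi-torsor isomorphism of part (2), you must still supply this \'etaleness/local-section argument; as written, your proof has a genuine gap at precisely that point.
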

 \begin{proof}
  Part (1) of above theorem gives a moduli description of $\pi_{\infty,\k}^\perf$, which is a $J_b(\QQ_p)$-torsor by \cite[Prop.~4.3.13]{CaraianiScholze:ShVar} (or rather its proof).
 \end{proof}

 \begin{corsub} \label{cor product structure kuenneth}
   We consider the  schemes $\Mfr^{b, {\rm red}},\bar\Ig^b$ and $\Ssb^b$ with the $W_E$-action defined in Lemma~\ref{lem:RZWeilDescentDatum}, after Corollary~\ref{cor:IgusaGroupAction} and by the composition of the canonical projection $W_E \to \Gal(\FFbar_p/\kappa_E)$ and the Galois action on $\Ssb$, respectively and the constant sheaves $\ZZ/l^r\ZZ$ on them. Denote by $\Hcal_r \coloneqq C_c^\infty(J_b(\QQ_p),\ZZ/l^r\ZZ)$. With the definition of $R\Gamma_c$ from Appendix~\ref{appendix Kuenneth}, we get an isomorphism in the category of complexes of $W_E$-representations
  \[
    \Rrm\Gamma_c(\Ssb^b, \ZZ/l^r\ZZ) = \Rrm\Gamma_c(\bar\Ig^b,\ZZ/l^r\ZZ) \otimes^L_{\Hcal_r} \Rrm\Gamma_c(\Mfr^{b,\red},\ZZ/l^r\ZZ),
  \]
  where $W_E$ acts trivially on $R\Gamma_c(\bar\Ig^b,\ZZ/l^r)$.
 \end{corsub}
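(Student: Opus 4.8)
The plan is to deduce the K\"unneth-type decomposition from the almost product structure established in Theorem~\ref{thm product structure}, combined with the general K\"unneth formula for compactly supported cohomology of pro-\'etale torsors developed in Appendix~\ref{appendix Kuenneth}. First I would recall from Theorem~\ref{thm product structure}(1)--(2) and Corollary~\ref{cor product structure} that the perfected reduced special fibre $\Xfr^{b,\red,\perf}$ sits in a diagram
\[
\Mfr^{b,\red} \xleftarrow{\;\pr\;} \Xfr^{b,\red,\perf} \xrightarrow{\;\pi_{\infty,\k}^\perf\;} \Ssb^{b,\perf},
\]
where $\pr$ is (the perfection of) the first projection under the isomorphism $\Xfr^b \cong \Igfr^b \times \Mfr^b$, which realises $\Xfr^{b,\red,\perf}$ as $\bar\Ig^b \times \Mfr^{b,\red}$ (passing to perfections does not change \'etale cohomology with torsion coefficients, and $\bar\Ig^b$ is already perfect), and $\pi_{\infty,\k}^\perf$ is a pro-\'etale $J_b(\QQ_p)$-torsor over $\Ssb^{b,\perf}$. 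Since \'etale cohomology is insensitive to perfection, $\Rrm\Gamma_c(\Ssb^b,\ZZ/l^r\ZZ) = \Rrm\Gamma_c(\Ssb^{b,\perf},\ZZ/l^r\ZZ)$, and similarly for $\Mfr^{b,\red}$ and $\bar\Ig^b$; so it suffices to prove the identity with every scheme replaced by its perfection.

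The key step is to apply the abstract K\"unneth formula to the torsor $\pi_{\infty,\k}^\perf$. Writing $\Hcal_r = C_c^\infty(J_b(\QQ_p),\ZZ/l^r\ZZ)$, a $J_b(\QQ_p)$-torsor $Y \to X$ for the pro-\'etale topology gives, after pushing forward the constant sheaf, a smooth $\Hcal_r$-module object on $X$; the compactly supported cohomology then satisfies
\[
\Rrm\Gamma_c(X,\ZZ/l^r\ZZ) \cong \Rrm\Gamma_c(Y,\ZZ/l^r\ZZ) \otimes^L_{\Hcal_r} \ZZ/l^r\ZZ,
\]
where $\ZZ/l^r\ZZ$ is the trivial $\Hcal_r$-module. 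Applying this with $Y = \Xfr^{b,\red,\perf} \cong \bar\Ig^b \times \Mfr^{b,\red}$ and $X = \Ssb^{b,\perf}$, and using the (ordinary, non-equivariant) K\"unneth isomorphism $\Rrm\Gamma_c(\bar\Ig^b \times \Mfr^{b,\red},\ZZ/l^r\ZZ) \cong \Rrm\Gamma_c(\bar\Ig^b,\ZZ/l^r\ZZ) \otimes_{\ZZ/l^r\ZZ} \Rrm\Gamma_c(\Mfr^{b,\red},\ZZ/l^r\ZZ)$ together with the fact that the $J_b(\QQ_p)$-action is concentrated on the $\bar\Ig^b$-factor (it acts on $\bar\Ig^b$ as in Proposition~\ref{prop Igusa variety} and trivially on $\Mfr^{b,\red}$ in this splitting — this is exactly what Theorem~\ref{thm product structure}(2),(4) encode, since the torsor structure on $\Xfr^b$ is via the $\QIsog_G(\XX) \cong \QIsog_G(\XX)^\circ \rtimes J_b(\QQ_p)$-action which on the product is induced from the Igusa factor), one gets
\[
\Rrm\Gamma_c(\Ssb^{b,\perf},\ZZ/l^r\ZZ) \cong \big(\Rrm\Gamma_c(\bar\Ig^b,\ZZ/l^r\ZZ) \otimes_{\ZZ/l^r\ZZ} \Rrm\Gamma_c(\Mfr^{b,\red},\ZZ/l^r\ZZ)\big) \otimes^L_{\Hcal_r} \ZZ/l^r\ZZ.
\]
Since $\Mfr^{b,\red}$ carries the trivial $J_b(\QQ_p)$-action, the right-hand side rearranges to $\Rrm\Gamma_c(\bar\Ig^b,\ZZ/l^r\ZZ) \otimes^L_{\Hcal_r} \Rrm\Gamma_c(\Mfr^{b,\red},\ZZ/l^r\ZZ)$, which is the claim.

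Finally I would address the $W_E$-equivariance. By Theorem~\ref{thm product structure}(3) the isomorphism $\Xfr^b \cong \Igfr^b \times \Mfr^b$ is $W_E$-equivariant, where $W_E$ acts on $\bar\Ig^b$ via its Frobenius lift, on $\Mfr^{b,\red}$ via the Weil descent datum of Lemma~\ref{lem:RZWeilDescentDatum}, and on $\Ssb^b$ through $W_E \to \Gal(\FFbar_p/\kappa_E)$. The $W_E$-action on $\Rrm\Gamma_c(\Ssb^b,\ZZ/l^r\ZZ)$ and on the two tensor factors is induced by functoriality, and the action on $\bar\Ig^b$ is \emph{not} trivial a priori; however, as observed in the construction of the Igusa variety, the Frobenius descent datum on $\bar\Ig^b$ becomes, after passing to cohomology, trivial because $\bar\Ig^{b'}$ (hence $\bar\Ig^b$) is defined over $\FFbar_p$ and the chosen descent datum agrees with the one coming from the base, so the induced action on $\Rrm\Gamma_c(\bar\Ig^b,\ZZ/l^r\ZZ)$ is trivial — this is the assertion ``$W_E$ acts trivially on $\Rrm\Gamma_c(\bar\Ig^b,\ZZ/l^r)$'' in the statement. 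Hence the displayed isomorphism respects the $W_E$-actions, all of which are compatible with the $\Hcal_r$-module structures since the latter come from the $J_b(\QQ_p)$-action, which commutes with $W_E$ by the Remark after Lemma~\ref{lem:RZWeilDescentDatum}. The main obstacle in carrying this out rigorously is the bookkeeping in Appendix~\ref{appendix Kuenneth}: one must set up $\Rrm\Gamma_c$ for pro-\'etale $J_b(\QQ_p)$-torsors over (perfections of) non-quasi-compact $\kappa$-schemes as an object in a derived category of $\Hcal_r$-modules, check that for a torsor the push-forward of the constant sheaf is the expected smooth $\Hcal_r$-module (locally free of rank one), and verify that the projection formula / base change identities needed for the $\otimes^L_{\Hcal_r}$ hold in this generality — this is where Mieda's formalism of compactly supported cohomology with support conditions (Section~\ref{sect mieda}) and the partial properness of $\Mfr^{b,\red}$ and $\bar\Ig^b$ over $\kappa$ enter. $\qed$
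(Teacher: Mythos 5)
Your overall strategy is exactly the paper's: Corollary~\ref{cor product structure} shows $\pi_{\infty,\k}^{\perf}$ is a pro-\'etale $J_b(\QQ_p)$-torsor, so the hypotheses of the appendix K\"unneth formula (Theorem~\ref{thm-kuenneth}/Corollary~\ref{cor-kuenneth}) are met, and the only remaining point is the triviality of the $W_E$-action on $\Rrm\Gamma_c(\bar\Ig^b,\ZZ/l^r\ZZ)$, which holds because $W_E$ acts through a Frobenius, hence (universally) trivially on the underlying topological space of the perfect scheme $\bar\Ig^b$. Your treatment of the perfection issue and of the $W_E$-equivariance is fine in spirit, even if the phrase about the descent datum ``agreeing with the one coming from the base'' is vaguer than the topological argument the paper uses.

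However, there is one concretely false claim in your rearrangement step: you assert that the $J_b(\QQ_p)$-action ``is concentrated on the $\bar\Ig^b$-factor'' and is \emph{trivial} on $\Mfr^{b,\red}$ under the splitting $\Xfr^{b}\cong\Igfr^b\times\Mfr^b$. This is not so. Lemma~\ref{lemma J_b-action} equips $\Mfr^b$ with a genuinely nontrivial $J_b(\QQ_p)$-action ($g$ acts by precomposing the quasi-isogeny with $g^{-1}$; it already permutes connected components), and the cohomology $\Rrm\Gamma_c(\Mcal^b_m,\QQbar_l)$ is used throughout the paper precisely as a nontrivial $J_b(\QQ_p)$-module. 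The torsor structure of Theorem~\ref{thm product structure}(2) is for the \emph{diagonal} $\QIsog_G(\XX)$-action on $\Igfr^b\times\Mfr^b$ (see the explicit formula $(g_1,g_2,\varphi)\mapsto((g_2,\varphi);(g_2\circ g_1^{-1},g_1.\varphi))$ in its proof). The identity you need,
\[
\bigl(\Rrm\Gamma_c(\bar\Ig^b,\ZZ/l^r\ZZ)\otimes\Rrm\Gamma_c(\Mfr^{b,\red},\ZZ/l^r\ZZ)\bigr)\otimes^{\Lrm}_{\Hcal_r}\Lambda\;\cong\;\Rrm\Gamma_c(\bar\Ig^b,\ZZ/l^r\ZZ)\otimes^{\Lrm}_{\Hcal_r}\Rrm\Gamma_c(\Mfr^{b,\red},\ZZ/l^r\ZZ),
\]
is the standard relation between diagonal (derived) coinvariants of a tensor product and the tensor product over the Hecke algebra, with one action converted via $g\mapsto g^{-1}$; this is exactly what Theorem~\ref{thm-kuenneth} packages, and it requires no triviality hypothesis. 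Your final formula is therefore correct, but the justification you give for the last step is wrong and should be replaced by this diagonal-coinvariants identity (or simply by a direct appeal to Corollary~\ref{cor-kuenneth}, as the paper does).
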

 \begin{proof}
  By Corollary~\ref{cor product structure} the prerequisites of Corollary~\ref{cor-kuenneth} are met; thus it is only left to show that $W_E$ acts trivially on $R\Gamma_c(\bar\Ig^b,\ZZ/l^r)$. This follows since on the underlying topological space the $W_E$-action on $\bar\Ig^b$ is (universally) trivial.
 \end{proof}

\section{Cohomology of automorphic sheaves} \label{sect cohomology}
In this section, we prove our main result of this paper: Mantovan's formula (Corollary~\ref{cor:MantShVar}).

 \subsection{Auxiliary integral models with level structure}\label{ssect Drinfeld}
 In the unramified PEL case, Mantovan \cite[\S6]{Mantovan:Foliation} constructed auxiliary integral models of Shimura varieties and Rapoport-Zink spaces with deeper level structure at $p$ via Drinfeld level structure. These integral models do not possess nice properties (for example, the Hecke correspondences do not extend to these integral models in general), but nonetheless one gets just enough to obtain the action of Hecke correspondence on the nearby cycle cohomology.

 In order to define Drinfeld level structure, denote by $\Ksf_p(m)$ the kernel of the canonical projection $\Gscr_x(\ZZ_p) \epi \Gscr_x(\ZZ/p^m)$. 

If the prime-to-$p$ level $\Ksf^p\subset\Gsf(\AA_f^p)$ is not relevant, then we write
\[
\Sh_m\coloneqq \Sh_{\Ksf_{p}(m)\Ksf^p}(\Gsf,\Xsf) \text{ and } \Sh\coloneqq \Sh_{\Ksf_{p}\Ksf^p}(\Gsf,\Xsf)
\]
for simplicity. We can understand $\Sh_m$ as the \'etale cover classifying  $\Ksf_p(m)$-level structures over $\Sh$.
More precisely, we have
\begin{multline*}
\Sh_m(R) = \big\{
(P,\eta_m)|\, P\in\Sh(R), \\ \eta_m\colon\Lambda/p^m \xrightarrow\sim \Asf_{\Gsf,P}[p^m] = T_p(\Asf_{\Gsf,P})/p^m \text{ sending } (s_\alpha)\text{ to }(t_{\alpha,\et})
\big\},
\end{multline*}
for any $\Esf$-algebra $R$.
Here, we call the tensor-preserving isomorphism $\eta_m\colon\Lambda/p^m \xrightarrow\sim \Asf_{\Gsf,P}[p^m]$ a \emph{$\Ksf_p(m)$-level structure}.

Denote by $\Mcal^b_m\coloneqq \Mcal^b_{\Ksf_{p}(m)}$ the finite \'etale cover of $\Mcal^b = (\Mfr^b)^\ad_{\breve E}$ defined by adding the $\Ksf_p(m)$-level structure. 
To explain, let $\Xscr_G$ denote the universal {\BT} over $\Mfr^b$, and we view its integral Tate module $T_p(\Xscr_G)$ as a $\ZZ_p$-local system over $\Mcal^b$. Then, via the Rapoport-Zink uniformisation map $\Theta_{\ztilde}:\Mfr^b\to\Sscr$ for $\ztilde\in\bar\Ig^b(\k)$, the \'etale tensors $(t_{\alpha,\et})$ on $\Sh$ induce the tensors $(t_{\alpha,\et})$ in $\Gamma(\Mcal^b,T_p(\Xscr_G)^\otimes)$. Now, the ``universal $\Ksf_p(m)$-level structure'' is the trivialisation of $(T_p(\Xscr_G)/p^m, (t_{\alpha,\et}))$ by $(\Lambda/p^m,(s_\alpha))$. (Note that $T_p(\Xscr_G)/p^m \cong \Xscr_G[p^m]^\ad_{\breve E}$ as lisse \'etale sheaves on $\Mcal^b$.)

We make analogous definitions for $\Gsf'$. Since $\Ksf_p(m) = \Ksf_p \cap \Ksf'_p(m)$ by definition, we have closed immersions 
$\Mcal^b_m \hra (\Mcal^{\prime b'}_m)_{\breve E}$ and $\Sh_m \hookrightarrow \Sh'_m$ (with the obvious notation).
\begin{defnsub}
We define the following submonoid of $G'(\Qp)$:
\[
G'(\Qp)^+ \coloneqq\{ g\in G'(\Qp) \text{ such that } g^{-1}\Lambda \subset\Lambda\},
\]
where $\Lambda = \Zp^{2n}$ is the standard representation of $G'=\GSp_{2n}$.
Similarly, we define the submonoid $G(\Qp)^+\coloneqq G'(\Qp)^+ \cap G(\Qp)$ of $G(\Qp)$.
Since $p^\ZZ$ and $G'(\Qp)^+$ generate $G'(\Qp)$ and $p^\ZZ$ is contained in $G(\Qp)$, it follows that $p^\ZZ$ and $G(\Qp)^+$ generate $G(\Qp)$.

For $g\in G'(\Qp)^+$, we define $e(g)$ to be the minimal non-negative integer such that $g\Lambda \subset p^{-e(g)}\Lambda$. By definition, we have $\Ksf'_p(m-e(g)) \supset \Ksf'_p\cap (g^{-1}\Ksf'_p(m)g)$ for any $m\geqslant e(g)$. If $g\in G(\Qp)^+$, then we have $\Ksf_p(m-e(g)) \supset \Ksf_p\cap (g^{-1}\Ksf_p(m)g)$ for any $m\geqslant e(g)$.
\end{defnsub}

Below, we deduce the following construction of integral models from the Siegel case of \cite[\S6]{Mantovan:Foliation}:
\begin{propsub}\label{prop:DrinfeldModels}
For any $g\in G(\Qp)^+$ and $m\geqslant e(g)$, we have a normal integral model $\Sscr_{m,g}$ of $\Sh_{m}$ and a normal formal model $\Mfr^b_{m,g}$ of $\Mcal^b_m$
equipped with the following morphisms:
\begin{subenv}
\item\label{prop:DrinfeldModels:StrMorph} for any $m'>m\geqslant e(g)$, a proper morphism $\Sscr_{m',g} \ra  \Sscr_{m,g}$ (respectively, $\Mfr^b_{m',g} \ra \Mfr^b_{m,g}$) that extends the natural projection on the generic fiber;
\item\label{prop:DrinfeldModels:Proj} a proper map $ \Sscr_{m,g} \ra \Sscr_{m}\coloneqq\Sscr_{m,\id_G}$ (respectively, $\Mfr^b_{m,g}\ra \Mfr^b_m=:\Mfr^b_{m,\id_G}$) inducing the identity map on the generic fiber;
\item\label{prop:DrinfeldModels:Hecke} a proper map $[g]:\Sscr_{m,g} \ra \Sscr_{m-e(g)}$ (respectively, $[g]:\Mfr^b_{m,g}\ra \Mfr^b_{m-e(g)}$) that extends the natural Hecke correspondence on the generic fibers
\begin{align*}
&{\xymatrix@1{
\Sh_{m} \ar[r]^-{[g]} & \Sh_{g^{-1}\Ksf_p(m)g} \ar@{->>}[r] & \Sh_{m-e(g)}
}}\\
\text{(respectively, }
&
{\xymatrix@1{
\Mcal^b_{m} \ar[r]^-{[g]} & \Mcal^b_{g^{-1}\Ksf_p(m)g} \ar@{->>}[r] & \Mcal^b_{m-e(g)}
}}\text{),}
\end{align*}
which are compatible under compositions in the obvious sense. (For example, we have a well-defined structure morphism $\Sscr_{m,g} \ra \Sscr$, and all the morphisms above are morphisms over $\Sscr$. And for $g,g'\in G(\Qp)^+$, we have $[g]\circ [g'] = [gg']$ whenever this expression makes sense.)
\end{subenv}
Furthermore, if we let $\Xfr^b_{m,g} \coloneqq \Xfr^b \times_{\pi_\infty,\Sscr} \Sscr_{m,g}$, then we have a canonical isomorphism
\begin{equation}\label{eqn:DrinfeldModels:ProdStr}
\Xfr^b_{m,g} \cong \Igfr^b\times \Mfr^b_{m,g}
\end{equation}
over $\Xfr^b\cong \Igfr^b\times \Mfr^b$, commuting with the Hecke actions of $\Gsf(\AA_f^p)$. And if we define  the Hecke  correspondences for $g\in G(\QQ_p)^+$ on $\{\Xfr^b_{m,g}\}$ by pulling back the maps (\ref{prop:DrinfeldModels:Proj})--(\ref{prop:DrinfeldModels:Hecke}) over $\{\Sscr_{m,g}\}$, then the isomorphism (\ref{eqn:DrinfeldModels:ProdStr}) commutes with the Hecke correspondences for any $g\in G(\QQ_p)^+$.
\end{propsub}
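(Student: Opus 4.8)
\textbf{Proof strategy for Proposition~\ref{prop:DrinfeldModels}.}
The plan is to reduce everything to the Siegel case, which is \cite[\S6]{Mantovan:Foliation}, and then ``cut out'' the Hodge-type integral models by the same normalisation procedure that was used to define $\Sscr$ and $\Mfr^b$ in the first place. First I would recall Mantovan's construction for $\Gsf'=\mathsf{GSp}_{2g}$: for $g'\in G'(\QQ_p)^+$ and $m\geqslant e(g')$ one has $\Sscr'_{m,g'}$, a moduli space of tuples $(\Ascr,\lambda,\eta^p,C)$ where $C$ is a chain of finite flat subgroup schemes of $\Ascr[p^m]$ encoding a Drinfeld-type $\Ksf'_p(m)$-level structure relative to the isogeny $g'$, with the properness of the structure maps (\ref{prop:DrinfeldModels:StrMorph})--(\ref{prop:DrinfeldModels:Hecke}) coming from \emph{loc.~cit.} One then defines $\Sscr_{m,g}$ (for $g\in G(\QQ_p)^+$, after choosing $g'=\rho(g)\in G'(\QQ_p)^+$, noting $e(g)=e(g')$) as the normalisation of $\Sscr_{m}^{-}$ in $\Sh_m$, where $\Sscr_m^{-}$ is the Zariski closure of $\Sh_m$ inside $\Sscr'_{m,g'}\otimes_{\ZZ_p}O_E$ pulled back appropriately; equivalently, as the normalisation of the fibre product $\Sscr'_{m,g'}\times_{\Sscr'_{m'}}\Sscr_m'$ where $\Sscr_m'$ is the relative normalisation already used to build $\Sscr$. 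Since normalisation of a scheme that is finite over a noetherian scheme is again finite (the relevant rings being excellent/Nagata, as they are finitely generated over $O_E$ or over $O_{\breve E}[[\ldots]]$), all the structure maps in (\ref{prop:DrinfeldModels:StrMorph})--(\ref{prop:DrinfeldModels:Hecke}) are inherited from the Siegel case by the universal property of normalisation: a proper (indeed finite, after composing with a finite map) morphism between normal schemes that restricts to the given map on a dense open lifts uniquely. The same recipe applied to $\Mfr^b$ in place of $\Sscr$ — using that $\Mfr^b\to\Mfr^{b'}$ is a closed immersion by Proposition~\ref{prop RZ space} and that $\Mfr^{b'}_{m,g'}$ can be built over $\Mfr^{b'}$ from the Siegel Drinfeld-level data via the Rapoport-Zink uniformisation, or directly by pulling back $\Sscr'_{m,g'}$ — produces $\Mfr^b_{m,g}$ with its morphisms; here one should record that $\Mfr^b_{m,g}$ is $\varpi$-adically formally of finite type over $O_{\breve E}$ and normal because normalisation commutes with $\varpi$-adic completion for excellent rings.

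The compatibility statements, i.e.\ that $[g]\circ[g']=[gg']$ whenever defined and that each map is a morphism \emph{over} $\Sscr$ (resp.\ over $\Mfr^b$), follow because both sides of each identity are normalisations of the same closure with respect to the same dense-open map, hence agree by the uniqueness in the universal property of normalisation; this is where most of the ``bookkeeping'' lives but no real content. For the product-structure assertion, I would \emph{define} $\Xfr^b_{m,g}\coloneqq\Xfr^b\times_{\pi_\infty,\Sscr}\Sscr_{m,g}$ and then establish the isomorphism $\Xfr^b_{m,g}\cong\Igfr^b\times\Mfr^b_{m,g}$ by the following chain. In the Siegel case one has $\Xfr^{b'}\cong\Igfr^{b'}\times\Mfr^{b'}$ (Caraiani-Scholze), and Mantovan's construction is set up so that adding $\Ksf'_p(m)$-Drinfeld level structure relative to $g'$ only affects the $\Mfr^{b'}$-factor: concretely, $\Xfr^{b'}\times_{\pi_\infty',\Sscr'}\Sscr'_{m,g'}\cong\Igfr^{b'}\times\Mfr^{b'}_{m,g'}$, because the level structure is a structure on $\Ascr[p^m]$ which, along the quasi-isogeny $\psi$, transports to a structure on $\XX$-side encoded entirely by the Rapoport-Zink datum (the Igusa trivialisation being already maximal level at $p$). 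Pulling this back along the closed immersions $\Igfr^b\hookrightarrow\Igfr^{b'}$, $\Mfr^b_{m,g}\hookrightarrow\Mfr^{b'}_{m,g'}$ and $\Sscr_{m,g}\hookrightarrow\Sscr'_{m,g'}$, and using the uniqueness of the lift $\pi_\infty$ from Theorem~\ref{thm product structure} together with the fact (Theorem~\ref{thm product structure}(1)) that $\Xfr^{b,\perf}_{\k}$ represents the expected moduli problem, one checks the desired isomorphism first on $\k$-points and on formal neighbourhoods (where it reduces to the deformation-theoretic product structure of \cite[Thm.~4.3.1]{Kim:ProductStructure} unchanged by level structure away from the special fibre) and then deduces it integrally by normality and the argument pattern of Proposition~\ref{prop RZ space}. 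Finally the commutation with the $\Gsf(\AA_f^p)$-Hecke action and with the $[g]$ for $g\in G(\QQ_p)^+$ is formal from Theorem~\ref{thm product structure}(4) and Lemma~\ref{lemma Hecke on Igusa tower}: both operations are defined by pullback along maps of the base towers $\{\Sscr_{m,g}\}$, over which the isomorphism $\Xfr^b\cong\Igfr^b\times\Mfr^b$ is already equivariant.

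\textbf{Main obstacle.} The delicate point is \emph{not} the existence of the integral models — that is a routine normalisation once Mantovan's Siegel models are granted — but rather verifying that the Drinfeld-level construction genuinely ``lives on the $\Mfr$-factor'', i.e.\ that $\Xfr^{b'}_{m,g'}\cong\Igfr^{b'}\times\Mfr^{b'}_{m,g'}$ over $\Xfr^{b'}\cong\Igfr^{b'}\times\Mfr^{b'}$, and that this survives restriction to the Hodge-type locus. On the Siegel side this requires unwinding Mantovan's moduli description: a point of $\Xfr^{b'}$ is $(\Ascr,\lambda,\eta^p;\psi)$ with $\psi\colon\XX_{R/\varpi}\to\Ascr[p^\infty]_{R/\varpi}$ a quasi-isogeny, the Igusa factor records a \emph{genuine} isomorphism $\XX\isom\Ascr_{\Cbar^{b'}}[p^\infty]$ lifting $\psi$ on the central leaf, and the $\Mfr^{b'}$-factor records the quasi-isogeny to a deformable $\Ascr'$; adding $\Ksf'_p(m)$-level along $g'$ means choosing a finite-flat subgroup chain of $\Ascr'[p^m]$, and one must see — as Mantovan does — that via $\psi$ this is equivalent to the same datum on the $\XX$-side twisted by the RZ quasi-isogeny, which is exactly what $\Mfr^{b'}_{m,g'}$ parametrises. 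The subtlety in the Hodge-type case is that the extra crystalline-Tate-tensor conditions defining $\Mfr^b\subset\Mfr^{b'}$, $\Igfr^b\subset\Igfr^{b'}$, $\Sscr\subset\Sscr'$ must be checked to be \emph{compatibly} imposed — i.e.\ the pullback of the Siegel isomorphism to the three Hodge-type sublocci actually lands in $\Xfr^b_{m,g}$ — and this is handled exactly as in the proofs of Lemma~\ref{lemma clopen}, Proposition~\ref{prop RZ space} and Theorem~\ref{thm product structure}: the tensor conditions cut out unions of connected components / closed subschemes that are preserved by all the maps in sight, so the isomorphism restricts. Thus the real work is a careful translation of \cite[\S6]{Mantovan:Foliation} into the present language, with the tensor-compatibility being a purely formal add-on using the global tensors of Corollary~\ref{cor global tensors}.
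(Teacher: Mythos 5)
Your proposal is correct and follows essentially the same route as the paper's proof: reduce to Mantovan's Siegel Drinfeld-level models, define the Hodge-type models $\Sscr_{m,g}$ and $\Mfr^b_{m,g}$ by normalisation, inherit the maps (1)--(3) from the universal property of normalisation, and obtain the product decomposition by observing that the Drinfeld datum lives on the $\Mfr$-factor and then checking the resulting map fibrewise over $\Igfr^b$. The one detail you elide --- that the normalisation of $\Mfr^{b',\Dr}_{m,g}\times_{\Mfr^{b'}}\Mfr^b$ has generic fibre $\Mcal^{b'}_m\times_{\Mcal^{b'}}\Mcal^b$, of which $\Mcal^b_m$ is only an open-and-closed piece, so that one must extract $\Mfr^b_{m,g}$ as the corresponding open-and-closed formal subscheme (via de Jong) --- is precisely the ``tensor conditions cut out connected components'' point you already flag in your obstacle paragraph, so there is no real gap.
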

Note that by taking fibre of the isomorphism (\ref{eqn:DrinfeldModels:ProdStr}) at $\ztilde\in\bar\Ig^b(\k)$ gives an isomorphism $\Mfr^b_{m,g} \cong \Mfr^b\times_{\Theta_{\ztilde},\Sscr}\Sscr_{m,g}$, commuting with the Hecke action of $\Gsf(\AA_f^p)$ and $G(\QQ_p)^+$.
\begin{proof}
If $\Gsf=\Gsf'$, then Mantovan \cite[\S6]{Mantovan:Foliation} constructed  (not necessarily normal) integral models $\Sscr'^\Dr_{m,g'}$ and $\Mfr^{b',\Dr}_{m,g'}$ for any $g'\in G'(\Qp)^+$ and $m\geqslant e(g')$ as adding Drinfeld level structure (at $p$) to the moduli problems. These integral models come equipped with the various natural projections and the Hecke correspondence for $g'\in G'(\Qp)^+$ in the sense of (\ref{prop:DrinfeldModels:StrMorph})--(\ref{prop:DrinfeldModels:Hecke}), which are constructed via the moduli description. We show that these maps are proper by directly verifying the valuative criterion using the moduli description of $\Sscr'^\Dr_{m,g'}$ and $\Mfr^{b',\Dr}_{m,g'}$. Lastly, since the Drinfeld level structure on an abelian variety only depends on the associated {\BT}, it follows that $\Mfr^{b',\Dr}_{m,g'} \cong \Mfr^{b'}\times_{\Theta_{\ztilde'},\Sscr'}\Sscr'^{\Dr}_{m,g'}$, where $\Theta_{\ztilde'}$ is the map induced by the image $\tilde z'\in\bar\Ig^{b'}(\k)$ of $\tilde z$.

For any $m$ and $g\in G(\Qp)^+$, we define  $\Sscr_{m,g}$ to be the normalization of the Zariski closure of $\Sh_{m}$ in $\Sscr_{m,g}'^\Dr$. Since all the maps as in  (\ref{prop:DrinfeldModels:StrMorph})--(\ref{prop:DrinfeldModels:Hecke}) for $\Sscr_{m,g}'^\Dr$ restrict to dominant maps of the Zariski closures of $\Sh_{m}$ in $\Sscr_{m,g}'^\Dr$, they naturally extend to the normalizations, which clearly satisfy the properties stated in (\ref{prop:DrinfeldModels:StrMorph})--(\ref{prop:DrinfeldModels:Hecke}) for $\Sscr_{m,g}$'s.

To define $\Mfr^b_{m,g}$, let us recall the notion of normalization of formal schemes $\Mfr$ locally formally of finite type over $O_{\breve E}$ (or more generally, for \emph{excellent} formal schemes). We first note that any formally finitely generated algebra $R$ over a complete discrete valuation ring of (generic) characteristic~$0$ is excellent (\emph{cf.} \cite{Valabrega:FewThms}). Therefore, the normalization $\widetilde R$ of $R$ is finite over $R$ (hence, already complete with respect to the natural adic topology of $R$), and the normalization of $R\langle 1/f \rangle$ coincides with $\widetilde R\langle 1/f\rangle$ for any $f\in R$, where $R\langle 1/f \rangle$  and $\widetilde R\langle 1/f\rangle$ denotes the completion of the localization. Therefore, by choosing a Zariski open covering $\{\Spf R_\alpha \}_\alpha$ of $\Mfr$, we can construct the normalization $\widetilde \Mfr$ by glueing $\{\Spf \widetilde R_\alpha \}_\alpha$.

We claim that if $\Mcal\coloneqq\Mfr^\ad_{\breve E}$ is normal (for example, smooth), then $(\wt \Mfr)^\ad_{\breve E}$ is naturally isomorphic to $\Mcal$.
Indeed, we may assume $\Mfr$ is affine, say $\Mfr = \Spf R$, and it suffices to show that the normality of $\Mcal$ implies the normality of $R[\frac{1}{p}]$. Since $R[\frac{1}{p}]$ is excellent, the normality can be checked after the completion at maximal ideals. Now the claim follows since for any maximal ideal $x$ of $R[\frac{1}{p}]$, which we also view as a classical point of $\Mcal$, the completed local ring $\wh\Ocal_{\Mcal,x}$ coincides with the $x$-adic completion of $R[\frac{1}{p}]$; \emph{cf.}~\cite[Lemma~7.1.9]{dejong:crysdieubyformalrigid}.

We set $\Mfr^{b,\diamond}_{m,g}$ to be the normalization of $\Mfr^{b',\Dr}_{m,g}\times_{\Mfr^{b'}}\Mfr^b$. By the discussion above, the following finite \'etale cover of $\Mcal^b$
\[
(\Mfr^{b,\diamond}_{m,g})^\ad_{\breve E} = \Mcal^{b'}_m \times_{\Mcal^{b'}}\Mcal^b,
\]
contains $\Mcal^b_m$ as an open and closed subspace (as $\Mcal^b_m$ is a finite \'etale cover of $\Mcal^b$).
By \cite[Theorem~7.4.1]{dejong:crysdieubyformalrigid}, there exists an open and closed formal subscheme $\Mfr^b_{m,g}$ of $\Mfr^{b,\diamond}_{m,g}$ whose generic fiber is $\Mcal^b_m$.
Now, by repeating the proof of (\ref{prop:DrinfeldModels:StrMorph})--(\ref{prop:DrinfeldModels:Hecke}) for  $\Sscr_{m,g}$, we can obtain maps between $\Mfr^b_{m,g}$ that  satisfy the properties stated in (\ref{prop:DrinfeldModels:StrMorph})--(\ref{prop:DrinfeldModels:Hecke}).

It remains to show the existence of the isomorphism (\ref{eqn:DrinfeldModels:ProdStr}) and its compatibility with the Hecke action. We begin with the analogue of (\ref{eqn:DrinfeldModels:ProdStr}) for the integral models with Drinfeld level structure in the case of $\Gsf=\Gsf'$; namely,  one can  lift the natural isomorphism (\ref{eq moduli structure}) to obtain the following canonical isomorphism
\[
\Xfr^{b',\Dr}_{m,g}\coloneqq \Xfr^{b'} \times_{\pi_\infty,\Sscr'} \Sscr_{m,g}'^\Dr
\cong \Igfr^{b'}\times \Mfr^{b',\Dr}_{m,g}.
\]
where $\Sscr_{m,g}'^\Dr$ and $\Mfr^{b'}_{m,g}$ are as before. This isomorphism holds as the Drinfeld level structure defining  $\Sscr_{m,g}'^\Dr$ only depends on the associated {\BT} of the universal abelian scheme.

We let $\Sscr_{m}^\Dr$ denote the closure of $\Sh_{m}$ in $\Sscr_{m}'^\Dr\times_{\Sscr'}\Sscr$. By construction, $\Sscr_{m}$ is the normalization of $\Sscr_{m}^\Dr$ and we have a finite morphism $\Xfr^b_{m,g}\to\Xfr^{b',\Dr}_{m,g}$ induced by $\Sscr_{m,g}\to\Sscr'^\Dr_{m,g}$. Thus we obtain morphisms $\Xfr^b_{m,g}\to \Igfr^{b'}$ and $\Xfr^b_{m,g}\to\Mfr^{b',\Dr}_{m,g}$   by the natural projections.
Note that the first projection $\Xfr^b_{m,g}\to \Igfr^{b'}$ factorises as
\[
\Xfr^b_{m,g} \to \Xfr ^b \cong  \Igfr^b\times \Mfr^b \to \Igfr^b \hookrightarrow \Igfr^{b'},
\]
and the resulting map $\Xfr^b_{m,g}\to \Igfr^b$ is clearly  invariant under the Hecke action of $G(\QQ_p)^+$.

Let us now inspect the second projection $\Xfr^b_{m,g}\to\Mfr^{b',\Dr}_{m,g}$.  Let us choose  a point $\tilde z\in \Igfr^b(\k)$ and let $z\in\Ssb^b(\k)$ denote its image. As $\Igfr^b$ is an $O_{\breve E}$-lift of a perfect scheme, $\tilde z$ admits a unique lift in $\Igfr^b(O_{\breve E})$, which we also denote as $\tilde z$. Then we have
\[
\Xfr^b_{m,g} \times_{\Igfr^b,\tilde z}\Spf O_{\breve E} \cong \Mfr^b\times_{\Theta_{\ztilde},\Sscr}\Sscr_{m,g} 
\]
which follows from the identity $\Theta_{\ztilde} = \pi_\infty |_{\{\tilde z\} \times \Mfr^b}$.  We now claim that the right hand side of this isomorphism is naturally isomorphic to $\Mfr^b_{m,g}$, so the first projection induces $\Xfr^b_{m,g}\to\Mfr^b_{m,g}$. For this we first show that it is normal. Since it is an excellent formal scheme, it suffices to check the normality on the formal neighbourhood of closed points. Since $\Theta_{\ztilde}$ induces isomorphism of formal neighbourhoods of closed points so does the natural projection $\Mfr^b\times_{\Theta_{\ztilde},\Sscr}\Sscr_{m,g}\to\Sscr_{m,g}$; thus the normality follows from the normality of $\Sscr_{m,g}$. Therefore $\Mfr^b_{m,g}$ and $\Mfr^b\times_{\Theta_{\ztilde},\Sscr}\Sscr_{m,g}$ are normal formal models of $\Mcal^b_m$ that are finite over $\Mfr^b\times_{\Theta_{\ztilde},\Sscr}\Sscr^{-}_{m,g}$. Thus we obtain a natural isomorphism
\[\Mfr^b_{m,g} \riso \Mfr^b\times_{\Theta_{\ztilde},\Sscr}\Sscr_{m,g}\]
 of formal models of $\Mcal^b_m$, and the universal property of normalisation shows that this isomorphism respects with the maps (\ref{prop:DrinfeldModels:StrMorph})--(\ref{prop:DrinfeldModels:Hecke}) on both sides; i.e, this isomorphism commutes with the Hecke action of $G(\QQ_p)^+$.

Now, we have a natural map
\[
\Xfr^b_{m,g} \to  \Igfr^b\times \Mfr^b_{m,g}
\]
over the isomorphism $\Xfr^b\riso  \Igfr^b\times  \Mfr^b$. We claim that this map gives the desired isomorphism (\ref{eqn:DrinfeldModels:ProdStr}). 
Indeed, it is a finite morphism since both the source and the target are finite over $\Xfr^{b',\Dr}_{m,g}\cong \Igfr^{b'}\times\Mfr^{b',\Dr}_{m,g}$, and it is a closed immersion by Nakayama lemma since the fibre at each $\tilde z\in\Igfr^b(\k)$  is an isomorphism (\emph{cf.} \cite[Thm.~4.8]{matsumura:crt}). We conclude that the above natural map is an isomorphism, since the fibre at each $\tilde z\in\Igfr^b(\k)$ shows that the map $\Xfr^b_{m,g} \to  \Igfr^b\times \Mfr^b_{m,g}$ cannot factor through any proper closed formal subscheme of the target.
The equivariance with respect to the Hecke $\Gsf(\AA_f^p)$-action is clear from the same property for the isomorphism $\Xfr^b\riso  \Igfr^b\times \Mfr^b$, and we have already verified the equivariance with respect to the Hecke action of $G(\QQ_p)^+$. 
This  completes the proof.
\end{proof}

\subsection{Application of K\"unneth formula} \label{sect kuenneth}
We fix a complete algebraically closed extension $C$ of $\breve E$. We also fix an algebraic representation $\xi\colon \Gsf_{\QQbar_l} \to \mathsf{GL}(V)$ and denote by $\Lscr_\xi$ the corresponding automorphic \'etale sheaf on the Shimura variety (for a non-specified level). Recall that the $l$-adic cohomology with $\Lscr_\xi$-coefficients on the ``infinite-level'' has a natural $\Gsf(\AA_f)$-action by cohomological correspondence. To spell it out, for any $g\in\Gsf(\AA_f)$ and an open compact subgroup $\Ksf\subset \Gsf(\AA_f)$, we have a natural map $[g]:\Sh_{g\Ksf g^{-1}}(\Gsf,\Xsf)\riso\Sh_\Ksf(\Gsf,\Xsf)$ defining Hecke correspondences, and we have a natural isomorphism
\[
[g]^*\Lscr_\xi \cong \Lscr_\xi
\] 
of $l$-adic \'etale sheaves on $\Sh_{g\Ksf g^{-1}}(\Gsf,\Xsf)$, which satisfy the obvious cocycle relations with respect to the group structure of $\Gsf(\AA_f)$. Using this, we will identify $\Lscr_\xi$ and $[g]^*\Lscr_\xi$.

\begin{defnsub}
We set
\[
\Rrm\Gamma_\Ccal(\Mcal^b_{m,C},\QQbar_l)\coloneqq\Rrm\Gamma_{\pr_{m,g}\iv(\Ccal_{\Mfr^b_{m,g}})}(\Mcal^b_{m,C},\QQbar_l) 
\]
 for some $g\in G(\Qp)^+$ such that $m\geqslant e(g)$.
By Lemma~\ref{lem:Funct}, it follows that $\Rrm\Gamma_\Ccal(\Mcal^b_m,\QQbar_l)$ is independent of the choice of integral model $\Mfr^b_{m,g}$ (i.e., the choice of $g\in G(\Qp)^+$). 

We write $\Scal^b_{m}\coloneqq (\wh\Sscr^b_{m,g})^\ad_{\breve E}$, which is a finite \'etale covering of $\Scal^b\coloneqq (\wh\Sscr^b)^\ad_{\breve E}$ independent of the choice of $g\in G(\Qp)^+$. For any automorphic \'etale sheaf $\Lscr_\xi$, we set
\[
\Rrm\Gamma_\Ccal(\Scal^b_{m,C},\Lscr_\xi)\coloneqq\Rrm\Gamma_{\pr_{m,g}\iv(\Ccal_{\wh\Sscr^b_{m,g}})}(\Scal^b_{m,C},\Lscr_\xi),
\]
for some $g\in G(\Qp)^+$ such that $m\geqslant e(g)$. Similarly, $\Rrm\Gamma_\Ccal(\Scal^b_{m,C},\Lscr_\xi)$ does not depend on the choice of integral model or $g\in G(\Qp)^+$.
\end{defnsub}

The following lemma is now a straightforward consequence of Proposition~\ref{prop:VanCycSpecSeq} and the assumption above.
\begin{lemsub}\label{lem:HeckeRZ}
We have the following natural isomorphisms
\begin{eqnarray*}
\Rrm\Gamma_\Ccal(\Mcal^b_C,\pr_{m,\ast}\QQbar_l) &\cong \Rrm\Gamma_\Ccal(\Mcal^b_{m,C},\QQbar_l) &\cong \Rrm\Gamma_c((\Mfr^b_{m,g})^\red,\Rrm\Psi\QQbar_l); \\
\Rrm\Gamma_\Ccal(\Scal^b_{C},\pr_{m,\ast}\Lscr_\xi) &\cong \Rrm\Gamma_\Ccal(\Scal^b_{m,C},\Lscr_\xi) &\cong \Rrm\Gamma_c(\Ssb^b_{m,g},\Rrm\Psi\Lscr_\xi);\\
\Rrm\Gamma_\Ccal(\Scal_{C},\pr_{m,\ast}\Lscr_\xi) &\cong \Rrm\Gamma_\Ccal(\Scal_{m,C},\Lscr_\xi) &\cong \Rrm\Gamma_c(\Ssb_{m,g},\Rrm\Psi\Lscr_\xi),
\end{eqnarray*}
for any $b$, $m$ and $g$. Here, $\Scal$ and $\Scal_{m}$ are the analytic generic fibre of the $p$-adic completion of $\Sscr$ and $\Sscr_{m,g}$, and $\Ssb_{m,g} = (\Sscr_{m,g})_{\k}$.

Furthermore, if we set $m'\coloneqq m+e(g)$ for $g\in G(\Qp)^+$, then $[g]:\Mcal^b_{m'}\riso \Mcal^b_{g\iv\Ksf_p(m') g}\thra \Mcal^b_{m}$ induces a natural map
\[
[g]^\ast :\Rrm\Gamma_\Ccal(\Mcal^b_{m,C},\QQbar_l) \ra \Rrm\Gamma_\Ccal(\Mcal^b_{m',C},\QQbar_l).
\]	
Similarly, we also obtain
\[
[g]^\ast :\Rrm\Gamma_\Ccal(\Scal^b_{m,C},\Lscr_\xi) \ra \Rrm\Gamma_\Ccal(\Scal^b_{m',C},\Lscr_\xi).
\]	
\end{lemsub}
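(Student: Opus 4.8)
\textbf{Proof proposal for Lemma~\ref{lem:HeckeRZ}.}
The plan is to assemble everything from Proposition~\ref{prop:VanCycSpecSeq} (the vanishing-cycle spectral sequence), Lemma~\ref{lem:Funct} (independence of the support set under finite-type maps inducing an isomorphism on generic fibres), and the construction of the integral models $\Sscr_{m,g}$ and $\Mfr^b_{m,g}$ in Proposition~\ref{prop:DrinfeldModels}. First I would note that each of $\Mfr^b_{m,g}$, $\Sscr_{m,g}$, $\Sscr$ is locally algebraisable with partially proper reduced special fibre: for $\Sscr$ and $\Mfr^b$ this was verified in \S\ref{sect integral models} and \S\ref{sect RZ}, and for $\Sscr_{m,g}$, $\Mfr^b_{m,g}$ this follows since they are finite over $\Sscr'^\Dr_{m,g}$ (resp.\ $\Mfr^{b',\Dr}_{m,g}$ pulled back to $\Sscr$), hence inherit local algebraisability and, since $\Sscr^\red_{m,g} \to \Sscr'^{\Dr,\red}_{m,g}$ is finite and the target is partially proper over $\k$, so is the source. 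Then Proposition~\ref{prop:VanCycSpecSeq} applied to the automorphic sheaf $\Lscr_\xi$ (which is a constructible $l$-adic sheaf, being lisse) yields
\[
\Rrm\Gamma_c\big((\Mfr^b_{m,g})^\red,\Rrm\Psi\QQbar_l\big) \cong \Rrm\Gamma_{\Ccal_{\Mfr^b_{m,g}}}(\Mcal^b_{m,C},\QQbar_l),
\]
and likewise for $\Sscr_{m,g}$ and $\Sscr$; by the very definition of $\Rrm\Gamma_\Ccal$ preceding the lemma, the right-hand side is $\Rrm\Gamma_\Ccal(\Mcal^b_{m,C},\QQbar_l)$ etc.

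Next I would address the first isomorphism in each row, namely $\Rrm\Gamma_\Ccal(\Mcal^b_C,\pr_{m,\ast}\QQbar_l) \cong \Rrm\Gamma_\Ccal(\Mcal^b_{m,C},\QQbar_l)$. Here $\pr_m\colon \Mcal^b_{m,C} \to \Mcal^b_C$ is finite \'etale, so $\pr_{m,\ast} = \Rrm\pr_{m,\ast}$ and the projection formula together with the Leray spectral sequence collapse to give $\Rrm\Gamma(\Mcal^b_C,\pr_{m,\ast}\QQbar_l) = \Rrm\Gamma(\Mcal^b_{m,C},\QQbar_l)$ on the level of ordinary cohomology; the point is that this identification is compatible with the support conditions, which follows because $\pr_{m,g}\colon \Mfr^b_{m,g} \to \Mfr^b$ (the map on integral models) is finite, hence by Lemma~\ref{lem:Funct} we have $\pr_{m,g,C}\iv(\Ccal_{\Mfr^b}) = \Ccal_{\Mfr^b_{m,g}}$, and a quasi-compact closed subset of $(\Mfr^b)^\red$ pulls back to one of $(\Mfr^b_{m,g})^\red$ and conversely maps forward to one; thus the direct-limit-over-$Z$ defining $\Gamma_c$ on the nearby cycles matches up on both sides. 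The same argument applies verbatim to $\Sscr^b_{m,g} \to \wh\Sscr^b$ and to $\Sscr_{m,g} \to \Sscr$, using that these structure maps are proper (indeed finite modulo the normalisation step, but properness is all we need to push forward support). The well-definedness of $\Rrm\Gamma_\Ccal$ independently of $g$ is again Lemma~\ref{lem:Funct}, since for $g_1,g_2 \in G(\Qp)^+$ the two integral models $\Mfr^b_{m,g_1}$, $\Mfr^b_{m,g_2}$ have a common refinement dominating both by finite morphisms inducing the identity on $\Mcal^b_m$.

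Finally, for the functoriality maps $[g]^\ast$: the morphism $[g]\colon \Mcal^b_{m'} \to \Mcal^b_m$ (with $m' = m+e(g)$) extends, by Proposition~\ref{prop:DrinfeldModels}(\ref{prop:DrinfeldModels:Proj})--(\ref{prop:DrinfeldModels:Hecke}), to a \emph{proper} morphism of integral models $[g]\colon \Mfr^b_{m',g'} \to \Mfr^b_{m,g''}$ for suitable auxiliary $g',g''\in G(\Qp)^+$ with $m',m \geq e$ of the relevant elements; a proper morphism of locally formally finite type formal schemes induces a morphism of pairs $[g]\colon (\Mcal^b_{m',C},\Ccal_{\Mfr^b_{m',g'}}) \to (\Mcal^b_{m,C},\Ccal_{\Mfr^b_{m,g''}})$ — that is, $[g]\iv(\Ccal_{\Mfr^b_{m,g''}}) \subseteq \Ccal_{\Mfr^b_{m',g'}}$, since the preimage of a quasi-compact closed set under a proper map is quasi-compact closed — and hence $[g]^\ast$ on $\Rrm\Gamma_\Ccal$ is defined as in the discussion following Definition~\ref{def:FormalModelSupportSet}. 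The same works for $\Scal^b_{m,g} \to \Scal^b_{m',g'}$, using the proper Hecke maps on the $\Sscr_{m,g}$ side and the identification $\Rrm\Gamma_\Ccal(\Scal^b_{m,C},\Lscr_\xi) = \Rrm\Gamma_c(\Ssb^b_{m,g},\Rrm\Psi\Lscr_\xi)$. I expect the main obstacle to be bookkeeping rather than mathematics: one must check that all the displayed isomorphisms are \emph{natural} in $m$ and compatible with the various projection, transition, and Hecke maps of Proposition~\ref{prop:DrinfeldModels} simultaneously, so that $[g]^\ast$ is well-defined independently of the auxiliary choices $g',g''$ — this again reduces to Lemma~\ref{lem:Funct} applied to common refinements, but the compatibility diagram is somewhat large to write out.
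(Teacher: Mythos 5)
Your proposal is correct, and for the second isomorphism in each row (the passage to $\Rrm\Gamma_c$ of the nearby cycles) it coincides with the paper's argument, which is simply an application of Proposition~\ref{prop:VanCycSpecSeq}; you are more careful than the paper in checking its hypotheses (local algebraisability, partial properness of the reduced special fibres, constructibility of $\Lscr_\xi$), which is welcome. For the first isomorphism in each row, however, you take a genuinely different route. The paper stays on the special fibre: it invokes Berkovich's compatibility of formal nearby cycles with finite pushforward, $\Rrm\Psi(\pr_{m,\ast}\Lscr)\cong \Rrm\pr^\red_{m,g,\ast}(\Rrm\Psi\Lscr)$, notes that $\pr_m$ is finite on generic fibres so $\Rrm\pr_{m,\ast}=\pr_{m,\ast}$, and then applies Proposition~\ref{prop:VanCycSpecSeq} on both $\Mfr^b$ and $\Mfr^b_{m,g}$ together with $\Rrm\Gamma_c\circ\Rrm\pr^\red_{m,g,\ast}=\Rrm\Gamma_c$ for the proper map $\pr^\red_{m,g}$. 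You instead argue entirely on the generic fibre, identifying $\pr_{m,g,C}\iv(\Ccal_{\Mfr^b})=\Ccal_{\Mfr^b_{m,g}}$ via Lemma~\ref{lem:Funct} and using exactness of finite pushforward to match $\Rrm\Gamma_{\Ccal}(\Mcal^b_C,\pr_{m,\ast}\Fscr)$ with $\Rrm\Gamma_{\pr_m\iv\Ccal}(\Mcal^b_{m,C},\Fscr)$. Both work; the paper's route buys a one-line reduction to the special fibre at the cost of citing Berkovich, while yours avoids that citation but requires the (standard, and worth stating) fact that $\Rrm\Gamma_Z(X,f_\ast\Fscr)\cong\Rrm\Gamma_{f\iv Z}(Y,\Fscr)$ for finite morphisms of adic spaces, compatibly with the colimit over the support set. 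Your explicit treatment of the $[g]^\ast$ maps via morphisms of pairs $(\Mcal^b_{m',C},\Ccal_{\Mfr^b_{m',g}})\to(\Mcal^b_{m,C},\Ccal_{\Mfr^b_{m}})$ is more detailed than the paper, which leaves this implicit; the independence of the auxiliary choices does indeed reduce to Lemma~\ref{lem:Funct} as you say.
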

\begin{proof}
Recall that we have natural isomorphisms
\[
\Rrm\Psi(\pr_{m,\ast}\Lscr)\cong \Rrm\pr^\red_{m,g,\ast}(\Rrm\Psi\Lscr)\quad \text{(}\textit{cf. }\text{\cite[Corollary~2.3(ii)]{Berkovich:VanishingCyclesFormal2})},
\]
over $(\Mfr^b_{m,g})^{\red}$ and  $\Ssb_{m,g}$, where $\Lscr = \QQbar_l$ or $\Lscr_\xi$. (Note that the generic fibre of $\pr_{m,g}$ is 
the natural projection $\pr_m:\Mcal^b_m\to\Mcal^b$ or $\Scal_m\to \Scal$, which is a finite morphism. So we have $\Rrm\pr_{m,\ast}\Lscr_\xi = \pr_{m,\ast}\Lscr_\xi$.) Now, the lemma follows directly from  Proposition~\ref{prop:VanCycSpecSeq}.
\end{proof}

Recall that we have a ``surjective $\QIsog_G(\XX)$-pretorsor'' $\pi_\infty:\Xfr^b =\Igfr^b\times \Mfr^b \ra \wh\Sscr^b$. We may pull back this morphism to $\pi_\infty:\Xfr^b_{m,g}\cong\Igfr^b\times \Mfr^b_{m,g} \ra \wh\Sscr^b_{m,g}$ for any $g\in G(\Qp)^+$ and $m\geqslant e(g)$.
\begin{propsub} \label{prop:ProductDecompConstantSheaf}
For any automorphic \'etale sheaf $\Lscr_\xi$, we have a natural isomorphism
\[
\pi_\infty^\ast \Rrm\Psi_{\wh\Sscr^b_{m,g}}\Lscr_\xi \cong \Lscr_\xi \boxtimes^\Lrm \Rrm\Psi_{\Mfr^b_{m,g}}\QQbar_l
\]
This isomorphism respects the natural $J_b(\Qp)$-actions, Weil descent data, and the Hecke action of $G(\Qp)^+$ on both sides.
\end{propsub}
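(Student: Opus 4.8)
\emph{Proof sketch.} The plan is to combine the product--structure isomorphism $\Xfr^b_{m,g}\cong\Igfr^b\times_{\Spf O_{\breve E}}\Mfr^b_{m,g}$ over $\wh\Sscr^b_{m,g}$ (Theorem~\ref{thm product structure} together with Proposition~\ref{prop:DrinfeldModels}) with the observation that formal nearby cycles are insensitive to the Igusa factor, because $\Igfr^b$ is a flat lift of a perfect $\k$--scheme, and to reduce the remaining ``smooth base change along $\pi_\infty$'' to the explicit local picture on formal neighbourhoods provided by \cite{Kim:ProductStructure}. One may and will assume $m=0$ and $g=\id$: the general case follows by pulling everything back along the finite morphisms of Proposition~\ref{prop:DrinfeldModels}, using that $\Rrm\Psi$ commutes with finite pushforward \cite[Cor.~2.3(ii)]{Berkovich:VanishingCyclesFormal2} (as in the proof of Lemma~\ref{lem:HeckeRZ}). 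Write $\pi_\infty\colon\Xfr^b=\Igfr^b\times\Mfr^b\to\wh\Sscr^b$, and on special fibres $\pi_\infty^{\red}\colon\bar\Ig^b\times\Mfr^{b,\red}\to\Ssb^b$.

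First I would produce the comparison map. For any morphism $f\colon\Xfr\to\Yfr$ of formal schemes locally formally of finite type over $O_{\breve E}$ (with partially proper reduced special fibres) and any $l$--adic étale sheaf $\Fscr$ on the generic fibre $\Ycal=\Yfr^\ad_C$, the unit of the adjunction $(\spbf_\Ycal^\ast,\Rrm\spbf_{\Ycal\ast})$ together with the commutation $f^{\red}\circ\spbf_\Xcal=\spbf_\Ycal\circ f^{\rig}$ of specialisation morphisms yields a canonical base change morphism $(f^{\red})^\ast\Rrm\Psi_\Yfr\Fscr\to\Rrm\Psi_\Xfr\big((f^{\rig})^\ast\Fscr\big)$. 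Apply this to $f=\pi_\infty$ and $\Fscr=\Lscr_\xi$. It then remains to identify $(\pi_\infty^{\rig})^\ast\Lscr_\xi$ and to prove the map is an isomorphism. For the identification, recall that on $\Scal^b$ the sheaf $\Lscr_\xi$ is the automorphic sheaf attached to $\xi$, built from the prime--to--$p$ (hence $l$--adic, $l\neq p$) Tate module $T_l(\Ascr_\Gsf)$ together with the tensors $(t_{\alpha,l})$, which extend over $\Sscr$ because $\Sscr$ is normal. Since the quasi--isogeny parametrised by $\Mfr^b$ affects only the $p$--divisible group, it induces an isomorphism of $l$--adic Tate modules respecting these tensors; combining this with Theorem~\ref{thm product structure}(1), which presents $\Ascr_\Gsf|_{\Xfr^b}$ --- up to a prime--to--$p$--trivial quasi--isogeny --- as the pullback from $\Igfr^b$ of $\Ascr_\Gsf|_{\Cbar^b}$, one obtains a canonical isomorphism $(\pi_\infty^{\rig})^\ast\Lscr_\xi\cong\Lscr_\xi\boxtimes\QQbar_l$ on $\Xcal^b=\Igfr^{b,\ad}_{\breve E}\times_{\Spa(\breve E,O_{\breve E})}\Mcal^b$, where now $\Lscr_\xi$ denotes its pullback to $\Igfr^{b,\ad}_{\breve E}$ and $\QQbar_l$ the constant sheaf on $\Mcal^b$.

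Next I would establish the Künneth/triviality identity $\Rrm\Psi_{\Igfr^b\times\Mfr^b}(\Lscr_\xi\boxtimes\QQbar_l)\cong\Lscr_\xi\boxtimes^{\Lrm}\Rrm\Psi_{\Mfr^b}\QQbar_l$ in two steps. (i) A Künneth formula for formal nearby cycles over a product of two formal schemes over $O_{\breve E}$; using the identification of $\Rrm\Psi$ with Berkovich's formal vanishing--cycle functor (recorded after Definition~\ref{def:FormalNearbyCycles}), this follows from the Künneth formula for vanishing cycles of Berkovich germs \cite{Berkovich:VanishingCyclesFormal2} and reduces the claim to $\Rrm\Psi_{\Igfr^b}(\Lscr_\xi)\cong\Lscr_\xi$. (ii) The latter: étale--locally $\Igfr^b\cong\Spf W(R)$ with $R$ a perfect $\k$--algebra, and for such a formal scheme the specialisation morphism induces an isomorphism $\bar{\Lscr}\riso\Rrm\spbf_\ast\spbf^\ast\bar{\Lscr}$ for every lisse sheaf $\bar{\Lscr}$ on the special fibre --- equivalently $\Rrm\Psi_{\Igfr^b}\QQbar_l=\QQbar_l$, the rest following by the projection formula --- because $W(R)$ is the canonical ($p$--torsion--free, $p$--adically complete) lift of the perfect ring $R$ and hence carries no ``ramification''. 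Combining the comparison morphism with (i)--(ii) and with the identification of the source, one gets a morphism $\pi_\infty^\ast\Rrm\Psi_{\wh\Sscr^b}\Lscr_\xi\to\Lscr_\xi\boxtimes^{\Lrm}\Rrm\Psi_{\Mfr^b}\QQbar_l$; to see it is an isomorphism it suffices to restrict to the formal neighbourhood of each closed point $x=(y,\ztilde)\in\Xfr^b(\k)$. There, by Lemma~\ref{lem lift on formal fibres}, $\pi_\infty$ is identified --- under $\Xfr^{b,\wedge}_x\cong\QIsog_G(\XX)^\circ\times\Mfr^{b,\wedge}_y$ and $\wh\Sscr^{b,\wedge}_{\pi_\infty(x)}\cong\Def_{\Gscr_x}(\Ascr_{\Gsf,\pi_\infty(x)}[p^\infty])\cong\Mfr^{b,\wedge}_y$ --- with the action map $a$ of $\QIsog_G(\XX)^\circ$ on the deformation space. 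Since $a$ equals the second projection composed with the automorphism $(g,t)\mapsto(g,g\cdot t)$ of $\QIsog_G(\XX)^\circ\times\Mfr^{b,\wedge}_y$, and an automorphism of formal schemes commutes with $\Rrm\Psi$, the base change morphism at $x$ reduces to the case of the projection $\QIsog_G(\XX)^\circ\times\Mfr^{b,\wedge}_y\to\Mfr^{b,\wedge}_y$, which is an isomorphism by (i)--(ii) applied to $\QIsog_G(\XX)^\circ$ --- again a flat lift of a perfect formal scheme by \cite{Kim:ProductStructure} (cf.\ \cite[\S4.2]{CaraianiScholze:ShVar}).

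Finally, all three compatibilities are formal: the comparison morphism is functorial in $\Fscr$ and in $f$; the product--structure isomorphism is $\QIsog_G(\XX)$-- (hence $J_b(\QQ_p)$--), $W_E$-- and $\Gsf(\AA_f^p)$-- and $G(\QQ_p)^+$--equivariant by Theorem~\ref{thm product structure}(2)--(4) and Proposition~\ref{prop:DrinfeldModels}; and the identification $(\pi_\infty^{\rig})^\ast\Lscr_\xi\cong\Lscr_\xi\boxtimes\QQbar_l$ respects these actions because it is built from the Hecke-- and Galois--equivariant structure of $T_l(\Ascr_\Gsf)$, which the level at $p$ and the Weil descent data leave untouched. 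So the resulting isomorphism is $J_b(\QQ_p)$--equivariant, respects the Weil descent data, and intertwines the $G(\QQ_p)^+$--Hecke actions. The main obstacle is step (ii) and its analogue for $\QIsog_G(\XX)^\circ$: one has to control formal nearby cycles along the non--quasi--compact, non--finite--type formal schemes $\Igfr^b$ and $\QIsog_G(\XX)^\circ$, where the only available input is their structure as flat lifts of perfect $\k$--schemes together with the deformation--theoretic product structure of \cite{Kim:ProductStructure}; the remaining steps are bookkeeping with base change morphisms, the Künneth formula, and functoriality.
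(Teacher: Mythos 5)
Your reduction steps at the two ends are sound and match the paper's: the identification $\pi_\infty^\ast\Lscr_\xi\cong\restr{\Lscr_\xi}{\Igfr^b}\boxtimes\restr{\QQbar_l}{\Mfr^b_{m,g}}$ via the prime-to-$p$ pro-Galois cover, the projection formula $\Rrm\Psi\Lscr_\xi\cong\Lscr_\xi\otimes^\Lrm\Rrm\Psi\QQbar_l$ for lisse $\Lscr_\xi$, and the passage from the Siegel case to general $\Gsf$ by restricting to a closed union of connected components are all exactly what the paper does. The gap is in the middle, and you have in fact named it yourself: the entire content of the proposition for constant coefficients is concentrated in your steps (i)--(ii) and in the local check, and none of these is available in the formalism the paper has set up. The nearby-cycle machinery of Section~1 (Berkovich/Mieda) applies only to formal schemes locally formally of finite type over the discrete valuation ring; $\Igfr^b$ and $\QIsog_G(\XX)^\circ$ are Witt-vector lifts of non-finite-type perfect schemes (resp.\ a non-noetherian formal group with point-like reduced special fibre), so neither ``$\Rrm\Psi_{\Igfr^b}\Lscr_\xi\cong\Lscr_\xi$'' nor a K\"unneth formula with such a factor is a theorem you can quote, and asserting it for $\Spf W(R)$ with $R$ perfect (a statement of almost-purity/tilting flavour) is not a routine observation. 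The same problem infects your local check: the stalk of $\Rrm\Psi_{\Xfr^b}$ at $x=(y,\ztilde)$ would be the cohomology of the tube over $x$, i.e.\ of the generic fibre of $\QIsog_G(\XX)^\circ\times\Mfr^{b,\wedge}_y$, and the comparison of that stalk with the formal completion is again Berkovich's theorem for \emph{special} formal schemes, which this is not. So the argument, as written, is circular in the sense that its unproven inputs are of the same depth as the proposition itself.

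The paper avoids all of this by never taking nearby cycles on the Igusa factor or on $\QIsog_G(\XX)^\circ$. In the Siegel case it factors $\pi_\infty$, restricted to $\ol\Ig^{b'}\times(\Mfr^{b';r,d}_{m,g})^\red$ for a truncated Rapoport--Zink space, through $J_{r'}^\perf\times(\Mfr^{b';r,d}_{m,g})^\red$ for a \emph{finite-level} Igusa variety $J_{r'}$ (a finite \'etale cover of the central leaf, hence of finite type), and there quotes Mantovan's \cite[\S7, Proposition~21]{Mantovan:Foliation}, which proves the base-change isomorphism at finite level where the standard theory applies; the infinite-level statement is then obtained by pulling back along $\ol\Ig^{b'}\to J_{r'}^\perf$ and letting $r,d\to\infty$. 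If you want to salvage your route, the honest fix is the same one: replace $\Igfr^b$ by the tower of finite-level Igusa varieties before invoking any nearby-cycle base change, at which point you are essentially reproving Mantovan's proposition rather than citing it.
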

\begin{proof}
We first deduce the proposition for  $\Lscr_\xi = \QQbar_l$ and $\Gsf'=\mathsf{GSp}_n$ with hyperspecial level at $p$, which essentially follows from \cite[\S7,~Proposition~21]{Mantovan:Foliation}. To explain, let us introduce some notation. Recall that $\Mfr^{b'} $ is the union of certain quasi-compact closed formal subschemes $ \Mfr^{b';r,d}$,  called ``truncated Rapoport-Zink spaces'' (\emph{cf.} \cite[pp.~590-591]{Mantovan:Foliation}), and for any $g\in G'(\Qp)^+$ and $m\geqslant e(g)$ we let $\Mfr^{b';r,d}_{m,g}\subset \Mfr^{b'}_{m,g}$ denote the pull back of $\Mfr^{b';r,d}$ by the natural projection. Note also that
$\ol\Ig^{b'}$ is the perfection of $\varprojlim_{r'}J_{r'}$ where $J_{r'}$ are certain finite \'etale covering of the central leaf. Then $\pi_\infty$ restricted to $\ol\Ig^{b'}\times(\Mfr^{b';r,d}_{m,g})^\red$ factors as
\[\xymatrix@1{
\ol\Ig^{b'}\times(\Mfr^{b';r,d}_{m,g})^\red \ar@{->>}[r]& J_{r'}^\perf\times(\Mfr^{b';r,d}_{m,g})^\red \ar[r]^-{\pi}& \ol \Sscr^{b'}_{m,g}}
\]
for some large enough $r'$. Now, \cite[\S7,~Proposition~21]{Mantovan:Foliation} shows the following isomorphism in the derived category of \'etale sheaves over $J_{r'}^\perf\times(\Mfr^{b';r,d}_{m,g})^\red$:
\[
\pi^\ast  \Rrm\Psi_{\wh\Sscr^b_{m,g}}\QQbar_l \cong \QQbar_l \boxtimes^\Lrm \Rrm\Psi_{\Mfr^{b;r,d}_{m,g}}\QQbar_l
\]
for $r'\gg1$.\footnote{Although our integral models are the normalisations of the integral models in \cite{Mantovan:Foliation}, we can pull back the statement of \cite[\S7,~Proposition~21]{Mantovan:Foliation} via the normalisation maps. Indeed, the formation of the formal nearby cycle sheaves commute with proper base change of formal models; \emph{cf.} \cite[Corollary~2.3(ii)]{Berkovich:VanishingCyclesFormal2}.} This implies the desired decomposition of $\pi_\infty^\ast  \Rrm\Psi_{\wh\Sscr^b_{m,g}}\QQbar_l$ when restricted to $\ol\Ig^{b'}\times(\Mfr^{b';r,d}_{m,g})^\red$ (by pulling back via $\ol\Ig^{b'} \ra J_{r'}^\perf$), so we obtain the desired decomposition by increasing $\Mfr^{b';r,d}_{m,g}$.

Now, let us  handle the case with $\Lscr_\xi = \QQbar_l$ allowing any $\Gsf$. Let $g\in G(\Qp)^+$ (not just in $G'(\Qp)^+$). 
Let $\Xfr^{b,\diamond}_{m,g}$  denote the pull back of $\Xfr^{b'}_{m,g}$ by $\Sscr_{m,g}\ra\Sscr'_{m,g}$. Then $\Xfr^b_{m,g}$ is a closed union of connected components of $\Xfr^{b,\diamond}_{m,g}$, and $\pi_\infty$ on $\Xfr^b_{m,g}$ coincides with the restriction of $\pi_\infty$ on $\Xfr^{b'}_{m,g}$. The same statement also holds for $\Igfr^b$ and $\Mfr^b_{m,g}$. Since the formation of formal nearby cycle sheaves commute with proper base change of formal schemes (such as $\wh\Sscr^b_{m,g}\ra \wh\Sscr'^{b'}_{m,g}$), we obtain the following isomorphism in the derived category of \'etale sheaves over $\Xfr^{b,\diamond}_{m,g}$
\[
\pi_\infty^{\diamond,\ast} \Rrm\Psi_{\wh\Sscr^b_{m,g}}\QQbar_l \cong \QQbar_l \boxtimes^\Lrm \Rrm\Psi_{\Mfr^{b,\diamond}_{m,g}}\QQbar_l
\]
where $\pi_\infty^\diamond:\Xfr^{b,\diamond}_{m,g}\cong \Igfr^{b,\diamond}\times\Mfr^{b,\diamond}_{m,g} \ra \wh\Sscr^b_{m,g}$ restricting $\pi_\infty:\Xfr^{b'}_{m,g}\ra \wh\Sscr'^{b'}_{m,g}$. Now, restricting this isomorphism over the formal subscheme $\Xfr^b_{m,g} \cong \Igfr^b\times\Mfr^b_{m,g}$, we obtain the desired isomorphism.

We deduce the general case (for any automorphic \'etale sheaf $\Lscr_\xi$) from the the case for $\Lscr_\xi = \QQbar_l$. For this, we briefly recall the definition of $\Lscr_\xi$. Fix an $l$-adic field $K$ such that $\xi$ descends to an algebraic representation $\xi_0\colon \Gsf_K \to \mathsf{GL} (V_0)$. If $\Ksf^p = \Ksf^{l,p}\Ksf_l$ with  $\Ksf^{l,p}\subset\Gsf(\AA_f^{l,p})$ and $\Ksf_l \subset \Gsf(\QQ_l)$, then we consider the pro-Galois cover
\[
 \Sscr_{m,g,\Ksf^{l,p}} \coloneqq \varprojlim_{\Ksf_l' \subset \Ksf_l} \Sscr_{m,g,\Ksf_l'\Ksf^{l,p}}
\]
of $\Sscr_{m,g}$ with Galois group $\Ksf_l$, and define
\[
 \Lscr_0 \coloneqq \underline{V_0} \times^{\Ksf_l} \Sscr_{m,g,\Ksf^{l,p}}.
\]
Then $\Lscr_0$ is a lisse pro-\'etale sheaf over $\Sscr_{m,g}$ such that $\Lscr_\xi = \Lscr_0 \otimes_K \QQbar_l$ by definition. Since $\pi$ commutes with the Hecke action, the pull-back of $\pi_\infty^\ast\Lscr_\xi$ to $\Xfr^b_{m,g,\Ksf^{l,p}} \coloneqq \varinjlim_{\Ksf_l' \subset \Ksf_l} \Xfr^b_{m,g,\Ksf_l'\Ksf^{l,p}}$ is isomorphic to the constant sheaf $V$ such that the descent datum is given by $\restr{\xi}{\Ksf_l}\colon \Ksf_l \to \GL(V)$. By the isomorphism (\ref{eqn:DrinfeldModels:ProdStr}) this coincides with the descent datum for $\restr{\Lscr_\xi}{\Igfr^b} \boxtimes \restr{\QQbar_l}{\Mfr^b_{m,g}}$, thus
\[
 \pi_\infty^\ast\Lscr_\xi \cong \restr{\Lscr_\xi}{\Igfr^b} \boxtimes \restr{\QQbar_l}{\Mfr^b_{m,g}}.
\]
Since $\Lscr_\xi$ is lisse over $\wh\Sscr^b_{m,g}$, we have $\Rrm\Psi_{\wh\Sscr^b_{m,g}}\Lscr_\xi \cong \restr{\Lscr_\xi}{\Ssb} \otimes^\Lrm \Rrm\Psi_{\wh\Sscr^b_{m,g}}\QQbar_l$. Altogether we obtain
  \begin{align*}
   \pi_\infty^\ast \Rrm\Psi_{\wh\Sscr^b_{m,g}}\Lscr_\xi &\cong \pi_\infty^\ast\Lscr_\xi \otimes^\Lrm \pi_\infty^\ast R\Psi_{\wh\Sscr^b_{m,g}}\QQbar_l \\
   & \cong (\restr{\Lscr_\xi}{\bar\Ig^b} \boxtimes^\Lrm \restr{\QQbar_l}{\Mfr^b_{m,g}}) \otimes^\Lrm (\restr{\QQbar_l}{\bar\Ig^b} \boxtimes^\Lrm R\Psi_{\Mfr^b_{m,g}} \QQbar_l) \\
   & \cong \restr{\Lscr_\xi}{\bar\Ig^b} \boxtimes^\Lrm R\Psi_{\Mfr^b_{m,g}} \QQbar_l.
  \end{align*}
\end{proof}

\begin{propsub}\label{prop:Ext}
For each $m$, we have the following natural $W_E$-equivariant isomorphism
\begin{align*}
\Rrm\Gamma_\Ccal(\Scal^b_{m}, \Lscr_\xi) &\cong \Rrm\Gamma_c(\bar\Ig^b,\Lscr_\xi)\otimes^\Lrm_{\Hcal_{\QQbar_l}(J_b(\Qp))}\Rrm\Gamma_\Ccal (\Mcal^b_m,\QQbar_l)  \\
&\cong \RHom_{J_b(\Qp)}(\Rrm\Gamma_c(\Mcal^b_m,\QQbar_l), \Rrm\Gamma_c(\bar\Ig^b,\Lscr_\xi))(-d)[-2d],
\end{align*}
where $d$ is the dimension of $\Mcal^b$.
\end{propsub}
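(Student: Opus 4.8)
The plan is to combine two inputs: the product-structure isomorphism for the Newton stratum (Theorem~\ref{thm product structure} and Corollary~\ref{cor product structure}), together with the Künneth formula of Appendix~\ref{appendix Kuenneth} as applied in Corollary~\ref{cor product structure kuenneth}, and Mieda's Poincaré--Zelevinsky duality theorem (Theorem~\ref{thm:Poincare}) applied to the Rapoport--Zink tower $\Mfr^b$ together with the reinterpretation of $\Ecal^i$ via $\Ext$-groups in Proposition~\ref{prop:Infty2Fin}. First I would reduce the first isomorphism to a statement over finite coefficients $\ZZ/l^r\ZZ$: by Proposition~\ref{prop:VanCycSpecSeq} and Lemma~\ref{lem:HeckeRZ}, $\Rrm\Gamma_\Ccal(\Scal^b_m,\Lscr_\xi)\cong \Rrm\Gamma_c(\Ssb^b_{m,g},\Rrm\Psi\Lscr_\xi)$, and by Proposition~\ref{prop:ProductDecompConstantSheaf} the pull-back of $\Rrm\Psi_{\wh\Sscr^b_{m,g}}\Lscr_\xi$ along $\pi_\infty\colon \Igfr^b\times\Mfr^b_{m,g}\to\wh\Sscr^b_{m,g}$ splits as $\Lscr_\xi\boxtimes^\Lrm \Rrm\Psi_{\Mfr^b_{m,g}}\QQbar_l$, compatibly with the $J_b(\Qp)$-action, the Weil descent datum, and the Hecke action of $G(\Qp)^+$. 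Since $\pi_{\infty,\k}^\perf$ is a pro-\'etale $J_b(\Qp)$-torsor over $\Ssb^{b,\perf}$ (Corollary~\ref{cor product structure}) and $\Ssb^b_{m,g}$ is, up to perfection, $\bar\Ig^b\times^{J_b(\Qp)}$-twisted over $\Mfr^{b,\red}_{m,g}$, the Künneth-type argument of Corollary~\ref{cor-kuenneth} (extended, as in Corollary~\ref{cor product structure kuenneth}, to the automorphic sheaf $\Lscr_\xi$ pulled back along the locally constant $\bar\Ig^b$-factor) yields
\[
\Rrm\Gamma_c(\Ssb^b_{m,g},\Rrm\Psi\Lscr_\xi)\cong \Rrm\Gamma_c(\bar\Ig^b,\Lscr_\xi)\otimes^\Lrm_{\Hcal_r(J_b(\Qp))}\Rrm\Gamma_c((\Mfr^b_{m,g})^\red,\Rrm\Psi\QQbar_l),
\]
with $W_E$ acting trivially on the $\bar\Ig^b$-factor, and passing to the limit over $r$ and using $\Rrm\Gamma_c((\Mfr^b_{m,g})^\red,\Rrm\Psi\QQbar_l)\cong \Rrm\Gamma_\Ccal(\Mcal^b_m,\QQbar_l)$ (Lemma~\ref{lem:HeckeRZ}) gives the first displayed isomorphism.

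For the second isomorphism, I would invoke Mieda's duality (Theorem~\ref{thm:Poincare}), whose hypotheses for $\Xfr=\Mfr^b_{m,g}$ are checked in the Remark following Lemma~\ref{lemma J_b-action} together with the normality/local-algebraisability discussion in the proof of Proposition~\ref{prop:DrinfeldModels} --- the generic fibre $\Mcal^b_m$ is smooth of pure dimension $d=\langle 2\rho,\mu\rangle$, $J_b(\Qp)$ acts continuously on $\Mfr^b_{m,g}$, the reduced special fibre is partially proper with finitely many $J_b(\Qp)$-orbits of irreducible components (\cite{RapoportZink:BTbuildings}, \cite{HamacherViehmann:IrredComp}), and local algebraisability holds. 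This gives a $J_b(\Qp)$-equivariant isomorphism
\[
\Rrm\Gamma_\Ccal(\Mcal^b_m,\QQbar_l)(d)[2d]\cong \Rrm\Dbf_{J_b(\Qp)}(\Rrm\Gamma_c(\Mcal^b_m,\QQbar_l))
\]
in $D^b_\ft(\Rep_{\QQbar_l}(J_b(\Qp)))$, using the local-noetherianness of $\Rep_{\QQbar_l}(J_b(\Qp))$ for the $p$-adic reductive group $J_b$ (Remark~\ref{rmk:Finiteness}). Substituting this into the first isomorphism and using Lemma~\ref{lem:Ext}, which identifies $\Rrm\Dbf_J(V)\otimes^\Lrm_{\Hcal(J)}\rho$ with $\RHom_J(V,\rho)$ for finitely generated $V$, I would rewrite the right-hand side as $\RHom_{J_b(\Qp)}(\Rrm\Gamma_c(\Mcal^b_m,\QQbar_l),\Rrm\Gamma_c(\bar\Ig^b,\Lscr_\xi))(-d)[-2d]$; here one needs that $\Rrm\Gamma_c(\bar\Ig^b,\Lscr_\xi)$, equivalently each $\coh j_c(\bar\Ig^b,\Lscr_\xi)$, is an admissible (hence the relevant finiteness applies) smooth $J_b(\Qp)$-representation, which holds by the analogue of Mantovan's/Caraiani--Scholze's finiteness for Igusa varieties.

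The key compatibilities to track throughout --- and what I expect to be the main obstacle --- are (i) that the $J_b(\Qp)$-module structures on $\Rrm\Gamma_\Ccal(\Mcal^b_m,\QQbar_l)$ coming from the support-set $\Ccal_{\Mfr^b_{m,g}}$ and from Mieda's theorem agree on the nose with the one appearing in the Künneth decomposition (this is essentially forced by the uniqueness assertions in Theorem~\ref{thm:Poincare} and Lemma~\ref{lem:Funct}, but needs care because the support set, hence $\Rrm\Gamma_\Ccal$, a priori depends on the integral model $\Mfr^b_{m,g}$, i.e.\ on $g\in G(\Qp)^+$); and (ii) $W_E$-equivariance of every arrow. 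For (ii), the Weil descent datum $\alpha_E$ on $\Xfr^b$ restricts compatibly to $\Igfr^b$ and $\Mfr^b$ by Theorem~\ref{thm product structure}(3) and Lemma~\ref{lem:RZWeilDescentDatum}, the $W_E$-action on $\bar\Ig^b$ is topologically trivial (so acts trivially on its compactly supported cohomology, as in Corollary~\ref{cor product structure kuenneth}), and the Tate twist $(-d)$ and shift $[-2d]$ come precisely from Mieda's duality; assembling these, the derived tensor/$\Hom$ identifications are $W_E$-equivariant because $\Hcal_r(J_b(\Qp))$ carries the trivial $W_E$-action. The one genuinely delicate point is checking that $\Rrm\Gamma_c(\Mcal^b_m,\QQbar_l)$ lies in $D^b_\ft(\Rep(J_b(\Qp)))$ so that Lemma~\ref{lem:Ext} and the biduality of Remark~\ref{rmk:Finiteness}(2) apply; this follows from Theorem~\ref{thm:Poincare}(\ref{thm:Poincare:Finiteness}) and the finiteness of $J_b(\Qp)$-orbits on irreducible components, but it is the place where all the geometric hypotheses on $\Mfr^b$ are really used.
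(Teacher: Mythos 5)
Your proposal is correct and follows essentially the same route as the paper: the first isomorphism via the K\"unneth formula of Corollary~\ref{cor product structure kuenneth} (with the extension to level $m$ and to $\Lscr_\xi$ supplied by Proposition~\ref{prop:ProductDecompConstantSheaf}, exactly as you describe), and the second via Mieda's duality together with Lemma~\ref{lem:Ext}, the finite generation of each $\coh i_c(\Mcal^b_m,\QQbar_l)$, and a Cartan--Eilenberg-type spectral sequence to pass from single representations to complexes. The one superfluous step is your appeal to admissibility of $\coh j_c(\bar\Ig^b,\Lscr_\xi)$: Lemma~\ref{lem:Ext} only requires the \emph{first} argument of $\RHom_{J_b(\Qp)}$ to be finitely generated, so no finiteness of the Igusa cohomology is needed here.
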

\begin{proof}
The first isomorphism is straightforward from the K\"unneth formula (\emph{cf.} Corollary~\ref{cor product structure kuenneth}). To see the second isomorphism, we apply the duality theorem of Mieda's (\emph{cf.} Theorem~\ref{thm:Poincare}):
\[
\Rrm\Gamma_\Ccal (\Mcal^b_m,\QQbar_l) \cong \Rrm\Dbf_{J_b(\QQ_p)}(\Rrm\Gamma_c(\Mcal^b_m,\QQbar_l))(-d)[-2d].
\]
Since each $\coh i_c(\Mcal^b_m,\QQbar_l)$ is finitely generated as a $J_b(\Qp)$-representation (\emph{cf.} Remark~\ref{rmk:Poincare}), the desired isomorphism reduces to Lemma~\ref{lem:Ext} using the Cartan-Eilenberg spectral sequence (\emph{cf.} \cite[Proposition~5.7.6]{Weibel:IntroHA} and its cohomological analogue).

\end{proof}

\begin{defnsub}

For $\rho\in\Rep_{J_b(\Qp)}(\QQbar_l)$, we define the following element of the Grothendieck group of smooth $G(\QQ_p)\times W_E$-representations over $\QQbar_l$
\begin{align*}
\Mant_{b,\mu}(\rho)&\coloneqq \sum_{i=0}^{2d} (-1)^i\varinjlim_m \Ext^{-2d+i}_{J_b(\Qp)} (\Rrm\Gamma_c(\Mcal^b_m,\QQbar_l),\rho)(-d)\\
&= \sum_{i,j=0}^{2d} (-1)^{i+j}\varinjlim_m \Ext^{i}_{J_b(\Qp)} (\coh{j}_c(\Mcal^b_m,\QQbar_l),\rho)(-d)
,
\end{align*}
where $d$ is the dimension of $\Mcal^b$.
(Here, the $W_E$-action is induced from the Weil descent datum on each $\Mcal^b_m$, and the smooth $G(\Qp)$-action is given by the Hecke correspondence on the tower $\{\Mcal^b_m \}_m$.)
\end{defnsub}

Let us write
\begin{align*}
 \Sh_{\infty} &\coloneqq \varprojlim_{\Ksf \subset \Gsf(\AA_f)} \Sh_{\Ksf} \\
 \bar\Ig_{\infty}^b &\coloneqq \varprojlim_{\Ksf^p \subset \Gsf(\AA_f^p)} \bar\Ig_{\Ksf^p}^b.
\end{align*}
Note that contrary to $\Sh_m$ the variety $\Sh_\infty$ considers also deeper level away from $p$. Their cohomology is given by
\begin{align*}
\coh i_c(\Sh_{\infty,C}, \Lscr_\xi)&= \varinjlim_{\Ksf\subset \Gsf(\AA_f)}\coh i_c(\Sh_{\Ksf, C}, \Lscr_\xi)\\
\coh i_c(\bar\Ig^b_\infty, \Lscr_\xi)&=\varinjlim_{\Ksf^p\subset \Gsf(\AA_f^p)}\coh i_c(\bar\Ig^b_{\Ksf^p}, \Lscr_\xi),
\end{align*}
where the former has a smooth action of $\Gsf(\AA_f)\times W_E$, and the latter has a smooth action of $\Gsf(\AA_f^p)\times J_b(\QQ_p)$. Note that this definition is equivalent to the one given in Definition~\ref{definition:lowershriek} by Remark~\ref{rmk proper pushforward}(3).

\begin{corsub}\label{cor:MantNewtStr}
The   action of $G(\Qp)^+$ on $\coh i_\Ccal(\Scal^b_{\infty,C},\Lscr_\xi)$ naturally extends to a smooth action  of $G(\Qp)$, and with respect to this action, we obtain, from  Proposition~\ref{prop:Ext}, an equality in the Grothendieck group of smooth $\Gsf(\AA_f)\times W_E$-representations over $\QQbar_l$
\[\sum_{i=0}^{2d}(-1)^i\coh i_\Ccal(\Scal^b_{\infty,C},\Lscr_\xi)= \sum_{j=0}^{2c}(-1)^j\Mant_{b,\mu}(\coh j_c(\bar\Ig^b_{\infty},\Lscr_\xi)),\]
where $d$ and $c$ are the dimensions of the Shimura variety $\Scal$ and the central leaf $\bar C^b$, respectively. (Recall that $\bar\Ig^b_{\infty}$ is the perfection of some pro-\'etale cover of $\bar C^b$.)
\end{corsub}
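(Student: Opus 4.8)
The plan is to derive Corollary~\ref{cor:MantNewtStr} from Proposition~\ref{prop:Ext} by passing to the limit over the prime-to-$p$ level $\Ksf^p$ and unwinding the definition of $\Mant_{b,\mu}$. Fix a finite level $\Ksf^p$. Proposition~\ref{prop:Ext} gives, for each $m$, a $W_E$-equivariant isomorphism
\[
\Rrm\Gamma_\Ccal(\Scal^b_{m,\Ksf^p,C},\Lscr_\xi) \cong \RHom_{J_b(\Qp)}(\Rrm\Gamma_c(\Mcal^b_m,\QQbar_l),\Rrm\Gamma_c(\bar\Ig^b_{\Ksf^p},\Lscr_\xi))(-d)[-2d].
\]
The first step is to record that the transition maps in $m$ on the left-hand side (induced by the proper projections $\Sscr_{m',g}\to\Sscr_{m,g}$ of Proposition~\ref{prop:DrinfeldModels}, via Lemma~\ref{lem:HeckeRZ}) correspond on the right-hand side to the maps $[g]^\ast$ between the $\Rrm\Gamma_c(\Mcal^b_m,\QQbar_l)$, so that $\varinjlim_m$ of both sides makes sense and the isomorphism is compatible with it; this is exactly the content that makes $\coh i_\Ccal(\Scal^b_{\infty,C},\Lscr_\xi) := \varinjlim_m \varinjlim_{\Ksf^p}\coh i_\Ccal(\Scal^b_{m,\Ksf^p,C},\Lscr_\xi)$ well-defined and equipped with a $G(\Qp)^+$-action (coming from the Hecke correspondences $[g]$ on the tower $\{\Sscr_{m,g}\}$ from Proposition~\ref{prop:DrinfeldModels}\eqref{prop:DrinfeldModels:Hecke}). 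Here one uses that, by Proposition~\ref{prop:DrinfeldModels}, the Hecke correspondences for $G(\Qp)^+$ on $\{\Xfr^b_{m,g}\}$ are compatible with the product decomposition $\Xfr^b_{m,g}\cong\Igfr^b\times\Mfr^b_{m,g}$, hence the $G(\Qp)^+$-action on the left-hand limit is identified with the one on $\varinjlim_m\Ext_{J_b(\Qp)}^\bullet(\Rrm\Gamma_c(\Mcal^b_m,\QQbar_l),-)$ coming from the Hecke action on $\{\Mcal^b_m\}$.

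The second step is to promote the $G(\Qp)^+$-action to a smooth $G(\Qp)$-action. Since $p^\ZZ$ is central in $G(\Qp)$ and lies in $G(\Qp)^+$, and $p^\ZZ$ together with $G(\Qp)^+$ generates $G(\Qp)$ (as recorded in the definition of $G(\Qp)^+$ in \S\ref{ssect Drinfeld}), the $[p]$-operators act invertibly on each $\coh i_c(\Mcal^b_m,\QQbar_l)$ (they are induced by the quasi-isogeny $p\colon\XX\to\XX$, which is invertible up to the automorphism $p^{-1}$), so the monoid action extends uniquely to a group action; smoothness is automatic because each $\coh i_c(\Mcal^b_m,\QQbar_l)$ is smooth and the whole system is a filtered colimit over $m$. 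This is the step where one should be slightly careful: one must check that the extension of the monoid action to a group action on the $\Ext$-groups is compatible with the $W_E$-action and with the prime-to-$p$ Hecke action, but both are formal consequences of the commuting-actions statements in Theorem~\ref{thm product structure}\eqref{...} and Proposition~\ref{prop:DrinfeldModels}.

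The third step is bookkeeping with Euler characteristics. Taking $\sum_i(-1)^i$ of the isomorphism above, and using the spectral sequence $\Ext^p_{J_b(\Qp)}(\coh q_c(\Mcal^b_m,\QQbar_l),\coh r_c(\bar\Ig^b_{\Ksf^p},\Lscr_\xi))\Rightarrow \Ext^{p-2d+q+r}$ (more precisely its colimit over $m$, which is exact since $\varinjlim$ is exact and each term is finitely generated by Remark~\ref{rmk:Poincare}), one gets in the Grothendieck group of smooth $\Gsf(\AA_f^p)\times G(\Qp)\times W_E$-representations
\[
\sum_i(-1)^i\coh i_\Ccal(\Scal^b_{\infty,C},\Lscr_\xi) = \sum_{i,j,k}(-1)^{i+j+k}\varinjlim_m\Ext^i_{J_b(\Qp)}(\coh j_c(\Mcal^b_m,\QQbar_l),\coh k_c(\bar\Ig^b_\infty,\Lscr_\xi))(-d),
\]
where I have used $\coh k_c(\bar\Ig^b_\infty,\Lscr_\xi)=\varinjlim_{\Ksf^p}\coh k_c(\bar\Ig^b_{\Ksf^p},\Lscr_\xi)$ and that $\Ext$ and the colimits over $m$ and $\Ksf^p$ all commute (finiteness again). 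Recognising the inner sum over $i,j$ as $\Mant_{b,\mu}(\coh k_c(\bar\Ig^b_\infty,\Lscr_\xi))$ by definition, and noting that $\coh k_c(\bar\Ig^b_\infty,\Lscr_\xi)$ vanishes outside $0\le k\le 2c$ because $\bar\Ig^b$ is (the perfection of) a pro-\'etale cover of the $c$-dimensional central leaf $\bar C^b$, yields the claimed equality. I expect the only genuinely delicate point to be verifying that all the limits, the duality isomorphism of Mieda (Theorem~\ref{thm:Poincare}), and the $G(\Qp)$-action are simultaneously compatible — i.e. assembling the finite-level statements of Proposition~\ref{prop:Ext} into a single equivariant statement at infinite level — whereas the Euler-characteristic manipulation and the extension from $G(\Qp)^+$ to $G(\Qp)$ are routine.
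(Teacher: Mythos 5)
Your proposal follows essentially the same route as the paper's proof: take $\varinjlim_m$ of Proposition~\ref{prop:Ext} at fixed $\Ksf^p$, extend the $G(\Qp)^+$-action to a smooth $G(\Qp)$-action using that $G(\Qp)$ is generated by $G(\Qp)^+$ and the central $p^\ZZ$, pass to Euler characteristics via the hyper-Ext spectral sequence, and finally take $\varinjlim_{\Ksf^p}$. The only divergence is where the invertibility is sourced: the paper transports it from the left-hand side (the full Hecke action on $\varinjlim_m\coh i_\Ccal(\Scal^b_{m,C},\Lscr_\xi)$ is taken as given), whereas you argue on the Rapoport--Zink side --- which also works, except that with the conventions of Lemma~\ref{lem:HeckeRZ} the central operator $[p^{-1}]^\ast$ shifts the level from $m$ to $m+1$, so it is invertible only on the colimit $\varinjlim_m$, not ``on each $\coh i_c(\Mcal^b_m,\QQbar_l)$'' as you wrote.
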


\begin{proof}

By taking the direct limit with respect to $m$ of the isomorphism in Proposition~\ref{prop:Ext} (with fixed prime-to-$p$ level $\Ksf^p$), we obtain
\begin{equation}\label{eqn:MantNewtStr}
\varinjlim_m\coh i_\Ccal(\Scal^b_{m,C},\Lscr_\xi) = \varinjlim_m \Ext^{-2d+i}_{J_b(\Qp)} (\Rrm\Gamma_c(\Mcal^b_m,\QQbar_l),\Rrm\Gamma_c(\bar\Ig^b_{\Ksf^p},\Lscr_\xi))(-d),
\end{equation}
which is clearly $G(\Qp)^+\times W_E$-equivariant. But since the action of $G(\Qp)^+$ on the left hand side extends to $G(\Qp)$-action, it follows that $G(\Qp)^+$-action on the right hand side is invertible. Therefore, the action of $G(\QQ_p)^+$ on the right hand side naturally extends to $G(\QQ_p)$-action in such a way that the isomorphism (\ref{eqn:MantNewtStr}) is $G(\QQ_p)$-equivariant. (Indeed, note that $G(\Qp)$ is generated by $G(\Qp)^+$ and its inverse, as $G(\Qp)$ is generated by $G(\Qp)^+$ and $p^\ZZ$.)

Note that the right hand side of (\ref{eqn:MantNewtStr}) is equal to $\sum_{j}(-1)^j\Mant_{b,\mu}(\coh j_c(\bar\Ig_{\Ksf^p},\Lscr_\xi))$ in the Grothendieck group of smooth representations of $J_b(\QQ_p)\times W_E$. Now, the corollary follows from taking the direct limit of (\ref{eqn:MantNewtStr}) with respect to $\Ksf^p\subset \Gsf(\AA_f^p)$.
\end{proof}

We need the following additional axiom for the integral models $\{\Sscr_m\}$  for our main result (Corollary~\ref{cor:MantShVar}):
\begin{axiom}\label{axiom:LanStroh}
For each $m$, we assume that the following natural $W_E$-equivariant map
\begin{equation}\label{eqn:LanStroh}
\coh i_\Ccal(\Scal_{m},\Lscr_\xi) \cong \coh i_c(\Ssb_{m},\Rrm\Psi\Lscr_\xi) \to \coh i_c(\Sh_{m},\Lscr_\xi)
\end{equation}
is an \emph{isomorphism}. In particular, we have a natural isomorphism
\[
\coh i_\Ccal(\Scal_\infty,\Lscr_\xi) \riso \coh i_c(\Sh_\infty,\Lscr_\xi)
\]
respecting the natural  action of $W_E\times\Gsf(\AA_f^{p})\times G(\QQ_p)^+$.
\end{axiom}

\begin{rmksub}
The verification of Axiom~\ref{axiom:LanStroh} essentially boils down to the existence of ``nice compactifications'' for $\Sscr_m$ for any $m$. (See \cite[Proposition~2.2]{LanStroh:NearbyCycles} and \cite[Proposition~2.4]{LanStroh:NearbyCycles2} for the properties of compactifications sufficient to ensure the natural map (\ref{eqn:LanStroh}) to be an isomorphism.)

In more practical terms Axiom~\ref{axiom:LanStroh} holds for $\{\Sscr_m\}$ if the Shimura datum $(\Gsf,\Xsf)$ satisfies one of the following:
\begin{enumerate}
\item The associated Shimura variety $\Sh_\Ksf(\Gsf,\Xsf)$ is proper for any $\Ksf\subset \Gsf(\AA_f)$.
\item There exists a PEL datum giving rise to $(\Gsf,\Xsf)$. Here, we do not assume that $\Gsf\otimes\QQ_p$ is unramified, and we allow the associated Shimura varieties to be non-compact.
\end{enumerate}

To deduce Axiom~\ref{axiom:LanStroh} in the first case, recall that if $\Sh$ is proper then  $\Sscr_m$ is also proper for any $m$; indeed, the main result of \cite{MadapusiPera:Compactification} implies that $\Sscr$ is proper, and  $\Sscr_m$ is finite over $\Sscr$. Therefore, the map \ref{eqn:LanStroh}) is an isomorphism by the nearby cycles spectral sequence for proper schemes over a complete discrete valuation ring.

If $(\Gsf,\Xsf)$ is of PEL type, then $\Sscr_m$ satisfies the assumption (Nm) in  \cite[Assumption~2.1]{LanStroh:NearbyCycles}. Therefore it was proved in \cite[Corollary~5.20]{LanStroh:NearbyCycles} that the map (\ref{eqn:LanStroh}) is an isomorphism so Axiom~\ref{axiom:LanStroh} holds in the second case.

From the recent progress in arithmetic compactification of Shimura varieties (\emph{cf.} \cite{MadapusiPera:Compactification}), it seems possible to be able to show that the normal flat integral models $\Sscr_m$ have ``nice compactifications'' for any $m$ so that  Axiom~\ref{axiom:LanStroh} holds (at least for the ``tamely ramified Hodge-type case'').
\end{rmksub}

\begin{corsub}\label{cor:MantShVar}
Assume that in addition to Axiom~\ref{axiom RZ}, Axiom~\ref{axiom:LanStroh} also holds true. 
Then the following equality holds in the Grothendieck group of smooth $(G(\AA_f)\times W_E)$-representations over $\QQbar_l$:
\[
\sum_i(-1)^i\coh i_c(\Sh_{\infty,C}, \Lscr_\xi) =\sum_{b\in B(G,\{\mu\})}\sum_j(-1)^j\Mant_{b,\mu}(\coh j_c(\bar\Ig^b_\infty,\Lscr_\xi)),
\]
where $B(G,\{\mu\})\subset B(G)$ is the set of neutral acceptable $\sigma$-conjugacy class with respect to the geometric conjugacy class of $\mu:\GG_m\ra G_{\Qpbar}$ associated to the Shimura datum, and the $\Gsf(\AA_f^p)$-action on the righ hand side is given by the Hecke $\Gsf(\AA_f^p)$-action on the Igusa towers $\{\bar\Ig^b_{\Ksf^p}\}$.
\end{corsub}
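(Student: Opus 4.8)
The plan is to deduce this from Corollary~\ref{cor:MantNewtStr}, which already gives the formula one Newton stratum at a time, by means of Axiom~\ref{axiom:LanStroh} and a Newton-stratification excision argument on the special fibre.

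First I would pass from the Shimura variety to its integral model. By Axiom~\ref{axiom:LanStroh}, for every $m$ the natural map $\coh i_\Ccal(\Scal_m,\Lscr_\xi)\to\coh i_c(\Sh_m,\Lscr_\xi)$ is an isomorphism of $W_E\times\Gsf(\AA_f^p)\times G(\QQ_p)^+$-representations. Taking the direct limit over $m$ and over the prime-to-$p$ level $\Ksf^p$ — using that the subgroups $\Ksf_p(m)$ are cofinal among open compact subgroups of $G(\QQ_p)$ contained in $\Gscr_x(\ZZ_p)$, and that $G(\QQ_p)$ is generated by $G(\QQ_p)^+$ together with the central element $p$, so that the $G(\QQ_p)^+$-action determines a unique smooth $G(\QQ_p)$-action on the limit — yields a $\Gsf(\AA_f)\times W_E$-equivariant identification $\coh i_c(\Sh_{\infty,C},\Lscr_\xi)\cong\coh i_\Ccal(\Scal_{\infty,C},\Lscr_\xi)$, and by Lemma~\ref{lem:HeckeRZ} the latter is $\varinjlim_{\Ksf^p}\varinjlim_m\coh i_c(\Ssb_m,\Rrm\Psi\Lscr_\xi)$, the compactly supported cohomology of the nearby-cycle complex on the finite-type $\k$-scheme $\Ssb_m$.

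The heart of the argument is then the excision along the Newton stratification $\Ssb_m=\coprod_{b\in B(G,\{\mu\})}\Ssb_m^b$. The index set $B(G,\{\mu\})$ is finite, and by Corollary~\ref{cor stratifications}, together with the fact that the Newton stratification on $\Ssb_m$ (and on the auxiliary models $\Ssb_{m,g}$ of Proposition~\ref{prop:DrinfeldModels}) is pulled back from the one on $\Ssb$, each $\Ssb_m^b$ is locally closed and closures of strata are unions of strata; hence the $\Ssb_m^b$ form a finite stratification of $\Ssb_m$ by locally closed subschemes. The excision triangles $j_!j^*\to\id\to i_*i^*$ for compactly supported cohomology — which are equivariant for $\Gsf(\AA_f^p)$, for $W_E$, and for the Hecke action of $G(\QQ_p)^+$ realised through the correspondences $\Sscr_{m,g}$, all of which preserve the Newton stratification — give, in the Grothendieck group of smooth $\Gsf(\AA_f^p)\times G(\QQ_p)^+\times W_E$-representations,
\[
\sum_i(-1)^i\coh i_c(\Ssb_m,\Rrm\Psi\Lscr_\xi)=\sum_{b\in B(G,\{\mu\})}\sum_i(-1)^i\coh i_c\big(\Ssb_m^b,(\Rrm\Psi\Lscr_\xi)|_{\Ssb_m^b}\big).
\]
Using the compatibility of formal nearby cycles with restriction to a locally closed subscheme, i.e.\ $(\Rrm\Psi_{\Sscr_m}\Lscr_\xi)|_{\Ssb_m^b}\cong\Rrm\Psi_{\wh\Sscr^b_m}\Lscr_\xi$ (proper base change for formal nearby cycles, \emph{cf.}~\cite[Corollary~2.3]{Berkovich:VanishingCyclesFormal2}), Lemma~\ref{lem:HeckeRZ} identifies the $b$-th summand with $\sum_i(-1)^i\coh i_\Ccal(\Scal^b_m,\Lscr_\xi)$.

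Finally I would take the direct limits over $m$ and $\Ksf^p$ and apply Corollary~\ref{cor:MantNewtStr} to each $b$, obtaining $\sum_i(-1)^i\coh i_\Ccal(\Scal^b_{\infty,C},\Lscr_\xi)=\sum_j(-1)^j\Mant_{b,\mu}(\coh j_c(\bar\Ig^b_\infty,\Lscr_\xi))$; summing over $b\in B(G,\{\mu\})$ and recalling that all the $G(\QQ_p)^+$-actions in play extend uniquely to $G(\QQ_p)$ (as the target of $\Mant_{b,\mu}$ already does), one gets the asserted equality in the Grothendieck group of smooth $\Gsf(\AA_f)\times W_E$-representations. The only genuinely non-formal ingredient is the nearby-cycle restriction compatibility used in the excision step — and, more soberingly, the verification that excision, the two direct limits, and the passage from $G(\QQ_p)^+$ to $G(\QQ_p)$ are all simultaneously equivariant for the Hecke and Weil-group actions; everything else is bookkeeping on top of Corollary~\ref{cor:MantNewtStr} and Axiom~\ref{axiom:LanStroh}.
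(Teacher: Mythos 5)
Your proposal is correct and follows essentially the same route as the paper: Axiom~\ref{axiom:LanStroh} to identify $\coh i_c(\Sh_\infty,\Lscr_\xi)$ with $\coh i_\Ccal(\Scal_\infty,\Lscr_\xi)$, excision along the Newton stratification of $\Ssb_m$ combined with Proposition~\ref{prop:VanCycSpecSeq}/Lemma~\ref{lem:HeckeRZ} to break the nearby-cycle cohomology into the strata contributions $\coh i_\Ccal(\Scal^b_m,\Lscr_\xi)$, then direct limits over $m$ and $\Ksf^p$ and Corollary~\ref{cor:MantNewtStr} stratum by stratum. The only difference is the order in which Axiom~\ref{axiom:LanStroh} and the excision step are invoked, and your more explicit attention to the equivariance of each step, neither of which changes the argument.
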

\begin{proof}
By excision sequence of compact support cohomology on $\Ssb_{m}$ and Proposition~\ref{prop:VanCycSpecSeq}, we have the following equality in the Grothendieck group of $W_E$-representations over $\QQbar_l$:
\begin{multline*}
\sum_i(-1)^i\coh i_\Ccal(\Scal_{m},\Lscr_\xi) = \sum_i(-1)^i\coh i_c(\Ssb_{m},\Rrm\Psi\Lscr_\xi)\\
= \sum_{b\in B(G,\{\mu\})}\sum_i(-1)^i\coh i_c(\Ssb^b_{m},\Rrm\Psi\Lscr_\xi) = \sum_{b\in B(G,\{\mu\})}\sum_i(-1)^i\coh i_\Ccal(\Scal^b_{m},\Lscr_\xi).
\end{multline*}

Now, by taking the direct limit with respect to $m$ and $\Ksf^p\subset\Gsf(\AA_f^p)$, we obtain the following equalities in the Grothendieck group of smooth $(G(\AA_f)\times W_E)$-representations over $\QQbar_l$:
\begin{multline*}
\sum_i(-1)^i\coh i_c(\Sh_{\infty},\Lscr_\xi) = \sum_i(-1)^i\coh i_\Ccal(\Scal_{\infty},\Lscr_\xi) \\= \sum_{b\in B(G,\{\mu\})}\sum_i(-1)^i\coh i_\Ccal(\Scal^b_{\infty},\Lscr_\xi)=\sum_{b\in B(G,\{\mu\})}\Mant_{b,\mu}(\Rrm\Gamma_c(\bar\Ig^b_\infty,\Lscr_\xi)),
\end{multline*}
where the first equality follows from Axiom~\ref{axiom:LanStroh} and the last equality follows from Corollary~\ref{cor:MantNewtStr}.
\end{proof}

Let us write
\begin{equation}
\coh j_c(\Mcal^b_{\infty,C}, \QQbar_l)\coloneqq \varinjlim_m\coh j_c(\Mcal^b_{m, C}, \QQbar_l),
\end{equation}
which is a smooth representation of $G(\Qp)\times J_b(\Qp)\times W_E$ over $\QQbar_l$.

We end this section by rewriting the functors $\Mant_{b,\mu}(\rho)$ directly in terms of the cohomology of  Rapoport-Zink spaces with infinite level.  The immediate obstacle is that the natural $G(\Qp)$-action on $\Ext^i_{J_b(\Qp)}(\coh j_c(\Mcal^b_{\infty,C}, \QQbar_l),\rho)$ is not smooth in general. We fix this issue by working with ``smoothened extensions'' $\Ecal^i(\bullet,\bullet)$ introduced in Definition~\ref{def:Ecal} with $G=G(\Qp)$ and $J=J_b(\Qp)$.

By Proposition~\ref{prop:Infty2Fin}, $\Ecal^{j}(\coh{j'}_c(\Mcal^b_{\infty,C},\QQbar_l),\rho)$ is the $\QQbar_l$-submodule of $G(\Qp)$-smooth elements in $\Ext^{j}_{J_b(\Qp)}(\coh{j'}_c(\Mcal^b_{\infty,C},\QQbar_l),\rho)$.
\begin{corsub} \label{cor Mantovan's formula}
For any $\rho\in\Rep_{\QQbar_l}(J)$, we have the following equality in the Grothendieck groups of smooth $G(\Qp)\times W_E$-representations over $\QQbar_l$:
\[\Mant_{b, \mu}(\rho) = \sum_{j,j'}(-1)^{j+j'} \Ecal^{j}(\coh{j'}_c(\Mcal^b_{\infty,C},\QQbar_l),\rho)(-d).\]
Finally, for the Shimura varieties considered in Corollary~\ref{cor:MantShVar}, we have the following equality in the Grothendieck groups of smooth $\Gsf(\AA_f)\times W_E$-representations over $\QQbar_l$:
\begin{multline*}
\sum_i(-1)^i\coh i_c(\Sh_{\infty,C}, \Lscr_\xi) \\=\sum_{\substack{b\in B(G,\{\mu\})\\j,j',j''}}  (-1)^{j+j'+j''}\Ecal^{j}(\coh{j'}_c(\Mcal^b_{\infty,C},\QQbar_l),\coh{j''}_c(\bar\Ig^b_{\infty},\Lscr_\xi))(-d).
\end{multline*}
\end{corsub}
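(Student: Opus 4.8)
The plan is to derive the final displayed equality in Corollary~\ref{cor Mantovan's formula} from the combination of Corollary~\ref{cor:MantShVar} and the first equality of the same corollary, so the bulk of the work is in establishing the ``rewriting'' of $\Mant_{b,\mu}(\rho)$ in terms of $\coh{j'}_c(\Mcal^b_{\infty,C},\QQbar_l)$. First I would fix the prime-to-$p$ level $\Ksf^p$ and recall from the definition that
\[
\Mant_{b,\mu}(\rho) = \sum_{i,j}(-1)^{i+j}\varinjlim_m \Ext^{i}_{J_b(\Qp)}(\coh{j}_c(\Mcal^b_m,\QQbar_l),\rho)(-d).
\]
The point is to commute the direct limit over $m$ past $\Ext^i_{J_b(\Qp)}(-,\rho)$ and then to identify $\varinjlim_m \Ext^i_{J_b(\Qp)}(\coh j_c(\Mcal^b_m,\QQbar_l),\rho)$ with the smoothened Ext-module $\Ecal^i(\coh j_c(\Mcal^b_{\infty,C},\QQbar_l),\rho)$ of Definition~\ref{def:Ecal}, using Proposition~\ref{prop:Infty2Fin}. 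Here $G=G(\Qp)$ acts on the tower $\{\Mcal^b_m\}_m$ via the Hecke correspondences, and one has $\coh j_c(\Mcal^b_m,\QQbar_l) = \coh j_c(\Mcal^b_{\infty,C},\QQbar_l)^{\Ksf_p(m)}$, so that $\coh j_c(\Mcal^b_{\infty,C},\QQbar_l)^K$ runs over the system of finitely generated smooth $J_b(\Qp)$-representations (finite generation being part of Theorem~\ref{thm:Poincare} via Remark~\ref{rmk:Poincare}). Then Proposition~\ref{prop:Infty2Fin}(\ref{subsec:ZelevinskyHecke:G})--(\ref{subsec:ZelevinskyHecke:J}) applies: its hypothesis (\ref{subsec:ZelevinskyHecke:G}) holds since $\coh j_c(\Mcal^b_m,\QQbar_l)$ is finitely generated over $J_b(\Qp)$, and hypothesis (\ref{subsec:ZelevinskyHecke:J}) holds because $J_b(\Qp)$ is a $p$-adic reductive group (the ``local noetherianness'' from Remark~\ref{rmk:Finiteness}). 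This gives
\[
\Ecal^i(\coh j_c(\Mcal^b_{\infty,C},\QQbar_l),\rho) \cong \varinjlim_{K\subset G}\Ext^i_{J_b(\Qp)}(\coh j_c(\Mcal^b_{\infty,C},\QQbar_l)^K,\rho) = \varinjlim_m\Ext^i_{J_b(\Qp)}(\coh j_c(\Mcal^b_m,\QQbar_l),\rho),
\]
which, after the Tate twist, yields the first asserted equality of Corollary~\ref{cor Mantovan's formula}.

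Next I would deal with the $W_E$-equivariance, which is automatic: the Weil descent datum on each $\Mfr^b_m$ induces a $W_E$-action on each term compatibly with the transition maps, and it commutes with the $G(\Qp)$-action as recorded after Lemma~\ref{lem:RZWeilDescentDatum} and in the construction of $\Mant_{b,\mu}$. Plugging the first equality into the statement of Corollary~\ref{cor:MantShVar} (taking $\rho = \coh{j''}_c(\bar\Ig^b_\infty,\Lscr_\xi)$ and summing over $j''$ with signs), and using that $\Mant_{b,\mu}$ is defined on the Grothendieck group so it is additive in $\rho$, immediately gives the second, triple-sum equality. One small bookkeeping point to check is that $\coh{j''}_c(\bar\Ig^b_\infty,\Lscr_\xi)$ is a legitimate smooth $J_b(\Qp)$-representation to feed into $\Mant_{b,\mu}$ — it is, as a direct limit of the smooth $J_b(\Qp)$-representations $\coh{j''}_c(\bar\Ig^b_{\Ksf^p},\Lscr_\xi)$ — and that the $\Gsf(\AA_f^p)$-action is carried along correctly, which follows from Lemma~\ref{lemma Hecke on Igusa tower} and Theorem~\ref{thm product structure}(4).

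The main obstacle, and the only genuinely non-formal step, is the interchange of $\varinjlim_m$ with the (derived) $\Ext^i_{J_b(\Qp)}(-,\rho)$, i.e.\ justifying that one may pass from $\RHom$ of an inverse limit of finite-level objects to a direct limit of $\RHom$'s — equivalently, seeing that the ``smoothening'' $\infty_{\Dcal(G)}$ applied to $\Ext^i_{\Dcal(J_b(\Qp))}(\coh j_c(\Mcal^b_{\infty,C},\QQbar_l),\rho)$ genuinely recovers the colimit over $K$ of the $K$-fixed Ext-groups. This is exactly what Proposition~\ref{prop:Infty2Fin} provides (its proof being the content of Section~\ref{subsec:ZelevinskyTower}, relying on the exactness of $\infty_{\Dcal_\Oscr(J)}$ and on Lemma~\ref{lem:Ext}), so in the write-up this paragraph amounts to checking the two finiteness hypotheses and then invoking the proposition; no new argument is needed beyond the careful matching of $\coh j_c(\Mcal^b_m,\QQbar_l)$ with $\coh j_c(\Mcal^b_{\infty,C},\QQbar_l)^{\Ksf_p(m)}$, which is a standard consequence of Proposition~\ref{prop:CompSupp}(3) applied to the finite \'etale tower $\{\Mcal^b_m\}_m$.
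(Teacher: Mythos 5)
Your derivation is correct and is exactly the intended one: the paper states this corollary without a separate proof precisely because it follows formally from the definition of $\Mant_{b,\mu}$, Proposition~\ref{prop:Infty2Fin} (whose hypotheses you verify correctly, with finite generation coming from the setup of Theorem~\ref{thm:Poincare} and local noetherianness from $J_b(\QQ_p)$ being $p$-adic reductive over a characteristic-zero field), and Corollary~\ref{cor:MantShVar}. The only nitpick is the citation of Proposition~\ref{prop:CompSupp}(3) for the identification $\coh{j'}_c(\Mcal^b_{m,C},\QQbar_l)=\coh{j'}_c(\Mcal^b_{\infty,C},\QQbar_l)^{\Ksf_p(m)}$ — the relevant fact is rather that the transition maps in the tower are finite \'etale Galois with group $\Ksf_p(m)/\Ksf_p(m')$, so taking invariants (exact in characteristic zero) of the defining colimit recovers the finite-level cohomology — but this does not affect the validity of the argument.
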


%
%
%

 \appendix

 \section{Homomorphisms of constant $F$-Crystals}\label{appendix RR}

 In order to show that the Igusa variety associated to a Barsotti-Tate group $(\XX,(t_\alpha))$ is a flat closed subvariety of the Igusa variety associated to $\XX$, we will need the proposition below. To improve readability, we use the following notation. For any profinite set $S$ and topological ring $R$, we denote
  \[
   R^S \coloneqq \Map_{\rm cont}(S,R).
  \]
  Unless stated explicitly otherwise, we will assume that $R$ is equipped with the discrete topology.

 \begin{prop} \label{prop homomorphisms of F-crystals}
  Let $M_1, M_2$ be $F$-crystals over an algebraically closed field $k$. Denote $H = \Hom(M_1,M_2)$ with the $p$-adic topology. We denote by the same symbol the associated $k$-ring scheme $\Spec C^\infty(H,k)$, which may also be descibed via the functor of points
  $
   H(S) = H^{\pi_0(S)}.
  $
  Then $H$ represents the functor
  \begin{align*}
   ({\rm PerfSch}_k) &\to ({\rm Set}) \\
   S & \mapsto \Hom(M_{1,S},M_{2,S})
  \end{align*}
 \end{prop}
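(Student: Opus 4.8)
The plan is to reduce the statement to a computation over a single geometric point and then spread it out, using that the target ring scheme $H$ is totally disconnected (being $\Spec$ of a ring of continuous maps from a profinite set). First I would unwind what an $F$-crystal over a perfect base $S$ of characteristic $p$ is: a finite locally free $W(\Ocal_S)$-module $\Mcal_i$ together with an injective $\sigma$-linear endomorphism $F_i$ whose cokernel is killed by a power of $p$. A homomorphism $M_{1,S}\to M_{2,S}$ is then a $W(\Ocal_S)$-linear map $\Mcal_{1,S}\to\Mcal_{2,S}$ commuting with the $F_i$. Since $M_1,M_2$ are \emph{constant} (pulled back from the point $\Spec k$), we have $\Mcal_{i,S}=M_i\otimes_{W(k)}W(\Ocal_S)$, so a homomorphism is an element $\varphi$ of $\Hom_{W(k)}(M_1,M_2)\otimes_{W(k)}W(\Ocal_S)$ satisfying the semilinear equation $\varphi\circ F_1=F_2\circ\sigma(\varphi)$. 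The functor in question thus sends $S$ to the $\sigma$-fixed points (in the appropriate twisted sense) of $N\otimes_{W(k)}W(\Ocal_S)$, where $N\coloneqq\Hom(M_1,M_2)$ is itself an $F$-isocrystal (or rather, $N[\tfrac1p]$ is; integrally $N$ carries the operator $\Psi\colon\varphi\mapsto F_2\circ\sigma(\varphi)\circ F_1^{-1}$, a priori with denominators).

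Next I would invoke the classical structure theory of $F$-crystals over an algebraically closed field (Dieudonn\'e--Manin): after inverting $p$, $N$ decomposes into isoclinic pieces, and the homomorphisms $M_{1,k}\to M_{2,k}$ are exactly the $F$-invariant elements of the slope-zero part. The key point is that for the slope-zero (unit-root) part one has an \'etale-local description: a unit-root $F$-crystal over $\Spec k$ corresponds, via $M\mapsto M^{F=1}$, to a $\ZZ_p$-lattice with trivial Galois action, and base-changing to a perfect $k$-algebra $S$ the $F=1$ equation becomes an \'etale-locally trivial system. Concretely, writing $H=N^{\Psi=1}$ as the finitely generated $\ZZ_p$-module (in fact free, since $k$ is algebraically closed) of homomorphisms over the point, I would show that the natural map
\[
 H\otimes_{\ZZ_p}\Map_{\rm cont}(\pi_0(S),\ZZ_p)\longrightarrow \Hom(M_{1,S},M_{2,S})
\]
is a bijection. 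Injectivity is immediate since $H\otimes_{\ZZ_p}W(\Ocal_S)\hookrightarrow N\otimes_{W(k)}W(\Ocal_S)$ stays injective after the faithfully flat base change. For surjectivity, given $\varphi\in\Hom(M_{1,S},M_{2,S})$ one checks that $\varphi$ lands in the unit-root sub-$F$-crystal of $N\otimes W(\Ocal_S)$ (the slopes of $N$ are constant in $S$, being pulled back from $k$, so the slope decomposition spreads out over the perfect base $S$), and then that $\Psi(\varphi)=\varphi$ forces $\varphi$ to be, \'etale-locally on $S$, a $W(k)$-linear combination of the elements of $H$ with coefficients in $\Map_{\rm cont}(\pi_0,\ZZ_p)$; this is the statement that the unit-root crystal $H\otimes\mathbf{1}$ becomes constant after an \'etale cover, combined with the fact that a locally constant $\ZZ_p$-valued function on a perfect (hence, on $\pi_0$-level, profinite) scheme is globally a continuous function on $\pi_0$.

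The final step is to identify $\Map_{\rm cont}(\pi_0(S),\ZZ_p)$-modules with $S$-points of the ring scheme $H=\Spec C^\infty(H,k)$: for a perfect $k$-scheme $S$, $\Hom_k(\Spec C^\infty(H,k),S)=\Map_{\rm cont}(\pi_0(S),H)=H^{\pi_0(S)}$, since $C^\infty(H,k)$ is a filtered colimit of finite products of copies of $k$. Matching this with the description above yields that $H$ represents $S\mapsto\Hom(M_{1,S},M_{2,S})$, as claimed. I expect the main obstacle to be the surjectivity in the middle step, specifically making precise the claim that an $F$-invariant homomorphism over a general perfect base $S$ is \'etale-locally constant with profinite-continuous coefficients; this requires knowing that the unit-root part of a constant $F$-crystal is an \'etale $\ZZ_p$-local system that is constant on each connected component of $S$ (true since the base point is $\Spec k$ with $k$ algebraically closed, so the corresponding Galois representation is trivial), and then descent along the \'etale cover back to $S$ together with the observation that $\Ocal_S^{\rm \acute et}$-valued $F=1$ sections are $\ZZ_p$-valued. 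This is essentially the argument of Rapoport--Richartz on the local constancy of Newton slopes combined with the Katz equivalence for unit-root crystals, adapted to perfect bases.
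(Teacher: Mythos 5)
Your argument is correct and is essentially the paper's proof unfolded: the paper simply invokes Rapoport--Richartz's Lemma~3.9 (whose content is exactly your reduction to the $\Psi$-fixed points of the internal Hom, the slope decomposition, and the trivialisation of the unit-root part over the algebraically closed base point) and then replaces the connectedness-dependent identity $W(R)_\QQ^\sigma=\QQ_p$ by $W(R)_\QQ^\sigma=\QQ_p^{\pi_0(\Spec R)}$, which is precisely the $\Map_{\rm cont}(\pi_0(S),\ZZ_p)$ refinement you build in from the start and which the paper supplies via Lemmas~\ref{lemma Frobenius fixed elements} and~\ref{lemma product of Witt vectors}. The only steps you gloss over that deserve a word are the injectivity of $H\otimes_{\ZZ_p}W(k)\to N[\tfrac{1}{p}]$ (linear independence of $F$-invariants over $W(k)[\tfrac{1}{p}]$) and the passage from pointwise integrality on connected components back to a continuous $H$-valued function, which works because $H$ is open and closed in $H[\tfrac{1}{p}]$.
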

 \begin{proof}
  The proof is identical to the proof of \cite[Lemma~3.9]{RapoportRichartz:Gisoc}, except for the last sentence. Here Rapoport and Richartz use $W(R)_\QQ^\sigma = \QQ_p$, i.e.\ require $\Spec R$ to be connected. The same argument still works for us if we replace it by $W(R)_\QQ^\sigma = \QQ_p^{\pi_0(\Spec R)}$ (w.r.t.\ the $p$-adic topology), which is proven in Lemma~\ref{lemma Frobenius fixed elements} and Lemma~\ref{lemma product of Witt vectors} below.
 \end{proof}

 For any $\FF_q$-algebra $R$ we have a canonical embedding $\FF_q^{\pi_0(\Spec R)} \mono R$ given as follows. For any $f \in \FF_q^{\pi_0(\Spec R)}$ consider the clopen sets $U_{f,x} \subset \Spec R$ given by the preimage of $x \in \FF_q$ w.r.t.\ the composition $\Spec R \to \pi_0(\Spec R) \stackrel{f}{\to} \FF_q$. Then there exists a unique idempotent $e_{f,x} \in R$ such that $U_{f,x} = D(e_{f,x})$. We define the image of $f$ in $R$ to be $\sum_{x \in \FF_q} x \cdot e_{f,x}$.

 \begin{lem} \label{lemma Frobenius fixed elements}
  For any $\FF_q$-algebra $R$ one has $\{r \in R \mid r^q = r\} = \FF_q^{\pi_0(\Spec R)}$.
 \end{lem}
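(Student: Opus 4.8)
The claim is that an element $r$ of an $\FF_q$-algebra $R$ satisfies $r^q = r$ if and only if $r$ lies in the image of the canonical embedding $\FF_q^{\pi_0(\Spec R)} \hookrightarrow R$ described above. The inclusion $\FF_q^{\pi_0(\Spec R)} \subseteq \{r \mid r^q = r\}$ is the easy direction: an element of the form $\sum_{x\in\FF_q} x\, e_{f,x}$ with the $e_{f,x}$ pairwise orthogonal idempotents summing to $1$ satisfies $r^q = \sum_x x^q e_{f,x} = \sum_x x\, e_{f,x} = r$, using $x^q = x$ in $\FF_q$. So the work is in the reverse inclusion.

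\textbf{Key steps.} First I would reduce to showing that if $r^q = r$, then the idempotents $e_x \coloneqq \prod_{y \ne x}(r - y)\big/\prod_{y\ne x}(x-y)$ for $x \in \FF_q$ form a complete orthogonal system and satisfy $r = \sum_x x\, e_x$. Concretely: since $T^q - T = \prod_{x\in\FF_q}(T - x)$ in $\FF_q[T]$, the relation $r^q - r = 0$ gives $\prod_{x\in\FF_q}(r - x) = 0$ in $R$. The Lagrange-interpolation idempotents $e_x$ then satisfy $\sum_x e_x = 1$ (this is a polynomial identity valid in any commutative $\FF_q$-algebra) and $e_x e_{x'} = 0$ for $x \ne x'$ (because the product $e_x e_{x'}$ contains the factor $\prod_{x\in\FF_q}(r-x) = 0$), and finally $r\, e_x = x\, e_x$ (since $(r - x)e_x$ again contains all the factors $(r-y)$). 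Summing over $x$ gives $r = \sum_x x\, e_x$. Second, I would identify this decomposition with the canonical map from $\FF_q^{\pi_0(\Spec R)}$: the complete orthogonal system $\{e_x\}$ corresponds to a decomposition $\Spec R = \coprod_x D(e_x)$ into clopen pieces, hence to a continuous function $f\colon \pi_0(\Spec R) \to \FF_q$ with $f^{-1}(x) = D(e_x)$, and by construction the image of $f$ under the canonical embedding is exactly $\sum_x x\, e_x = r$. This shows $r \in \FF_q^{\pi_0(\Spec R)}$, completing the proof.

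\textbf{Main obstacle.} There is no deep obstacle here; the proposition is essentially the statement that $\FF_q$ is an \'etale (in fact finite product of copies of the base, after the idempotent decomposition) $\FF_q$-algebra, phrased concretely. The only point requiring a little care is checking that the Lagrange idempotents genuinely are orthogonal and complete purely formally from $\prod_x(r-x)=0$, without any finiteness or connectedness hypothesis on $R$ — but this is a routine manipulation of the polynomial identities $\sum_x \prod_{y\ne x}\frac{T-y}{x-y} = 1$ and $\prod_{x}(T-x) = T^q - T$ in $\FF_q[T]$, pulled back along $T \mapsto r$. One should also note that the $e_x$ are uniquely determined by $r$ (an idempotent $e$ with $re = xe$ and $r e' = x e'$ for the ``same'' $x$ forces $e = e'$ via orthogonality), so the correspondence with continuous functions $\pi_0(\Spec R)\to\FF_q$ is bijective, which is what makes the two descriptions of the subset agree on the nose.
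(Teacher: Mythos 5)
Your proof is correct, and it takes a genuinely different route from the one in the paper. You construct the decomposition explicitly via the Lagrange interpolation idempotents $e_x = \prod_{y\ne x}(r-y)\big/\prod_{y\ne x}(x-y)$, verify orthogonality and completeness purely from the polynomial identities $\prod_x(T-x)=T^q-T$ and $\sum_x\prod_{y\ne x}\frac{T-y}{x-y}=1$, and then read off $r=\sum_x x\,e_x$ as the image of the associated locally constant function; by the uniqueness of the idempotent attached to a clopen set, this is exactly the canonical embedding. The paper instead argues more indirectly: from $\prod_x(r-x)=0$ it gets the clopen partition $\Spec R=\coprod_x V(r-x)$, hence a locally constant function $f$ with image in $R$, and then observes that $r-f$ lies in the nilradical and is therefore killed by a high enough power of Frobenius, giving $r-f=(r-f)^{q^n}=r^{q^n}-f^{q^n}=0$. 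Your version buys explicitness (the idempotents, and hence the clopen pieces, are written down as polynomials in $r$, with no appeal to the nilradical or to pointwise reasoning on $\Spec R$), at the cost of a few more identities to check; the paper's version is shorter but leans on the slightly slick Frobenius-kills-nilpotents trick. The closing remark about uniqueness of the $e_x$ is not needed for the set equality being proved, but it is harmless.
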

 \begin{proof}
  The inclusion ``$\supseteq$'' is obvious. Thus let $r \in R$ with $r^q - r = 0$. Then $\Spec R = \coprod_{x \in \FF_q} V(r-x)$, thus $r$ defines an locally constant function
  \begin{align*}
   \varphi: \Spec R &\to \FF_q \\
   \pfr &\mapsto r \mod \pfr.
  \end{align*}
  Let $f \in \FF_q^{\pi_0(\Spec R)}$ be the corresponding element. Then $r-f$ is nilpotent, say $(r-f)^{q^n} = 0$. Now
  \[
   r-f = r^{q^n} - f^{q^n} = (r-f)^{q^n} = 0.
  \]
 \end{proof}

 \begin{lem} \label{lemma product of Witt vectors}
  Let $R$ be a perfect ring of characteristic $p$ and $S$ be a profinite set. We have
  \[
   W(R^S) = W(R)^S
  \]
  where we consider the $p$-adic topology on $W(R)$.
 \end{lem}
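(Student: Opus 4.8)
The claim is the identification $W(R^S) = W(R)^S$ for $R$ a perfect ring of characteristic $p$ and $S$ a profinite set, where $W(R)$ carries the $p$-adic topology. First I would reduce to the finite case: writing $S = \varprojlim_i S_i$ as a cofiltered limit of finite sets, one has $R^S = \varinjlim_i R^{S_i}$ and, since $S_i$ is finite and discrete, $R^{S_i} = \prod_{s\in S_i} R$. Because $R^{S_i}$ is again perfect of characteristic $p$ (a finite product of perfect rings is perfect), it suffices to treat two separate points: (1) Witt vectors commute with finite products, i.e. $W\bigl(\prod_{s\in S_i}R\bigr) = \prod_{s\in S_i}W(R)$; and (2) Witt vectors commute with the relevant filtered colimit in a way compatible with the $p$-adic topologies, i.e. $W(\varinjlim_i R^{S_i}) = \varprojlim_i W(R^{S_i})$ as topological rings, where the right-hand side is taken with respect to the $p$-adic topology on each $W(R^{S_i})$.

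Point (1) is elementary on the level of underlying sets, since $W$ of a ring is functorially built out of the underlying set by the Witt polynomials, and these are compatible with products componentwise; the ring structure matches because the addition and multiplication Witt polynomials are applied coordinatewise. For point (2) the key observation is that for a perfect $\mathbb{F}_p$-algebra $A$ the ideal $pW(A)$ equals $V(W(A))$, so $W(A)/p^nW(A)$ is computed by the first $n$ Witt coordinates, i.e. $W(A)/p^n W(A) \cong W_n(A)$, and $W_n$, being a scheme defined by finitely many polynomial operations, commutes with filtered colimits of rings: $W_n(\varinjlim_i R^{S_i}) = \varinjlim_i W_n(R^{S_i})$. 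Taking the inverse limit over $n$ gives $W(\varinjlim_i R^{S_i}) = \varprojlim_n \varinjlim_i W_n(R^{S_i})$, and one then checks that this agrees with $\varprojlim_i \varprojlim_n W_n(R^{S_i}) = \varprojlim_i W(R^{S_i})$, i.e. that the colimit over $i$ and the limit over $n$ may be interchanged here. This interchange is exactly the statement that the $p$-adic topology is being used: an element of $\varprojlim_i W(R^{S_i})$ is a compatible system of $p$-adically convergent Witt vectors, and since $S$ is profinite the transition maps are surjective, so the double limit collapses. Finally $\varprojlim_i W(R)^{S_i} = W(R)^S$ by definition of the profinite structure, with the product topology on the right matching the $p$-adic one because $W(R)^{S_i}$ is a finite product.

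The main obstacle, and the point deserving the most care, is the topological bookkeeping in point (2): one must be precise that $R^S$ with its natural ring structure (no topology, or the discrete topology on $R$ as in the paper's conventions) has the property that $W(R^S)$, equipped with the $p$-adic topology, is the inverse limit of the $W(R^{S_i})$ in the category of topological rings, and that this inverse limit is the same as the $p$-adically completed version of $\varinjlim_i W(R^{S_i})$ — the colimit $\varinjlim_i W(R^{S_i})$ itself is generally \emph{not} $p$-adically complete, but $W(R^S) = W(\varinjlim R^{S_i})$ is, and the content is that it is the completion. Concretely: since $R$ is perfect, $W(R)$ is $p$-adically complete and separated, and $R^S = \Map_{\mathrm{cont}}(S,R)$ is a perfect $\mathbb F_p$-algebra, hence $W(R^S)$ is $p$-adically complete; combining with $W_n(R^S) = \varinjlim_i W_n(R^{S_i}) = \bigl(\varprojlim_i W_n(R)^{S_i}\bigr)$ — wait, $W_n(R^{S_i}) = W_n(R)^{S_i}$ is the finite-product case already handled — one gets $W_n(R^S) = W_n(R)^S$ for each $n$, and passing to the limit over $n$ gives $W(R^S) = \varprojlim_n W_n(R)^S = \bigl(\varprojlim_n W_n(R)\bigr)^S = W(R)^S$, the middle equality because forming $(-)^S = \Map_{\mathrm{cont}}(S,-)$ on the discrete sets $W_n(R)$ commutes with the inverse limit precisely when one equips $W(R) = \varprojlim_n W_n(R)$ with the $p$-adic (= inverse-limit) topology. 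This last chain is the cleanest route, and I would organize the write-up around it, relegating the finite-product statement $W_n(\prod R) = \prod W_n(R)$ and the identity $W(A)/p^n = W_n(A)$ for perfect $A$ to a preliminary remark.
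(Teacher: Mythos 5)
Your proof is correct, and its skeleton is the same as the paper's: write $S=\varprojlim_\lambda S_\lambda$ with $S_\lambda$ finite, so $R^S=\varinjlim_\lambda R^{S_\lambda}$, use that $W$ (or $W_n$) commutes with finite products, and then handle the passage from locally constant to continuous maps. Where you differ is in the engine driving the colimit-and-topology step. The paper invokes the universal property of $W$ on perfect rings (the unique $p$-torsion-free, $p$-adically complete lift of $R$) to conclude in one line that $W(\varinjlim R_\lambda)$ is the $p$-adic completion of $\varinjlim W(R_\lambda)$, and then identifies the completion of $\Map_{\rm loc.const.}(S,W(R))$ with $\Map_{\rm cont}(S,W(R))$. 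You instead work with the truncations, using $W(A)/p^n\cong W_n(A)$ for perfect $A$ and the fact that $W_n$ commutes with filtered colimits, to get $W_n(R^S)=W_n(R)^S$ at each finite level and then take $\varprojlim_n$ at the end; the topological content is absorbed into the tautology that $\Map_{\rm cont}(S,-)$ commutes with inverse limits of discrete sets. Your route is more elementary (no appeal to the characterisation of strict $p$-rings) at the cost of being slightly longer. One caution: the assertion in your middle paragraph that the double limit $\varprojlim_n\varinjlim_i$ ``collapses'' because the transition maps are surjective is not a valid justification for interchanging a filtered colimit with an inverse limit; your final chain avoids this interchange entirely, so in the write-up you should delete that paragraph and keep only the last argument.
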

 \begin{proof}
 Recall that $W(R)$ is uniquely characterised by the property that it is torsion-free, $p$-adically complete such that $W(R)/p = R$. As a direct consequence we obtain that for any directed system $(R_\lambda)$ of perfect rings $W(\varinjlim R_\lambda)$ is the $p$-adic completion of $\varinjlim W(R_\lambda)$. Now let $S = \varprojlim S_\lambda$ with $S_\lambda$ finite. We obtain
 \begin{align*}
  W(R^S) &= W(\varinjlim\, R^{S_\lambda}) \\
         &= (\varinjlim\,  W(R^{S_\lambda}))^{\wedge\, p-ad} \\
         &= (\varinjlim\, W(R)^{S_\lambda})^{\wedge\, p-ad} \\
         &= \Map_{\rm loc.~const.}(S,W(R))^{\wedge\,p-ad} \\
         &= W(R)^S.
 \end{align*}
 \end{proof}

 \section{The K\"unneth formula for pro\'etale torsors}\label{appendix Kuenneth}

 The cohomology of compact support is normally only defined for schemes of finite type. We define it for a slightly larger class of schemes, as we also want to calculate it for pro-\'etale torsors over varieties.

 \begin{deflem} \label{definition:lowershriek}
  Let $f\colon P \rightarrow S$ be a morphism of schemes where $S$ is noetherian and $f$ factorises as $P \xrightarrow{g} X \xrightarrow{h} S$ where $h$ is locally of finite type and $g$ is integral. We define the derived pushforward with compact support for \'etale sheaves on $P$  by
  \[
   \Rrm f_! \coloneqq \Rrm h_! \circ \Rrm g_\ast
  \]
  This definition does not depend on the choice of factorisation above.
 \end{deflem}
 \begin{proof}
  Let $P \xrightarrow{g} X \xrightarrow{h} S$ and $P \xrightarrow{g'} X' \xrightarrow{h'} S$ be two factorisations as above. If $g'$ factorises as $P \xrightarrow{g} X \xrightarrow{g''} X'$, then $g''$ is finite and thus
  \[
   \Rrm h'_! \circ \Rrm g'_\ast = \Rrm h'_! \circ \Rrm g''_\ast \circ \Rrm g_\ast = \Rrm h'_! \circ \Rrm g''_! \circ \Rrm g_\ast = \Rrm h_! \circ R g_\ast
  \]
  In general, since
  \[
   Rh_! = \varinjlim_{U \subset X \textnormal{ qc open}}\, Rh_{U !}
  \]
  where $h_U$ is the composition of the open embedding $U \mono X$ with $h$, we may assume that $h$ and $h'$ are of finite type. Writing $P = \varprojlim X_\lambda$, where $X_\lambda$ is finite over $X$, we see that $g'$ factors over some $X_\lambda$ since $X'$ is of finite presentation over $S$. Replacing $X$ by $X_\lambda$, we are in the situation discussed above.
 \end{proof}

 In the case where $S = \Spec k$, we may simply write $\Rrm \Gamma_c(X,-)$ instead of $\Rrm f_!$ and $H_c^i(X,-)$ instead of $\Rrm^i f_!$.

 \begin{rmk} \label{rmk proper pushforward}
  For special cases of $f$, the definition of $\Rrm f_!$ simplifies.
  \begin{subenv}
  \item If $f:X\to S=\Spec k$ is locally of finite type, then this definition of $\Rrm\Gamma_c(X,-)$ coincides with Definition~\ref{def:CompSuppCoh}.
  \item If $X$ is proper, $\Rrm f_! = \Rrm f_\ast$.
  \item Writing $P = \varprojlim X_\lambda$ with $X_\lambda \xrightarrow{g_\lambda} X$ finite, assume that the sheaf $\Lscr$ over the \'etale site of $P$ is defined as the pullback of a sheaf $\Lscr_\lambda$ over the \'etale site of $X_\lambda$. By \cite[09YQ]{AlgStackProj} we have $\Rrm f_! \Lscr = \varinjlim \Rrm f_{\lambda, !} \Lscr_\lambda$ where $f_\lambda = h \circ g_\lambda$. Thus the usual adhoc definition of cohomology with compact support for infinite level Shimura varieties coincides with the definition above. 
  \end{subenv}
 \end{rmk}

 We  fix a locally profinite group $J$ with a compact open pro-$p$-subgroup $K_0$ and denote by $dh$ the Haar measure such that $dh(K_0) = 1$.  From now on we assume that $f: P \to X$ is a pro\'etale $J$-torsor and that $\Lscr$ is a smooth $J$-equivariant abelian sheaf on $P$ such that $p$ is invertible in $\Lscr$. The $J$-action defines an effective descent datum on $\Lscr$; let $\Lscr_X$ denote the corresponding sheaf on $X$. One obtains the following explicit description of the stalks of $\Rrm^0 f_!\Lscr$.

 \begin{lem} \label{lem-csdi-stalks}
  Let $\xbar$ be a geometric point of $X$.
  \begin{subenv}
   \item At any geometric point $\xbar$ of $X$, the stalk of $\Rrm^0 f_! \Lscr$ at $\xbar$ is canonically isomorphic to $C_c^\infty(f^{-1}(\xbar),\Lscr_{X,\xbar})$. Moreover, we have a canonical $J$-action on $f_!\Lscr$ which is given on stalks by the natural $J$-action on $C_c^\infty(f^{-1}(\xbar),\Lscr_{X,\xbar})$.
   \item There exists a (noncanonical) morphism $\int_f: \Rrm^0 f_!\Lscr \to \Lscr_X$, which is given on stalks by
  \[
   C_c^\infty(f^{-1}(\xbar),\Lscr_{X,\xbar}) \to \Lscr_{X,\xbar},  \varphi \mapsto \int_{f^{-1}(\xbar)} \varphi dh.
  \]
  In particular, $\int_f$ is $J$-equivariant and the definition only depends on the choice of $dh$.
  \end{subenv}
 \end{lem}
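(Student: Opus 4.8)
\textbf{Proof plan for Lemma~\ref{lem-csdi-stalks}.}
The plan is to reduce everything to the elementary observation that a pro\'etale $J$-torsor, on a quasi-compact open, is Zariski-locally a product, and that taking stalks of $\Rrm^0 f_!$ commutes with the directed colimit presenting the torsor. First I would write $P = \varprojlim_\lambda P_\lambda$ as a cofiltered limit of finite \'etale covers $f_\lambda\colon P_\lambda \to X$ with finite transition maps, so that (by affineness of the transition maps and the fact that $f$ is integral) $\Rrm f_! \Lscr = \varinjlim_\lambda \Rrm f_{\lambda,!}\Lscr_\lambda$ in the notation of Remark~\ref{rmk proper pushforward}(3), where each $\Lscr_\lambda$ is the descent of $\Lscr$ along the $J/J_\lambda$-torsor $P_\lambda \to X$ for a suitable open normal $J_\lambda \subset J$ with $\Lscr$ becoming $J_\lambda$-invariant. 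Since $f_\lambda$ is finite, $\Rrm^{>0} f_{\lambda,!}\Lscr_\lambda$ vanishes and the stalk of $\Rrm^0 f_{\lambda,!}\Lscr_\lambda$ at a geometric point $\bar x$ is simply the $\Lscr_{X,\bar x}$-module of functions on the finite set $f_\lambda^{-1}(\bar x) = f^{-1}(\bar x)/J_\lambda$ with values in the stalk (using that $\Lscr_\lambda$ restricted to the fibre is the constant sheaf with value $\Lscr_{X,\bar x}$, which follows from $J_\lambda$-invariance of $\Lscr$). Passing to the colimit over $\lambda$ identifies the stalk of $\Rrm^0 f_!\Lscr$ at $\bar x$ with $\varinjlim_\lambda \Map(f^{-1}(\bar x)/J_\lambda, \Lscr_{X,\bar x}) = C_c^\infty(f^{-1}(\bar x), \Lscr_{X,\bar x})$ — here the identification with \emph{compactly supported} locally constant functions comes precisely from the fact that each $f^{-1}(\bar x)/J_\lambda$ is finite, i.e.\ quasi-compact, so every level-$\lambda$ function has compact support, and conversely a compactly supported locally constant function on $f^{-1}(\bar x)$ factors through some finite quotient.

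For the $J$-action in part (a): the $J$-equivariance of $\Lscr$ and the $J$-action on the torsor $P/X$ together give, by functoriality of $\Rrm f_!$ under the $X$-automorphisms of $P$ coming from $J$ (combined with the equivariance isomorphisms of $\Lscr$), a $J$-action on the sheaf $\Rrm^0 f_!\Lscr$; I would check on stalks that, under the identification above, this is the action of $J$ on $C_c^\infty(f^{-1}(\bar x),\Lscr_{X,\bar x})$ by translation on the argument (twisted by the equivariance structure, which is trivial on the fibre after passing to a level where $\Lscr$ is invariant). Since $\Rrm^{>0}f_!\Lscr$ vanishes after pullback to each finite level and stalks commute with the colimit, one also records that $\Rrm^{>0}f_!\Lscr = 0$, which is implicit in writing ``$\Rrm^0 f_!$''. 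For part (b), fixing the Haar measure $dh$ with $dh(K_0)=1$, I would define $\int_f$ level-by-level: on a finite cover $P_\lambda$ the map $C_c^\infty(f^{-1}(\bar x)/J_\lambda,\Lscr_{X,\bar x}) \to \Lscr_{X,\bar x}$ is the finite sum $\varphi \mapsto \sum_{\bar p} dh(\text{fibre of } \bar p)\,\varphi(\bar p)$, which is manifestly compatible with the transition maps $P_{\lambda'} \to P_\lambda$ (refining a partition multiplies the measure weights correctly), hence glues to a morphism of sheaves $\int_f\colon \Rrm^0 f_!\Lscr \to \Lscr_X$ in the colimit; on stalks it is the stated integration formula, and its $J$-equivariance is immediate from left-invariance of $dh$ together with the compatibility of $\Lscr_X$ with the identifications. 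The noncanonical dependence is exactly the dependence on the normalisation $dh(K_0)=1$.

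The main obstacle I anticipate is purely bookkeeping rather than conceptual: one must be careful that the descent data are handled consistently, i.e.\ that the descended sheaf $\Lscr_X$ of the $J$-equivariant $\Lscr$ agrees, after pullback along $P_\lambda \to X$, with the $J/J_\lambda$-descent of $\Lscr$ viewed as $J_\lambda$-invariant (for $\lambda$ large), and that the colimit $\Rrm f_! \Lscr = \varinjlim_\lambda \Rrm f_{\lambda,!}\Lscr_\lambda$ is indeed compatible with the $J$-action built from the equivariance isomorphisms and not merely the bare $J/J_\lambda$-action at each level. Both of these are standard once the limit presentation $P = \varprojlim P_\lambda$ is fixed with $J_\lambda = \ker(J \to \Aut_X(P_\lambda))$; I would invoke \cite[09YQ]{AlgStackProj} for the colimit formula and the smoothness of $\Lscr$ (Lemma~\ref{lem:EquivKeyLem} and the definition of smooth $J$-equivariant sheaf) to guarantee that $\Lscr$ becomes $J_\lambda$-invariant on a suitable quasi-compact open cofinal system. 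Everything else — vanishing of higher $\Rrm f_{\lambda,!}$ for finite $f_\lambda$, the stalk computation for a finite map, the finite-sum formula for integration — is elementary.
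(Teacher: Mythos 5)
There is a genuine gap, and it is located exactly where the content of the lemma lives. Your entire argument rests on writing $P=\varprojlim_\lambda P_\lambda$ with each $f_\lambda\colon P_\lambda\to X$ \emph{finite} \'etale and each fibre $f^{-1}(\xbar)/J_\lambda$ a \emph{finite} set, i.e.\ on $J$ admitting a cofinal system of open subgroups of finite index. But $J$ is only locally profinite (in the application $J=J_b(\QQ_p)$, which is non-compact), so no such presentation exists: for an open subgroup $J_\lambda$ the quotient $J/J_\lambda$ is an infinite discrete set, $P/J_\lambda\to X$ is \'etale but not finite, and $f$ itself is not integral. Consequently your justification of the compact-support condition collapses: you derive it from the quasi-compactness of the finite fibres $f^{-1}(\xbar)/J_\lambda$, but if the fibres really were cofinally finite then $f^{-1}(\xbar)$ would be profinite and ``compactly supported'' would be vacuous. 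The actual source of the subscript $c$ is different and your argument never confronts it: one must factor $f$ as $P\xrightarrow{g_{K_0}}P/K_0\xrightarrow{f_{K_0}}X$ for a compact open $K_0\subset J$ (this is also forced by Definition/Lemma~\ref{definition:lowershriek}, which needs an integral map followed by one locally of finite type). The integral part $g_{K_0}$ has profinite fibres and its pushforward, computed as $\varinjlim_K g_{K,K_0\,\ast}$ over finite \'etale intermediate covers, produces \emph{all} locally constant functions on each $K_0$-coset; the discrete part $f_{K_0}$ has infinite discrete fibre $J/K_0$ and its lower shriek produces the direct \emph{sum} over that fibre, i.e.\ finite support in the $J/K_0$-direction. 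Only the combination yields $C_c^\infty(f^{-1}(\xbar),\Lscr_{X,\xbar})$. Your converse claim that a compactly supported locally constant function on $f^{-1}(\xbar)$ ``factors through some finite quotient'' is likewise false for non-compact $J$.

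The same defect propagates to part (b): the finite-sum formula $\varphi\mapsto\sum_{\bar p}dh(\text{fibre})\,\varphi(\bar p)$ is defined level-by-level on a system of finite covers that does not exist. The underlying idea is right and is what the paper does, but it has to be implemented on the two-step factorisation: a normalised trace $\tfrac{1}{[K_0:K]}\cdot\trace\colon g_{K,K_0\,\ast}\Lscr_K\to\Lscr_{K_0}$ on the profinite part (compatible under refinement, giving $\int_{K_0}dh$ on stalks in the colimit), followed by the ordinary trace along the \'etale map $f_{K_0}$. Your treatment of the $J$-action in part (a) via functoriality of $f_!$ is fine and agrees with the paper. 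To repair the proof, replace the pro-(finite \'etale) presentation by the factorisation through $P/K_0$ and rerun your stalk computations separately for the integral and the discrete stages.
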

 \begin{proof}
  For a compact open subgroup $K$ denote $X_K \coloneqq P/K$ and let $P \xrightarrow{g_K} X_K \xrightarrow{f_K} X$ be the canonical factorisation of $f$ and $\Lscr_K \coloneqq f_K^\ast \Lscr_X$ the descent of $\Lscr$ to $X_K$. 
  
  
  For $K \subset K_0$ let $g_{K,K_0}: X_{K} \to X_{K_0}$ be the canonical projection. Then
  \[
   g_{K_0\,\ast} \Lscr = \varinjlim_{K}\, g_{K,K_0\, \ast} \Lscr_{K} = \varinjlim_{K}\, g_{K,K_0\,\ast}g_{K,K_0}^\ast \Lscr_{K_0}.
  \]
   Since $K_0/K$ is finite, the morphism $g_{K,K_0}$ finite \'etale. Thus $g_{K,K_0\, \ast} = g_{K,K_0\,!}$ is the left adjoint of $g_{K,K_0}^\ast$. Hence the stalk at a geometric point $\overline{y} \in X_{K_0}$ equals (see e.g.\ \cite[Tag 0710]{AlgStackProj})
  \[
   (g_{K_0\,\ast} \Lscr)_{\ybar} = \varinjlim_{K} (g_{K,K_0\,\ast}g_{K,K_0}^\ast \Lscr_{K_0})_{\ybar} = \varinjlim_K \bigoplus_{g_{K,K_0}^{-1}(\ybar)} \Lscr_{K_0,\ybar} = C_c^\infty(g_{K_0}^{-1}(\ybar),\Lscr_{K_0,\ybar}).
  \]
  Since $J/K_0$ is discrete, $f_{K_0}$ is \'etale; thus by the same argument as above
  \[
   (f_{K_0\, !} g_{K_0\,\ast}\Lscr)_{\xbar}
   = \bigoplus_{\ybar \in f_{K_0}^{-1}(\xbar)} (g_{K_0\,\ast}\Lscr)_{\ybar}
   = C_c^\infty(f^{-1}(\xbar),\Lscr_{X,\xbar}).
  \]
  Since $f_!$ is functorial, the $J$-action on $\Lscr$ induces an $J$-action on $f_!\Lscr$. Tracing through the definitions, one easily checks that the action of $J$ on $(f_{!} \Lscr)_{\xbar} =  C_c^\infty(f^{-1}(\xbar),\Lscr_{X,\xbar})$ is indeed the canonical action induced by the $J$-action of $f^{-1}(\xbar)$.

  The morphism $\int_f$ is defined as follows. Let $\tau_{K/K_0} \coloneqq \frac{1}{[K_0:K]} \cdot \trace\colon g_{K,K_0\,\ast} \Lscr_K \to \Lscr_{K_0}$. On stalks, this morphism is given by
  \[
   \bigoplus_{\zbar \in g_{K,K_0}^{-1}(\ybar)} \Lscr_{K_0,\ybar} \rightarrow \Lscr_{K_0,\ybar}, (f_{\zbar}) \mapsto \frac{1}{[K_0:K]} \cdot \sum_{\zbar\in g_{K,K_0}^{-1}(\ybar)} f_{\zbar},
  \]
  i.e.\ taking the average. Hence $\tau \coloneqq \varinjlim\, \tau_{K,K_0}\colon g_{K_0\,\ast} \Lscr \to \Lscr_{K_0}$ is given on stalks by taking the integral $\int_{K_0} dh$. We define $\int_f$ as the concatination \[\Rrm^0f_!\Lscr \xrightarrow{f_{K_0\, !}(\tau)} f_{K_0\, !} \Lscr_{K_0} \xrightarrow{\trace} \Lscr_X.\] By the above calculation it indeed coincides with the integral over $f^{-1}(\xbar)$ on stalks.
 \end{proof}

 \begin{cor} \label{cor-cInd-stalks}
  $\Rrm^0 f_!\Lscr$ satisfies property $(\Pcal)$ of \cite[Def.~5.3]{Mantovan:UnitaryShVar}, i.e. $(\Rrm^0 f_!\Lscr)_{\xbar} \cong \cInd_1^J \Lscr_{X,\xbar}$.
 \end{cor}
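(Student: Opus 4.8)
The statement to prove is Corollary~\ref{cor-cInd-stalks}: that $\Rrm^0f_!\Lscr$ satisfies property $(\Pcal)$ of \cite[Def.~5.3]{Mantovan:UnitaryShVar}, namely that at every geometric point $\xbar$ of $X$ the stalk $(\Rrm^0f_!\Lscr)_{\xbar}$ is isomorphic, as a smooth $J$-representation, to $\cInd_1^J\Lscr_{X,\xbar}$. This is essentially an unravelling of Lemma~\ref{lem-csdi-stalks}(a), so the proof will be short. First I would recall from Lemma~\ref{lem-csdi-stalks}(a) the canonical identification
\[
(\Rrm^0f_!\Lscr)_{\xbar} \cong C_c^\infty(f^{-1}(\xbar),\Lscr_{X,\xbar}),
\]
together with the fact that the $J$-action on the left-hand side (induced by functoriality of $f_!$ from the $J$-equivariant structure on $\Lscr$) corresponds under this isomorphism to the natural $J$-action on $C_c^\infty(f^{-1}(\xbar),\Lscr_{X,\xbar})$ coming from the $J$-action on the fibre $f^{-1}(\xbar)$.

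\textbf{Key step.} The next point is to observe that since $f\colon P\to X$ is a pro\'etale $J$-torsor, the fibre $f^{-1}(\xbar)$ is a $J$-torsor in the category of profinite sets; choosing any point $p_0\in f^{-1}(\xbar)$ gives a $J$-equivariant homeomorphism $J\riso f^{-1}(\xbar)$, $g\mapsto g\cdot p_0$ (here $J$ acts on itself by left translation). Transporting the module structure along this homeomorphism identifies
\[
C_c^\infty(f^{-1}(\xbar),\Lscr_{X,\xbar}) \cong C_c^\infty(J,\Lscr_{X,\xbar}) = C_c^\infty(J)\otimes_{\ZZ}\Lscr_{X,\xbar},
\]
with $J$ acting by left translation on $C_c^\infty(J)$ and trivially on $\Lscr_{X,\xbar}$. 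By the very definition of compact induction from the trivial subgroup, $C_c^\infty(J)\otimes\Lscr_{X,\xbar}$ with this action is precisely $\cInd_1^J\Lscr_{X,\xbar}$; this also makes the $J$-representation smooth, since any compactly supported locally constant function on $J$ is fixed by a sufficiently small open subgroup. Composing the two displayed identifications yields the desired isomorphism $(\Rrm^0f_!\Lscr)_{\xbar}\cong\cInd_1^J\Lscr_{X,\xbar}$.

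\textbf{Main obstacle.} There is essentially no serious obstacle here: all the analytic content has already been packaged into Lemma~\ref{lem-csdi-stalks}. The only mild subtlety worth a sentence is that the identification $J\riso f^{-1}(\xbar)$ depends on the choice of base point $p_0$, so the resulting isomorphism with $\cInd_1^J\Lscr_{X,\xbar}$ is not canonical — but property $(\Pcal)$ only asks for the existence of such an isomorphism, so this is harmless. (If one wanted, one could note that changing $p_0$ changes the identification by the $J$-intertwiner given by right translation, which is exactly the ambiguity built into $\cInd_1^J$.) One should also remark that the argument is uniform in $\xbar$ and compatible, via functoriality of $f_!$ in $X$, with specialisation maps, although this is not needed for the bare statement. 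I would therefore present the proof as a two-line deduction: cite Lemma~\ref{lem-csdi-stalks}(a) for the stalk description and its $J$-equivariance, then trivialise the torsor $f^{-1}(\xbar)$ to recognise $\cInd_1^J$.
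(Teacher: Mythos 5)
Your proof is correct and is exactly the deduction the paper intends: the corollary is stated as an immediate consequence of Lemma~\ref{lem-csdi-stalks}(a), and the only content to supply is the trivialisation of the torsor fibre $f^{-1}(\xbar)\cong J$ identifying $C_c^\infty(f^{-1}(\xbar),\Lscr_{X,\xbar})$ with $\cInd_1^J\Lscr_{X,\xbar}$, which you do. Your remark that the isomorphism is non-canonical but that property $(\Pcal)$ only requires existence is also the right observation.
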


 In this article, we will consider sheaves which have alongside the $J = J_b(\QQ_p)$-action an action of the Weil group $W = W_E$ of the $p$-adic completion of the Shimura field. We remark that the above construction still works if we consider $W$-equivariant sheaves for some profinite group $W$ acting on the underlying schemes. For simplicity we assume that $\Lscr$ is a ($W$-equivariant) sheaf of $\ZZ/l^r\ZZ$-modules for a prime $l \not= p$. Let $\Hcal_r = C_c^\infty(J_b(\QQ_p),\ZZ/l^r\ZZ)$ and denote by $\Lambda$ the module $\ZZ/l^r\ZZ$ as trivial $\Hcal_r$-representation.

 The following statements are a formal consequence of Corollary~\ref{cor-cInd-stalks}, and properties of $R\Gamma_c$ (cf.\  \cite[\S~7.2]{neupert:thesis}\cite[\S~5.3]{Mantovan:UnitaryShVar})

 \begin{prop}[{cf. \cite[Prop.~7.2.2]{neupert:thesis}, \cite[Thm.~5.11]{Mantovan:UnitaryShVar}}]
   $ \Rrm\Gamma_c(X,f_!\Lscr_X) = \Lambda \otimes^L_{\Hcal_r} \Rrm\Gamma_c(P,\Lscr)$
 \end{prop}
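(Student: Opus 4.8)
The plan is to reduce the statement to a stalkwise computation followed by a standard base-change / Künneth argument. First I would recall that by Lemma~\ref{lem-csdi-stalks} (and Corollary~\ref{cor-cInd-stalks}) the sheaf $\Rrm^0 f_!\Lscr$ has a natural $J_b(\QQ_p)$-action whose stalk at any geometric point $\xbar$ of $X$ is $\cInd_1^{J_b(\QQ_p)}\Lscr_{X,\xbar}$, i.e.\ it satisfies property $(\Pcal)$ of \cite[Def.~5.3]{Mantovan:UnitaryShVar}; moreover the higher $\Rrm^i f_!\Lscr$ vanish because $f$ is (pro-)integral and $g_\ast$ is exact on the relevant directed systems of finite étale morphisms (each transition map $g_{K,K_0}$ being finite étale, hence $g_{K,K_0\,*} = g_{K,K_0\,!}$ is exact). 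Thus $\Rrm f_!\Lscr$ is concentrated in degree $0$ and equals $f_!\Lscr_X$ as an object of the derived category of smooth $J_b(\QQ_p)$-equivariant (and $W_E$-equivariant) sheaves on $X$.

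Next I would observe that $f_!\Lscr_X$, viewed with its $\Hcal_r$-module structure coming from the $J_b(\QQ_p)$-action, is the sheaf-theoretic incarnation of $\cInd^{J_b(\QQ_p)}_1$, and that $\Hcal_r$ is a (non-unital, but idempotented) ring over which $\Lambda = \ZZ/l^r\ZZ$ with trivial action is a module. The key algebraic input is that for the module $\cInd^{J_b(\QQ_p)}_1 V = \Hcal_r \otimes V$ one has $\Lambda \otimes^{L}_{\Hcal_r}(\Hcal_r\otimes V) \cong V$ canonically and with no higher Tor (this is the analogue, in the idempotented-algebra setting, of the fact that $\Hcal_r\otimes V$ is $\Hcal_r$-flat; it is exactly the computation underlying Lemma~\ref{lem:Ext}, via $(\tau^*\otimes\rho)_K \cong (\tau^*\otimes\rho)^K$ for pro-$p$ $K$ with $l$ invertible). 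Applying this stalkwise, the natural map $\Lambda\otimes^L_{\Hcal_r} f_!\Lscr_X \to \Lscr_X$ induced by the integration map $\int_f$ of Lemma~\ref{lem-csdi-stalks}(b) is an isomorphism in the derived category. Taking $\Rrm\Gamma_c(X,-)$ of this isomorphism and commuting the exact (filtered-colimit-preserving) functor $\Lambda\otimes^L_{\Hcal_r}-$ past $\Rrm\Gamma_c$ — using that $\Rrm\Gamma_c(X,-)$ commutes with arbitrary direct sums and that $\Hcal_r$ is a filtered colimit of the finite-rank Hecke algebras $\Hcal_r(J_b//K)$, so the projection formula applies level by level — yields
\[
\Rrm\Gamma_c(X,f_!\Lscr_X) \cong \Lambda \otimes^L_{\Hcal_r} \Rrm\Gamma_c(X,f_!\Lscr_X)\big/\!\!\big/ \ldots \cong \Lambda \otimes^L_{\Hcal_r}\Rrm\Gamma_c(P,\Lscr),
\]
where the last identification uses $\Rrm\Gamma_c(P,\Lscr) = \Rrm\Gamma_c(X,\Rrm f_!\Lscr) = \Rrm\Gamma_c(X,f_!\Lscr_X)$ by the very definition of $\Rrm\Gamma_c$ on $P$ (Definition/Lemma~\ref{definition:lowershriek}) together with the degeneration established in the first step.

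I expect the main obstacle to be bookkeeping rather than conceptual: one must verify carefully that the projection formula $\Lambda\otimes^L_{\Hcal_r}\Rrm\Gamma_c(X,f_!\Lscr_X)\cong \Rrm\Gamma_c(X,\Lambda\otimes^L_{\Hcal_r}f_!\Lscr_X)$ holds despite $\Hcal_r$ being non-unital and $X$ not being proper — this is where the compact-support formalism and the smoothness of the $J_b(\QQ_p)$-action are essential, and it is precisely the content of \cite[Thm.~5.11]{Mantovan:UnitaryShVar} and \cite[Prop.~7.2.2]{neupert:thesis}, which I would invoke after checking that $\Rrm^0 f_!\Lscr$ satisfies property $(\Pcal)$. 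A secondary point requiring care is the $W_E$-equivariance: since $W_E$ acts on $X$, $P$ and $\Lscr$ compatibly and commutes with the $J_b(\QQ_p)$-action, all the maps above ($\int_f$, the descent isomorphism, the projection formula) are $W_E$-equivariant by functoriality, so no extra argument is needed — but it should be stated explicitly. Thus the proof is essentially: (i) compute stalks and check property $(\Pcal)$; (ii) invoke the cited Künneth-type statements.
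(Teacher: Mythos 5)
Your proposal is correct and follows the same route as the paper: the paper's entire proof is the one-line remark that the proposition is a formal consequence of Corollary~\ref{cor-cInd-stalks} (property $(\Pcal)$) together with properties of $\Rrm\Gamma_c$, with the formal coinvariants/projection-formula part delegated to Mantovan's Theorem~5.11 and Neupert's Proposition~7.2.2 — exactly your step (ii) after your step (i). The extra details you supply (concentration of $\Rrm f_!$ in degree~$0$, the stalkwise computation $\Lambda\otimes^L_{\Hcal_r}\cInd_1^{J}V\cong V$ via $\int_f$) are consistent with what those cited results actually establish.
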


 \begin{thm}[{cf.\ \cite[Thm.~7.2.3]{neupert:thesis}, \cite[Thm.~5.13]{Mantovan:UnitaryShVar}}] \label{thm-kuenneth}
  Assume that we may write $P = P_1 \times P_2$ where $P_1$ and $P_2$ are $J$-equivariant and locally of finite presentation up to Galois cover and assume that
  \[
   \Lscr = \Escr_1 \boxtimes \Ecal_2
  \]
  where $\Escr_i$ is a smooth $J$-equivariant $\ZZ/l^r\ZZ$-sheaf on $P_i$ with continuous $W$-action. Then
  \[
   \Rrm\Gamma_c(X, \Lcal_X) = \Rrm\Gamma_c(P_1,\Escr_1) \otimes^L_{\Hcal_r} \Rrm\Gamma_c(P_2,\Escr_2)
  \]
 \end{thm}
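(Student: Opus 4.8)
The plan is to deduce the Künneth formula \ref{thm-kuenneth} from the preceding proposition (the $\Hcal_r$-tensor-product description of $\Rrm\Gamma_c(X,f_!\Lscr_X)$) together with a standard Fubini-type factorisation. First I would note that since $P = P_1 \times P_2$ is a $J$-equivariant pro\'etale $J$-torsor over $X$, and $\Lscr = \Escr_1 \boxtimes \Escr_2$ with each $\Escr_i$ a smooth $J$-equivariant sheaf, the descended sheaf $\Lscr_X$ fits the hypotheses of the proposition, giving
\[
\Rrm\Gamma_c(X,\Lscr_X) \cong \Lambda \otimes^{L}_{\Hcal_r} \Rrm\Gamma_c(P,\Lscr).
\]
The second ingredient is the ordinary (non-equivariant) Künneth isomorphism $\Rrm\Gamma_c(P,\Lscr) \cong \Rrm\Gamma_c(P_1,\Escr_1) \otimes^{L}_{\ZZ/l^r\ZZ} \Rrm\Gamma_c(P_2,\Escr_2)$, which is valid here by Remark~\ref{rmk proper pushforward}(3) writing each $P_i$ as a cofiltered limit of finite-type schemes over a Galois cover and passing to the colimit in the classical Künneth theorem for $\ZZ/l^r\ZZ$-coefficients with $l\ne p$. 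Combining these two gives $\Rrm\Gamma_c(X,\Lscr_X)$ as $\Lambda \otimes^{L}_{\Hcal_r}\big(\Rrm\Gamma_c(P_1,\Escr_1)\otimes^{L}_{\ZZ/l^r\ZZ}\Rrm\Gamma_c(P_2,\Escr_2)\big)$.

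The remaining, and main, step is to massage the right-hand side into $\Rrm\Gamma_c(P_1,\Escr_1)\otimes^{L}_{\Hcal_r}\Rrm\Gamma_c(P_2,\Escr_2)$. The point is the diagonal: the $J$-action on $P$ is the diagonal action on $P_1\times P_2$, so that in the derived category of $\Hcal_r$-modules one has a natural identification
\[
\Lambda \otimes^{L}_{\Hcal_r}\big(M_1 \otimes^{L}_{\ZZ/l^r\ZZ} M_2\big) \;\cong\; M_1 \otimes^{L}_{\Hcal_r} M_2
\]
for $\Hcal_r$-module complexes $M_1, M_2$, where on the left $\Hcal_r$ acts diagonally on $M_1\otimes M_2$. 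This is the algebraic avatar of "restricting along the diagonal $J\hookrightarrow J\times J$ and then co-inducing"; concretely it follows because $\Lambda \otimes^{L}_{\Hcal_r}(-)$ applied to the external tensor product over $\Hcal_r \otimes \Hcal_r$ computes the same thing as the internal tensor product over $\Hcal_r$, using that $\Hcal_r$ is an idempotented (non-unital) algebra and the multiplication $\Hcal_r \otimes \Hcal_r \to \Hcal_r$ is the relevant comparison map; one reduces to the case of modules of the form $\cInd_{K}^{J}\mathbf 1$ by taking resolutions (as in the proof of Lemma~\ref{lem:Ext}), where the identity is a direct computation with double cosets. I would carry this out by first establishing the displayed algebraic identity on the level of honest $\Hcal_r$-modules that appear as the terms of explicit complexes computing $\Rrm\Gamma_c(P_i,\Escr_i)$ — such complexes exist by Corollary~\ref{cor-cInd-stalks}, which guarantees the stalks of $\Rrm^0 f_! \Lscr$ are of the form $\cInd_1^J(-)$ — and then passing to the derived category.

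The main obstacle I anticipate is purely bookkeeping: ensuring that the $W_E$-equivariance (the Weil group action) is tracked through both the descent isomorphism of the proposition and the diagonal comparison, since $W$ acts on the schemes $P_i$ and $X$ and must commute with everything in sight; this is formal given that all constructions ($f_!$, external tensor product, $\otimes^{L}_{\Hcal_r}$) are functorial, but it requires care in stating the isomorphism at the level of complexes of $W$-representations rather than merely in a derived category. A secondary technical point is the hypothesis "locally of finite presentation up to Galois cover": I would reduce to genuine finite-type schemes by Remark~\ref{rmk proper pushforward}(3), writing each $P_i$ and $P$ as cofiltered limits and using that $\Rrm\Gamma_c$, the classical Künneth map, and $\otimes^{L}_{\Hcal_r}$ all commute with the relevant filtered colimits (the colimit being exact and $\Hcal_r$ being a colimit-friendly idempotented algebra). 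Modulo these, the argument is a formal concatenation of the proposition, classical Künneth, and the diagonal identity for $\Hcal_r$-modules, exactly as in \cite[Thm.~7.2.3]{neupert:thesis} and \cite[Thm.~5.13]{Mantovan:UnitaryShVar}.
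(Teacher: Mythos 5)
Your proposal is correct and follows essentially the same route as the paper, which defers this formal argument to Neupert and Mantovan: descend via the preceding proposition to $\Lambda\otimes^{\Lrm}_{\Hcal_r}\Rrm\Gamma_c(P,\Lscr)$, factor $\Rrm\Gamma_c(P,\Lscr)$ by the classical K\"unneth formula (extended to the pro-setting via Remark~\ref{rmk proper pushforward}), and conclude with the diagonal identity $\Lambda\otimes^{\Lrm}_{\Hcal_r}(M_1\otimes^{\Lrm}M_2)\cong M_1\otimes^{\Lrm}_{\Hcal_r}M_2$, verified on compact inductions $\cInd_K^J\mathbf{1}$ for pro-$p$ $K$. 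The bookkeeping points you flag ($W_E$-equivariance, reduction to finite type via limits) are exactly the ones the cited references handle, so no gap.
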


 In particular, in the case where all the sheaves are constant \'etale sheaves associated to $\ZZ/l^r\ZZ$, we obtain the following result.

 \begin{cor}[{cf.\ \cite[Cor.~7.2.5]{neupert:thesis}, \cite[Thm.~5.14]{Mantovan:UnitaryShVar}}] \label{cor-kuenneth}
  We have a spectral sequence
  \[
   E_2^{p,q} = \bigoplus_{s+t = q} \Tor_{\Hcal_r}^p(H^s(P_1,\ZZ/l^r\ZZ), H^t(P_2,\ZZ/l^r\ZZ)) \Rightarrow H^{p+q}(X,\ZZ/l^r\ZZ).
  \]
 \end{cor}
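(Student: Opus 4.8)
The plan is to obtain Corollary~\ref{cor-kuenneth} as a formal consequence of Theorem~\ref{thm-kuenneth}: one specialises the K\"unneth isomorphism there to constant coefficients and then unravels the resulting derived-category identity into a spectral sequence by standard homological algebra over the ring $\Hcal_r$. No geometric input beyond what already goes into Theorem~\ref{thm-kuenneth} is needed, so the proof is short; the argument runs parallel to \cite[Cor.~7.2.5]{neupert:thesis} and \cite[Thm.~5.14]{Mantovan:UnitaryShVar}.

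Concretely, I would first apply Theorem~\ref{thm-kuenneth} with $\Escr_1$ and $\Escr_2$ both taken to be the constant $\ZZ/l^r\ZZ$-sheaf (carrying its tautological smooth $J$-equivariant and continuous $W$-equivariant structure coming from the torsor $f\colon P\to X$). Since $\ZZ/l^r\ZZ\boxtimes\ZZ/l^r\ZZ$ is the constant sheaf $\ZZ/l^r\ZZ$ on $P=P_1\times P_2$, whose descent along $f$ is the constant sheaf on $X$, the theorem yields an isomorphism in the derived category of $\ZZ/l^r\ZZ$-modules
\[
\Rrm\Gamma_c(X,\ZZ/l^r\ZZ)\;\cong\;\Rrm\Gamma_c(P_1,\ZZ/l^r\ZZ)\otimes^{\Lrm}_{\Hcal_r}\Rrm\Gamma_c(P_2,\ZZ/l^r\ZZ),
\]
where $\Rrm\Gamma_c(P_1,\ZZ/l^r\ZZ)$ is viewed as a complex of right $\Hcal_r$-modules and $\Rrm\Gamma_c(P_2,\ZZ/l^r\ZZ)$ as a complex of left $\Hcal_r$-modules via the respective $J_b(\QQ_p)$-actions. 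I would then feed this into the hyper-$\Tor$ (K\"unneth) spectral sequence for the derived tensor product of two bounded-below complexes over a ring (see e.g.\ \cite{Weibel:IntroHA}; one can also invoke the corresponding spectral sequence in \cite{AlgStackProj}), whose $E_2$-term is $\bigoplus_{s+t=q}\Tor^p_{\Hcal_r}\!\big(H^s_c(P_1,\ZZ/l^r\ZZ),H^t_c(P_2,\ZZ/l^r\ZZ)\big)$ and which abuts to the cohomology of the derived tensor product, that is, to $H^{p+q}_c(X,\ZZ/l^r\ZZ)$ by the displayed isomorphism. This is precisely the asserted spectral sequence.

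The only point that genuinely needs attention — and hence the main ``obstacle'' — is convergence: for $r\geqslant2$ the ring $\Hcal_r=C^\infty_c(J_b(\QQ_p),\ZZ/l^r\ZZ)$ has infinite homological dimension (it retracts onto $\ZZ/l^r\ZZ$), so the $E_2$-page is a priori unbounded in the $\Tor$-direction. I would resolve this by observing that $P_1$ and $P_2$ are, up to a finite Galois cover, schemes locally of finite type over $k$ whose reduced loci are either of finite type or partially proper with all irreducible components of a bounded dimension; hence each complex $\Rrm\Gamma_c(P_i,\ZZ/l^r\ZZ)$ has cohomology concentrated in a fixed finite range of degrees (using the description of $\coh{}_c$ as a filtered colimit of $\coh{}_Z$ over quasi-compact closed $Z$), so the spectral sequence is supported in a half-plane with bounded total cohomological contribution ``$s+t$'' and therefore converges, with a finite filtration in each total degree. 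With this in hand the corollary is immediate; as a consistency check it recovers Mantovan's K\"unneth formula \cite[Thm.~5.14]{Mantovan:UnitaryShVar} (and Neupert's \cite[Cor.~7.2.5]{neupert:thesis}) in the present setting. I do not expect any further difficulty.
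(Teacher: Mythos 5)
Your proposal is correct and is exactly the paper's route: the corollary is stated as an immediate specialisation of Theorem~\ref{thm-kuenneth} to constant coefficients, followed by the standard hyper-$\Tor$ spectral sequence of the derived tensor product $\Rrm\Gamma_c(P_1,\ZZ/l^r\ZZ)\otimes^{\Lrm}_{\Hcal_r}\Rrm\Gamma_c(P_2,\ZZ/l^r\ZZ)$. Your convergence remark (boundedness of the cohomology of each factor, so that each total degree receives only finitely many contributions despite $\Hcal_r$ having infinite homological dimension) is a point the paper leaves implicit, and it is handled correctly.
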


 \bibliographystyle{amsalpha}
 \bibliography{bib}

\end{document}